\DeclareRobustCommand\widecheck[1]{{\mathpalette\@widecheck{#1}}}
\def\@widecheck#1#2{%
    \setbox\z@\hbox{\m@th$#1#2$}%
    \setbox\tw@\hbox{\m@th$#1%
       \widehat{%
          \vrule\@width\z@\@height\ht\z@
          \vrule\@height\z@\@width\wd\z@}$}%
    \dp\tw@-\ht\z@
    \@tempdima\ht\z@ \advance\@tempdima2\ht\tw@ \divide\@tempdima\thr@@
    \setbox\tw@\hbox{%
       \raise\@tempdima\hbox{\scalebox{1}[-1]{\lower\@tempdima\box
\tw@}}}%
    {\ooalign{\box\tw@ \cr \box\z@}}}
\theoremstyle{definition}
\newtheorem{theorem}{Theorem}[section]
\newtheorem{prop}[theorem]{Proposition}
\newtheorem{lemma}[theorem]{Lemma}
\newtheorem{cor}[theorem]{Corollary}
\newtheorem{question}[theorem]{Question}
\newtheorem{notation}[theorem]{Notation}
\newtheorem{ex}[theorem]{Example}
\theoremstyle{remark}
\newtheorem{dfn}[theorem]{Definition}
\newtheorem{remark}[theorem]{Remark}
\def\co{\colon\thinspace}
\def\ep{\epsilon}
\def\Q{\mathbb{Q}}
\def\R{\mathbb{R}}
\def\Z{\mathbb{Z}}
\def\N{\mathbb{N}}
\def\C{\mathbb{C}}
\def\dell{\delta_{\mathrm{ell}}}
\def\dellu{\delta_{\mathrm{ell}}^{\mathrm{u}}}
\DeclareMathOperator{\Img}{Im}
\numberwithin{equation}{section}
\newtheorem{proposition}[theorem]{Proposition}
\newtheorem{lemma-definition}[theorem]{Lemma-Definition}
\newtheorem{definition}[theorem]{Definition}
\newcommand{\op}{\operatorname}
\newcommand{\too}{\longrightarrow}
\newcommand{\norm}[1]{\left\| #1 \right\|}
\newcommand{\eps}{\varepsilon}
\newcommand{\bpm}{\begin{pmatrix}}
\newcommand{\epm}{\end{pmatrix}}
\renewcommand{\epsilon}{\varepsilon}
\newcommand*{\rom}[1]{\expandafter\@slowromancap\romannumeral #1@}
\title{Symplectically knotted codimension-zero embeddings of domains in $\mathbb{R}^4$}
\author{Jean Gutt \and Michael Usher}
\address{Department of Mathematics\\University of Georgia\\Athens, GA 30602}
\email{guttjean@gmail.com, usher@uga.edu}
\begin{document}

\begin{abstract}
   We show that many toric domains $X$ in $\R^4$ admit symplectic embeddings $\phi$ into dilates of themselves which are knotted in the strong sense that there is no symplectomorphism of the target that takes $\phi(X)$ to $X$.  For instance $X$ can be taken equal to a polydisk $P(1,1)$, or to any convex toric domain that both is contained in $P(1,1)$ and properly contains a ball $B^4(1)$; by contrast a result of McDuff shows that $B^4(1)$ (or indeed any four-dimensional ellipsoid) cannot have this property.  The embeddings are constructed based on recent advances on symplectic embeddings of ellipsoids, though in some cases a more elementary construction is possible.  The fact that the embeddings are knotted is proven using filtered positive $S^1$-equivariant symplectic homology.
\end{abstract}

\maketitle
\section{Introduction} \label{intro}

Recent years have seen a significant improvement in our understanding of when one region in $\mathbb{R}^4$ symplectically embeds into another, see \emph{e.g.} \cite{M}, \cite{MS}, \cite{CG}.  Complementing this existence question, one can ask whether embeddings are unique up to an appropriate notion of equivalence; in particular, if $A\subset U\subset\R^4$ this entails asking whether every symplectic embedding $A\hookrightarrow U$ is equivalent to the inclusion.  Somewhat less is known about this uniqueness question, though there are positive results in \cite{M},\cite{CG} and negative results in \cite{FHW}, \cite{Hi}.  We show in this paper that modern techniques of constructing symplectic embeddings $B\hookrightarrow U$ often give rise, when restricted to certain subsets $A\subset B\cap U$, to embeddings $A\hookrightarrow U$ that are distinct from the inclusion in a strong sense. 

The subsets of $\mathbb{R}^4$ (and in some cases more generally in $\R^{2n}\cong \C^n$) that we consider are toric domains; let us set up some notation and recall basic definitions.

 Define $\mu\co \mathbb{C}^n\to [0,\infty)^n$ by \[ \mu(z_1,\ldots,z_n)=(\pi|z_1|^2,\ldots,\pi|z_n|^2).\]  A \textbf{toric domain} is by definition a set of the form $X_{\Omega}=\mu^{-1}(\Omega)$ where $\Omega$ is a domain in $[0,\infty)^n$.  Throughout the paper the term ``domain'' will always refer to the closure of a bounded open subset of $\R^n$ or $\C^n$; in particular domains are by definition compact.

Given $\Omega\subset[0,\infty)^n$, we define
\begin{equation}\label{omegahat}
\widehat{\Omega} = \left\{(x_1,\ldots,x_n)\in\R^n \;\big|\; (|x_1|,\ldots,|x_n|)\in\Omega\right\}.
\end{equation}

Symplectic embedding problems for toric domains are currently best understood when the domains are concave or convex according to the following definitions, which follow \cite{GuH}.

\begin{definition}
A {\bf convex toric domain\/} is a toric domain $X_\Omega$ such that $\widehat{\Omega}$ is a convex domain in $\R^n$.
\end{definition}
\begin{definition}
A {\bf concave toric domain\/} is a toric domain $X_\Omega$ where $\Omega\subset [0,\infty)^n$ is a domain and  $[0,\infty)^n\setminus\Omega$ is convex.
\end{definition}

\begin{ex}
If $n=2$, a convex or concave toric domain $X_{\Omega}$ arises from a ``region under a graph'' $\Omega=\{(x,y)|0\leq x\leq a,0\leq y\leq f(x)\}$ where $f\co [0,a]\to[0,\infty)$ is a monotone decreasing function.  The corresponding toric domain $X_{\Omega}$ is convex if $f$ is concave, and is concave if $f$ is convex and $f(a)=0$.
\end{ex}

\begin{ex}
If $a_1,\ldots,a_n>0$, the \emph{ellipsoid} $E(a_1,\ldots,a_n)$ is defined as $X_{\Omega}$ where $\Omega=\big\{(x_1,\ldots,x_n)\in[0,\infty)^n|\sum_{i=1}^{n}\frac{x_i}{a_i}\leq 1\big\}$. As a special case, the ball of capacity $a$ is $B^{2n}(a)=E(a,\ldots,a)$.  Note that $E(a_1,\ldots,a_n)$ is both a concave toric domain and a convex toric domain.  We will occasionally find it convenient to extend this to the case that some $a_i=0$ by taking $E(\ldots,0,\ldots)=\varnothing$.
\end{ex}

\begin{ex}
If $a_1,\ldots,a_n>0$, the \emph{polydisk} $P(a_1,\ldots,a_n)$ is defined as $X_{\Omega}$ where $\Omega=\{(x_1,\ldots,x_n)\in [0,\infty)^n\,|\,(\forall i)(0\leq x_i\leq a_i)\}$.  Equivalently, $P(a_1,\ldots,a_n)=B^2(a_1)\times\cdots\times B^2(a_n)$.  Polydisks are convex toric domains.
\end{ex}

We use the following standard notational convention:
\begin{dfn}
If $A\subset \C^n$ and $\alpha>0$, we define $\alpha A=\{\sqrt{\alpha}a|a\in A\}$.
\end{dfn}

(The square root ensures that any capacity $c$ will obey $c(\alpha A)=\alpha c(A)$, and also that we have $E(\alpha a_1,\ldots,\alpha a_n)=\alpha E(a_1,\ldots,a_n)$ and similarly for polydisks.)

For any subset $B\subset\C^n$ let $B^{\circ}$ denote the interior of $B$.
This paper is largely concerned with symplectic embeddings $X\hookrightarrow \alpha X^{\circ}$ where $X$ is a concave or convex toric domain and $\alpha >1$.  The definitions imply that  concave or convex toric domains $X$ always satisfy $X\subset \alpha X^{\circ}$ for all $\alpha>1$ (see Proposition \ref{cvxstar}),  so one such embedding is given by the inclusion of $X$ into  $\alpha X^{\circ}$. However we will find that in many cases there are other such embeddings that are inequivalent to the inclusion in the following sense:

\begin{dfn}\label{knotdef} Let $A\subset U\subset\C^n$, with $A$ closed and $U$ open, and let $\phi\co A\to U$ be a symplectic embedding.\footnote{Since $A$ may not be a manifold or even a manifold with boundary we should say what it means for $\phi\co A\to U$ to be a symplectic embedding; our convention will be that it means that there is an open neighborhood of $A$ to which $\phi$ extends as a symplectic embedding.  When $A$ is a manifold with boundary it is not hard to see using a relative Moser argument that this is equivalent to the statement that $\phi\co A\to U$ is a smooth embedding of manifolds with boundary which preserves the symplectic form.}  We say that $\phi$ is \textbf{unknotted} if there is a symplectomorphism $\Psi\co U\to U$ such that $\Psi(A)=\phi(A)$.  We say that $\phi$ is \textbf{knotted} if it is not unknotted.
\end{dfn}

Note that we do not require the map $\Psi$ to be compactly supported, or Hamiltonian isotopic to the identity, or even to extend continuously to the closure of $U$; accordingly our definition of knottedness is in principle more restrictive than others that one might use.

In Section \ref{4proof} (based on results from Sections \ref{sh} and \ref{construct}) we will prove the existence of knotted embeddings from $X$ to $\alpha X^{\circ}$ for many toric domains $X\subset \C^2$ and suitable $\alpha>1$.  

\begin{theorem}\label{4d}  Let $X\subset \C^2$ belong to any of the following classes of domains:
\begin{itemize}\item[(i)] All convex toric domains $X$ such that, for some $c>0$, $B^4(c)\subsetneq X\subset P(c,c)$.
\item[(ii)] All concave toric domains $X_{\Omega}$ such that, for some $c>0$, \[ \{(x,y)\in [0,\infty)^2|\min\{2x+y,x+2y\}\leq c\}\subset \Omega\subsetneq\{(x,y)\in [0,\infty)^2|x+y\leq c\}.\]
\item[(iii)] All complex $\ell^p$ balls $\{(w,z)\in \C^2||w|^p+|z|^p\leq r^p\}$ for $p>\frac{\log 9}{\log 6}\approx 1.23$, except for $p=2$.
\item[(iv)] All polydisks $P(a,b)$ for $a\leq b<2a$.
\end{itemize}  Then there exist $\alpha >1$ and a knotted embedding $\phi\co X\to \alpha X^{\circ}$.
\end{theorem}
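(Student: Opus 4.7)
The plan is to attack the theorem in two complementary parts: first construct, for each class of $X$, a symplectic embedding $\phi\co X\hookrightarrow \alpha X^\circ$ whose image sits in $\alpha X^\circ$ in a geometrically non-standard way, and second distinguish $\phi$ from the inclusion using a symplectic invariant attached to the complement of the image inside $\alpha X^\circ$.

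For the construction I would exploit the significant recent progress on symplectic embeddings of ellipsoids and polydisks (McDuff--Schlenk, Frenkel--M\"uller, Cristofaro-Gardiner, Hind--Kerman), encapsulated in the results of Section \ref{construct}. For each class of domains one would find an intermediate toric shape $Y$ (often an ellipsoid, or a small dilate of one) such that $X$ embeds into $Y$ in an essentially trivial way while $Y$ admits a \emph{non-trivial} symplectic embedding into $\alpha X^\circ$ for some $\alpha>1$. Composing yields $\phi$. For polydisks (case (iv)) one combines the Frenkel--M\"uller embedding $P(a,b)\hookrightarrow E$ with a non-standard embedding of $E$ into a slight dilate of $P(a,b)$; the hypothesis $b<2a$ is exactly what makes the chain work. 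Cases (i)--(iii) are to be handled analogously, the interval $B^4(c)\subsetneq X\subset P(c,c)$, the concavity condition, and the lower bound $p>\log 9/\log 6$ all playing the role of threshold conditions that guarantee the intermediate ellipsoid embedding is available and strictly non-trivial.

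To show $\phi$ is knotted I would employ the filtered positive $S^1$-equivariant symplectic homology of the open complements $\alpha X^\circ\setminus X$ and $\alpha X^\circ\setminus \phi(X)$, which is the invariant developed in Section \ref{sh}. If $\Psi\co \alpha X^\circ\to \alpha X^\circ$ is a symplectomorphism with $\Psi(X)=\phi(X)$, then its restriction is a symplectomorphism between the two complements, hence induces an isomorphism of their filtered positive $S^1$-equivariant symplectic homologies. It therefore suffices to show that these two filtered modules are not isomorphic. For the inclusion, the complement $\alpha X^\circ\setminus X$ is toric, so the Reeb dynamics on its natural boundary components are completely explicit and the action spectrum can be read off directly. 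For the constructed $\phi$, the non-standard placement of $\phi(X)$ (for example, the presence in $\alpha X^\circ\setminus\phi(X)$ of a ball or ellipsoid of capacity forced by the underlying ellipsoid embedding) should produce a qualitatively different action spectrum, visible at a particular filtration level.

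The main obstacle, as I see it, is this last computation: $\alpha X^\circ\setminus\phi(X)$ is not toric and its Reeb geometry is not directly visible from the construction of $\phi$. I would attempt to control it by a neck-stretching argument along the boundary of $\phi(X)$, reducing the computation to invariants of $(X,\alpha X^\circ)$ and of the factorization of $\phi$, combined with a monotonicity/filtration argument that extracts a specific action value or generator forced by the non-trivial intermediate ellipsoid embedding. The delicate point is keeping track of the interaction between the action filtration and the $S^1$-equivariant structure tightly enough that the computed invariant of $\alpha X^\circ\setminus\phi(X)$ can be \emph{proven} different from that of $\alpha X^\circ\setminus X$, rather than merely conjecturally so.
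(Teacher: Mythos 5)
The construction half of your proposal is essentially the paper's: one composes an inclusion (or other trivial embedding) $X\hookrightarrow E$ with a nontrivial embedding $E\hookrightarrow\alpha X^{\circ}$ of an intermediate ellipsoid supplied by the McDuff/Cristofaro-Gardiner machinery of Section \ref{construct}, the threshold hypotheses in (i)--(iv) being exactly what makes the numerics work. The genuine gap is in the detection half. You propose to distinguish $\phi$ from the inclusion via the filtered positive $S^1$-equivariant symplectic homology of the complements $\alpha X^{\circ}\setminus X$ and $\alpha X^{\circ}\setminus\phi(X)$, but that invariant is not available in the framework you invoke: the theory of Section \ref{sh} is built for Liouville (star-shaped) domains and, by inverse limits, for tamely exhausted open sets, whereas a shell between two star-shaped hypersurfaces has a concave boundary component, is not a Liouville domain, and is not exhausted by tame star-shaped domains. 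Even granting some version of the invariant, your assertion that a symplectomorphism of the complements induces an isomorphism of the \emph{filtered} theories is not automatic: the filtration is by actions with respect to a primitive $\lambda$ of $\omega_0$, and on a region with $H^1\neq 0$ (as the shell is) the filtered invariant depends on the choice of primitive, while a symplectomorphism need not relate the two primitives by an exact form --- this is precisely why the paper restricts attention to generalized Liouville embeddings and to star-shaped sets. Finally, you concede that you cannot compute the invariant of the non-toric complement $\alpha X^{\circ}\setminus\phi(X)$ and offer only a speculative neck-stretching reduction; that computation is where the real content would have to lie, and it is absent.

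The paper sidesteps the complement entirely. It introduces $\dell(X)$ and $\dellu(X)$ and proves $\dell(X)<\dellu(X)$: the upper bound on $\dell$ is your construction, while the lower bound on $\dellu$ comes from the rank of the transfer map $\Phi^L\co CH^{L}_{k}(\alpha X^{\circ},\lambda_0)\to CH^{L}_{k}(X^{\circ},\lambda_0)$. If the composition $X\to E\to\alpha X^{\circ}$ were unknotted, then naturality of transfer maps under ambient symplectomorphisms of the open target (Proposition \ref{prop:open}, Lemma \ref{openfunctor}) forces this rank to be at most $\dim CH^{L}_{k}(E^{\circ},\lambda_0)$, which for an irrational ellipsoid is at most $1$ in degree $3$ and $0$ in degree $4$ (Corollary \ref{rankcor}). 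On the other hand, the rescaling identification of $\Phi^L$ with $\imath_{\alpha^{-1}L,L}$ together with the Reeb-orbit computations for convex and concave toric domains (Lemmas \ref{lem:keyconvex} and \ref{lem:keyconcave}) shows this rank is $2$ (resp.\ $1$) whenever $\alpha$ is below the explicit threshold of Theorem \ref{dellubounds}. Choosing the constructed embedding with $\alpha$ below that threshold gives the contradiction. The idea you are missing is to exploit the factorization through the ellipsoid to bound the rank of a transfer map between the domains themselves, rather than to compute an invariant of the complement of the image.
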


\begin{figure}\includegraphics[width=\textwidth]{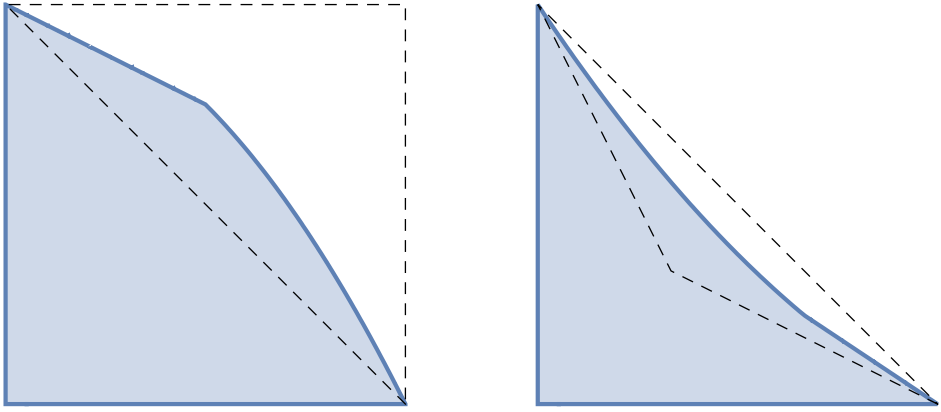}
\caption{The shaded regions are examples of choices of $\Omega$ such that Theorem \ref{4d} gives knotted embeddings $X_{\Omega}\to\alpha X_{\Omega}^{\circ}$ for suitable $\alpha>1$.  The dashed lines delimit the regions which are assumed to contain $(\partial\Omega)\cap (0,\infty)^2$ in, respectively, Cases (i) and (ii) of the theorem.}
\label{exfigure}
\end{figure}

For context, recall that McDuff showed in \cite{M91} that the space of symplectic embeddings from one four-dimensional ball to another is always connected; by the symplectic isotopy extension theorem this implies that symplectic embeddings $B^4(c)\to\alpha B^4(c)^{\circ}$ can never be knotted. (In particular the exclusion of $B^4(c)$ from each of the classes (i),(ii),(iii) above is necessary.) McDuff's result was later extended to establish the connectedness of the space of embeddings of one four-dimensional ellipsoid into another \cite{M} or of a four-dimensional concave toric domain into a convex toric domain \cite{CG}.  So Theorem \ref{4d} reflects that embeddings from concave toric domains into concave ones, or convex toric domains into convex ones, can behave differently than embeddings from concave toric domains into convex ones.

We do not know whether the bound $b<2a$ in part (iv) of Theorem \ref{4d} is sharp.  The bound $p>\frac{\log 9}{\log 6}$ in part (iii) is not sharp; we are aware of extensions of our methods that lower this bound slightly, though in the interest of brevity we do not include them.  Note that the domains in part (iii) are concave when $p<2$ and convex when $p>2$ (in the latter case the result follows directly from part (i)).  

While our primary focus in this paper is on domains in $\mathbb{R}^4$, we show in Theorem \ref{anyd} that the embeddings from Cases (i) and (iv) of Theorem \ref{4d} remain knotted after being trivially extended to the product of $X_{\Omega}$ with an ellipsoid of sufficiently large Gromov width. It remains an  interesting problem to find knotted embeddings involving broader classes of high-dimensional domains that do not arise from lower-dimensional constructions.

By the way, embeddings such as those in Theorem \ref{4d} can only be knotted for a limited range of $\alpha$, since the extension-after-restriction principle \cite[Proposition A.1]{S} implies that for any compact set $X\subset \C^n$ which is star-shaped with respect to the origin and contains the origin in its interior and any symplectic embedding $\phi\co X\to \C^n$, there is $\alpha_0>1$ such that $\phi(X)\subset \alpha_0 X^{\circ}$ and such that $\phi$ is unknotted when considered as a map to $\alpha X^{\circ}$ for all $\alpha\geq \alpha_0$.  The values for $\alpha$ that we find in the proof of Theorem \ref{4d} vary from case to case, but in each instance lie between $1$ and $2$. This suggests the:
\begin{question}
Do there exist a domain $X\subset \R^{2n}$, a number $\alpha>2$, and a knotted symplectic embedding $\phi\co X\to \alpha X^{\circ}$?
\end{question}

Theorem \ref{4d} concerns embeddings of a domain $X$ into the interior of a dilate $\alpha X^{\circ}$ of $X$; of course it is also natural to consider embeddings in which the source and target are not simply related by a dilation.  Our methods in principle allow for this, though the proofs that the embeddings are knotted become more subtle.  In Section \ref{polysect} we carry this out for embeddings of four-dimensional polydisks into other polydisks, and in particular we prove the following as Corollary \ref{allpoly}:

\begin{theorem} \label{polythm} Given any $y\geq 1$, there exist polydisks $P(a,b)$ and $P(c,d)$ and knotted embeddings of $P(a,b)$ into $P(1,y)^{\circ}$ and of $P(1,y)$ into $P(c,d)^{\circ}$.
\end{theorem}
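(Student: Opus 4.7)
The plan is to reduce Theorem \ref{polythm} to Theorem \ref{4d}(iv) where possible and to verify knottedness via the filtered positive $S^1$-equivariant symplectic homology machinery of Section \ref{sh}.

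For the first statement, given $y \geq 1$, I take $a = b$ small enough that $P(\alpha a, \alpha a) \subset P(1, y)^\circ$ for some $\alpha > 1$; since the constant $\alpha$ produced by Theorem \ref{4d}(iv) for square polydisks $P(a,a)$ is independent of $a$ by rescaling, it suffices to choose $a$ with $\alpha a < 1$. Theorem \ref{4d}(iv) produces a knotted embedding $\phi \co P(a, a) \hookrightarrow P(\alpha a, \alpha a)^\circ$, and composing with inclusion yields a candidate $\tilde\phi \co P(a, a) \hookrightarrow P(1, y)^\circ$. For the second statement, in the range $1 \leq y < 2$ Theorem \ref{4d}(iv) applies directly to $P(1, y)$ and yields a knotted embedding into $P(\alpha, \alpha y)^\circ$, giving $(c, d) = (\alpha, \alpha y)$. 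For $y \geq 2$, I first symplectically embed $P(1, y)$ into a polydisk $P(a', b')$ with $b' < 2a'$ via a folding-type construction, then chain with a knotted embedding from Theorem \ref{4d}(iv) to obtain $(c, d) = (\alpha a', \alpha b')$.

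The main technical challenge is verifying that the compositions remain knotted in their enlarged targets; per Definition \ref{knotdef} this requires excluding \emph{any} symplectomorphism of the larger polydisk taking the inclusion to the new embedding, which is \emph{a priori} stronger than the analogous statement in the smaller target. To handle this, I would use the filtered positive equivariant symplectic homology invariants of Section \ref{sh}: for each pair $(U, A)$ they produce numerical invariants preserved by symplectomorphisms of $U$ taking one subset to another. The key observation is that by choosing the source polydisk with sufficiently small capacity (in the first statement) or by working below the threshold at which the intermediate folding affects Reeb orbit actions (in the second), the short Reeb orbits used in the proof of Theorem \ref{4d}(iv) to distinguish the new embedding from the inclusion have actions below any scale introduced by enlarging the target, so the distinguishing invariants persist.

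I expect the hardest step to be the $y \geq 2$ case of the second embedding. Here Theorem \ref{4d}(iv) does not apply directly to $P(1, y)$, so the intermediate folding step must be compatible with the subsequent knottedness computation, and the source $P(1, y)$ has a richer Reeb action spectrum than the sources handled in (iv). In this case I would follow the direct approach outlined in Section \ref{polysect} and compute the filtered equivariant symplectic homology invariants of $(P(c, d)^\circ, \psi(P(1, y)))$ and $(P(c, d)^\circ, P(1, y))$ explicitly, as a genuine generalization of Theorem \ref{4d}(iv) rather than a reduction to it.
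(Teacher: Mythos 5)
There is a genuine gap, and it sits exactly in the step you flag as the ``main technical challenge'': knottedness does not survive enlarging the target in the way you propose, and your heuristic for why it should is backwards. The only mechanism the paper has for certifying that an embedding $P(a,b)\to P(c,d)^{\circ}$ (with $c\le d$) which factors through an ellipsoid is knotted is Corollary \ref{rankcor} combined with Proposition \ref{polydisk-rank}, and this requires an action level $L$ with $d<L<a+b$: in that window $CH^{L}_{3}$ of the \emph{target} is two-dimensional and the transfer map to the source is an isomorphism, contradicting the rank-$\le 1$ bound forced by the ellipsoid. The distinguishing classes therefore live at actions \emph{above} the long side $d$ of the target (the degree-$3$ generators of $P(1,y)^{\circ}$ have actions about $1$ and $y$), not at small scales; shrinking the source does not make them ``persist'', it makes them disappear. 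Concretely, your first construction takes $P(a,a)$ with $\alpha a<1$, so $a+b=2a<2/\alpha<2$: for every $y\ge 2$ the required window $(y,2a)$ is empty (and with the value $\alpha>3/2$ actually furnished by Theorem \ref{dellbounds}(c) it is already empty for $y\ge 4/3$), so nothing obstructs an ambient symplectomorphism of $P(1,y)^{\circ}$ carrying the image back to $P(a,a)$ — indeed a tiny source inside a large target is precisely the regime where embeddings tend to unknot (cf.\ the extension-after-restriction discussion in the introduction). The paper does the opposite: for $y=m+\ep$ it takes a \emph{long} source $P(\tfrac12,\,m+\tfrac{3\ep-1}{4})$, whose sides sum to more than $y$, and obtains the embedding by restricting the ellipsoid embedding $E(1,2m+\ep)\hookrightarrow\alpha P(1,m+\ep)^{\circ}$ of Proposition \ref{longembed} (this is Corollary \ref{longknot}); Theorem \ref{4d}(iv) is not used here.

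Your second embedding has the analogous problem for $y\ge 2$: composing a folding map $P(1,y)\to P(a',b')$ with the knotted embedding of Theorem \ref{4d}(iv) gives no knottedness for free (restrictions and compositions of knotted embeddings need not be knotted in the sense of Definition \ref{knotdef}), and the verification you defer is the whole content — one would need $P(1,y)\subset P(c,d)^{\circ}$ with $d<1+y$ for the resulting $(c,d)=(\alpha a',\alpha b')$, plus $b'<2a'$ and a folding with small enough loss, none of which is checked and all of which is numerically delicate. The paper avoids folding entirely: writing $y=2k+\delta$, it applies Corollary \ref{longknot} to get a knotted embedding $P(\mu,\mu y)\hookrightarrow P(1,k+\tfrac{\delta}{2})^{\circ}$ for suitable $\mu$ and then rescales by $\mu^{-1}$ so that the \emph{source} becomes $P(1,y)$, the knottedness criterion being Corollary \ref{ellpolycor} ($d<a+b$) throughout. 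The only piece of your plan that stands as written is the case $1\le y<2$ of the second embedding, where Theorem \ref{4d}(iv) applies verbatim to $P(1,y)$.
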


Theorem \ref{polythm} and Case (iv) of Theorem \ref{4d}
should be compared to \cite[Section 3.3]{FHW}, in which it is shown that, if $a\leq b<c$ but $a+b>c$, then the embeddings $\phi_1,\phi_2\co P(a,b)\to P(c,c)^{\circ}$ given by $\phi_1(w,z)=(w,z)$ and $\phi_2(w,z)=(z,w)$ are not isotopic through compactly supported symplectomorphisms of $P(c,c)^{\circ}$. However our embeddings are different from these; in fact the embeddings from \cite{FHW} are not even knotted in our (rather strong) sense since there is a symplectomorphism of the open polydisk $P(c,c)^{\circ}$ mapping $P(a,b)$ to $P(b,a)$.  If one instead considers embeddings into $P(c,d)$ with $c< d$ chosen such that $P(c,d)^{\circ}$ contains both $P(a,b)$ and $P(b,a)$ and $a+b>d$, then $P(a,b)$ and $P(b,a)$ are inequivalent to each other under the symplectomorphism group of $P(c,d)^{\circ}$.  However in situations where this construction and the  construction underlying  Theorem \ref{4d} (iv) and Theorem \ref{polythm} both apply, our knotted  embeddings represent different knot types than both $P(a,b)$ and $P(b,a)$, see Remark \ref{flip}. 

Let us be a bit more specific about how we prove Theorem \ref{4d}; the proof of Theorem \ref{polythm} is conceptually similar.  The knotted embeddings $\phi\co X\to\alpha X^{\circ}$ described in Theorem \ref{4d} are obtained as compositions of embeddings $X\to E\to\alpha X^{\circ}$ where $E$ is an ellipsoid.  In the cases that $X$ is convex, the first map $X\to E$ is just an inclusion, while the second map $E\to\alpha X^{\circ}$ is obtained by using recent developments from \cite{M},\cite{CG} that ultimately have their roots in Taubes-Seiberg-Witten theory, see Section \ref{construct}. (For a limited class of convex toric domains $X$ that are close to a cube $P(c,c)$, we provide a much more elementary and explicit construction in Section \ref{explicitsection}.)  In the cases that $X$ is concave the reverse is true: $E\to\alpha X^{\circ}$ is an inclusion while $X\to E$ is obtained from these more recent methods.  Meanwhile, we use the properties of transfer maps in filtered  $S^1$-equivariant symplectic homology to obtain a lower bound on possible values $\alpha$ such that there can exist any unknotted embedding $X\to \alpha X^{\circ}$ which factors through an ellipsoid $E$.  In each case in Theorem \ref{4d}, we will find compositions $X\to E\to\alpha X^{\circ}$ arising from the constructions in Section \ref{construct} for which $\alpha$ is less than this symplectic-homology-derived lower bound, leading to the conclusion that the composition must be knotted. Figure \ref{L5} and its caption explain this more concretely in a representative special case. 

\begin{figure} 
\includegraphics[width=\textwidth]{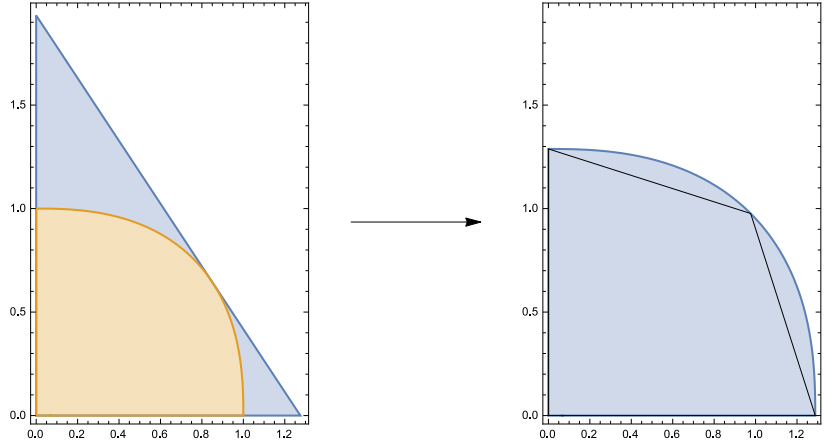}
\caption{The strategy underlying our knotted embedding in the case that $X$ is the $\ell^5$ ball of capacity $1$, as in Case (i) or (iii) of Theorem \ref{4d}.  $X$ is the toric domain associated to the smaller region on the left; the toric domain associated to the triangle on the left is the ellipsoid $E=E((3/2)^{3/5},3^{3/5})$, which in particular contains $X$. The larger region at right is obtained by dilating $X$ by $\alpha=(1+\ep)(3/2)^{3/5}$ for a small $\ep>0$, and Proposition \ref{cvxaxy} shows that there is a symplectic embedding $\phi\co E\to\alpha X^{\circ}$ (in fact, $\phi$ has image contained in the preimage under $\mu$ of the inscribed quadrilateral on the right).  Our knotted embedding is $\phi|_X$; Theorem \ref{dellubounds}(a) implies that any unknotted embedding $X\to\alpha X^{\circ}$ that extends to a symplectic embedding  $E\to\alpha X^{\circ}$ would have $\alpha\geq 2^{3/5}$, whereas in this construction $\alpha$ can be taken arbitrarily close to $(3/2)^{3/5}$.}
\label{L5}
\end{figure}

To carry this out systematically, let us introduce the following two quantities associated to a star-shaped domain $X\subset\C^n$, where the symbol $\hookrightarrow$ always denotes a symplectic embedding: \begin{equation}\label{delldef}
\dell(X)=\inf\{\alpha\geq 1|(\exists a_1,\ldots,a_n)(X\hookrightarrow E(a_1,\ldots,a_n)\hookrightarrow \alpha X^{\circ})\} \end{equation} and \begin{equation}\label{delludef} \dellu(X)=\inf\left\{\alpha\geq 1\left|\begin{array}{c}(\exists a_1,\ldots,a_n,\,f\co X\hookrightarrow E(a_1,\ldots,a_n),\\ g\co E(a_1,\ldots,a_n)\hookrightarrow \alpha X^{\circ})(g\circ f\mbox{ is unknotted.})\end{array}\right.\right\} \end{equation} (The $u$ in $\dellu$ stands for ``unknotted.'')  To put this into a different context, as was suggested to us by Y. Ostrover and L. Polterovich, one can define a pseudometric on the space of star-shaped domains in $\mathbb{C}^n$ by declaring the distance between two domains $X$ and $Y$ to be the logarithm of the infimal $\alpha\in\R$ such that there is a sequence of symplectic embeddings $\alpha^{-1/2}X\hookrightarrow Y\hookrightarrow\alpha^{1/2}X^{\circ}$; a more refined version of this pseudometric would additionally ask that neither of the resulting compositions $X\to\alpha X^{\circ}$ and $Y\to\alpha Y^{\circ}$ be knotted.  Then (at least if $n=2$) the logarithm of $\dell(X)$ or of $\dellu(X)$ is the distance from $X$ to the set of ellipsoids with respect to such a pseudometric.  (In the case of $\dellu$ this statement depends partly on the result from \cite{M} that when $E$ is an ellipsoid in $\mathbb{R}^4$ a symplectic embedding $E\hookrightarrow\alpha E^{\circ}$ is never knotted.)

We will prove Theorem \ref{4d} by proving, for each $X$ as in the statement, a strict inequality $\dell(X)<\dellu(X)$.  This entails finding upper bounds for $\dell(X)$ by exhibiting particular compositions of embeddings $X\hookrightarrow E\hookrightarrow \alpha X^{\circ}$, and finding lower bounds for $\dellu(X)$ using filtered positive $S^1$-equivariant symplectic homology. As it happens, for convex or concave toric domains both our upper bounds and our lower bounds can be conveniently expressed in terms of the following notation:

\begin{notation}
For a domain $\Omega\subset [0,\infty)^n$ we define functions $\|\cdot\|^{*}_{\Omega}$ and $[\,\cdot\,]_{\Omega}$ from $\R^n$ to $\R$ as follows:
\begin{itemize}
\item For $\vec{\alpha}\in \R^n$, $\|\vec{\alpha}\|^{*}_{\Omega}=\sup\{\vec{\alpha}\cdot\vec{v}\,|\,\vec{v}\in\Omega\}$.
\item For $\vec{\alpha}\in \R^n$, $[\vec{\alpha}]_{\Omega}=\inf\{\vec{\alpha}\cdot\vec{v}\,|\,\vec{v}\in[0,\infty)^n\setminus\Omega\}$.
\end{itemize}
\end{notation}

The estimates for $\dellu$ that are relevant to Theorem \ref{4d} are given by the following result, proven in Section \ref{sh}:
\begin{theorem} \label{dellubounds} (a) If $X_{\Omega}\subset \C^2$ is a convex toric domain, then \[ \dellu(X_{\Omega})\geq \frac{\|(1,1)\|^{*}_{\Omega}}{\max\{\|(1,0)\|^{*}_{\Omega},\|(0,1)\|^{*}_{\Omega}\}}.\]

(b) If $X_{\Omega}\subset \C^2$ is a concave toric domain, then 
\[ \dellu(X_{\Omega})\geq \frac{\min\{[(2,1)]_{\Omega},[(1,2)]_{\Omega}\}}{[(1,1)]_{\Omega}}.\]
\end{theorem}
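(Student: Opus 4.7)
The strategy is to apply the Gutt--Hutchings capacities $c_k$ from filtered positive $S^1$-equivariant symplectic homology (see \cite{GuH}) to the composition $X\hookrightarrow E\hookrightarrow\alpha X^\circ$, exploiting the monotonicity of transfer maps.

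First I would convert the unknottedness hypothesis into a geometric sandwich. If $\phi=g\circ f$ with $f\colon X\hookrightarrow E(a_1,a_2)$ and $g\colon E(a_1,a_2)\hookrightarrow\alpha X^\circ$ is unknotted, then by Definition \ref{knotdef} there is a symplectomorphism $\Psi$ of $\alpha X^\circ$ with $\Psi\circ\phi$ equal to the inclusion $X\hookrightarrow\alpha X^\circ$. Setting $\widetilde g:=\Psi\circ g$, the image $\widetilde E:=\widetilde g(E(a_1,a_2))$ is a symplectically embedded copy of the ellipsoid, and we obtain the honest set-inclusion sandwich $X\subset\widetilde E\subset\alpha X^\circ$. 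Since the Gutt--Hutchings capacities are monotone under symplectic embeddings and conformal ($c_k(\alpha Y)=\alpha\,c_k(Y)$), this sandwich gives, for every $k\geq 1$,
\[
c_k(X_\Omega)\;\leq\;c_k(E(a_1,a_2))\;\leq\;\alpha\,c_k(X_\Omega).
\]

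To extract the claimed bound I would combine the explicit computations: $c_k(E(a_1,a_2))$ is the $k$-th element of the ordered multiset $\{m a_1+n a_2:m,n\in\Z_{\geq 0},\,(m,n)\neq(0,0)\}$, while for convex (resp.\ concave) toric $X_\Omega\subset\C^2$ the $c_k$ admit combinatorial descriptions in terms of $\|\cdot\|^*_\Omega$ (resp.\ $[\cdot]_\Omega$) evaluated at appropriate lattice vectors in $\Z^2_{\geq 0}$ (resp.\ $\Z^2_{\geq 1}$). For part (a) I would select three specific indices $k$ at which $c_k(X_\Omega)$ reproduces each of $\|(1,0)\|^*_\Omega,\;\|(0,1)\|^*_\Omega,\;\|(1,1)\|^*_\Omega$; the matching ellipsoid capacities then isolate entries of the spectrum corresponding to the same lattice directions. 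Since the three sandwich inequalities must be simultaneously satisfied by \emph{some} choice of $(a_1,a_2)$, one can eliminate the axis parameters by dominating $\max(a_1,a_2)$ by $\alpha\max\{\|(1,0)\|^*_\Omega,\|(0,1)\|^*_\Omega\}$ from the axis inequalities and bounding the diagonal-direction entry below by $\|(1,1)\|^*_\Omega$, arriving at $\alpha\geq\|(1,1)\|^*_\Omega/\max\{\|(1,0)\|^*_\Omega,\|(0,1)\|^*_\Omega\}$. Part (b) is formally dual: the concave formulas for $c_k$ in terms of $[\cdot]_\Omega$, applied at the interior lattice vectors $(1,1),(2,1),(1,2)\in\Z^2_{\geq 1}$, yield the analogous inequality $\min\{[(2,1)]_\Omega,[(1,2)]_\Omega\}\leq\alpha\,[(1,1)]_\Omega$.

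The main obstacle I expect is the precise identification of which Gutt--Hutchings index $k$ realizes each of the desired lattice-direction quantities, together with the verification of the matching ellipsoid values. The combinatorial formula for $c_k$ on toric domains is not simply $\min\{\|v\|^*_\Omega\,:\,|v|=k\}$: the indexing is dictated by the Conley--Zehnder grading of generators in the $S^1$-equivariant chain complex, so a chain-level (rather than merely numerical) analysis of the filtered complex is needed to confirm that the capacity attached to the direction $(1,1)$ combines with those attached to the axis directions so as to produce the maximum (not the sum) of $a_1$ and $a_2$ on the ellipsoid side. This is what gives the stated bound its sharpness; once this matching is in place, the algebraic derivation of the ratio bound from the simultaneous sandwich inequalities is elementary. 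The concave case in part (b) parallels this analysis with $[\cdot]_\Omega$ in place of $\|\cdot\|^*_\Omega$ and inner lattice vectors in place of axis ones.
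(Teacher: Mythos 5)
The central step of your plan---reducing to the numerical sandwich $c_k(X_\Omega)\leq c_k(E(a_1,a_2))\leq \alpha\,c_k(X_\Omega)$---cannot prove the theorem, because those inequalities never use the unknottedness hypothesis in any essential way: the Gutt--Hutchings capacities are monotone under \emph{arbitrary} symplectic embeddings of star-shaped domains in $\R^4$ (every such embedding is a generalized Liouville embedding since $\partial X\cong S^3$), so the same three inequalities already follow from the bare existence of $X\hookrightarrow E\hookrightarrow\alpha X^{\circ}$. Hence any bound extracted from them is a lower bound for $\dell(X_\Omega)$, not for $\dellu(X_\Omega)$---and the whole point of the paper is that $\dell(X_\Omega)$ is strictly smaller than the right-hand side of Theorem \ref{dellubounds}. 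Concretely, for $X=P(1,1)$ the construction behind Theorem \ref{dellbounds}(c) (via Proposition \ref{cvxaxy}) gives $P(1,1)\subset E(3/2,3)\hookrightarrow \alpha P(1,1)^{\circ}$ with $\alpha$ arbitrarily close to $3/2$; one checks directly that $c_k(P(1,1))=k$, that $c_k\bigl(E(3/2,3)\bigr)$ is the $k$-th element of the sorted union of the arithmetic progressions $\{3m/2\}$ and $\{3n\}$, and that $k\leq c_k\bigl(E(3/2,3)\bigr)\leq \tfrac32 k$ for all $k$. So every capacity inequality you propose to exploit is satisfied with $\alpha<2=\|(1,1)\|^{*}_{\Omega}/\max\{\|(1,0)\|^{*}_{\Omega},\|(0,1)\|^{*}_{\Omega}\}$, and no choice of indices or elimination scheme can recover the claimed bound. (There is also a smaller slip: Definition \ref{knotdef} only gives a symplectomorphism $\Psi$ of $\alpha X^{\circ}$ with $\Psi(X)=\phi(X)$ as sets, not $\Psi\circ\phi=\mathrm{incl}$, though your sandwich $X\subset\widetilde E\subset\alpha X^{\circ}$ survives this; note too that $\Psi$ is only defined on the open set $\alpha X^{\circ}$ and need not extend to its closure, which is why the paper must extend $CH^L$ to open domains by inverse limits.)

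What the argument genuinely needs is information invisible to the capacities, namely the \emph{rank of the persistence maps} $\imath_{L_1,L_2}$: by Lemmas \ref{lem:convexebarcode} and \ref{lem:keyconvex}, for a convex toric $X_\Omega$ the degree-$3$ filtered homology $CH^{L}_{3}(X_\Omega^{\circ},\lambda_0)$ is \emph{two}-dimensional and the maps $\imath_{L_1,L_2}$ are isomorphisms throughout the action window between $\max\{\|(1,0)\|^{*}_{\Omega},\|(0,1)\|^{*}_{\Omega}\}$ and $\|(1,1)\|^{*}_{\Omega}$, whereas for any irrational ellipsoid $\dim CH^{L}_{3}(E^{\circ},\lambda_0)\leq 1$ for every $L$. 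The unknottedness hypothesis enters through Corollary \ref{rankcor} (built on Proposition \ref{prop:open} and Lemma \ref{openfunctor}): an unknotted factorization through $E$ forces the transfer map $\Phi^L\co CH^{L}_{3}(\alpha X_\Omega^{\circ},\lambda_0)\to CH^{L}_{3}(X_\Omega^{\circ},\lambda_0)$---which Lemma \ref{opentriangle} identifies up to rescaling with $\imath_{\alpha^{-1}L,L}$---to have rank at most one, and this contradicts the two-dimensional persistence unless $\alpha$ is at least the stated ratio; part (b) is the same with degree $4$, Lemma \ref{lem:keyconcave}, and the vanishing of $CH^{L}_{4}$ for irrational ellipsoids. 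Your closing remark that a ``chain-level analysis'' is needed points in the right direction, but the quantity to track is this rank/barcode data of the filtered $S^1$-equivariant homology, not a finer identification of which capacity index realizes which lattice direction.
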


As for upper bounds on $\dell$, in Section \ref{dellproof} we prove the following:
\begin{theorem}\label{dellbounds}
(a) Suppose that $\Omega\subset[0,\infty)^2$ is a domain such that $\hat{\Omega}$ is convex and such that $\Omega$ contains  points $(a,0),(0,b),(x,y)$ with $0<x\leq  a\leq b\leq x+y$.  Then \[ \dell(X_{\Omega})\leq \left\|\left(\frac{1}{a},\frac{1}{x+y}\right)\right\|^{*}_{\Omega}.\]

(b) Suppose that $\Omega\subset [0,\infty)^2$ is a domain that contains  $(0,0)$ in its interior and whose complement in $[0,\infty)^2$ is convex, and such that points $(a,0),(0,b),(x,y)$ with $0<x+y\leq a\leq b$ all belong to $\overline{[0,\infty)^2\setminus\Omega}$.  Then \[ \dell(X_{\Omega})\leq \frac{1}{\left[\left(\frac{1}{b},\frac{1}{x+y}\right)\right]_{\Omega}}.\]

(c) For a polydisk $P(a,b)$ with $a\leq b\leq 2a$ we have \[
\dell(P(a,b))\leq \left\|\left(\frac{3}{a+b},\frac{1}{2a+b}\right)\right\|^{*}_{[0,a]\times[0,b]}.
\]
\end{theorem}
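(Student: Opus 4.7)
The overall strategy in each of parts (a)--(c) is to exhibit a composition $X\hookrightarrow E\hookrightarrow \alpha X^\circ$ in which the ellipsoid $E$ is chosen so that exactly one of the two embeddings is an inclusion (after accounting for the scaling by $\alpha$), while the other is a genuine symplectic embedding coming from the ellipsoid-to-toric-domain (or toric-domain-to-ellipsoid) constructions of Section \ref{construct}. For part (a), set $\lambda = \|(1/a,1/(x+y))\|^*_\Omega$; since $\hat\Omega$ is convex so is $\Omega$, and the definition of the sup norm gives $\Omega\subseteq \{(u,v):u/a+v/(x+y)\leq\lambda\}$, producing an inclusion $X_\Omega\hookrightarrow E(\lambda a,\lambda(x+y))=\lambda E(a,x+y)$. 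Note $\lambda\geq 1$ automatically since $(a,0)\in\Omega$. It then remains to construct a symplectic embedding $E(a,x+y)\hookrightarrow X_\Omega^\circ$, which on rescaling by $\lambda$ completes the composition. Convexity of $\Omega$ places the triangle $T$ with vertices $(a,0)$, $(x,y)$, $(0,b)$ inside $\Omega$, and the inequalities $x\leq a$ and $a\leq b\leq x+y$ control the slopes of the two edges of $T$ meeting at $(x,y)$; these slopes are precisely what is needed to apply the ellipsoid-into-convex-toric-domain construction from Section \ref{construct}, of the kind depicted in Figure \ref{L5}.

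Part (b) is dual. Set $\mu = [(1/b,1/(x+y))]_\Omega$; convexity of $[0,\infty)^2\setminus\Omega$ together with the definition of $[\,\cdot\,]_\Omega$ forces $\{(u,v):u/b+v/(x+y)\leq\mu\}\cap[0,\infty)^2\subseteq\Omega$, so $E(\mu b,\mu(x+y))\subseteq X_\Omega$, or equivalently $E(b,x+y)\subseteq (1/\mu)X_\Omega$ by the scaling convention. The remaining task is to build $X_\Omega\hookrightarrow E(b,x+y)$, for which the inequalities $x+y\leq a\leq b$ on the three distinguished boundary points of $\Omega$ supply the combinatorial input required to invoke the concave-toric-domain-into-ellipsoid construction from Section \ref{construct}. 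For part (c), the support function $\|(3/(a+b),1/(2a+b))\|^*_{[0,a]\times[0,b]}$ attains its sup at the corner $(a,b)$ and equals $\lambda:=\frac{3a}{a+b}+\frac{b}{2a+b}$. We will select a specific ellipsoid $E(c_1,c_2)$ and exhibit the composition $P(a,b)\hookrightarrow E(c_1,c_2)\hookrightarrow \lambda P(a,b)^\circ$, with inclusion handling one leg and an explicit ellipsoid-into-polydisk embedding from Section \ref{construct} handling the other; the constraint $b\leq 2a$ is what makes both legs of this composition fit simultaneously.

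The main obstacle in each part is the non-inclusion embedding. The inclusion legs reduce to elementary manipulations of the sup norm and the infimum functional $[\,\cdot\,]_\Omega$ once the definitions are unfolded, and the convexity hypotheses translate immediately into the set-theoretic containments above. The substance lies in the ellipsoid-toric-domain embeddings from Section \ref{construct}; these are not given by inclusion and depend on recent developments in four-dimensional symplectic embedding theory. The real work will be in verifying, for the particular ellipsoids we select, that the combinatorial hypotheses of those constructions are met --- this is where the specific numerical inequalities appearing in the hypotheses of each of parts (a)--(c) enter. Once the required embeddings are in hand, the assembly into the stated bounds on $\dell$ is a matter of tracking the scale factors through each composition.
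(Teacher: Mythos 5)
Your proposal follows the paper's proof essentially verbatim: the inclusion legs are precisely the paper's general bounds (\ref{maxdell}) and (\ref{mindell}) unfolded from the definitions of $\|\cdot\|^*_{\Omega}$ and $[\,\cdot\,]_{\Omega}$, and the hard legs are the embeddings of Propositions \ref{cvxaxy} and \ref{ccvaxy} applied to the toric quadrilateral $T(a,b,x,y)$ nested against $X_\Omega$, together with Proposition \ref{step2} (with the ellipsoid $E\bigl(\frac{a+b}{3},2a+b\bigr)$) for the polydisk case, all from Section \ref{construct}. The only bookkeeping point is that those propositions yield embeddings only into dilates $\alpha>1$ of the target, so rather than an exact embedding $E(a,x+y)\hookrightarrow X_\Omega^\circ$ one embeds the slightly shrunk ellipsoid $E\bigl(\alpha^{-1}a,\alpha^{-1}(x+y)\bigr)$ and lets $\alpha\to 1$, which the infimum in the definition of $\dell$ absorbs --- exactly as the paper does.
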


Assuming Theorems \ref{dellubounds} and \ref{dellbounds} for the time being, we now show how they lead to Theorem \ref{4d}.

\subsection{Proof of Theorem \ref{4d}} \label{4proof}

In each of the four cases it suffices to prove a strict inequality $\dell(X)<\dellu(X)$.  

First let $X=X_{\Omega}$ be a convex toric domain with $B^4(c)\subsetneq X_{\Omega}\subset P(c,c)$.  Thus  $\Omega\subset [0,c]\times [0,c]$ (since $X_{\Omega}\subset P(c,c)$), and $\Omega$ is a convex region containing the points $(c,0),(0,c),$ and (due to the strict inclusion $B^4(c)\subsetneq X_{\Omega}$) some point $(x_0,y_0)$ having $x_0+y_0>c$.  The fact that $(c,0),(0,c)\in\Omega\subset [0,c]\times[0,c]$ implies that $\|(1,0)\|^{*}_{\Omega}=\|(0,1)\|^{*}_{\Omega}=c$.  Consequently by Theorem \ref{dellubounds}(a), \[ \dellu(X_{\Omega})\geq \frac{1}{c}\|(1,1)\|^{*}_{\Omega}.\]  Meanwhile Theorem \ref{dellbounds}(a) gives \[ \dell(X_{\Omega})\leq \left\|\left(\frac{1}{c},\frac{1}{x_0+y_0}\right)\right\|^{*}_{\Omega}=\frac{1}{c}\left\|\left(1,\frac{c}{x_0+y_0}\right)\right\|^{*}_{\Omega}.\]  So to prove that $\dell(X_{\Omega})<\dellu(X_{\Omega})$ it suffices to show that $\|(1,1)\|^{*}_{\Omega}>\|(1,a)\|^{*}_{\Omega}$ where $a:=\frac{c}{x_0+y_0}<1$.  Choose $(v_1,v_2)\in \Omega$ such that $v_1+av_2=(1,a)\cdot(v_1,v_2)=\|(1,a)\|^{*}_{\Omega}$; it suffices to find $(w_1,w_2)\in \Omega$ with $(1,1)\cdot(w_1,w_2)=w_1+w_2>v_1+av_2$.  Bearing in mind that $(v_1,v_2)\in\Omega\subset[0,\infty)^2$ and $a<1$, if $v_2\neq 0$ we can simply take $(w_1,w_2)=(v_1,v_2)$.  On the other hand if $v_2=0$ then since $\Omega\subset [0,c]\times [0,c]$ we have $v_1+av_2\leq c$, so taking $(w_1,w_2)=(x_0,y_0)$ gives $w_1+w_2> c\geq v_1+av_2$.  So in any case $\|(1,1)\|^{*}_{\Omega}\geq w_1+w_2>v_1+av_2=\|(1,a)\|^{*}_{\Omega}$, proving that $\dellu(X_{\Omega})>\dell(X_{\Omega})$ and thus completing the proof of Case (i) of Theorem \ref{4d}.

Case (ii) is rather similar.  The hypothesis implies that all points $(x,y)$ of $[0,\infty)^2\setminus\Omega$ have $\min\{2x+y,x+2y\}\geq c$ and so Theorem \ref{dellubounds}(b) yields $\dellu(X_{\Omega})\geq \frac{c}{[(1,1)]_{\Omega}}$.  The hypothesis also implies that $\overline{[0,\infty)^2\setminus\Omega}$  contains a point $(x_0,y_0)$ with $x_0+y_0<c$, and also contains the points $(c,0)$ and $(0,c)$, so by Theorem \ref{dellbounds}(b) we have \[ \dell(X_{\Omega})\leq \frac{1}{\left[\left(\frac{1}{c},\frac{1}{x_0+y_0}\right)\right]_{\Omega}}=\frac{c}{\left[\left(1,\frac{c}{x_0+y_0}\right)\right]_{\Omega}}.\]  So to show that $\dell(X_{\Omega})<\dellu(X_{\Omega})$ it suffices to show that $[(1,b)]_{\Omega}>[(1,1)]_{\Omega}$ where $b:=\frac{c}{x_0+y_0}>1$.  This is established in basically the same way as the similar inequality in Case (i): let $(v_1,v_2)\in \overline{[0,\infty)^2\setminus\Omega}$  minimize $\vec{v}\mapsto (1,b)\cdot \vec{v}$.  Then either $v_2\neq 0$, in which case $(1,1)\cdot(v_1,v_2)<(1,b)\cdot (v_1,v_2)$, or else $v_2= 0$, in which case $v_1=c$ by our assumptions on $\Omega$, and so $[(1,1)]_{\Omega}\leq x_0+y_0<c=(1,b)\cdot(v_1,v_2)$.  So in Case (ii) we indeed have $\dell(X_{\Omega})<\dellu(X_{\Omega})$. 

We now turn to Case (iii) concerning complex $\ell^p$ balls $X=\{(w,z)\in\C^2\,|\,|w|^p+|z|^p\leq r^p\}$.  Using appropriate rescalings it suffices to prove the result in the case that $r=\frac{1}{\sqrt{\pi}}$, so that $X=X_{\Omega}$ where $\Omega=\{(x,y)\in[0,\infty)^2\,|\,x^{p/2}+y^{p/2}\leq 1\}$.  When $p>2$, $X_{\Omega}$ is a convex toric domain contained in $P(1,1)$ and strictly containing $B^4(1)$, so the result follows from Case (i).  From now on assume that $0<p<2$, so that $X_{\Omega}$ is a concave toric domain.  Since $p/2<1$, the reverse H\"older inequality (and the fact that it is sharp) implies that for any $(v,w)\in [0,\infty)^2$ we have $[(v,w)]_{\Omega}=(v^q+w^q)^{1/q}$ where $q=\frac{p}{p-2}<0$.  So from Theorem \ref{dellubounds}(b) we obtain \[ \dellu(X)\geq \frac{(2^q+1)^{1/q}}{2^{1/q}}=\left(2^{q-1}+\frac{1}{2}\right)^{-1/|q|}.\]  Meanwhile $\overline{[0,\infty)^2\setminus\Omega}$  contains the points $(0,1),(1,0),(2^{-2/p},2^{-2/p})$, so Theorem \ref{dellbounds}(b) yields \[ \dell(X)\leq \frac{1}{\left(1+\left(\frac{1}{2^{1-2/p}}\right)^{q}\right)^{1/q}}=\left(1+\frac{1}{2}\right)^{-1/q}=\left(\frac{2}{3}\right)^{-1/|q|}. \]  So we will have $\dell(X)<\dellu(X)$ provided that $2^{q-1}+\frac{1}{2}<\frac{2}{3}$, where $q=\frac{p}{p-2}$.  This condition is equivalent to $2^q<\frac{1}{3}$, \emph{i.e.}, $\frac{p}{2-p}>\frac{\log 3}{\log 2}$, \emph{i.e.}, $p>\frac{\log 9}{\log 6}$.

Turning finally to Case (iv), let $X=P(a,b)=X_{\Omega}$ where $\Omega=[0,a]\times [0,b]$ and we assume that $a\leq b< 2a$.  Clearly for $(v,w)\in [0,\infty)^2$ we have $\|(v,w)\|^{*}_{\Omega}=av+bw$.  Hence Theorem \ref{dellubounds}(a) gives \[ \dellu(P(a,b))\geq \frac{a+b}{b} \] while Theorem \ref{dellbounds}(c) gives \[ \dell(P(a,b))\leq \frac{3a}{a+b}+\frac{b}{2a+b}.\]  In other words, writing $s=\frac{b}{a}$, we have $\dellu(P(a,b))\geq 1+\frac{1}{s}$ and $\dell(P(a,b))\leq 1+\frac{4+s}{2+3s+s^2}$.  So $\dell(P(a,b))<\dellu(P(a,b))$ provided that $4s+s^2<2+3s+s^2$, \emph{i.e.} provided that $\frac{b}{a}=s<2$, as we have assumed.    \qed

\subsection{Organization of the paper} The following Section \ref{sh} will recall some facts about $S^1$-equivariant symplectic homology and extend these using an inverse limit construction to open subsets of $\R^{2n}$ in order to prove Theorem \ref{dellubounds}, which is the key to showing that our embeddings are indeed knotted.  The point of the argument, roughly speaking, is that the filtered positive $S^1$-equivariant symplectic homology of an ellipsoid $E$ is ``as simple as possible'' given the total (unfiltered) homology, while that of the domains in Theorem \ref{4d} has additional features in the form of elements that persist over certain finite action intervals before disappearing in the total homology.  The ratios of the endpoints of these intervals are related to the bounds that we prove on the quantity $\dellu$ in Theorem \ref{dellubounds}.     We also show that our knotted embeddings remain knotted in certain products in Section \ref{prodsect}. 

The embeddings appearing in our main results are constructed in Section \ref{construct} using methods derived from Taubes-Seiberg-Witten theory in work of McDuff \cite{M} and Cristofaro-Gardiner \cite{CG}.  While these sophisticated methods seem to be necessary to obtain results as broad as Theorems \ref{4d} and \ref{polythm}, we show in Section \ref{explicitsection} that for certain domains that are close to a cube the embeddings can be obtained by much more elementary methods, leading to explicit formulas which we provide.  Section \ref{polysect} extends the work in the rest of the paper to obtain the knotted polydisks from Theorem \ref{polythm}.

The appendix contains a proof of a lemma concerning filtered positive $S^1$-equivariant symplectic homology, showing that it can be identified as the filtered homology of a certain filtered complex generated by good Reeb orbits.  This lemma probably will not surprise experts (in particular it was anticipated in \cite[Remark 3.2]{GuH}), and is similar to \cite[Proposition 3.3]{GG}, but we have not seen full details of a proof of a result as sharp as this one elsewhere.

\subsection{Acknowledgements}
This work grew out of our consideration of a question of Yaron Ostrover and Leonid Polterovich.
We are grateful to Richard Hind, Mark McLean, Yaron Ostrover, Leonid Polterovich, and Felix Schlenk for very useful discussions at various stages of this project.
The work was partially supported by the NSF through grant DMS-1509213 and by an AMS-Simons travel grant.

\section{Obstructions to unknottedness from filtered positive $S^1$-equivariant symplectic homology}\label{sh}

The goal of this section is to prove Theorem \ref{dellubounds}, which gives lower bounds on the quantity $\dellu$ defined in (\ref{delludef}).
The main tool for proving this theorem is the positive $S^1$-equivariant symplectic homology which was introduced by Viterbo \cite{V} and developed by Bourgeois and Oancea \cite{bo, BOjta, BOind, BOjems}.
We refer to \cite{bo, BOjta, gutt, GuH} for a precise definition, but describe here some of the key features.

Let $(X,\lambda)$ be a Liouville domain, so that $X$ is a compact smooth manifold with boundary and $\lambda\in \Omega^1(X)$ has the properties that $d\lambda$ is non-degenerate and that $\lambda|_{\partial X}$ is a contact form.  We  say that $(X,\lambda)$ is non-degenerate if the linearized return map of the Reeb flow at each closed Reeb orbit on $\partial X$, acting on the contact hyperplane $\ker\lambda$, does not have $1$ as an eigenvalue.  We will also assume that the first Chern class of $TX$ vanishes on $\pi_2(X)$.

 In this situation, as in \cite{GuH}, for each $L\in \R$ we have an $L$-filtered positive  $S^1$-equivariant symplectic homology, denoted by $CH^L(X,\lambda)$; these are $\mathbb{Q}$-vector spaces that come equipped with maps $\imath_{L_1,L_2}\co CH^{L_1}(X,\lambda)\to CH^{L_2}(X,\lambda)$ for $L_1\leq L_2$ such that $\imath_{L,L}$ is the identity and $\imath_{L_2,L_3}\circ \imath_{L_1,L_2}=\imath_{L_1,L_3}$.\footnote{Warning: In \cite{GuH} the map that we denote by $\imath_{L_1,L_2}$ is denoted by $\imath_{L_2,L_1}$.}  The assumption on $c_1(TX)$ implies that the $CH^L(X,\lambda)$ are $\Z$-graded.
The (unfiltered) positive $S^1$-equivariant symplectic homology of $(X,\lambda)$ is $CH(X,\lambda)=\varinjlim_L CH^L(X,\lambda)$ where the direct limit is constructed using the maps $\imath_{L_1,L_2}$.

The analysis of the spaces $CH^L(X,\lambda)$ is significantly simplified by the following, which is proven in the appendix.  A slightly weaker version for a different version of $S^1$-equivariant symplectic homology is given in \cite[Proposition 3.3]{GG}.
\begin{lemma}\label{complex-exists} Assume as above that $(X,\lambda)$ is a non-degenerate Liouville domain with $c_1(TX)|_{\pi_2(X)}=0$.
There is an $\mathbb{R}$-filtered chain complex $\bigl(CC_*(X,\lambda),\partial\bigr)$, freely generated over $\Q$ by the good\footnote{Recall that a Reeb orbit $\gamma$ is bad if it is an even degree multiple cover of another Reeb orbit $\gamma'$ such that the Conley-Zehnder indices of $\gamma$ and $\gamma'$ have opposite parity. Otherwise, $\gamma$ is good.} Reeb orbits of $\lambda|_{\partial X}$ with the generator corresponding to a Reeb orbit $\gamma$ having filtration level equal to the action $\int_{\gamma}\lambda$ and grading equal to the Conley-Zehnder index of $\gamma$, such that for each $k\in \Z$ and $L\in \R$ the space $CH^{L}_{k}(X,\lambda)$ is isomorphic to the $k$th homology of the subcomplex $CC^{L}_{*}(X,\lambda)$ of $CC_*(X,\lambda)$ consisting of elements with filtration level at most $L$, and such that for $L_1\leq L_2$ the image of the map $\imath_{L_1,L_2}\co CH^{L_1}_{k}(X,\lambda)\to CH^{L_2}_{k}(X,\lambda)$ is isomorphic to the image of the inclusion-induced map $H_k\big(CC^{L_1}_{*}(X,\lambda)\big)\to H_k\big(CC^{L_2}_{*}(X,\lambda)\big)$. 

Moreover, the boundary operator $\partial$ on $CC_*(X,\lambda)$ strictly decreases filtration, in the sense that if $x\in CC^{L}_{*}(X,\lambda)$ then there is $\epsilon>0$ such that $\partial x\in CC^{L-\epsilon}_{*}(X,\lambda)$.
\end{lemma}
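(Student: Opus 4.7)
The plan is to build the complex $(CC_*(X,\lambda),\partial)$ as an inverse/direct limit of $S^1$-equivariant Floer complexes associated to a cofinal family of admissible Hamiltonians on the symplectic completion $\widehat{X} = X \cup_{\partial X} ([1,\infty) \times \partial X)$. Recall that $CH^L(X,\lambda)$ is defined by fixing a Hamiltonian $H_b$ that is a small non-positive function on $X$ and eventually linear with slope $b$ (where $b$ is not the action of any closed Reeb orbit) on the cylindrical end, together with a parameterization by a Morse function on $ES^1 \simeq \varinjlim \C P^N$; the positive part of the $S^1$-equivariant Floer homology of $H_b$, filtered by action and truncated at $L$, computes $CH^L(X,\lambda)$ once $b$ is large enough.

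First, I would arrange by a small Morse--Bott perturbation on the cylindrical end that each non-constant 1-periodic orbit of $H_b$ corresponds canonically to a closed Reeb orbit $\gamma$ of $\lambda|_{\partial X}$ of action $\int_\gamma \lambda < b$, with Hamiltonian action arbitrarily close to the Reeb action and Conley--Zehnder index equal to that of $\gamma$ (up to a fixed shift absorbed in the grading convention). Passing to the $S^1$-equivariant theory using a perfect Morse function on each $\C P^N$ and taking the limit $N\to\infty$, a standard computation in the spirit of \cite{BOjta,BOjems,gutt} shows that the contributions of the bad Reeb orbits cancel (because of the parity mismatch between the Conley--Zehnder indices of a bad orbit and its underlying simple one), so the generators that survive in the equivariant complex are in bijection with the good Reeb orbits of action $< b$.

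Next I would assemble these per-slope complexes into a single $\R$-filtered complex by letting $b\to\infty$. The key point is that for any fixed $L$, once $b>L$ the generators of the action-truncated equivariant complex at level $L$ are canonically identified with the good Reeb orbits of action $\leq L$, and the continuation maps between Hamiltonians of different slopes preserve action, hence restrict to chain isomorphisms on these truncated subcomplexes. Piecing these canonical identifications together yields the complex $(CC_*(X,\lambda),\partial)$, whose generators are all the good Reeb orbits, whose filtration level is the Reeb action, and whose grading is the Conley--Zehnder index; by construction $H_k\bigl(CC^L_*(X,\lambda)\bigr)\cong CH^L_k(X,\lambda)$, and the inclusions $CC^{L_1}_*\hookrightarrow CC^{L_2}_*$ realize the maps $\imath_{L_1,L_2}$.

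The main obstacle is the final claim that $\partial$ \emph{strictly} decreases the action filtration, as opposed to merely preserving it. I would handle this by exploiting the structure of Floer differentials between orbits arising from Morse--Bott families: the change in action along a Floer cylinder decomposes as the difference of Reeb actions plus an energy term, and the energy term is strictly negative unless the cylinder is constant, which is excluded for distinct generators. Combined with the non-degeneracy hypothesis, which ensures there are only finitely many Reeb orbits of action below any bound, this gives a uniform $\epsilon>0$ by which the action drops for any differential emanating from a given generator. A secondary technical point where care is needed is the bookkeeping of Conley--Zehnder indices through the Morse--Bott correspondence and the identification of which orbits are bad, so that their cancellation in the equivariant complex is unambiguous; these are the details I would expect to occupy most of the appendix.
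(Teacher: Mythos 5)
Your outline founders at the step where you assert that, for slope $b>L$, the action-truncated $S^1$-equivariant complex is ``canonically identified'' with the $\Q$-span of the good Reeb orbits of action $\leq L$. That is not true at the chain level: in the equivariant theory each nonconstant Hamiltonian orbit contributes a whole tower of generators (the classes $\widecheck{\gamma}$ and $u^{j}\otimes\widehat{\gamma}$ coming from the critical points of the Morse function on $\C P^N$), bad orbits do appear as generators, and their cancellation over $\Q$ is a statement about the \emph{homology} of the action-window subquotients, not about the generator set. So there is no literal complex ``freely generated by good Reeb orbits'' sitting inside the Floer-theoretic construction, and the phrase ``piecing these canonical identifications together'' hides the real content of the lemma. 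Knowing that the homology of each action-graded piece is spanned by the good orbits of that action, and knowing each $CH^L$, does not by itself produce a single filtered complex on those generators whose filtered homologies \emph{and} all the maps $\imath_{L_1,L_2}$ (more precisely, their images) are reproduced; recovering that persistence-type data is exactly the nontrivial step. The paper handles it with a purely algebraic lemma (Lemma \ref{maincx}): starting from the action-filtered direct-limit Floer complex, one builds a new complex on the $E^1$-page of the filtration spectral sequence, with a differential assembled from all the higher differentials $\partial^r_{p,q}$ via chosen complements, and proves that its filtered homology images agree with those of the original complex; the identification of the $E^1$-page with the span of good orbits then comes from the computations in \cite{gutt},\cite{GuH}. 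Nothing in your proposal plays the role of this lemma.

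The strictness claim has a related problem. You argue that the Floer differential strictly drops action because the energy of a nonconstant cylinder is positive, but the filtration in the lemma is by \emph{Reeb} action, and after the Morse--Bott perturbation there are differentials connecting generators with the same underlying Reeb action (for instance between the two perturbed orbits $\widehat{\gamma},\widecheck{\gamma}$ born from a single Reeb orbit, or the components involving the Morse function on $\C P^N$); these drop the Hamiltonian action by an amount of the size of the perturbation but do not drop the Reeb-action filtration at all. So the naive Floer differential does not strictly decrease the filtration you want. In the paper's construction strictness is automatic, because differentials within a fixed action level are absorbed by passing to homology of the graded quotient before the new differential is built, and the constructed $\delta$ lowers the (coarsened) filtration by at least one step by design.
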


\begin{dfn}
A \textbf{tame domain} in $\R^{2n}$ is a $2n$-dimensional Liouville domain $(X,\lambda)$ where: \begin{itemize} \item $X$ is a compact submanifold with boundary of $\mathbb{R}^{2n}$; \item  $d\lambda=\omega_0$, where $\omega_0=\sum_{i=1}^{n}dx_i\wedge dy_i$ is the (restriction of the) standard symplectic form on $\R^{2n}$; and \item the Reeb flow of $\lambda|_{\partial X}$ is non-degenerate. \end{itemize} 
A \textbf{tame star-shaped domain} is a subset $X\subset \R^{2n}$ such that $(X,\lambda_0|_X)$ is a tame domain, where \[ \lambda_0=\frac{1}{2}\sum_i(x_idy_i-y_idx_i).\]
\end{dfn}

Said differently, a tame star-shaped domain is a smooth star-shaped domain such that the radial vector field on $\R^{2n}$ is transverse to the boundary, and such that the characteristic flow on the boundary is non-degenerate.

\begin{remark}
If $U\subset \R^{2n}$ is open and  $\lambda\in \Omega^1(U)$ with $d\lambda=\omega_0$, and if $X\subset U$ has the property that $(X,\lambda|_X)$ is a tame domain, we will typically write $CH^L(X,\lambda)$ instead of $CH^{L}(X,\lambda|_X)$.  It should be noted however that $CH^L(X,\lambda)$ depends only on the restriction of $\lambda$ to $X$.  In fact, more specifically, given that we always assume that $d\lambda=\omega_0$ the only dependence of $CH^L(X,\lambda)$ on $\lambda$ (as opposed to $d\lambda$) arises from the germ of $\lambda|_X$ along $\partial X$; this feature is part of what allows for the construction of transfer maps associated to generalized Liouville embeddings in \cite{GuH}.
\end{remark}

Let $(X,\lambda)$ and $(X',\lambda')$ be two non-degenerate Liouville domains. If $\phi: X\hookrightarrow (X')^{\circ}$ is a symplectic embedding with the property that $(\phi^{\star}\lambda'-\lambda)|_{\partial X}$ is exact\footnote{Such embeddings in general are called ``generalized Liouville embeddings'' of $X$ into $X'$.}, then for all $L\in\R$, there exists a map

\[
	\Phi^{L}_{\phi}:CH^L(X',\lambda') \longrightarrow CH^L(X,\lambda)
\]
called the transfer map.
This map is defined in \cite[Section 8.1]{GuH}.  If $X\subset (X')^{\circ}$, we will simply write $\Phi^L$ for the transfer map induced by the inclusion of $X$ into $X'$.

Such a transfer map $\Phi^{L}_{\phi}$ also exists in the case that, instead of being a generalized Liouville embedding into the interior of $X'$, $\phi\co X\to X'$ is simply an isomorphism of Liouville domains (\emph{i.e.} $\phi$ is a diffeomorphism with $\phi^{\star}\lambda'=\lambda$). In this case $\Phi_{\phi}^{L}$ is an isomorphism.

The transfer map is functorial in the sense that if $(X_1,\lambda_1)$, $(X_2,\lambda_2)$, and $(X_3,\lambda_3)$ are tame domains and if $\phi: X_1\hookrightarrow X_2$ and $\psi: X_2\hookrightarrow X_3$ are either  generalized Liouville embeddings or isomorphisms of Liouville domains, then the following diagram is commutative:
\begin{equation}\label{functor}
	\xymatrix{
		CH^{L}(X_3,\lambda_3) \ar[r]^{\Phi_{\psi}^{L}} \ar@/_2pc/[rr]^{\Phi_{\psi\circ\phi}^{L}} & CH^{L}(X_2,\lambda_2) \ar[r]^{\Phi_{\phi}^{L}} & CH^{L}(X_1,\lambda_1).
	}
\end{equation} (This is proven in the unfiltered context for Liouville embeddings in \cite[Theorem 4.12]{gutt}, and the same argument proves the result in our more general situation.)

Recall that a tame star-shaped domain $W$ by definition has the property that $(W,\lambda_0)$ is a non-degenerate Liouville domain, where $\lambda_0$ is  the standard Liouville primitive $\frac{1}{2}\sum_{i}(x_idy_i-y_idx_i)$, so in this case we obtain graded vector spaces $CH^L(W,\lambda_0)$.  In this case, for any $\zeta>0$, the 
scaled domain $\zeta W=\{\sqrt{\zeta}\vec{x}\,|\,\vec{x}\in W\}$ is likewise a tame domain with respect to $\lambda_0$.  By pulling back the ingredients in the construction
of $CH^{L}(W,\lambda_0)$ by appropriate rescalings, we obtain an identification of $CH^{L}(W,\lambda_0)$ with $CH^{\zeta L}(\zeta W,\lambda_0)$ (on the level of the Reeb orbits that generate the complex $CC_*(W,\lambda_0)$, this 
sends an orbit $\gamma\co S^1\to \R^{2n}$ to the orbit $\sqrt{\zeta}\gamma$, which has the effect of multiplying the action by $\zeta$). We call this isomorphism
$CH^{L}(W,\lambda_0)\cong CH^{\zeta L}(\zeta W,\lambda_0)$ the ``rescaling isomorphism.''  The following gives useful relations between this rescaling isomorphism and the other maps in the theory.

\begin{lemma}\label{lem:commutativetriangle}
	Let $W$ be a tame star-shaped domain,  $\zeta>1$, and $0<s<t$.
	Then the diagrams
\begin{equation}\label{scaletransfer}
		\xymatrix{
			CH^{\zeta^{-1}t}(W,\lambda_0)\ar[rr]^{\imath_{\zeta^{-1}t,t}} \ar[d]^{\textrm{rescaling}}_{\cong} & &CH^{t}(W,\lambda_0)\\
			CH^{t}(\zeta W,\lambda_0)\ar[rru]_{\Phi^t}
		}
\end{equation} and \begin{equation} \label{scaleincl}
\xymatrix{
CH^{s}(W,\lambda_0) \ar[r]^{\imath_{s,t}} &  CH^{t}(W,\lambda_0) \ar[d]_{\textrm{rescaling}}^{\cong} \\ CH^{\zeta s}(\zeta W,\lambda_0)\ar[r]^{\imath_{\zeta s,\zeta t}}  \ar[u]_{\textrm{rescaling}}^{\cong} & CH^{\zeta t}(\zeta W,\lambda_0)
}
\end{equation} are both commutative. 
\end{lemma}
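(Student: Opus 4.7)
The plan is to use Lemma \ref{complex-exists} to reduce both commutativity statements to statements at the chain level, where the rescaling isomorphism has an explicit description in terms of Reeb orbits. The dilation $\psi_\zeta\co\R^{2n}\to\R^{2n}$ given by $\psi_\zeta(\vec x)=\sqrt{\zeta}\vec x$ restricts to a diffeomorphism $W\to\zeta W$ and pulls back $\lambda_0$ to $\zeta\lambda_0$, so it sends Reeb orbits of $\lambda_0|_{\partial W}$ bijectively to reparametrized Reeb orbits of $\lambda_0|_{\partial(\zeta W)}$, preserving the Conley--Zehnder index and multiplying the action by $\zeta$. Pulling back the moduli spaces that define the differential in Lemma \ref{complex-exists} along $\psi_\zeta$, with the corresponding conformal rescaling of almost complex structures, gives a chain isomorphism $r_{\#}\co CC_*(W,\lambda_0)\to CC_*(\zeta W,\lambda_0)$ that sends $CC^L_*$ onto $CC^{\zeta L}_*$; this $r_{\#}$ induces the rescaling isomorphism $r$ on homology.

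Diagram \ref{scaleincl} is then immediate: by Lemma \ref{complex-exists}, the continuation maps $\imath_{s,t}$ and $\imath_{\zeta s,\zeta t}$ are induced by subcomplex inclusions, and $r_{\#}$ intertwines these inclusions up to the indicated scaling of filtration, so the required square commutes tautologically at the chain level and hence on homology.

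For diagram \ref{scaletransfer} I would use the functoriality \ref{functor} to factor the inclusion $\iota\co(W,\lambda_0)\hookrightarrow(\zeta W,\lambda_0)$ as $\iota=\psi_\zeta\circ\psi_{\zeta^{-1}}$, where $\psi_{\zeta^{-1}}\co(W,\lambda_0)\hookrightarrow(W,\zeta\lambda_0)$ (the map $\vec x\mapsto\vec x/\sqrt\zeta$, with image $\zeta^{-1}W\subset W$) is a generalized Liouville embedding and $\psi_\zeta\co(W,\zeta\lambda_0)\to(\zeta W,\lambda_0)$ is an isomorphism of Liouville domains. Functoriality then gives $\Phi^t_\iota=\Phi^t_{\psi_{\zeta^{-1}}}\circ\Phi^t_{\psi_\zeta}$. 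Since the transfer of a Liouville isomorphism acts on generators by pullback of Reeb orbits (preserving the action measured with the corresponding Liouville form), a direct comparison with $r_{\#}$ yields $\Phi^t_{\psi_\zeta}\circ r=\sigma_\zeta$, where $\sigma_\zeta\co CH^{\zeta^{-1}t}(W,\lambda_0)\xrightarrow{\cong}CH^t(W,\zeta\lambda_0)$ is the tautological isomorphism (identity on Reeb orbits, action scaled by $\zeta$) arising from rescaling the Liouville form. The desired identity then reduces to $\Phi^t_{\psi_{\zeta^{-1}}}\circ\sigma_\zeta=\imath_{\zeta^{-1}t,t}$.

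The main obstacle is verifying this last identity, which at the chain level asserts that the transfer $\Phi^t_{\psi_{\zeta^{-1}}}$ takes a Reeb orbit viewed with respect to $\zeta\lambda_0$ to the same underlying curve viewed with respect to $\lambda_0$ (thereby undoing $\sigma_\zeta$) and then includes it into the $t$-filtered subcomplex of $CC_*(W,\lambda_0)$. To justify this one must revisit the construction of the transfer in \cite[Section 8.1]{GuH}, using cofinal Hamiltonians adapted to the Liouville flow so that the continuation trajectories computing the transfer degenerate to the evident identification of generators on the chain complexes produced by Lemma \ref{complex-exists}.
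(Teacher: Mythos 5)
Your reduction of (\ref{scaletransfer}) stops exactly where the real content of the lemma lies, so there is a genuine gap. After factoring the inclusion through $\psi_{\zeta^{-1}}$ and $\psi_\zeta$, everything hinges on the identity $\Phi^{t}_{\psi_{\zeta^{-1}}}\circ\sigma_\zeta=\imath_{\zeta^{-1}t,t}$, i.e.\ on the assertion that the transfer morphism is, at chain level, the tautological correspondence of Reeb orbits under rescaling (multiplying actions by $\zeta^{-1}$). You state this as ``the main obstacle'' and say one must revisit the construction of the transfer with adapted cofinal Hamiltonians, but you do not carry that out; since the rest of your argument is bookkeeping, the proof is incomplete at its decisive step. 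The paper closes precisely this step by invoking the chain-level description of the transfer morphism in \cite[Lemma 4.16]{gutt}, which says the chain map inducing $\Phi^t\co CH^t(\zeta W,\lambda_0)\to CH^t(W,\lambda_0)$ can be chosen to send an orbit near $\partial(\zeta W)$ to its image under the rescaling $\zeta W\to W$; once that is available, (\ref{scaletransfer}) is immediate, and (\ref{scaleincl}) follows from conjugating the construction by rescalings as in \cite[Lemma 4.15]{gutt} --- the same conjugation idea you use.

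A secondary problem is your use of Lemma \ref{complex-exists} as the chain-level model. The complex $CC_*(X,\lambda)$ is produced by abstract homological algebra in the appendix; its differential is not defined by moduli spaces that you can pull back along $\psi_\zeta$, and the lemma only identifies the \emph{images} of the maps $\imath_{L_1,L_2}$ with images of inclusion-induced maps, not the maps $\imath$, the rescaling isomorphisms, or the transfer maps themselves. So commutativity of the two diagrams cannot be checked on $CC_*$ as you propose; the conjugation must be performed on the parametrized $S^1$-equivariant Floer data (Hamiltonians, continuation maps) that actually define $CH^L$ and its structure maps, which is what the rescaling isomorphism is by definition and what the cited lemmas of \cite{gutt} do. A smaller, fixable point: your intermediate object $(W,\zeta\lambda_0)$ has $d(\zeta\lambda_0)=\zeta\omega_0$, so it is not a tame domain in the paper's sense and the functoriality (\ref{functor}) as stated does not literally apply to your factorization, though the underlying functoriality for Liouville domains does.
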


\begin{proof} The commutativity of (\ref{scaleincl}) follows by conjugating the various ingredients involved in the construction of $CH$ by rescalings, see \cite[Lemma 4.15]{gutt}.  The commutativity of (\ref{scaletransfer}) follows from the description of the transfer morphism $\Phi^t\co CH^t(\zeta W,\lambda_0)\to CH^t(W,\lambda_0)$ in \cite[Lemma 4.16]{gutt}; indeed it is shown there that the chain map which induces $\Phi^t$ on filtered homology can be chosen to be the one that sends an orbit near the boundary of $\zeta W$ to its image under the rescaling $\zeta W\to W$, and this correspondence multiplies actions by $\zeta^{-1}$.
   \end{proof}
 
\begin{lemma}\label{lem:commsquare}
	Let $X$ be a tame star-shaped domain. Let $b\geq a>0$.
	Then the following diagram is commutative:
	\[
		\xymatrix{
			CH^L(bX,\lambda_0)\ar[rr]^{\Phi^L}\ar[d]_{\simeq}^{\textrm{rescaling}} && CH^L(aX,\lambda_0)\ar[d]^{\simeq}_{\textrm{rescaling}}\\
			CH^{b^{-1}L}(X,\lambda_0)\ar[rr]_{\imath_{b^{-1}L,a^{-1}L}} && CH^{a^{-1}L}(X,\lambda_0)
		}.
	\]
\end{lemma}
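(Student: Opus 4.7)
The plan is to reduce the lemma to Lemma \ref{lem:commutativetriangle} combined with the composition law for rescaling isomorphisms. Observe that since $b\geq a$ we have $aX\subset bX$ and moreover $bX=(b/a)\cdot (aX)$, so the inclusion $aX\hookrightarrow bX$ matches the inclusion $W\hookrightarrow\zeta W$ of Lemma \ref{lem:commutativetriangle} with $W=aX$ and $\zeta=b/a\geq 1$.

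The first step will be to apply the triangle (\ref{scaletransfer}) with this choice of $W$ and $\zeta$ and with $t=L$, obtaining a factorization
\[
\Phi^L = \imath_{(a/b)L,\,L}\circ R^{-1},
\]
where $R\colon CH^{(a/b)L}(aX,\lambda_0)\to CH^L(bX,\lambda_0)$ is the rescaling isomorphism by the factor $b/a$ and $\imath_{(a/b)L,\,L}$ is the inclusion-induced map on $CH^{\bullet}(aX,\lambda_0)$. Substituting this into the target square will reduce the problem to verifying
\[
\sigma_a^{-1}\circ \imath_{(a/b)L,\,L}\circ R^{-1} = \imath_{b^{-1}L,\,a^{-1}L}\circ \sigma_b^{-1},
\]
where I write $\sigma_\zeta^{-1}\colon CH^{s}(\zeta X,\lambda_0)\to CH^{\zeta^{-1}s}(X,\lambda_0)$ for the rescaling isomorphism at the relevant filtration level.

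Second, I will apply the triangle (\ref{scaleincl}) with $W=X$, $\zeta=a$, $s=b^{-1}L$, and $t=a^{-1}L$ (so that $(\zeta s,\zeta t)=((a/b)L,L)$) in order to commute $\sigma_a^{-1}$ past the inclusion, rewriting the left-hand side as $\imath_{b^{-1}L,\,a^{-1}L}\circ \sigma_a^{-1}\circ R^{-1}$. It will then remain only to check that $\sigma_a^{-1}\circ R^{-1}=\sigma_b^{-1}$ as maps $CH^L(bX,\lambda_0)\to CH^{b^{-1}L}(X,\lambda_0)$. This is the composition law for rescaling isomorphisms: the composite of the rescaling $X\to aX$ by $\sqrt{a}$ with the rescaling $aX\to bX$ by $\sqrt{b/a}$ is the rescaling $X\to bX$ by $\sqrt{b}$, which induces $\sigma_b$ via the explicit description of rescaling isomorphisms on the Reeb-orbit generators of $CC_*$ recalled just before Lemma \ref{lem:commutativetriangle}.

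The main difficulty I anticipate is not conceptual but rather the bookkeeping of filtration levels and of the precise pairs of filtered groups between which each rescaling isomorphism acts at each stage. Once that is carefully handled, the argument amounts to a clean diagram chase gluing together the two triangles of Lemma \ref{lem:commutativetriangle}.
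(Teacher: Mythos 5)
Your argument is correct and is essentially the paper's own proof: the paper assembles the same three ingredients — the triangle (\ref{scaletransfer}) with $W=aX$, $\zeta=b/a$, $t=L$, the square (\ref{scaleincl}) with $W=X$, $\zeta=a$, $s=b^{-1}L$, $t=a^{-1}L$, and the composition law for rescaling isomorphisms (the paper's "left triangle commutes trivially") — into a single commutative diagram and concludes using that the rescaling maps are isomorphisms. Your equational chase is just a rewriting of that diagram chase, so there is nothing to add.
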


\begin{proof}

Consider the diagram \[ \xymatrix{
CH^{L}(bX,\lambda_0)\ar[rr]^{\Phi^L} & & CH^L(aX,\lambda_0) \ar[dd]_{\cong} \\
 & CH^{\frac{a}{b}L}(aX,\lambda_0) \ar[ul]_{\cong} \ar[ur]^{\imath_{\frac{a}{b}L,L}} 
 &
\\ CH^{b^{-1}L}(X,\lambda_0) \ar[uu]^{\cong}  \ar[ur]_{\cong} \ar[rr]_{\imath_{b^{-1}L,a^{-1}L}} & & CH^{a^{-1}L}(X,\lambda_0) 
}\] where all of the indicated isomorphisms are given by rescaling.  The left triangle commutes trivially, the upper triangle commutes as a special case of (\ref{scaletransfer}), and the lower right quadrilateral commutes as a special case of (\ref{scaleincl}). Hence the entire diagram commutes, which implies the result since the left map is an isomorphism.\end{proof}

\begin{lemma}\label{lem:hamdiffeo}
	Let $X, X'\subset\R^{2n}$ and $\lambda\in \Omega^1(X')$ be such that $X\subset (X')^{\circ}$ and both $(X,\lambda|_X)$ and $(X',\lambda|_{X'})$ are tame domains, and let $\Psi$ be a symplectomorphism between open subsets of  $\R^{2n}$ whose  domain contains $X'$.
	Then the following diagram is commutative:
	\begin{equation} \label{isocomm}
		\xymatrix{
			CH^L(X',\lambda)\ar[r]^{\Phi^L}\ar[d]_{\simeq}^{\Phi^{L}_{\Psi^{-1}}} & CH^L(X,\lambda)\ar[d]^{\simeq}_{\Phi^{L}_{\Psi^{-1}}} \\
			CH^L\bigl( \Psi(X'),{\Psi^{-1}}^{\star}\lambda\bigr)\ar[r]_{\Phi^L}& CH^L\bigl( \Psi(X),{\Psi^{-1}}^{\star}\lambda\bigr)
		}
	\end{equation}
\end{lemma}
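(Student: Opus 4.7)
The plan is to reduce this to the functoriality property \eqref{functor} of transfer maps, by recognizing both compositions in \eqref{isocomm} as the transfer map associated to one and the same generalized Liouville embedding.

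First I would identify the four maps explicitly as transfer maps in the sense of \eqref{functor}. Writing $\iota\co X\hookrightarrow X'$ and $\iota'\co \Psi(X)\hookrightarrow \Psi(X')$ for the inclusions (both are generalized Liouville embeddings since the relevant pullbacks of the Liouville forms coincide on the nose, not just up to an exact term on the boundary), and $\Psi^{-1}\co \Psi(X')\to X'$ and $\Psi^{-1}|_{\Psi(X)}\co \Psi(X)\to X$ for the Liouville isomorphisms determined by $\Psi$ (where $\Psi(X')$ and $\Psi(X)$ carry the pulled-back form $(\Psi^{-1})^\star\lambda$), the square in \eqref{isocomm} becomes
\[
\xymatrix{
CH^L(X',\lambda)\ar[r]^{\Phi^L_{\iota}}\ar[d]_{\Phi^L_{\Psi^{-1}}} & CH^L(X,\lambda)\ar[d]^{\Phi^L_{\Psi^{-1}|_{\Psi(X)}}}\\
CH^L\!\bigl(\Psi(X'),(\Psi^{-1})^\star\lambda\bigr)\ar[r]_{\Phi^L_{\iota'}} & CH^L\!\bigl(\Psi(X),(\Psi^{-1})^\star\lambda\bigr).
}
\]

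Next I would invoke \eqref{functor} along each of the two compositions. Going top-then-right yields $\Phi^L_{\iota\circ (\Psi^{-1}|_{\Psi(X)})}$, where the composed embedding $\iota\circ(\Psi^{-1}|_{\Psi(X)})\co \Psi(X)\to X'$ is simply $\Psi^{-1}$ restricted to $\Psi(X)$, regarded as a generalized Liouville embedding into $X'$. Going left-then-bottom yields $\Phi^L_{\Psi^{-1}\circ \iota'}$, and the composed map $\Psi^{-1}\circ\iota'\co \Psi(X)\to X'$ is literally the same restriction of $\Psi^{-1}$. Thus both routes around the square realize the transfer map of the same map, and \eqref{functor} gives commutativity.

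The main thing to check along the way is that the functoriality statement \eqref{functor} genuinely applies to each composition here: this requires that in each case the two constituent arrows be of one of the two allowed types (generalized Liouville embedding or Liouville isomorphism), and that the overall composition also be of one of those types. The maps $\iota$ and $\iota'$ are inclusions with matching Liouville forms, $\Psi^{-1}$ and its restriction to $\Psi(X)$ are honest Liouville isomorphisms, and the composite $\Psi^{-1}|_{\Psi(X)}\co \Psi(X)\hookrightarrow X'$ again satisfies $(\Psi^{-1}|_{\Psi(X)})^\star\lambda=(\Psi^{-1})^\star\lambda|_{\Psi(X)}$, hence is a generalized Liouville embedding. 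This is essentially the only nontrivial verification; once it is in place the conclusion is immediate. (If one wished to avoid appealing to a version of \eqref{functor} that mixes generalized Liouville embeddings with Liouville isomorphisms, one could instead slightly enlarge $X$ and $\Psi(X)$ to landing inside the open interiors of $X'$ and $\Psi(X')$ and use a limiting argument, but this is not necessary given the framework of \cite{GuH,gutt}.)
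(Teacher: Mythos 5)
Your proposal is correct and is essentially the paper's own argument: the paper likewise writes down the commutative square formed by the two inclusions and the two restrictions of $\Psi^{-1}$, and obtains \eqref{isocomm} directly from the functoriality \eqref{functor} of transfer maps, since both compositions are the same map $\Psi(X)\to X'$. Your extra verification that this composition is a generalized Liouville embedding is a reasonable point to spell out, but it does not change the route.
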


\begin{proof}
This is a direct consequence of the functoriality (\ref{functor}): writing $i\co X\to X'$ and $j\co \Psi(X)\to\Psi(X')$ for the inclusions, we have a commutative diagram \[ \xymatrix{ \Psi(X) \ar[r]^{j}\ar[d]^{\Psi^{-1}} & \Psi(X') \ar[d]^{\Psi^{-1}} \\ X\ar[r]_{i} & X'} \] and (\ref{isocomm}) is obtained by taking transfer maps.
\end{proof}

In proving Theorem \ref{dellubounds} it will be helpful to know that the image of the  map $\imath_{L_1,L_2}$ is not too small in certain situations.  The following two lemmas give our first results in this direction.
\begin{lemma}\label{lem:convexebarcode}
	Let $X_\Omega$ be a  convex toric domain in $\C^2$. Then for any $\delta,\eps >0$ there is a tame star-shaped domain $X_{\Omega}^{\delta,\eps}$ such that $(1-\eps)X_{\Omega}\subset X_{\Omega}^{\delta,\eps}\subset X_{\Omega}^{\circ}$ and such that, for any $L_1,L_2$ with \[ \max\left\{\norm{(1,0)}^*_\Omega, \norm{(0,1)}^*_\Omega \right\}+\delta\leq L_1<L_2\leq \norm{(1,1)}^{*}_{\Omega}-\delta,\] the map
	\[
		\imath_{L_1,L_2}\co CH_3^{L_1}(X_{\Omega}^{\delta,\eps},\lambda_0) \too CH_3^{L_2}(X_{\Omega}^{\delta,\eps},\lambda_0)
	\] is an isomorphism of two-dimensional vector spaces.
\end{lemma}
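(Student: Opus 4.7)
The plan is to construct $X_\Omega^{\delta,\eps}$ as a smooth, strictly convex inner approximation $X_{\Omega'}$ of $X_\Omega$ (followed by a small Morse-Bott perturbation to achieve Reeb non-degeneracy), arranged so that every Reeb orbit of action at most $L_2$ is an iterate of one of the two simple axis orbits, all such iterates lying in odd Conley-Zehnder degrees with exactly two of degree $3$.

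First, I would choose $\Omega'\subset\Omega^\circ$ with $(1-\eps)\Omega\subset\Omega'$ and $\partial\Omega'$ smooth and strictly convex in $(0,\infty)^2$, arranged so that (i) the slopes of $\partial\Omega'$ at the axis intersection points $(a_1',0)$ and $(0,a_2')$ (with $a_1':=\|(1,0)\|^*_{\Omega'}$ and $a_2':=\|(0,1)\|^*_{\Omega'}$) have absolute value strictly greater than $1$, and (ii) $\|(1,1)\|^*_{\Omega'}>\|(1,1)\|^*_\Omega-\delta$. Both conditions are easy to realize simultaneously: (i) only requires that $\partial\Omega'$ descend steeply near each axis (for instance by smoothing along a profile of the form $x^p+y^p=c^p$ for large $p$), while (ii) only requires $\Omega'$ to remain close to $\Omega$ along the diagonal, which is compatible with (i). A standard Morse-Bott perturbation of $\lambda_0|_{X_{\Omega'}}$ near each degenerate $S^1$-family of Reeb orbits then yields a non-degenerate tame star-shaped domain $X_\Omega^{\delta,\eps}$.

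Next, I would identify the Reeb orbits of $\partial X_\Omega^{\delta,\eps}$ with action at most $L_2$: they consist of iterates $\gamma_j^k$ of the two simple axis orbits with action close to $ka_j'$, together with Morse-Bott pairs at each point of $\partial\Omega'\cap(0,\infty)^2$ with primitive rational outward normal $(p,q)$, $p,q\geq 1$, of action close to $\|(p,q)\|^*_{\Omega'}$. Since $\Omega'\supset(1-\eps)\Omega$ yields $\|(p,q)\|^*_{\Omega'}\geq \|(1,1)\|^*_{\Omega'}>L_2$ for every such $(p,q)$, the second family contributes nothing in the action interval under consideration. For the axis iterates, the standard Conley-Zehnder formula gives $\mu_{CZ}(\gamma_j^k)=2k+1+2\lfloor k/s_j\rfloor$, where $s_j>1$ denotes the slope magnitude at the $j$-th axis intersection; this equals $3$ for $k=1$ and is at least $5$ for $k\geq 2$, and in particular every such index is odd.

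It follows that $CC_*^{L_2}(X_\Omega^{\delta,\eps})$ is concentrated in odd degrees, so the differential $\partial$ vanishes on it, and by Lemma \ref{complex-exists}, $CH_3^L\cong CC_3^L$ for every $L\leq L_2$. For $L\in[L_1,L_2]$ we therefore have $CC_3^L=\Q\langle\gamma_1,\gamma_2\rangle$, two-dimensional, and the inclusion of subcomplexes $CC_*^{L_1}\hookrightarrow CC_*^{L_2}$ restricts to the identity in degree $3$, so $\imath_{L_1,L_2}$ is an isomorphism. The main obstacle is the Conley-Zehnder index computation, specifically verifying that the slope-magnitude-$>1$ condition at the axis intersections indeed forces $\mu_{CZ}(\gamma_j)=3$ (rather than a larger odd integer), and confirming that the Morse-Bott perturbation does not introduce unexpected low-action even-index orbits that could affect the chain complex structure.
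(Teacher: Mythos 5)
Your proposal is in substance the same argument as the paper's: the paper obtains exactly this orbit control (no index-$2$ orbits, two index-$3$ orbits with actions near $\|(1,0)\|^{*}_{\Omega}$ and $\|(0,1)\|^{*}_{\Omega}$, and any index-$4$ orbit having action $>\|(1,1)\|^{*}_{\Omega}-\delta$) by citing Steps 1--3 of the proof of Lemma 2.5 in \cite{GuH}, and then reads off the conclusion from Lemma \ref{complex-exists} just as you do; you are simply re-deriving the Gutt--Hutchings smoothing/perturbation analysis by hand. One correction to your condition (i): the requirement is not that $|dy/dx|>1$ at both axis points, but that the transverse rotation number of each axis orbit be less than $1$, i.e.\ the tangent line to $\partial\Omega'$ at $(a_1',0)$ should meet the $y$-axis above height $a_1'$ and the tangent line at $(0,a_2')$ should meet the $x$-axis beyond $a_2'$; at $(0,a_2')$ this means $|dy/dx|<1$ (equivalently $|dx/dy|>1$), and with your literal condition the orbit over $(0,a_2')$ would have index at least $5$, leaving only one degree-$3$ generator. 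Your $x^p+y^p=c^p$ profile does satisfy the correct per-axis condition, so the construction goes through once the hypothesis is stated this way; the remaining points you flag (smallness of the Morse--Bott perturbation so that the interior torus families, whose unperturbed actions are at least $\|(1,1)\|^{*}_{\Omega'}>\|(1,1)\|^{*}_{\Omega}-\delta\geq L_2$, stay above $L_2$, and so that the axis-orbit actions stay below $L_1$) are routine.
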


\begin{proof}

The constructions in steps 1, 2, and 3 of \cite[Proof of Lemma 2.5]{GuH} use a Morse-Bott perturbation of a suitable smoothing of $X_{\Omega}$ to obtain a tame star-shaped domain $X_{\Omega}^{\delta,\eps}$ that can be arranged to have the properties that $(1-\eps)X_{\Omega}\subset X_{\Omega}^{\delta,\eps}\subset X_{\Omega}^{\circ}$ and such that the Reeb orbits of $\lambda_0|_{\partial X_{\Omega}^{\delta,\eps}}$ having action at most $\|(1,1)\|_{\Omega}^{*}$ and Conley-Zehnder index at most $4$ consist of:

	\begin{itemize}
	\item no orbits of index $2$;
	\item two orbits  of index $3$, with actions in the intervals $\left(\|(1,0)\|_{\Omega}^{*}-\delta, \|(1,0)\|_{\Omega}^{*}+\delta\right)$ and$\left(\|(0,1)\|_{\Omega}^{*}-\delta, \|(0,1)\|_{\Omega}^{*}+\delta\right)$, respectively; and 
	\item at most one orbit  of index $4$, with action greater than $\|(1,1)\|_{\Omega}^{*}-\delta$.
	\end{itemize} 
	So letting $CC_{*}^{L}(X_{\Omega}^{\delta,\eps},\lambda_0)$ be as in Lemma \ref{complex-exists} (so that in particular $CH^{L}_k(X_{\Omega}^{\delta,\eps},\lambda_0)\cong H_k\bigl(CC_{*}^{L}(X_{\Omega}^{\delta,\eps},\lambda_0),\partial\bigr)$), for any $L$ in  $\left[\max\{\|(1,0)\|_{\Omega}^{*},\|(0,1)\|_{\Omega}^{*}\}+\delta\,,\,\|(1,1)\|_{\Omega}^{*}-\delta\right]$ we have $CC_{2}^{L}(X_{\Omega}^{\delta,\eps},\lambda_0)=CC_{4}^{L}(X_{\Omega}^{\delta,\eps},\lambda_0)=\{0\}$ and $CC_{3}^{L}(X_{\Omega}^{\delta,\eps},\lambda_0)\cong \mathbb{Q}^2$, and moreover if $L_1,L_2$ both lie in this interval with $L_1<L_2$ then the inclusion of complexes $CC_{3}^{L_1}(X_{\Omega}^{\delta,\eps},\lambda_0)\to CC_{3}^{L_2}(X_{\Omega}^{\delta,\eps},\lambda_0)$ is an isomorphism.  So passing to homology shows that, for $\max\{\|(1,0)\|_{\Omega}^{*},\|(0,1)\|_{\Omega}^{*}\}+\delta\leq L_1<L_2\leq \|(1,1)\|_{\Omega}^{*}-\delta$, the inclusion-induced map $\imath_{L_1,L_2}\co CH^{L_1}_{3}(X_{\Omega}^{\delta,\eps},\lambda_0)\to CH^{L_2}_{3}(X_{\Omega}^{\delta,\eps},\lambda_0)$ is an isomorphism of two-dimensional vector spaces.
\end{proof}

\begin{lemma}\label{lem:concavebarcode}
		Let $X_\Omega$ be a concave toric domain in $\C^2$.  Then for any $\delta, \eps >0$ there is a tame star-shaped domain $X_{\Omega}^{\delta,\eps}$ such that $(1-\eps)X_{\Omega}\subset X_{\Omega}^{\delta,\eps}\subset X_{\Omega}^{\circ}$ and such that, if  \[ [(1,1)]_{\Omega}+\delta\leq L_1<L_2\leq \min\{[(1,2)]_{\Omega},[(2,1)]_{\Omega}\}-\delta,\] the map
	\[
\iota_{L_1,L_2}\co CH_4^{L_1}(X_{\Omega}^{\delta,\eps},\lambda_0) \too CH_{4}^{L_2}(X_{\Omega}^{\delta,\eps},\lambda_0) 	\] is an isomorphism of one-dimensional vector spaces.
\end{lemma}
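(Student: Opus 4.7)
The plan is to parallel the proof of Lemma \ref{lem:convexebarcode}, which appealed to \cite[Proof of Lemma 2.5]{GuH}. First I would construct $X_{\Omega}^{\delta,\eps}$ by a small inward smoothing of $\partial X_\Omega$ followed by a Morse-Bott perturbation of the type used in \cite{GuH} for concave toric domains, chosen so that $(1-\eps)X_\Omega \subset X_\Omega^{\delta,\eps}\subset X_\Omega^\circ$. For a concave toric domain the $T^2$-invariant closed characteristics on the smoothed boundary come in Morse-Bott circle families labeled by primitive integer vectors $(a,b)\in\Z_{\geq 0}^2\setminus\{(0,0)\}$ (the outward normals to points of $\partial\Omega$), with the approximate action of the family labeled by $(a,b)$ equal to $[(a,b)]_\Omega$; after the Morse-Bott perturbation each such family splits into a pair of non-degenerate orbits whose Conley--Zehnder indices differ by $1$.

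The key combinatorial step is to enumerate those Reeb orbits of action at most $\min\{[(1,2)]_\Omega,[(2,1)]_\Omega\}$ whose Conley--Zehnder index lies in $\{3,4,5\}$. A normal-form Robbin-Salamon index calculation, parallel to but sign-opposite from the convex case, should yield: exactly one orbit of index $4$ with action in $\bigl([(1,1)]_\Omega-\delta,\,[(1,1)]_\Omega+\delta\bigr)$, coming from the diagonal family labeled by $(1,1)$; no orbit of index $3$ with action at most $\min\{[(1,2)]_\Omega,[(2,1)]_\Omega\}$; and orbits of index $5$ only with action in $\bigl([(2,1)]_\Omega-\delta,[(2,1)]_\Omega+\delta\bigr)\cup\bigl([(1,2)]_\Omega-\delta,[(1,2)]_\Omega+\delta\bigr)$. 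Orbits coming from the axial directions $(1,0)$ and $(0,1)$ can be arranged, by the choice of smoothing near the axial corners of $\Omega$, to have action outside the relevant range or Conley--Zehnder index outside $\{3,4,5\}$, so that they do not interfere.

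Combining this with Lemma \ref{complex-exists}, for every $L$ in $\bigl[[(1,1)]_\Omega+\delta,\,\min\{[(1,2)]_\Omega,[(2,1)]_\Omega\}-\delta\bigr]$ the subcomplex $CC_*^L(X_\Omega^{\delta,\eps},\lambda_0)$ satisfies $CC_3^L=CC_5^L=\{0\}$ and $CC_4^L\cong\Q$, so $CH_4^L\cong\Q$; and for $L_1<L_2$ in this range the inclusion $CC_4^{L_1}\hookrightarrow CC_4^{L_2}$ is an equality of one-dimensional spaces, so $\imath_{L_1,L_2}$ is an isomorphism. The main obstacle is the Conley--Zehnder index calculation: verifying that the diagonal family indeed contributes index exactly $4$ (not $3$), and that the families labeled by $(1,0),(0,1),(2,1),(1,2)$ do not intrude into degrees $3$ or $5$ within the relevant action window. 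I would handle this by computing the linearized Reeb flow along a representative characteristic in a Darboux-type normal form adapted to the smoothed concave boundary, and applying Robbin-Salamon's formula for perturbed Morse-Bott orbits, exactly as in the convex analog carried out in \cite{GuH}.
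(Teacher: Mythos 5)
There is a genuine gap, and it lies exactly where the concave case differs from the convex one. Your enumeration claims there is no index-$3$ orbit of action at most $\min\{[(1,2)]_{\Omega},[(2,1)]_{\Omega}\}$, but for a concave toric domain the Morse--Bott family labeled by $(1,1)$ splits after perturbation into a pair of good orbits of Conley--Zehnder indices $3$ and $4$, \emph{both} with action within $\delta$ of $[(1,1)]_{\Omega}$; this is what \cite[Proof of Lemma 2.7]{GuH} produces, and it is forced in any case by the capacity formula $c_1(X_{\Omega})=[(1,1)]_{\Omega}$ of \cite[Theorem 1.14]{GuH}, which requires the degree-$3$ class to be visible already at action $[(1,1)]_{\Omega}$. (Your claim is also internally inconsistent: you say each family contributes a pair with indices differing by $1$, so the partner of the index-$4$ orbit would have index $3$ or $5$; if it had index $5$ it would instead destroy the assertion $CC_{5}^{L}=\{0\}$.) Hence in the window $[(1,1)]_{\Omega}+\delta\leq L\leq\min\{[(1,2)]_{\Omega},[(2,1)]_{\Omega}\}-\delta$ one has $CC_{3}^{L}\neq\{0\}$, and the two-line argument ``$CC_{3}^{L}=CC_{5}^{L}=\{0\}$, so $CH_{4}^{L}\cong CC_{4}^{L}\cong\Q$'' collapses: a priori the differential could send the degree-$4$ generator $b_{1,1}$ to a nonzero multiple of the degree-$3$ generator $a_{1,1}$, in which case $CH_{4}^{L}$ would vanish.

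The missing step, which is the actual content of the paper's proof, is to show $\partial b_{1,1}=0$. The paper does this with global input rather than an index count: the unfiltered $CH_{3}$ of any tame star-shaped domain in $\R^4$ is $\Q$ (\cite[Proposition 3.1]{GuH}), and by \cite[Theorem 1.14]{GuH} its generator is represented by a chain of filtration level at most $[(1,1)]_{\Omega}$; since $a_{1,1}$ spans the part of $CC_{3}$ at filtration below $\max\{[(1,2)]_{\Omega},[(2,1)]_{\Omega}\}$, it must represent this nonzero class and so cannot be a boundary, whence $\partial b_{1,1}=0$ because $\partial$ decreases filtration and $a_{1,1}$ is the only available degree-$3$ target. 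Then $b_{1,1}$ is a cycle which is not a boundary in $CC_{*}^{L}$ (as $CC_{5}^{L}=\{0\}$ in the stated window), giving $CH_{4}^{L}\cong\Q$ and the asserted isomorphisms. A Robbin--Salamon computation alone, however carefully done, cannot remove $a_{1,1}$ from the window, so some argument controlling the differential (such as the one above) is unavoidable; the rest of your outline (construction of $X_{\Omega}^{\delta,\eps}$, use of Lemma \ref{complex-exists}, treatment of the axial families) is in line with the paper.
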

\begin{proof}
We argue analogously to the proof of Lemma \ref{lem:convexebarcode}. By \cite[Proof of Lemma 2.7]{GuH}, there is a tame star-shaped domain $X_{\Omega}^{\delta,\eps}$ such that $(1-\eps)X_{\Omega}\subset X_{\Omega}^{\delta,\eps}\subset X_{\Omega}^{\circ}$ and such that the part of $CC_{*}(X_{\Omega}^{\delta,\eps},\lambda_0)$ of filtration level at most $\max\{[(1,2)]_{\Omega},[(2,1)]_{\Omega}\}$ and degree at most five is generated by:
	\begin{itemize} \item one generator, denoted $a_{1,1}$, in degree $3$, with filtration level in the interval  $\left([(1,1)]_{\Omega}-\delta, [(1,1)]_{\Omega}+\delta\right)$;
	\item one generator, denoted $b_{1,1}$, in degree $4$, with filtration level in the interval  $\left([(1,1)]_{\Omega}-\delta, [(1,1)]_{\Omega}+\delta\right)$; and 
	\item at most two generators $c_{1,2}$ and $c_{2,1}$ in degree $5$, with respective filtration levels in the intervals   $\bigl([(1,2)]_{\Omega}-\delta, [(1,2)]_{\Omega}+\delta\bigr)$ and  $\bigl([(2,1)]_{\Omega}-\delta, [(2,1)]_{\Omega}+\delta\bigr)$.  \end{itemize}
	
	Moreover it is a standard fact (see \emph{e.g.} \cite[Proposition 3.1]{GuH}) that the full degree-$3$ homology $CH_{3}(X_{\Omega}^{\delta,\eps},\lambda_0)$ of this complex is isomorphic to $\mathbb{Q}$; indeed this statement holds for arbitrary tame star-shaped domains in $\R^4$.  Also, \cite[Theorem 1.14]{GuH} shows that a generator for $CH_{3}(X_{\Omega}^{\delta,\eps},\lambda_0)$ is represented by a chain having filtration level at most $[(1,1)]_{\Omega}$.  So since the generator $a_{1,1}$ spans the part of $CC_3(X_{\Omega}^{\delta,\eps},\lambda_0)$ with filtration level at most $\max\{[(1,2)]_{\Omega},[(2,1)]_{\Omega}\}$ (which is greater than $[(1,1)]_{\Omega}$), it follows that $a_{1,1}$ must not be in the image of the boundary operator $\partial$.  Since the boundary operator preserves the filtration, we must then have $\partial b_{1,1}=0$.
	
	Thus for $[(1,1)]_{\Omega}+\delta\leq L\leq \min\{[(1,2)]_{\Omega},[(2,1)]_{\Omega}\}-\delta$, the element $b_{1,1}$ is a degree-four cycle in the subcomplex $CC_{*}^{L}(X_{\Omega}^{\delta,\eps},\lambda_0)$, which is not a boundary for the trivial reason that, for this range of $L$, $CC_{5}^{L}(X_{\Omega}^{\delta,\eps},\lambda_0)=\{0\}$.  Thus $b_{1,1}$ descends to homology to generate the one-dimensional vector space $CH_{4}^{L}(X_{\Omega}^{\delta,\eps},\lambda_0)$ for any such $L$, and the map $\imath_{L_1,L_2}\co CH_{4}^{L_1}(X_{\Omega}^{\delta,\eps},\lambda_0)\to CH_{4}^{L_2}(X_{\Omega}^{\delta,\eps},\lambda_0)$ is an isomorphism whenever $[(1,1)]_{\Omega}+\delta\leq L_1<L_2\leq \min\{[(1,2)]_{\Omega},[(2,1)]_{\Omega}\}-\delta$.
\end{proof}

We are now going to extend the definition of $CH^L$ to open subsets of $\R^{2n}$. This is part of what makes it possible to prove knottedness in the strong sense of Defnition \ref{knotdef}, which considers arbitrary symplectomorphisms of the open set that serves as the codomain for the embedding.  Working with open sets also allows us to consider domains with poorly-behaved boundaries, to which the standard definition of $CH^L$ does not apply.

We continue to denote by $\omega_0$ the standard symplectic form $\sum_{i=1}^{n}dx_i\wedge dy_i$ on open subsets of $\R^{2n}$.
\begin{definition}\label{dfn:open}
	Let $U$ be an open subset of $\R^{2n}$ and let $\lambda\in \Omega^1(U)$ be such that $d\lambda=\omega_0$.
	We define the positive $S^1$-equivariant symplectic homology of $(U,\lambda)$ as
	\begin{equation}\label{eq:defCHopendomains}
		CH^L(U,\lambda) := \lim_{\substack{\longleftarrow\\ (X,\lambda|_X) \textrm{ tame domain}\\ X\subset U\\ }}CH^L(X,\lambda).
	\end{equation}
	Here the inverse limit is taken over transfer maps $\Phi^L$ associated to inclusions.
\end{definition}

Given open sets $U\subset V\subset \R^{2n}$ and $\lambda\in \Omega^1(V)$ with $d\lambda=\omega_0$, we define a transfer map $\Phi^L\co CH^L(V,\lambda)\to CH^L(U,\lambda)$ as the inverse limit of transfer maps $\Phi^L\co CH^L(Y,\lambda)\to CH^L(X,\lambda)$ as $X,Y$ vary through sets such that $(X,\lambda),(Y,\lambda)$ are both tame with $X\subset U\cap Y^{\circ}$ and $Y\subset V$.  This construction will be extended to certain other symplectic embeddings of open subsets (not just inclusions) in Lemma \ref{openfunctor}.

\begin{lemma}\label{closedopen}
        If $X$ is a  tame star-shaped domain and if $L$ is not the action of any periodic Reeb orbit on $\partial X$ then the natural map $CH^L(X,\lambda_0)\to CH^{L}(X^{\circ},\lambda_0)$ is an isomorphism.
\end{lemma}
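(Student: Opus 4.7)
The plan is to replace the inverse system defining $CH^L(X^{\circ},\lambda_0)$ by a cofinal subsystem of scaled copies of $X$, use the rescaling isomorphism to convert the transfer maps into the ordinary filtration maps $\imath_{\cdot,\cdot}$ on $CH^{\cdot}(X,\lambda_0)$, and then observe that the hypothesis that $L$ avoids the Reeb action spectrum forces the resulting system to be eventually constant.

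First I would establish that $\{\zeta X\}_{\zeta\in(0,1)}$ is a cofinal subsystem of the directed set of tame domains in $X^{\circ}$ (directed by $Y\leq Y'$ iff $Y\subset (Y')^{\circ}$). Indeed, since $X$ is tame star-shaped, the radial vector field induces a diffeomorphism between $X^{\circ}\setminus\{0\}$ and $(0,1)\times\partial X$ under which $\zeta X\setminus\{0\}$ corresponds to $(0,\zeta]\times\partial X$; any tame $Y\subset X^{\circ}$ is compact with $0\in Y^{\circ}$, so $Y\subset (\zeta X)^{\circ}$ for some $\zeta<1$. Consequently
$$CH^L(X^{\circ},\lambda_0)\cong \varprojlim_{\zeta\to 1^{-}}CH^L(\zeta X,\lambda_0),$$
where the structure maps are transfer maps $\Phi^L\co CH^L(\zeta_2 X,\lambda_0)\to CH^L(\zeta_1 X,\lambda_0)$ for $0<\zeta_1<\zeta_2<1$.

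Next I would apply Lemma \ref{lem:commsquare} with $b=\zeta_2$, $a=\zeta_1$ to conjugate these transfer maps, via the rescaling isomorphism $CH^L(\zeta X,\lambda_0)\cong CH^{L/\zeta}(X,\lambda_0)$, to the filtration-inclusion maps $\imath_{L/\zeta_2,L/\zeta_1}\co CH^{L/\zeta_2}(X,\lambda_0)\to CH^{L/\zeta_1}(X,\lambda_0)$. Hence
$$CH^L(X^{\circ},\lambda_0)\cong \varprojlim_{\zeta\to 1^{-}}CH^{L/\zeta}(X,\lambda_0),$$
and unwinding the definitions shows that the natural map out of $CH^L(X,\lambda_0)$ corresponds under this identification to the map into the inverse limit assembled from the compatible family $\{\imath_{L,L/\zeta}\}_{\zeta\in(0,1)}$.

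Finally, since $(X,\lambda_0)$ is non-degenerate the action spectrum of $\lambda_0|_{\partial X}$ is a closed, locally finite subset of $\R$, so the hypothesis on $L$ yields some $\epsilon>0$ such that no periodic Reeb orbit has action in $(L,L+\epsilon)$. By Lemma \ref{complex-exists} this forces $CC^{L'}_*(X,\lambda_0)=CC^L_*(X,\lambda_0)$ for all $L'\in[L,L+\epsilon)$, so $\imath_{L,L'}$ is an isomorphism for every such $L'$, and more generally $\imath_{L',L''}$ is an isomorphism for $L\leq L'\leq L''<L+\epsilon$. Choosing $\zeta_0\in(L/(L+\epsilon),1)$, every $\zeta\in[\zeta_0,1)$ satisfies $L/\zeta\in[L,L+\epsilon)$, so all structure maps of the restricted inverse system between indices $\geq\zeta_0$ are isomorphisms; the inverse limit is therefore canonically isomorphic to $CH^L(X,\lambda_0)$ via the natural map. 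The main point to nail down is the translation from transfer maps between scaled domains to filtration-inclusion maps on a fixed domain, which is exactly what Lemma \ref{lem:commsquare} encodes, together with the cofinality statement, which is immediate from the tame star-shaped geometry of $X$.
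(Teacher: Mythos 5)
Your proposal is correct and is essentially the paper's own argument: replace the inverse system by the cofinal family of scaled copies $(1-\eps)X$, use Lemma \ref{lem:commsquare} to identify the transfer maps with the filtration maps $\imath_{L,(1-\eps)^{-1}L}$ on $CH^{\cdot}(X,\lambda_0)$, and invoke Lemma \ref{complex-exists} together with the hypothesis that $L$ avoids the action spectrum to see these maps are isomorphisms for small $\eps$. (The only cosmetic point is that cofinality needs nothing about tame subdomains containing the origin; compactness of $Y\subset X^{\circ}$ already gives $Y\subset(\zeta X)^{\circ}$ for some $\zeta<1$.)
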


\begin{proof} The system of tame star-shaped domains $\left\{(1-\eps)X|\,\eps>0\right\}$ is cofinal in the system of all tame star-shaped domains $Y$ with $Y\subset X^{\circ}$, so there is a natural isomorphism \[ CH^L(X^{\circ},\lambda_0)\cong \lim_{\substack{\longleftarrow\\ \eps >0\\ }}CH^L\bigl((1-\eps)X,\lambda_0\bigr). \]

Lemma \ref{lem:commsquare} then induces a natural isomorphism \[ \lim_{\substack{\longleftarrow\\ \eps >0\\ }}CH^L\bigl((1-\eps)X,\lambda_0\bigr)\cong \lim_{\substack{\longleftarrow\\ \eps >0\\ }}CH^{(1-\eps)^{-1}L}(X,\lambda_0)\] where the inverse limit on the right is constructed from the maps $\imath_{s,t}$ that are identified by Lemma \ref{complex-exists} with the maps induced by  inclusions of subcomplexes $CC^{s}_{*}\hookrightarrow CC^{t}_{*}$.  Since $L$ is not  the action of any periodic Reeb orbit on $\partial X$, it follows from Lemma \ref{complex-exists} that the map $\imath_{L,(1-\eps)^{-1}L}: CH^{L}(X,\lambda_0)\to CH^{(1-\eps)^{-1}L}(X,\lambda_0)$ is an isomorphism for all sufficiently small $\eps$, from which the lemma immediately follows. \end{proof}

Let $U$ be an open subset of $\C^n$ and $\lambda\in \Omega^1(U)$ with $d\lambda=\omega_0$, and let $L_1<L_2\in\R$.
We define the map $\imath_{L_1,L_2}:CH^{L_1}(U,\lambda)\to CH^{L_2}(U,\lambda)$ as the inverse limit of the maps $\imath_{L_1,L_2}:CH^{L_1}(X_U,\lambda|_{X_U})\to CH^{L_2}(X_U,\lambda|_{X_U})$ where $(X_U,\lambda|_{X_U})$ is a tame domain, $X_U\subset U$.

Since the inverse limit is a functor from the category of diagrams of abelian groups to the category of abelian groups (see \cite[Application 2.6.7]{weibel}),
we have a similar statement to Lemma \ref{lem:commutativetriangle}:

\begin{lemma}\label{opentriangle}
	Let $U$ be an open set in $\R^{2n}$, let $\zeta>1$, and let $\lambda\in \Omega^1(U)$ with $d\lambda=\omega_0$.
	Then the following diagram is commutative:
	\begin{equation}
		\xymatrix{
			CH^{\zeta^{-1}L}(U,\lambda)\ar[rr]^{\imath_{\zeta^{-1}L,L}} \ar[d]_{\textrm{rescaling}} && CH^{L}(U,\lambda)\\
			CH^{L}(\zeta U,\lambda)\ar[rru]_{\Phi^L}
		}.
	\end{equation}
\end{lemma}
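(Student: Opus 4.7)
The plan is to deduce the lemma from Lemma \ref{lem:commutativetriangle} by an inverse limit argument, parallel in spirit to the passages from closed to open domains already carried out in Definition \ref{dfn:open} and Lemma \ref{closedopen}. First I would realize all three groups appearing in the triangle as inverse limits indexed by a common directed system. By Definition \ref{dfn:open}, $CH^{\zeta^{-1}L}(U,\lambda)$ and $CH^L(U,\lambda)$ are inverse limits over tame subdomains of $U$; since tame star-shaped subdomains are cofinal among these, the inverse limits may be taken over them. For $CH^L(\zeta U,\lambda)$, the assignment $W\mapsto \zeta W$ sends tame star-shaped subdomains of $U$ to tame star-shaped subdomains of $\zeta U$, and I would check that this produces a cofinal subsystem: any tame domain $Y\subset \zeta U$ lies in $\zeta W$ for some tame star-shaped $W\subset U$, obtained by taking $W$ to be a slight star-shaped enlargement of $\zeta^{-1}Y$ inside $U$. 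Hence all three groups are computed by inverse limits indexed over the same directed set of tame star-shaped $W\subset U$.

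Next, for each such $W$, Lemma \ref{lem:commutativetriangle} (applied with $t=L$) supplies the commutative triangle relating $CH^{\zeta^{-1}L}(W,\lambda_0)$, $CH^L(W,\lambda_0)$, and $CH^L(\zeta W,\lambda_0)$ via the rescaling isomorphism, $\imath_{\zeta^{-1}L,L}$, and $\Phi^L$. The key technical step is to verify that these triangles are natural with respect to inclusions $W\subset W'$ of tame star-shaped subdomains of $U$: explicitly, the rescaling isomorphisms must intertwine the transfer maps induced by $W\hookrightarrow W'$ and by $\zeta W\hookrightarrow \zeta W'$. This is a chain-level check which follows directly from the descriptions of rescaling and of the transfer morphism recalled in the proof of Lemma \ref{lem:commutativetriangle}, while naturality of the $\imath$ and $\Phi^L$ edges in $W$ is a consequence of the functoriality diagram (\ref{functor}).

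Finally, applying the inverse limit functor (as the paragraph preceding the statement notes, this is a functor from diagrams of abelian groups to abelian groups) to this compatible family of commutative triangles yields a commutative triangle of inverse limits, which is precisely the diagram in the statement. The principal obstacle is the naturality verification in the second paragraph; once this is in hand the remainder of the argument is a formal manipulation of inverse limits.
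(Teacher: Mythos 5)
Your proposal is correct and takes essentially the same route as the paper: the paper's entire argument is the remark immediately preceding the lemma, namely that the inverse limit is a functor on diagrams of abelian groups, so the statement is obtained exactly as you describe by applying inverse limits to the commutative triangles of Lemma \ref{lem:commutativetriangle} over a cofinal system of tame star-shaped subdomains $W\subset U$ (with $\zeta W$ cofinal among tame subdomains of $\zeta U$). Your explicit attention to the cofinality claim and to the naturality of the triangles under inclusions $W\subset W'$ only spells out compatibilities the paper leaves implicit.
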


The following calculation related to the maps $\imath_{L_1,L_2}$ will be very helpful.

\begin{lemma}\label{lem:keyconvex} Let $X_{\Omega}$ be a convex toric domain in $\C^2$. \begin{itemize}
    \item[(i)] If $\max\{\|(1,0)\|_{\Omega}^{*},\|(0,1)\|_{\Omega}^{*}\}<L<\|(1,1)\|_{\Omega}^{*}$, then 
$CH^{L}_{3}(X_{\Omega}^{\circ},\lambda_0)$ is a two-dimensional vector space. 
\item[(ii)]  If $\max\{\|(1,0)\|_{\Omega}^{*},\|(0,1)\|_{\Omega}^{*}\}<L_1<L_2<\|(1,1)\|_{\Omega}^{*}$, then $\imath_{L_1,L_2}\co CH^{L_1}_{3}(X_{\Omega}^{\circ},\lambda_0)\to CH^{L_2}_{3}(X_{\Omega}^{\circ},\lambda_0)$ is an isomorphism. 
\end{itemize} 
\end{lemma}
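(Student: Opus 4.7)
Our plan is to compute $CH^L(X_\Omega^\circ,\lambda_0)$ via Definition \ref{dfn:open} as an inverse limit over tame domains in $X_\Omega^\circ$, restrict it to a cofinal subsystem of smoothings produced by Lemma \ref{lem:convexebarcode}, and show that the resulting transition maps are isomorphisms on the degree-$3$ piece. Fix $\delta_0>0$ small enough that each value of interest ($L$ in (i), or both $L_1<L_2$ in (ii)) lies strictly inside $\bigl(\max\{\|(1,0)\|^*_\Omega,\|(0,1)\|^*_\Omega\}+\delta_0,\,\|(1,1)\|^*_\Omega-\delta_0\bigr)$. Choose a strictly decreasing sequence $\eps_n\downarrow 0$ and set $Y_n:=X_\Omega^{\delta_0,\eps_n}$ as in Lemma \ref{lem:convexebarcode}. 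By shrinking $\eps_{n+1}$ at each stage we may arrange that $Y_n\subset Y_{n+1}^\circ$ (since $Y_n$ is compact in $X_\Omega^\circ$, so $Y_n\subset(1-\eps_{n+1})X_\Omega\subset Y_{n+1}$ for $\eps_{n+1}$ small enough) and that $Y_{n+1}\subset\zeta_n Y_n$ with $\zeta_n:=(1-\eps_n)^{-1}$, via $Y_{n+1}\subset X_\Omega=\zeta_n\cdot\bigl((1-\eps_n)X_\Omega\bigr)\subset\zeta_n Y_n$. Since every compact subset of $X_\Omega^\circ$ is contained in some $(1-\eps)X_\Omega\subset Y_n$, the family $\{Y_n\}$ is cofinal in the inverse system, so $CH^L(X_\Omega^\circ,\lambda_0)\cong\varprojlim_n CH^L(Y_n,\lambda_0)$.

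For $\eps_n$ small, $\zeta_n^{-1}L=(1-\eps_n)L$ also lies in the good interval, so by Lemma \ref{lem:convexebarcode} each of $CH^L_3(Y_n)$, $CH^L_3(Y_{n+1})$, and $CH^{\zeta_n^{-1}L}_3(Y_n)$ is a two-dimensional $\mathbb{Q}$-vector space, and $\imath_{\zeta_n^{-1}L,L}\co CH^{\zeta_n^{-1}L}_3(Y_n)\to CH^L_3(Y_n)$ is an isomorphism. The nested inclusions $Y_n\subset Y_{n+1}\subset\zeta_n Y_n$ give, via the functoriality (\ref{functor}) of transfer maps, a factorization
\[
CH^L(\zeta_n Y_n,\lambda_0)\xrightarrow{\;\Phi^L\;}CH^L(Y_{n+1},\lambda_0)\xrightarrow{\;\Phi^L\;}CH^L(Y_n,\lambda_0)
\]
whose composite is the transfer induced by the inclusion $Y_n\subset\zeta_n Y_n$. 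By Lemma \ref{lem:commsquare} with $X=Y_n$, $b=\zeta_n$, $a=1$, this composite corresponds under the rescaling isomorphism $CH^L(\zeta_n Y_n)\cong CH^{\zeta_n^{-1}L}(Y_n)$ to $\imath_{\zeta_n^{-1}L,L}$, hence is an isomorphism in degree $3$. Since a composition of two linear maps between two-dimensional vector spaces that is itself an isomorphism forces each factor to be an isomorphism, the transfer $CH^L_3(Y_{n+1})\to CH^L_3(Y_n)$ is an isomorphism.

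The inverse system $\{CH^L_3(Y_n)\}$ therefore has isomorphic transition maps, so $CH^L_3(X_\Omega^\circ,\lambda_0)\cong\varprojlim_n CH^L_3(Y_n)$ is two-dimensional, proving (i). For (ii), applying the same reduction at both $L_1$ and $L_2$ (with a common $\delta_0$) and using the naturality of $\imath_{L_1,L_2}$ with respect to transfer maps, the isomorphisms $\imath_{L_1,L_2}\co CH^{L_1}_3(Y_n)\to CH^{L_2}_3(Y_n)$ from Lemma \ref{lem:convexebarcode} assemble into an isomorphism of inverse limits. The main technical obstacle lies in choosing the sequence $\eps_n$ so that all of $L$, $L_1$, $L_2$, and $\zeta_n^{-1}L$, $\zeta_n^{-1}L_i$ remain strictly inside the interval of validity of Lemma \ref{lem:convexebarcode}, together with the careful identification, via Lemma \ref{lem:commsquare}, of the composite transfer with $\imath_{\zeta_n^{-1}L,L}$ so that the two-dimensional factorization argument applies.
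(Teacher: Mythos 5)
Your proposal is correct and follows essentially the same strategy as the paper: restrict to the cofinal system of smoothings $X_{\Omega}^{\delta,\eps_n}$ from Lemma \ref{lem:convexebarcode}, sandwich consecutive domains between rescaled copies so that the composite transfer is identified (via the rescaling lemmas) with an inclusion-induced map $\imath$ whose endpoints lie in the good action window, conclude the transition maps are isomorphisms of two-dimensional spaces, and pass to the inverse limit for (i) and to a naturality square for (ii). The only (immaterial) difference is the direction of the sandwich: the paper composes with the next transfer and uses Lemma \ref{lem:commutativetriangle} to get injectivity of the transition map, while you compose with the transfer from $\zeta_n Y_n$ and use Lemma \ref{lem:commsquare} to get surjectivity; both yield the same conclusion.
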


\begin{proof}
Choose $\delta>0$ such that \[ \max\{\|(1,0)\|_{\Omega}^{*},\|(0,1)\|_{\Omega}^{*}\}+2\delta < L,L_1,L_2 < \|(1,1)\|_{\Omega}^{*}-2\delta. \]  
For this fixed value of $\delta$ and varying $\eps>0$, the non-degenerate domains $X_{\Omega}^{\delta,\eps}$ from Lemma \ref{lem:convexebarcode}
 form a cofinal system in the inverse system defining $CH^L(X_{\omega}^{\circ},\lambda_0)$.
Choose a sequence $\eps_m\searrow 0$ such that $X_{\Omega}^{\delta,\eps_m}\subset (X_{\Omega}^{\delta,\eps_{m+1}})^{\circ}$ for each $m$, so we have transfer maps 
$(\Phi^L)_{m}\co CH^{L}_{3}( X_{\Omega}^{\delta,\eps_{m+1}},\lambda_0)\to CH^{L}_{3}(X_{\Omega}^{\delta,\eps_m},\lambda_0)$; this gives a cofinal subsystem within our inverse system.  We claim that these transfer maps are isomorphisms of 
two-dimensional vector spaces once $m$ is sufficiently large (and hence $\eps_m$ is sufficiently small).  

To prove this, we first note that the domain and codomain both have dimension two by Lemma \ref{lem:convexebarcode}, so it is enough to  show that $(\Phi^L)_m$ 
is injective for all large $m$.  But we have inclusions \[ 
(1-\eps_{m})X_{\Omega}^{\delta,\eps_{m+1}}\subset (1-\eps_m)X_{\Omega}^{\circ}\subset (X_{\Omega}^{\delta,\eps_m} )^{\circ}\subset X_{\Omega}^{\delta,\eps_m}  \subset (X_{\Omega}^{\delta,\eps_{m+1}})^{\circ}\] 
and so the transfer map $(\Phi^L)_m$ fits into a sequence of transfer maps \begin{equation}\label{seqtrans} \xymatrix{
CH^{L}_{3}( X_{\Omega}^{\delta,\eps_{m+1}},\lambda_0)\ar[r]^{(\Phi^L)_m} &  CH^{L}_{3}(X_{\Omega}^{\delta,\eps_m},\lambda_0)\ar[r] &  CH^{L}_{3}\bigl((1-\eps_m)X_{\Omega}^{\delta,\eps_{m+1}},\lambda_0\bigr)  }
\end{equation}
whose composition is identified  up to isomorphism by Lemma \ref{lem:commutativetriangle} with the inclusion-induced map \[ \imath_{L,(1-\eps_m)^{-1}L}\co  
CH^{L}_{3}( X_{\Omega}^{\delta,\eps_{m+1}},\lambda_0)\to CH^{(1-\eps_m)^{-1}L}_{3}( X_{\Omega}^{\delta,\eps_{m+1}},\lambda_0).  
\]  Provided that $m$ is chosen so large that $(1-\eps_m)^{-1}L<\|(1,1)\|_{\Omega}^{*}-\delta$,  Lemma \ref{lem:convexebarcode} shows that the above map
$\imath_{L,(1-\eps)^{-1}L}$ is an isomorphism.  Thus for $m$ sufficiently large the first map $(\Phi^L)_m$ in the sequence (\ref{seqtrans}) must be injective, and hence
is also an isomorphism by counting dimensions.

Since the $(\Phi^L)_m$ are all isomorphisms for $m$ sufficiently large, and since they form the structure maps in a cofinal system within the inverse 
system defining $CH^L(X_{\Omega}^{\circ},\lambda_0)$, it follows that the canonical map $CH^L(X_{\Omega}^{\circ},\lambda_0)\to CH^{L}_{3}(X_{\Omega}^{\delta,\eps_m},\lambda_0)$
is an isomorphism for $m$ sufficiently large.  So by Lemma \ref{lem:convexebarcode} $CH^{L}_{3}(X_{\Omega}^{\circ},\lambda_0)$ is two-dimensional, proving statement (i) of the lemma.    Moreover this argument works uniformly for  all $L$ in  the interval from   $\max\{\|(1,0)\|_{\Omega}^{*},\|(0,1)\|_{\Omega}^{*}\}+2\delta$ to   $\|(1,1)\|_{\Omega}^{*}-2\delta$, and in particular for $L=L_1$ or $L=L_2$ where $L_1,L_2$ are as in statement (ii) of the lemma.  So for sufficiently large $m$ we have  a commutative diagram \[ \xymatrix{ 
CH^{L_1}_{3}(X_{\Omega}^{\circ},\lambda_0)\ar[r]^{\imath_{L_1,L_2}} \ar[d] & CH^{L_2}_{3}(X_{\Omega}^{\circ},\lambda_0)\ar[d] \\ CH^{L_1}_{3}(X_{\Omega}^{\delta,\eps_m},\lambda_0) \ar[r]^{\imath_{L_1,L_2}}\ar[r]^{\imath_{L_1,L_2}} &
CH^{L_2}_{3}(X_{\Omega}^{\delta,\eps_m},\lambda_0)
} \] where the vertical arrows are isomorphisms by what we have just shown, and the bottom horizontal arrow is an isomorphism by Lemma \ref{lem:convexebarcode}.  Hence the top horizontal arrow is an isomorphism, proving statement (ii) of the lemma.
\end{proof}

\begin{lemma}\label{lem:keyconcave}
		Let $X_\Omega$ be a concave toric domain in $\C^2$ such that $[(1,1)]_{\Omega}<\max\{[(1,2)]_{\Omega},[(2,1)]_{\Omega}\}$. Then for $[(1,1)]_{\Omega}<L_1<L_2<\min\{[(1,2)]_{\Omega},[(2,1)]_{\Omega}\}$,
	\[
\imath_{L_1,L_2}\co CH_4^{L_1}(X_{\Omega}^{\circ},\lambda_0) \too CH_4^{L_2}(X_{\Omega}^{\circ},\lambda_0)
	\] is an isomorphism of one-dimensional vector spaces.
\end{lemma}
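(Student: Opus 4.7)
The proof will mirror that of Lemma \ref{lem:keyconvex} almost verbatim, with Lemma \ref{lem:concavebarcode} substituted for Lemma \ref{lem:convexebarcode} and with degree $4$ replacing degree $3$. The numerical bookkeeping is simply that $\max\{\|(1,0)\|^*_\Omega,\|(0,1)\|^*_\Omega\}$ and $\|(1,1)\|^*_\Omega$ are replaced respectively by $[(1,1)]_\Omega$ and $\min\{[(1,2)]_\Omega,[(2,1)]_\Omega\}$.

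The plan is as follows. Fix $\delta>0$ small enough that $[(1,1)]_\Omega+2\delta<L_1<L_2<\min\{[(1,2)]_\Omega,[(2,1)]_\Omega\}-2\delta$. For this $\delta$ and varying $\eps>0$, the non-degenerate domains $X_\Omega^{\delta,\eps}$ produced by Lemma \ref{lem:concavebarcode} satisfy $(1-\eps)X_\Omega\subset X_\Omega^{\delta,\eps}\subset X_\Omega^\circ$, so they form a cofinal family in the inverse system defining $CH^L(X_\Omega^\circ,\lambda_0)$. Choose $\eps_m\searrow 0$ with $X_\Omega^{\delta,\eps_m}\subset(X_\Omega^{\delta,\eps_{m+1}})^\circ$; this yields transfer maps $(\Phi^L)_m\co CH_4^L(X_\Omega^{\delta,\eps_{m+1}},\lambda_0)\to CH_4^L(X_\Omega^{\delta,\eps_m},\lambda_0)$ that form the structure maps of a cofinal subsystem.

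Next, I would show that each $(\Phi^L)_m$ is an isomorphism of one-dimensional vector spaces once $m$ is large. Both source and target are one-dimensional by Lemma \ref{lem:concavebarcode}, so it suffices to check injectivity. As in the proof of Lemma \ref{lem:keyconvex}, consider the chain of inclusions
\[ (1-\eps_m)X_\Omega^{\delta,\eps_{m+1}} \subset (1-\eps_m)X_\Omega^\circ \subset (X_\Omega^{\delta,\eps_m})^\circ \subset X_\Omega^{\delta,\eps_m} \subset (X_\Omega^{\delta,\eps_{m+1}})^\circ, \]
whose composite transfer map factors through $(\Phi^L)_m$ and, by Lemma \ref{lem:commutativetriangle}, is identified up to rescaling with the inclusion-induced map $\imath_{L,(1-\eps_m)^{-1}L}\co CH_4^L(X_\Omega^{\delta,\eps_{m+1}},\lambda_0)\to CH_4^{(1-\eps_m)^{-1}L}(X_\Omega^{\delta,\eps_{m+1}},\lambda_0)$. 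For $m$ large enough that $(1-\eps_m)^{-1}L\leq\min\{[(1,2)]_\Omega,[(2,1)]_\Omega\}-\delta$, Lemma \ref{lem:concavebarcode} guarantees that this map is an isomorphism, so $(\Phi^L)_m$ is injective and hence an isomorphism by dimension count.

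Finally, since $(\Phi^L)_m$ is an isomorphism in a cofinal subsystem for $m$ large, the canonical map $CH_4^L(X_\Omega^\circ,\lambda_0)\to CH_4^L(X_\Omega^{\delta,\eps_m},\lambda_0)$ is an isomorphism of one-dimensional spaces for large $m$. Running this argument uniformly for $L\in\{L_1,L_2\}$ and using naturality of $\imath_{L_1,L_2}$ with respect to the transfer maps gives a commutative square
\[ \xymatrix{
CH_4^{L_1}(X_\Omega^\circ,\lambda_0) \ar[r]^{\imath_{L_1,L_2}} \ar[d]_{\cong} & CH_4^{L_2}(X_\Omega^\circ,\lambda_0) \ar[d]^{\cong} \\
CH_4^{L_1}(X_\Omega^{\delta,\eps_m},\lambda_0) \ar[r]^{\imath_{L_1,L_2}} & CH_4^{L_2}(X_\Omega^{\delta,\eps_m},\lambda_0)
} \]
whose bottom arrow is an isomorphism by Lemma \ref{lem:concavebarcode}, forcing the top arrow to be one as well. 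There is no genuinely hard step: the argument is a mechanical translation of that of Lemma \ref{lem:keyconvex}, and the only point to verify carefully is that the one-dimensional degree-$4$ information packaged by Lemma \ref{lem:concavebarcode} survives through the inverse limit that defines $CH^L$ on open subsets, which is precisely what the injectivity argument above ensures.
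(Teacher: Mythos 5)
Your proposal is correct and is essentially the paper's own proof: the paper disposes of this lemma by saying it follows from the exact same argument as Lemma \ref{lem:keyconvex} with Lemma \ref{lem:concavebarcode} in place of Lemma \ref{lem:convexebarcode}, which is precisely the translation (degree $4$, one-dimensional spaces, action window $\bigl([(1,1)]_{\Omega},\min\{[(1,2)]_{\Omega},[(2,1)]_{\Omega}\}\bigr)$) that you carry out.
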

\begin{proof} This follows by the exact same argument as Lemma \ref{lem:keyconvex}, using Lemma \ref{lem:concavebarcode} instead of Lemma \ref{lem:convexebarcode}.
\end{proof}

\begin{remark} In the case that $X_{\Omega}$ is an ellipsoid $E(a,b)$ (and hence in particular is both a concave toric domain and a convex toric domain), Lemmas \ref{lem:keyconvex} and \ref{lem:keyconcave} have no content when applied to $X_{\Omega}$. Indeed in  this case, assuming without loss of generality that $a\leq b$, \[
\|(1,1)\|_{\Omega}^{*}=\max\{\|(1,0)\|_{\Omega}^{*},\|(0,1)\|_{\Omega}^{*}\}=b, \quad [(1,1)]_{\Omega}=\min\{[(1,2)]_{\Omega},[(2,1)]_{\Omega}=a\] and so there are no choices of $L_1,L_2$ that satisfy the hypotheses.  For each of the domains appearing in our main theorem, on the other hand, Lemma \ref{lem:keyconvex} or Lemma \ref{lem:keyconcave} gives important information.
\end{remark}

\begin{prop}\label{prop:open} 
Let $U\subset \R^{2n}$ be a star-shaped open set, and let $\phi\co U\to V$ be a symplectomorphism where $V$ is an open subset of $\R^{2n}$.  Then $\phi$ determines an isomorphism $\Phi_{\phi}^{L}\co CH^{L}(V,{\phi^{-1}}^{\star}\lambda_0)\to CH^{L}(U,\lambda_0)$ such that the diagram
\begin{equation}\label{opencomm} \xymatrix{ CH^{L}(V,{\phi^{-1}}^{\star}\lambda_0)\ar[r]^{\Phi^L}\ar[d]^{\Phi_{\phi}^{L}} & CH^{L}\bigl(\phi(W),{\phi^{-1}}^{\star}\lambda_0\bigr) \ar[d]^{\Phi_{\phi|_W}^{L} }\\ CH^{L}(U,\lambda_0) \ar[r]^{\Phi^L} & CH^{L}(W,\lambda_0) } \end{equation}  commutes when $W\subset U$ is  an open subset.
\end{prop}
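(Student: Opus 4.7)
The plan is to define $\Phi_\phi^L$ as the inverse limit of the canonical isomorphisms of filtered $S^1$-equivariant symplectic homology induced by the Liouville-isomorphism restrictions of $\phi$ to tame subdomains. The key identity is $\phi^{\star}({\phi^{-1}}^{\star}\lambda_0)=\lambda_0$, which implies that for any tame domain $X\subset U$ (with Liouville form $\lambda_0|_X$), the image $\phi(X)\subset V$ is a tame domain with Liouville form ${\phi^{-1}}^{\star}\lambda_0|_{\phi(X)}$, and the restriction $\phi|_X\co (X,\lambda_0)\to (\phi(X),{\phi^{-1}}^{\star}\lambda_0)$ is an isomorphism of Liouville domains, which induces an isomorphism $\Phi_{\phi|_X}^{L}\co CH^{L}(\phi(X),{\phi^{-1}}^{\star}\lambda_0)\to CH^{L}(X,\lambda_0)$. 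Moreover $X\leftrightarrow\phi(X)$ is a bijection between the posets (under inclusion) indexing the two inverse limits in Definition \ref{dfn:open}.

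Next, I would verify that the isomorphisms $\Phi_{\phi|_X}^L$ are compatible with the transfer maps serving as structure maps in these inverse systems. Given tame domains $X_1\subset X_2\subset U$, applying Lemma \ref{lem:hamdiffeo} with $\Psi=\phi^{-1}$, $\lambda={\phi^{-1}}^{\star}\lambda_0$, and the roles of $X,X'$ played by $\phi(X_1),\phi(X_2)$ produces the commutative square
\[
\xymatrix{
CH^L(\phi(X_2),{\phi^{-1}}^{\star}\lambda_0) \ar[r]^-{\Phi^L} \ar[d]_{\Phi_{\phi|_{X_2}}^L} & CH^L(\phi(X_1),{\phi^{-1}}^{\star}\lambda_0) \ar[d]^{\Phi_{\phi|_{X_1}}^L} \\
CH^L(X_2,\lambda_0) \ar[r]^-{\Phi^L} & CH^L(X_1,\lambda_0)
}
\]
Hence the family $\{\Phi_{\phi|_X}^L\}$ assembles into a well-defined map $\Phi_\phi^L\co CH^{L}(V,{\phi^{-1}}^{\star}\lambda_0)\to CH^{L}(U,\lambda_0)$ on inverse limits, sending $(c_Y)_Y$ to $(\Phi_{\phi|_X}^L(c_{\phi(X)}))_X$; this is an isomorphism, an explicit inverse being constructed from $\phi^{-1}$ by the same procedure.

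Finally, to obtain the commutativity of (\ref{opencomm}) for an open subset $W\subset U$, I would note that both horizontal arrows in (\ref{opencomm}) are by definition inverse limits of transfer maps at the level of tame subdomains. For any tame $X\subset W$ and any tame $Y\subset V$ with $\phi(X)\subset Y^{\circ}$, a further application of Lemma \ref{lem:hamdiffeo} (with $\Psi=\phi^{-1}$ and the roles of $X$ and $X'$ played by $\phi(X)$ and $Y$) yields a commutative square relating the transfer $CH^L(Y,{\phi^{-1}}^{\star}\lambda_0)\to CH^L(\phi(X),{\phi^{-1}}^{\star}\lambda_0)$ to the transfer $CH^L(\phi^{-1}(Y),\lambda_0)\to CH^L(X,\lambda_0)$ via the vertical isomorphisms $\Phi_{\phi|_{\phi^{-1}(Y)}}^L$ and $\Phi_{\phi|_X}^L$. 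Passing to the inverse limit over such pairs (which is cofinal in the combined indexing system) yields (\ref{opencomm}). The main obstacle throughout is bookkeeping with inverse limits; all the geometric content is supplied by Lemma \ref{lem:hamdiffeo} together with the functoriality (\ref{functor}).
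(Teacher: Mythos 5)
Your proposal is correct and follows essentially the same route as the paper: the observation that $X\mapsto\phi(X)$ identifies the tame subdomains of $(U,\lambda_0)$ with those of $(V,{\phi^{-1}}^{\star}\lambda_0)$, together with Lemma \ref{lem:hamdiffeo} (applied with $\Psi=\phi^{-1}$), gives an isomorphism of the two inverse systems, and both the definition of $\Phi_{\phi}^{L}$ and the commutativity of (\ref{opencomm}) are obtained by passing to inverse limits of the squares (\ref{isocomm}). Your extra bookkeeping (checking compatibility with structure maps and cofinality of the pairs used for the transfer maps) is exactly what the paper leaves implicit, so there is no substantive difference.
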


\begin{proof}
For $X\subset U$, it is straightforward to see that $(X,\lambda_0)$ is a non-degenerate Liouville domain if and only if $\bigl(\phi(X),{\phi^{-1}}^{\star}\lambda_0\bigr)$ is a non-degenerate Liouville domain.  So in view of Lemma \ref{lem:hamdiffeo}, we obtain an isomorphism of the inverse systems defining $CH^{L}(V,{\phi^{-1}}^{\star}\lambda_0)$ and $CH^L(U,\lambda_0)$.  This induces the desired isomorphism $\Phi_{\phi}^{L}$ between the inverse limits $ CH^{L}(V,{\phi^{-1}}^{\star}\lambda_0)$ and  $CH^{L}(U,\lambda_0)$, and the fact that (\ref{opencomm}) commutes follows by taking inverse limits of the diagrams (\ref{isocomm}) from Lemma \ref{lem:hamdiffeo}.
\end{proof}

\begin{dfn}
Let $U\subset \R^{2n}$ be an open subset and let $\lambda\in \Omega^1(U)$ obey $d\lambda=\omega_0$.  We say that the pair $(U,\lambda)$ is \textbf{tamely exhausted} if for every compact subset $K\subset U$ there is a set $X$ with $K\subset X\subset U$ such that $(X,\lambda)$ is a tame Liouville domain and such that the natural map $H^1(X;\R)\to H^1(\partial X;\R)$ is zero.
\end{dfn}

\begin{ex} \label{ex: exhausted}
In any dimension $m$, let us say that a nonempty compact subset $X\subset \R^{m}$ is \textbf{strictly star-shaped} if for all $x\in X$ and all $t\in [0,1)$ it holds that $tx\in X^{\circ}$.  We claim that if $X\subset \R^{2n}$ is strictly star-shaped then $(X^{\circ},\lambda_0)$ is tamely exhausted.

To see this, first note that for any $a\in S^{2n-1}$ the set $I_a=\{t\geq 0\,|\,ta\in X\}$ is a closed interval of the form $[0,f(a)]$ where $0<f(a)<\infty$.  Indeed $I_a$ contains all sufficiently small positive numbers because the definition implies that $0\in X^{\circ}$, and $I_a$ is closed and bounded because $X$ is compact.  So we can take $f(a)=\sup I_a=\max I_a$; the fact that $I_a$ contains all numbers between $0$ and $f(a)$ is an obvious consequence of the assumption that $X$ is star-shaped.  Moreover we then have $tf(a)a\in X^{\circ}$ for all $t\in [0,1)$.  

So we have defined a function $f\co S^{2n-1}\to (0,\infty)$ with the properties that \[ X=\big\{sa|a\in S^{2n-1},0\leq s \leq f(a)\big\} \] and \[ X^{\circ}=\big\{sa|a\in S^{2n-1},0\leq s <f(a)\big\}.\]  

We will now show that $f$ is continuous.  Let $a\in S^{2n-1}$ and let $\ep>0$ be small enough that $f(a)>\ep$. Then $\big(f(a)-\ep\big)a\in X^{\circ}$, so by considering a small ball around $\big(f(a)-\ep\big)a$ that is contained in $X^{\circ}$ we see that, for $b\in S^{2n-1}$ sufficiently close to $a$, it will hold that  $\big(f(a)-\ep\big)b\in X^{\circ}$ and hence that $f(b)>f(a)-\ep$.  Thus $f$ is lower semi-continuous. To see that $f$ is upper semi-continuous  note that if it were not then we could find $a_k,a\in S^{2n-1}$ with $a_k\to a$ and each $f(a_k)\geq f(a)+\ep$ for some $\ep>0$ independent of $k$. Since $X$ is compact, after passing to a subsequence the $f(a_k)a_k$ would converge to a point of the form $sa$ where both $s>f(a)$ and $sa\in X$, contradicting the defining property of $f$.  So $f$ is indeed continuous.

With this in hand it is not hard to see that our strictly star-shaped domain $X$ is tamely exhausted. Indeed, if $K$ is a compact subset of $X^{\circ}$ then there is $\eps>0$ such that, for all $a\in S^{2n-1}$ and $t\geq 0$ with $ta\in K$, we have $t<f(a)-\eps$.  Choose a $C^{\infty}$ function $g\co S^{2n-1}\to (0,\infty)$ such that, for all $a\in S^{2n-1}$, $f(a)-\eps<g(a)<f(a)$.  Then defining $Y=\big\{ta|a\in S^{2n-1},0\leq t\leq g(a)\big\}$, $Y$ will be a smooth manifold with boundary such that $\lambda_0|_{\partial Y}$ is a contact form and such that $K\subset Y\subset X^{\circ}$.  Possibly after a further perturbation of $g$, the Reeb flow of $\lambda_0|_{\partial Y}$ will be non-degenerate so that $(Y,\lambda_0)$ is tame. Because $Y$ is star-shaped, it obviously has $H^1(Y;\R)=0$.  Since $K$ is an arbitrary compact subset of $X^{\circ}$ this proves our claim that $(X^{\circ},\lambda_0)$ is tamely exhausted.
\end{ex}

\begin{lemma}\label{openfunctor} To each symplectic embedding $\phi\co U\hookrightarrow V$ between open subsets $U,V\subset \R^{2n}$ equipped with one-forms $\lambda,\lambda'$ such that $(U,\lambda)$ and $(V,\lambda')$ are tamely exhausted, we may associate a map $\Phi^{L}_{\phi}\co CH^L(V,\lambda')\to CH^L(U,\lambda)$ such that:
\begin{itemize}
\item[(i)] In the case that $\phi$ is the inclusion of $U$ into $V$ and $\lambda=\lambda'|_U$, $\Phi_{\phi}^{L}$ coincides with the transfer map $\Phi^L\co CH^L(V,\lambda')\to CH^L(U,\lambda')$ described just before  Lemma \ref{closedopen}.
\item[(ii)] If $(U,\lambda),(V,\lambda'),(W,\lambda'')$ are tamely exhausted and if $\phi\co U\hookrightarrow V$ and $\psi\co V\hookrightarrow W$ are symplectic embeddings then we have a commutative diagram \begin{equation}\label{opendiag} \xymatrix{CH^L(W,\lambda'')\ar[rr]^{\Phi^{L}_{\psi\circ\phi}}
\ar[dr]_{\Phi^{L}_{\psi}} & & CH^L(U,\lambda) \\ & CH^L(V,\lambda')\ar[ru]_{\Phi^{L}_{\phi}} & } \end{equation}
\end{itemize}
\end{lemma}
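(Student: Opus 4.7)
The plan is to reduce the definition of $\Phi_\phi^L$ to the transfer maps already available for generalized Liouville embeddings and inclusions, leveraging the tame exhaustion hypothesis to find suitable intermediate Liouville subdomains. The key observation is that, by the tame exhaustion of $(U,\lambda)$, the tame Liouville subdomains $X\subset U$ satisfying $H^1(X;\R)\to H^1(\partial X;\R)=0$ are cofinal in the inverse system defining $CH^L(U,\lambda)$. For any such $X$, the one-form $\phi^{*}\lambda'-\lambda$ on $X$ is closed (since $d\lambda=d\lambda'=\omega_0$) and its class in $H^1(X;\R)$ dies in $H^1(\partial X;\R)$, so $(\phi^{*}\lambda'-\lambda)|_{\partial X}$ is exact. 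Thus, for any tame Liouville subdomain $Y\subset V$ with $\phi(X)\subset Y^\circ$, the map $\phi|_X\co X\hookrightarrow Y^\circ$ is a generalized Liouville embedding; and by tame exhaustion of $(V,\lambda')$ we may further arrange that $H^1(Y;\R)\to H^1(\partial Y;\R)=0$.

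Given such a pair $(X,Y)$, I would define the $X$-component of $\Phi_\phi^L$ as the composition
\[
CH^L(V,\lambda')\longrightarrow CH^L(Y,\lambda')\xrightarrow{\Phi_{\phi|_X}^L} CH^L(X,\lambda),
\]
where the first arrow is the canonical map from the inverse limit and the second is the transfer for the generalized Liouville embedding $\phi|_X$. Independence from the auxiliary $Y$ follows from the functoriality \eqref{functor}: if $Y\subset Y'$ are two such choices, \eqref{functor} applied to the chain consisting of $\phi|_X\co X\hookrightarrow Y^\circ$ followed by the inclusion $Y\hookrightarrow Y'$ equates the resulting $X$-components, and two arbitrary valid choices are compared by inserting a common $Y''$ containing both. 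Compatibility under passing from $X$ to a smaller tame $X'\subset X$ (both with the cohomology condition) follows by applying \eqref{functor} to $X'\xrightarrow{\phi}\phi(X')\hookrightarrow Y$ for a common $Y\supset \phi(X)$. These compatibilities allow passing to the inverse limit over $X$ to define $\Phi_\phi^L\co CH^L(V,\lambda')\to CH^L(U,\lambda)$.

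Property (i) is then immediate from the construction: when $\phi$ is the inclusion and $\lambda=\lambda'|_U$, for any valid $X$ one may take $Y$ to be any tame Liouville subdomain of $V$ containing $X$ (with the cohomology condition), in which case $\Phi_{\phi|_X}^L$ is simply the transfer for the inclusion $X\hookrightarrow Y$ inside the inverse system defining $CH^L(V,\lambda')$; the composition is then the canonical projection onto the $X$-factor, matching the transfer map described just before Lemma \ref{closedopen}. For the functoriality (ii), given $\phi\co U\hookrightarrow V$ and $\psi\co V\hookrightarrow W$, I would choose cofinally many triples $(X,Y,Z)$ of tame Liouville subdomains of $U$, $V$, $W$ respectively, all with the cohomology condition, such that $\phi(X)\subset Y^\circ$ and $\psi(Y)\subset Z^\circ$; then \eqref{functor} applied to the chain $X\xrightarrow{\phi}Y\xrightarrow{\psi}Z$ establishes the commutativity of \eqref{opendiag} after projecting to the $X$-factor, and passing to the inverse limit completes the proof. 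The main obstacle I anticipate is primarily organizational, namely verifying that a cofinal system of tame Liouville subdomains with the required cohomology condition can always be chosen simultaneously compatibly with $\phi$ (and with $\psi$ in the functoriality step); this is precisely what the tame exhaustion hypothesis is designed to guarantee.
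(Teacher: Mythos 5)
Your proposal is correct and follows essentially the same route as the paper: use tame exhaustion of $(U,\lambda)$ to get a cofinal system of tame subdomains $X$ with vanishing $H^1(X;\R)\to H^1(\partial X;\R)$ (which makes $\phi|_X$ a generalized Liouville embedding into the interior of a tame $Y\subset V$ supplied by tame exhaustion of $(V,\lambda')$), define each component of $\Phi^L_\phi$ as the structure map to $CH^L(Y,\lambda')$ followed by the transfer of $\phi|_X$, check independence of $Y$ and compatibility in $X$ via the functoriality (\ref{functor}) with a common enveloping domain, and pass to the inverse limit. The verifications of (i) and (ii), which you sketch via compatible choices of subdomains, are exactly the inverse-limit exercises the paper leaves to the reader.
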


\begin{proof}

Since $(U,\lambda)$ is tamely exhausted, the subsets $X\subset U$ with $(X,\lambda)$ tame and $H^1(X;\R)\to H^1(\partial X;\R)$ zero form a cofinal system in the inverse system defining $CH^L(U,\lambda)$.  So in order to construct $\Phi^{L}_{\phi}$ it suffices to define maps $\Phi^{L}_{VX}\co CH^L(V,\lambda')\to CH^L(X,\lambda)$ for all such $X$ in such a way that the  diagrams \begin{equation}\label{phivx} \xymatrix{ CH^L(V,\lambda')\ar[d]_{\Phi^{L}_{VX}}\ar[dr]^{\Phi^{L}_{VX'}} & \\ CH^L(X,\lambda)\ar[r]_{\Phi^L} & CH^L(X',\lambda) } \end{equation}
 commute for subsets $X,X'\subset U$ as above with $X'\subset X^{\circ}$.

To define $\Phi^{L}_{VX}$, note that the fact that $(V,\lambda')$ is tamely exhausted implies that there is $Y$ with $\phi(X)\subset Y^{\circ}\subset Y\subset V$ such that $(Y,\lambda')$ is tame, and define $\Phi_{VX}$ as a composition $CH^L(V,\lambda')\to CH^L(Y,\lambda')\to CH^L(X,\lambda)$ where the first map is the structure map of the inverse limit and the second map is the transfer map associated to $\phi|_{X}\co X\hookrightarrow Y^{\circ}$.  (The fact that $\phi|_X$ is a generalized Liouville embedding follows from the facts that $\phi$ preserves $\omega_0$ and that $H^1(X;\R)\to H^1(\partial X;\R)$ vanishes.)  

We claim that this map $\Phi_{VX}^{L}$ is independent of the choice of $Y$ involved in its construction.  Indeed if $Y'\subset V$ is another set satisfying the same properties, then the fact that $(V,\lambda')$ is tamely exhausted shows that there is $Z$ such that $Y\cup Y'\subset Z^{\circ}\subset Z\subset V$ and such that $(Z,\lambda')$ is tame.  We can then form a commutative diagram 
\[
    \xymatrix{
        CH^L(V,\lambda')\ar[dr]\ar[drr]\ar[ddr]&&\\
        &CH^L(Z,\lambda')\ar[d]\ar[r]&CH^L(Y,\lambda')\ar[d]\\
        &CH^L(Y',\lambda')\ar[r]&CH^L(X,\lambda)
    }.
\]
Every piece of the above diagram (the square and the two triangles) is commutative by definition of the inverse limit and by functoriality of the transfer map.
Therefore the two compositions $CH^L(V,\lambda')\to CH^L(X,\lambda)$ passing respectively through $CH^L(Y,\lambda')$ and $CH^L(Y',\lambda')$ are equal to each other.

To see that (\ref{phivx}) commutes, notice that, by what we have just shown, we may use the same subdomain $Y\subset V$ in the constructions of $\Phi_{VX}^{L}$ and of $\Phi_{VX'}^{L}$, yielding a commutative diagram \[ \xymatrix{ & CH^L(V,\lambda')\ar[ddl]_{\Phi_{VX}^{L}}\ar[d]\ar[ddr]^{\Phi_{VX'}^{L}} & \\ & CH^L(Y,\lambda')\ar[dl]\ar[dr] & \\ CH^L(X,\lambda)\ar[rr]^{\Phi^L} & & CH^L(X',\lambda)          }\] where the bottom triangle is an instance of (\ref{functor}).  So passing to the inverse limit over $X$ indeed yields our desired map $\Phi_{\phi}^{L}\co CH^L(V,\lambda')\to CH^L(U,\lambda)$.

It remains to show that the various maps $\Phi_{\phi}^{L}$ construced in this way satisfy properties (i) and (ii) in the statement of the lemma.  However, given the  validity of the above construction of $\Phi^{L}_{\phi}$ and the functoriality (\ref{functor}) for transfer maps associated to generalized Liouville embeddings, both of these are straightforward exercises with inverse limits and so we leave them to the reader.
\end{proof}

\begin{cor} \label{rankcor}
Let $X,E,Y\subset \mathbb{R}^{2n}$ be strictly star-shaped domains with $X\subset Y^{\circ}$, and let $f\co X\to E$, $g\co E\to Y^{\circ}$ be symplectic embeddings.  If the composition $g\circ f$ is unknotted, then for all $k\in \Z,L\in \R$ it holds that \[ \op{Rank}\left(\Phi^{L}\co CH^{L}_{k}(Y^{\circ},\lambda_0)\to CH^{L}_{k}(X^{\circ},\lambda_0)\right)\leq \dim CH_{k}^{L}(E^{\circ},\lambda_0).\]
\end{cor}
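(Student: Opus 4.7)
The plan is to use the factorization of transfer maps through the intermediate ellipsoid, together with the unknotting symplectomorphism $\Psi$, to identify the inclusion-induced transfer map $\Phi^L$ (up to composition with isomorphisms) with $\Phi^L_{g\circ f}$, whose rank is manifestly bounded by $\dim CH^L_k(E^\circ,\lambda_0)$.

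First, I would move to the open setting. By Example \ref{ex: exhausted}, the strictly star-shaped hypothesis implies that $(X^\circ,\lambda_0)$, $(E^\circ,\lambda_0)$, and $(Y^\circ,\lambda_0)$ are tamely exhausted. The embeddings $f$ and $g$ extend to open neighborhoods of $X$ and $E$, and since the image of $X^\circ$ under $f$ is an open subset of $\R^{2n}$ contained in $E$, it must lie in $E^\circ$; this gives symplectic embeddings $f\co X^\circ\hookrightarrow E^\circ$ and $g\co E^\circ\hookrightarrow Y^\circ$. The tame subdomains in the exhaustion of Example \ref{ex: exhausted} can be chosen star-shaped, so their first cohomology vanishes, and the generalized-Liouville hypothesis needed to invoke Lemma \ref{openfunctor} with $\lambda_0$ throughout is automatic. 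Functoriality (Lemma \ref{openfunctor}(ii)) then yields
\[
\Phi^L_{g\circ f}=\Phi^L_f\circ\Phi^L_g\co CH^L_k(Y^\circ,\lambda_0)\to CH^L_k(E^\circ,\lambda_0)\to CH^L_k(X^\circ,\lambda_0),
\]
from which $\op{Rank}\Phi^L_{g\circ f}\le\dim CH^L_k(E^\circ,\lambda_0)$.

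The key step is to reduce from $\Phi^L_{g\circ f}$ to the transfer map of the inclusion. By hypothesis there is a symplectomorphism $\Psi\co Y^\circ\to Y^\circ$ with $\Psi(X)=(g\circ f)(X)$; since $\Psi$ is a homeomorphism of $Y^\circ$, it carries the topological interior of $X$ inside $Y^\circ$ onto that of $(g\circ f)(X)$, giving $\Psi(X^\circ)=(g\circ f)(X^\circ)$. Setting $\psi\eqdef\Psi^{-1}\circ(g\circ f)$ therefore produces a symplectomorphism $X^\circ\to X^\circ$, and $g\circ f=\Psi\circ i\circ\psi$, where $i\co X^\circ\hookrightarrow Y^\circ$ is the inclusion. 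Two applications of Lemma \ref{openfunctor}(ii) yield
\[
\Phi^L_{g\circ f}=\Phi^L_\psi\circ\Phi^L_i\circ\Phi^L_\Psi.
\]

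Finally, I would verify that $\Phi^L_\Psi$ and $\Phi^L_\psi$ are isomorphisms. Applying functoriality to $\Psi\circ\Psi^{-1}=\op{id}_{Y^\circ}$ gives $\Phi^L_{\Psi^{-1}}\circ\Phi^L_\Psi=\Phi^L_{\op{id}_{Y^\circ}}$, which by Lemma \ref{openfunctor}(i) coincides with the transfer map of the inclusion $Y^\circ\hookrightarrow Y^\circ$, namely the identity; the reverse composition is symmetric, and the same argument handles $\psi$. Combining these statements, $\op{Rank}\Phi^L_i=\op{Rank}\Phi^L_{g\circ f}\le\dim CH^L_k(E^\circ,\lambda_0)$, and Lemma \ref{openfunctor}(i) identifies $\Phi^L_i$ with the inclusion-induced transfer map $\Phi^L$ appearing in the statement. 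The main anticipated hurdle is bookkeeping for the Liouville primitives across the different transfer maps, but this is rendered routine by the contractibility of each open set involved and the $\omega_0$-preserving property of $\Psi$, so that every closedness or exactness condition needed to invoke Lemma \ref{openfunctor} with $\lambda_0$ is automatic.
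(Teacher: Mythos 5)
Your argument is correct, and it reaches the conclusion by a somewhat different technical route than the paper, although the underlying idea (conjugate by the unknotting symplectomorphism and use functoriality to replace the inclusion transfer by the transfer of $g\circ f$, which factors through the ellipsoid) is the same. The paper keeps the map $\Psi$ on the space side but changes the Liouville primitive: it works with $CH^{L}_{k}(Y^{\circ},\Psi^{\star}\lambda_0)$ and $CH^{L}_{k}\bigl(g(f(X^{\circ})),\Psi^{\star}\lambda_0\bigr)$, invokes Proposition \ref{prop:open} (built on Lemma \ref{lem:hamdiffeo}) to obtain a commuting square whose vertical arrows $\Phi^{L}_{\Psi^{-1}}$ are isomorphisms, and then factors the top inclusion transfer through $CH^{L}_{k}(E^{\circ},\lambda_0)$ via Lemma \ref{openfunctor}. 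You instead keep $\lambda_0$ throughout, write $(g\circ f)|_{X^{\circ}}=\Psi\circ i\circ\psi$ with $\psi=\Psi^{-1}\circ(g\circ f)$ a symplectomorphism of $X^{\circ}$ (your check that $\Psi(X^{\circ})=(g\circ f)(X^{\circ})$ is the needed point, and it is fine since $g\circ f$ is an open map near $X$), and apply Lemma \ref{openfunctor} directly to $\Psi$ and $\psi$ even though they need not preserve $\lambda_0$; this is legitimate because the generalized Liouville condition in the construction of the transfer maps is automatic from the vanishing of $H^{1}(X;\R)\to H^{1}(\partial X;\R)$ for the tame subdomains in the exhaustion of Example \ref{ex: exhausted}. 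Your route thus avoids Proposition \ref{prop:open} altogether, at the cost of two small verifications that should be made explicit: that the transfer map of the identity inclusion $Y^{\circ}\hookrightarrow Y^{\circ}$ is the identity of the inverse limit (so that functoriality applied to $\Psi\circ\Psi^{-1}$ and $\Psi^{-1}\circ\Psi$ forces $\Phi^{L}_{\Psi}$, and likewise $\Phi^{L}_{\psi}$, to be isomorphisms), both of which are straightforward. After that, both arguments conclude identically: $\Phi^{L}_{g\circ f}=\Phi^{L}_{f}\circ\Phi^{L}_{g}$ factors through $CH^{L}_{k}(E^{\circ},\lambda_0)$, so the rank of the inclusion transfer is at most $\dim CH^{L}_{k}(E^{\circ},\lambda_0)$.
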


\begin{proof}

The assumption that $g\circ f$ is unknotted implies that there is a symplectomorphism $\phi\co Y^{\circ}\to Y^{\circ}$ such that $\phi(g(f(X)))=X$.  Example \ref{ex: exhausted} shows that each of $(X^{\circ},\lambda_0),(E^{\circ},\lambda_0)$, and $(Y^{\circ},\lambda_0)$ is tamely exhausted.  It is clear from the definition that if $\psi\co Z\hookrightarrow \R^{2n}$ is a symplectic embedding with image $Z'$, then $(Z,\psi^{\star}\lambda)$ is tamely exhausted if and only if $(Z',\lambda)$ is tamely exhausted.  So since our symplectomorphism $\phi$ maps $Y^{\circ}$ to $Y^{\circ}$ and   $g\big(f(X^{\circ})\big)$ to $X^{\circ}$ it follows that $(Y^{\circ},\phi^{\star}\lambda_0)$ and $\big(g(f(X^{\circ})),\phi^{\star}\lambda_0\big)$ are also tamely exhausted.
Consider the diagram \[ \xymatrix{ &  CH^{L}_{k}(E^{\circ},\lambda_0) \ar[rd]^{\Phi_{(g|_{f(X^{\circ})})^{-1}}^{L}} & \\ CH^{L}_{k}(Y^{\circ},\phi^{\star}\lambda_0)\ar[rr]^{\Phi^L}\ar[ru]^{\Phi_{g}^{L}}\ar[d]^{\Phi_{\phi^{-1}}^{L}} & & CH^{L}_{k}\bigl(g(f(X^{\circ})),\phi^{\star}\lambda_0\bigr)\ar[d]^{\Phi_{\phi^{-1}}^{L}} \\ CH^{L}_{k}(Y^{\circ},\lambda_0)\ar[rr]^{\Phi^L} & & CH^{L}_{k}(X^{\circ},\lambda_0)                   }
\]   We see that the top triangle commutes since it is an instance of (\ref{opendiag}) (as $g\circ (g|_{f(X^{\circ})})^{-1}$ is just the inclusion of $g(f(X^{\circ}))$ into $Y^{\circ}$); the  square commutes by Corollary \ref{prop:open}; and the vertical arrows are isomorphisms. Hence for each $k\in \Z$,
\begin{align*}
    \op{Rank}&\left(\Phi^{L}\co CH^{L}_{k}(Y^{\circ},\lambda_0)\to CH^{L}_{k}(X^{\circ},\lambda_0)\right)=\\
    &\phantom{coucou}\op{Rank}\left(\Phi^L\co CH^{L}_{k}(Y^{\circ},\phi^{\star}\lambda_0)\to CH^{L}_{k}(g(f(X^{\circ})),\phi^{\star}\lambda_0)\right).
\end{align*}
But since $\Phi^L\co CH^{L}_{k}(Y^{\circ},\phi^{\star}\lambda_0)\to CH^{L}_{k}(g(f(X^{\circ})),\phi^{\star}\lambda_0)$ factors through $CH^{L}_{k}(E^{\circ},\lambda_0)$, its rank is at most the dimension of $CH^{L}_{k}(E^{\circ},\lambda_0)$.
 \end{proof}

Throughout the rest of the paper Corollary \ref{rankcor} will be our main tool for showing that embeddings are knotted.  First we need the following to show that it applies to the domains appearing in our main theorems.

\begin{prop}\label{cvxstar}
Let $X$ be either a convex toric domain or a concave toric domain in $\mathbb{R}^{2n}$.  Then $X$ is strictly star-shaped.
\end{prop}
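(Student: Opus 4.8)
The plan is to show that every point $x$ in the boundary of a convex or concave toric domain $X = X_\Omega$ has the property that $tx \in X^\circ$ for all $t \in [0,1)$, since this is exactly the definition of strictly star-shaped given in Example \ref{ex: exhausted}. I would handle the two cases separately, reducing each to an elementary statement about the region $\Omega$ (respectively $\widehat\Omega$) in terms of the radial scaling on $\C^n$. The key observation is that the map $z \mapsto \sqrt{t}\,z$ on $\C^n$ corresponds under $\mu$ to the scaling $\vec{v} \mapsto t\vec{v}$ on $[0,\infty)^n$; so $tX_\Omega = X_{t\Omega}$ where $t\Omega = \{t\vec v : \vec v \in \Omega\}$, and it suffices to show $t\Omega \subset \Omega^\circ$ (interior taken in $[0,\infty)^n$, i.e. relative interior along the boundary faces $x_i = 0$ is allowed to meet $\partial[0,\infty)^n$) for $t \in [0,1)$ — more precisely that every point of $\mu^{-1}(t\Omega)$ lies in the topological interior of $X_\Omega$ in $\C^n$.

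For the convex case, $\widehat\Omega \subset \R^n$ is a convex domain, and I claim it contains the origin in its interior. Indeed $\Omega$ is a domain in $[0,\infty)^n$ (closure of a nonempty bounded open set), so $\widehat\Omega$ is the union of $\Omega$ with its reflections across all coordinate hyperplanes; since $\Omega$ has nonempty interior in $[0,\infty)^n$ and is convex with $\widehat\Omega$ convex, a short argument shows $\vec 0 \in \widehat\Omega$: if some point $\vec p$ with all positive coordinates lies in $\Omega$, then $\vec p$ and its reflection $-\vec p$ (across the origin, obtainable by reflecting across each hyperplane in turn) both lie in $\widehat\Omega$, so the midpoint $\vec 0$ lies in $\widehat\Omega$; one then checks $\vec 0$ is in fact an interior point using that $\widehat\Omega$ is full-dimensional and symmetric under the reflections. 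Given $\vec 0 \in \widehat\Omega^\circ$, for any $x \in X_\Omega = X_{\widehat\Omega \cap [0,\infty)^n}$ — equivalently $(|x_1|,\dots,|x_n|) \in \Omega$, equivalently $x \in \widehat{\Omega}$ viewed appropriately — and any $t \in [0,1)$, convexity of $\widehat\Omega$ gives that $\sqrt t\, x$, being on the segment from $\vec 0 \in \widehat\Omega^\circ$ to $x \in \widehat\Omega$, lies in $\widehat\Omega^\circ$; translating back, $\sqrt t\, x \in X_\Omega^\circ$. (One must be mildly careful that interior in $\widehat\Omega$ translates to interior of $X_\Omega$ in $\C^n$, but this is routine since $\mu$ is a submersion away from the coordinate subspaces and the coordinate subspaces themselves are handled by the same segment argument applied to the surviving coordinates.)

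For the concave case, $\Omega \subset [0,\infty)^n$ is a domain with $[0,\infty)^n \setminus \Omega$ convex. Here I would first note $\vec 0 \in \Omega$: otherwise $\vec 0$ lies in the convex set $[0,\infty)^n\setminus\Omega$, but $\Omega$ being a domain with nonempty interior and $[0,\infty)^n\setminus\Omega$ convex forces $\Omega$ to be "closed toward the origin" in the sense that $\vec v \in \Omega, t\in[0,1] \Rightarrow t\vec v \in \Omega$ — indeed if $t\vec v \notin \Omega$ then $t\vec v$ is in the convex complement, and since $[0,\infty)^n$ is unbounded in the direction of $\vec v$ while $\Omega$ is bounded, the ray from $t\vec v$ through $\vec v$ eventually leaves $\Omega$, so by convexity of the complement $\vec v$ itself would be in the complement, a contradiction. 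This "star-shapedness toward $\vec 0$" of $\Omega$, combined with boundedness and the domain hypothesis, gives $\vec 0 \in \Omega^\circ$. Then for $x \in X_\Omega$ and $t\in[0,1)$, the point $t(|x_1|^2,\dots)$ scaled appropriately lands in $\Omega^\circ$ by the same segment-to-an-interior-point argument, so $\sqrt t\, x \in X_\Omega^\circ$.

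The main obstacle, and the only genuinely delicate point, is the bookkeeping at the coordinate subspaces: a point $x \in X_\Omega$ with some $x_i = 0$ has $\mu(x)$ on the boundary face $\{v_i = 0\}$ of $[0,\infty)^n$, and one needs $\sqrt t\, x$ to be in the topological interior of $X_\Omega$ \emph{as a subset of $\C^n$}, not merely in some relative interior. The resolution is that interiority in $\C^n$ only requires $\mu(\sqrt t\, x)$ to be an interior point of $\Omega$ relative to $[0,\infty)^n$ (because $\mu^{-1}$ of a point like $(c_1,\dots,c_{n-1},0)$ with $c_j>0$ is genuinely in the interior of $\mu^{-1}$ of a neighborhood), and the segment from $\vec 0$ (an interior point of $\Omega$ in the convex/concave case as established) to $\mu(x)$ has all its non-endpoint points in this relative interior — standard convexity. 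I would state this compatibility as a short preliminary observation and then the two cases follow cleanly. I expect the whole argument to take well under a page once the relative-interior subtlety is pinned down.
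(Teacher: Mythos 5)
Your convex case is essentially correct, and it takes a slightly different route from the paper: you place the origin in $\widehat{\Omega}^{\circ}$ and use the standard fact that the half-open segment from an interior point of a convex set to any of its points stays in the interior, whereas the paper instead approximates a given point of $\widehat{\Omega}$ by a nearby interior point $y$ and uses sign-flip invariance of $\widehat{\Omega}^{\circ}$ to put the whole box $\{(z_1,\ldots,z_n):|z_i|\leq y_i\}$ inside $\widehat{\Omega}^{\circ}$. Both work; for your version, the "one then checks" step for $\vec{0}\in\widehat{\Omega}^{\circ}$ is genuinely easy (take an interior point with all coordinates nonzero, note all $2^n$ sign-flips are interior, and their convex hull is a box with $\vec{0}$ in its interior), and your compatibility remark that a point in the relative interior of $\Omega$ in $[0,\infty)^n$ pulls back under $\mu$ to an interior point of $X_{\Omega}$ in $\C^n$ is exactly the right bookkeeping.

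The concave case, however, has a genuine gap. The final step invokes "the same segment-to-an-interior-point argument," but that lemma is a statement about convex sets and $\Omega$ is not convex here; knowing $\vec{0}\in\Omega^{\circ}$ and $\mu(x)\in\Omega$ does not by itself place $t\mu(x)$ in the relative interior of $\Omega$, and your ray argument only yields the non-strict conclusion $t\mu(x)\in\Omega$. Relatedly, your justification that $\vec{0}\in\Omega^{\circ}$ (star-shapedness toward $\vec{0}$ plus boundedness plus the domain hypothesis) is insufficient: the cone over a box with strictly positive coordinates is star-shaped about the origin and has nonempty interior, yet the origin is not in its relative interior; one must use convexity of the complement again (if $\vec{0}\in\overline{[0,\infty)^n\setminus\Omega}$, that closed convex set contains all far points and hence all of $[0,\infty)^n$, forcing $\Omega^{\circ}=\varnothing$). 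The missing idea — and how the paper argues — is to work with the convex complement throughout: if $t\mu(x)\notin\Omega^{\circ}$, then $t\mu(x)\in\overline{[0,\infty)^n\setminus\Omega}$, and since this closed convex set contains every point of $[0,\infty)^n$ far from the origin, convexity forces it to contain the translated orthant $t\mu(x)+[0,\infty)^n$, whose relative interior contains $\mu(x)$; hence $\mu(x)\notin\overline{\Omega^{\circ}}$, contradicting that $\Omega$ is the closure of an open set. With this replacement (followed by your compatibility observation about $\mu$) the concave case closes, and it never actually needs $\vec{0}\in\Omega^{\circ}$.
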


\begin{proof}
First suppose that $X=X_{\Omega}$ is a convex toric domain; thus $\Omega\subset [0,\infty)^n$ has the property that $\hat{\Omega}$ (as defined in (\ref{omegahat})) is a convex domain in $\R^{n}$.  It is easy to see that $X_{\Omega}$ is strictly star-shaped if and only if $\hat{\Omega}$ is strictly star-shaped.  

Let us re-emphasize that ``domains'' are by definition closures of bounded open sets.  Consequently if $x=(x_1,\ldots,x_n)\in \hat{\Omega}$ and $0<\ep<1$, we can find $(y_1,\ldots,y_n)\in \hat{\Omega}^{\circ}$ such that $\frac{y_i}{x_i}>1-\ep$ for all $i$ such that $x_i\neq 0$.  Now $\hat{\Omega}^{\circ}$ is convex and is invariant under reversal of the sign of any subset of the coordinates of $\R^n$, so it follows that $\big\{(z_1,\ldots,z_n)\,|\,|z_i|\leq y_i\big\}\subset \hat{\Omega}^{\circ}$. In particular this implies that $(1-\ep)x\in \hat{\Omega}^{\circ}$.  Since $\ep$ can be taken arbitrarily small this proves that $\hat{\Omega}$ is strictly star-shaped and hence that $X_{\Omega}$ is strictly star-shaped.

Now let us turn to the case that $X=X_{\Omega}$ is a concave toric domain, so that $\Omega\subset [0,\infty)^n$ has the property that $[0,\infty)^n\setminus \Omega$ is convex.  It is easy to see that $X_{\Omega}$ is strictly star-shaped if and only if $\Omega$ has the property that $t\Omega\subset \Omega^{\circ}$ for all $t\in [0,1)$, where the interior $\Omega^{\circ}$ is taken relative to $[0,\infty)^n$.  Suppose for contradiction that $x=(x_1,\ldots,x_n)\in \Omega$ and $tx\notin \Omega^{\circ}$ where $0\leq t<1$.  Then \[ tx\in [0,\infty)^n\setminus \Omega^{\circ}=\overline{[0,\infty)^n\setminus \Omega}.\]  Now $\overline{[0,\infty)^n\setminus \Omega}$ is a convex set which (since $\Omega$ is compact) contains all points sufficiently far from the origin in addition to containing $tx$, in view of which \[ \big\{(y_1,\ldots,y_n)\,|\,y_i\geq tx_i\mbox{ for all }i\big\} \subset \overline{[0,\infty)^n\setminus \Omega}.\]  The set on the left hand side above contains our point $x$ in its interior, so we would have \[ x\in \Omega\cap \left(\overline{[0,\infty)^n\setminus \Omega}\right)^{\circ}.\]  
But \[ \left(\overline{[0,\infty)^n\setminus \Omega}\right)^{\circ}=\left([0,\infty)^n\setminus \Omega^{\circ}\right)^{\circ}=[0,\infty)^n\setminus \overline{\Omega^{\circ}}, \] so we would have $x\in \Omega\cap \left([0,\infty)^n\setminus \overline{\Omega^{\circ}}\right)$, which is impossible since $\Omega$ is the closure of an open subset.
\end{proof}

We now fulfill the main goal of this section by proving Theorem \ref{dellubounds}.

\begin{proof}[Proof of Theorem \ref{dellubounds} (a)]
We will show that $\alpha>\dellu(X)$ implies that $\alpha\geq \frac{\|(1,1)\|^{*}_{\Omega}}{\max\{\|(1,0)\|^{*}_{\Omega},\|(0,1)\|^{*}_{\Omega}}$.  
Let $\alpha>\dellu(X)$.  Then there is an ellipsoid $E$ and embeddings $f\co X\hookrightarrow E$ and $g\co E\hookrightarrow \alpha X^{\circ}$ such that $g\circ f$ is unknotted. By slightly perturbing $E$ we may assume that $E$ is irrational (\emph{i.e.} $E=E(a,b)$ where $\frac{b}{a}\notin \mathbb{Q}$); this ensures that $E$ is a tame star-shaped domain. We will apply Corollary \ref{rankcor} with $k=3$.  Note that, for each $L\in \R$, $\dim CH_{3}^{L}(E^{\circ},\lambda_0)\leq 1$ by Lemma \ref{closedopen} and \cite[Section 3]{BCE}.  So by Corollary \ref{rankcor}, we must have $\op{Rank}(\Phi^L\co CH_{3}^{L}(\alpha X^{\circ},\lambda_0)\to CH_{3}^{L}(X^{\circ},\lambda_0))\leq 1$ for all $L\in \R$.  By Lemma \ref{opentriangle}, then, $\imath_{\alpha^{-1}L,L}\co CH^{\alpha^{-1}L}(X^{\circ},\lambda_0)\to CH^{L}(X^{\circ},\lambda_0)$ has rank at most one.  

If we had $\alpha < \frac{\|(1,1)\|^{*}_{\Omega}}{\max\{\|(1,0)\|^{*}_{\Omega},\|(0,1)\|^{*}_{\Omega}\}}$, then Lemma \ref{lem:keyconvex}  would allow us to find a real number $L$ such that $\imath_{\alpha^{-1}L,L}\co CH^{\alpha^{-1}L}(X^{\circ},\lambda_0)\to CH^{L}(X^{\circ},\lambda_0)$ is an isomorphism of two-dimensional vector spaces, a contradiction which proves that $\alpha\geq \frac{\|(1,1)\|^{*}_{\Omega}}{\max\{\|(1,0)\|^{*}_{\Omega},\|(0,1)\|^{*}_{\Omega}\}}$, as desired.
\end{proof}

\begin{proof}[Proof of Theorem \ref{dellubounds} (b)] This follows by essentially the same argument, using $k=4$ in the application of Corollary \ref{rankcor} in place of $k=3$, and appealing to Lemma \ref{lem:keyconcave} instead of Lemma \ref{lem:keyconvex}.  This yields the result since any irrational ellipsoid $E$ has no periodic Reeb orbits on its boundary with Conley-Zehnder index equal to $4$ and hence obeys $CH_{4}^{L}(E^{\circ},\lambda_0)=\{0\}$ for all $L\in \R$.\end{proof}

\subsection{Products}\label{prodsect}
The goal of this section is to show that Theorem \ref{4d} extends to products of convex toric domains with large ellipsoids of arbitrary even dimension.
\begin{theorem}\label{anyd}
Let $X\subset \C^2$ belong to any of the following classes of domains:
\begin{itemize}\item[(i)] All convex toric domains $X$ such that, for some $c>0$, $B^4(c)\subsetneq X\subset P(c,c)$.
\item[(ii)] All polydisks $P(a,b)$ for $a\leq b<2a$.
\end{itemize}  Then there exist numbers $\alpha >1$ and $R>0$  and a knotted symplectic embedding $\phi\co X\times E(b_1,\ldots,b_{n-2})\to \alpha \bigl(X\times E(b_1,\ldots,b_{n-2})\bigr)^{\circ}$ for any $b_1,\ldots,b_{n-2}$ with each $b_i\geq R$.
\end{theorem}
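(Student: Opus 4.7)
My plan is to produce the embedding $\phi$ by extending the $4$-dimensional knotted embedding from Theorem \ref{4d} trivially on the $E(b_1,\ldots,b_{n-2})$ factor, then deduce knottedness by applying Corollary \ref{rankcor} with a product middle domain and reducing to the $4$-dimensional computations of Section \ref{sh}.

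Cases (i) and (iv) of Theorem \ref{4d} (proven in Section \ref{4proof}) produce the knotted map $\psi\co X\hookrightarrow \alpha X^{\circ}$ as a composition $X\xrightarrow{f_0} E_0 \xrightarrow{g_0}\alpha X^{\circ}$ through an explicit ellipsoid $E_0=E(a_1,a_2)\subset\C^2$.  Given any $b_1,\ldots,b_{n-2}>0$, let $\iota\co E(b_1,\ldots,b_{n-2})\hookrightarrow E(\alpha b_1,\ldots,\alpha b_{n-2})^{\circ}$ be the inclusion, and define
\[
 \phi:=(g_0\circ f_0)\times \iota\co X\times E(b_i)\longrightarrow \alpha X^{\circ}\times E(\alpha b_i)=\alpha\bigl(X\times E(b_i)\bigr)^{\circ}.
\]
This factors through the middle domain $M:=E_0\times E(b_1,\ldots,b_{n-2})$, which is a convex toric domain in $\C^n$ and therefore strictly star-shaped by Proposition \ref{cvxstar}. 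By Corollary \ref{rankcor}, to show that $\phi$ is knotted it suffices to find $L\in\R$ and $k\in\Z$ such that
\[
 \dim CH^L_k(M^{\circ},\lambda_0)<\op{Rank}\Bigl(\Phi^L\co CH^L_k\bigl(\alpha(X\times E(b_i))^{\circ},\lambda_0\bigr)\to CH^L_k\bigl((X\times E(b_i))^{\circ},\lambda_0\bigr)\Bigr),
\]
and by the higher-dimensional analog of Lemma \ref{opentriangle} this rank equals $\op{Rank}(\imath_{\alpha^{-1}L,L})$ on $CH^L_k((X\times E(b_i))^{\circ})$.

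The core of the argument is to transport the $4$-dimensional calculation underlying Theorem \ref{dellubounds}(a) (or its polydisk specialization) to the product domains, by choosing $R$ so large that the full $4$-dimensional action window from Lemma \ref{lem:keyconvex} (applied to $X=X_{\Omega'}$ in case (i), or $\Omega'=[0,a]\times[0,b]$ in case (ii)) lies strictly below $R$. Analogously to Lemmas \ref{lem:convexebarcode}, \ref{lem:concavebarcode}, and \ref{lem:keyconvex}, one builds Morse--Bott perturbations of $X\times E(b_i)$, $M$, and $\alpha(X\times E(b_i))$ whose Reeb orbits of action at most $L<R$ are in bijection with the Reeb orbits (of action at most $L$) on $\partial X$, $\partial E_0$, and $\partial(\alpha X)$ respectively: any orbit that winds nontrivially in an $E(b_i)$ direction has action at least $\min_i b_i\geq R$. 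Under this correspondence the Conley--Zehnder index in $\C^n$ of an orbit lifted from $\C^2$ equals its $4$-dimensional Conley--Zehnder index plus a fixed shift $\sigma=n-2$ coming from the transverse elliptic directions contributed by the $E(b_i)$ factor, as in the combinatorial description of $CH$ for convex toric domains from \cite{GuH}.

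Granted this reduction, Lemma \ref{lem:keyconvex} transports to show that for $L_1<L_2$ in the $4$-dimensional window the map $\imath_{L_1,L_2}\co CH^{L_1}_{k}((X\times E(b_i))^{\circ})\to CH^{L_2}_{k}((X\times E(b_i))^{\circ})$ with $k=3+\sigma=n+1$ is an isomorphism of two-dimensional spaces, while $\dim CH^L_{n+1}(M^{\circ})\leq 1$ because the $4$-dimensional factor of $M$ is an ellipsoid (the only relevant generators come from $\partial E_0$, of which only one produces a nonzero class in each filtered degree). The proof of Theorem \ref{4d} in Section \ref{4proof} already shows that in both of the applicable classes of $X$ one can choose $L$ so that $\alpha^{-1}L$ also lies inside the window, giving $\op{Rank}(\imath_{\alpha^{-1}L,L})=2>1\geq \dim CH^L_{n+1}(M^{\circ})$ and hence that $\phi$ is knotted. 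The hardest step is the rigorous justification of the Morse--Bott bijection and the index shift $\sigma=n-2$ for product convex toric domains -- essentially the higher-dimensional analog of Lemmas \ref{lem:convexebarcode}--\ref{lem:keyconvex} -- which I would carry out by a direct perturbation of the Reeb flow on the product boundary, verifying that the extra $(n-2)$ elliptic directions contribute uniformly via a standard Robbin--Salamon computation, exactly as in the ellipsoid indices used throughout Section \ref{sh}.
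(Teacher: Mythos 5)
Your proposal is correct and follows essentially the same route as the paper: take the product of the four-dimensional composition $X\hookrightarrow E(a_1,a_2)\hookrightarrow\alpha X^{\circ}$ with the identity on $E(b_1,\ldots,b_{n-2})$, then combine Corollary \ref{rankcor} with a rank-two statement for $\imath_{L_1,L_2}$ in degree $n+1$ on the product (the paper's Proposition \ref{prodapprox} and Lemma \ref{lem:keyconvexprod}) and a dimension-at-most-one bound for the product of ellipsoids (the paper's Lemma \ref{ellipsoidproduct}, proved via an $\ell^p$-type smoothing of the corner with $a_2/a_1$ irrational), taking $R$ at least $\|(1,1)\|^{*}_{\Omega}$ so that the low-action orbits see only the four-dimensional factor. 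The technical steps you flag as hardest are exactly the ones the paper carries out in those three statements, with the same index shift $n-2$ from the transverse elliptic directions.
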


(Specific values for $R$ in the various cases will appear in the proof.)

In order to prove this we will first establish some basic facts concerning the relationship of the filtered positive $S^1$-equivariant symplectic homology of a product of two convex toric domains to that of the factors.  Observe that the product of two convex toric domains is a convex toric domain: we have $X_{\Omega_1}\times X_{\Omega_2}=X_{\Omega_1\times \Omega_2}$.  Also notice that, if $\Omega_1\subset \R^m$ and $\Omega_2\subset \R^n$ and if we express general elements of $\R^{n+m}$ as $(\alpha,\beta)$ where $\alpha\in \R^m$ and $\beta\in \R^n$, then $\|(\alpha,\beta)\|_{\Omega_1\times \Omega_2}^{*}=\|\alpha\|_{\Omega_1}^{*}+\|\beta\|_{\Omega_2}^{*}$.

\begin{proposition} \label{prodapprox}   Let $X_{\Omega_1}\subset \C^2$ and $X_{\Omega_2}\subset \C^{n-2}$ be two convex toric domains, and assume that $\min\{\|e_i\|_{\Omega_2}^{*}\}> \|(1,1)\|_{\Omega_1}^{*}$ where $\{e_1,\ldots,e_{n-2}\}$ is the standard basis for $\R^{n-2}$.  Then for any $\delta,\eps >0$ there is a tame star-shaped domain $Z^{\delta,\eps}$ such that $(1-\eps)X_{\Omega_1\times \Omega_2}\subset Z^{\delta,\eps}\subset X_{\Omega_1\times \Omega_2}^{\circ}$ and such that, for \[ \max\left\{\norm{(1,0)}^*_{\Omega_1}, \norm{(0,1)}^*_{\Omega_1} \right\}+\delta\leq L_1<L_2\leq \norm{(1,1)}^{*}_{\Omega_1}-\delta,\] the map
	\[
		\imath_{L_1,L_2}\co CH_{n+1}^{L_1}(Z^{\delta,\eps},\lambda_0) \too CH_{n+1}^{L_2}(Z^{\delta,\eps},\lambda_0)
	\] is an isomorphism of two-dimensional vector spaces.
\end{proposition}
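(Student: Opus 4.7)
The plan is to adapt the argument behind Lemma \ref{lem:convexebarcode} to the higher-dimensional product $X_{\Omega_1\times\Omega_2}\subset\C^n$. First I would construct $Z^{\delta,\eps}$ by performing a small smoothing of $(1-\eps)X_{\Omega_1\times\Omega_2}$ followed by a Morse--Bott non-degenerate perturbation exactly as in Steps 1--3 of the proof of \cite[Lemma 2.5]{GuH}, carried out now in dimension $2n$. This produces a tame star-shaped domain satisfying $(1-\eps)X_{\Omega_1\times\Omega_2}\subset Z^{\delta,\eps}\subset X_{\Omega_1\times\Omega_2}^{\circ}$ and reduces the remaining work to an analysis of closed Reeb orbits on $\partial Z^{\delta,\eps}$.

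The crucial step is to enumerate the good Reeb orbits of $\lambda_0|_{\partial Z^{\delta,\eps}}$ having action at most $\|(1,1)\|_{\Omega_1}^{*}$ and Conley--Zehnder index at most $n+2$. This is where the hypothesis $\min_i\|e_i\|_{\Omega_2}^{*}>\|(1,1)\|_{\Omega_1}^{*}$ enters: any Reeb orbit on $\partial Z^{\delta,\eps}$ that projects nontrivially to the $X_{\Omega_2}$ factor has action bounded below (up to an arbitrarily small perturbation error controlled by $\delta$) by $\min_i\|e_i\|_{\Omega_2}^{*}$, and thus falls outside the relevant window. Consequently every orbit we must count is a perturbation of a Morse--Bott family sitting over the axis portion of $\partial X_{\Omega_1}\times\{0\}\subset\partial(X_{\Omega_1}\times X_{\Omega_2})$. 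Arguing as in Lemma \ref{lem:convexebarcode}, the orbits arising in the asserted action window come from the two axis vertices $(a,0,0,\ldots,0)$ and $(0,b,0,\ldots,0)$ of $\Omega_1\times\Omega_2$ (where $(a,0)$ and $(0,b)$ are the axis intercepts of $\Omega_1$). Each of these Morse--Bott families contributes, after perturbation, a single good orbit with action in $(\|(1,0)\|_{\Omega_1}^{*}-\delta,\|(1,0)\|_{\Omega_1}^{*}+\delta)$ or $(\|(0,1)\|_{\Omega_1}^{*}-\delta,\|(0,1)\|_{\Omega_1}^{*}+\delta)$ respectively, and with Conley--Zehnder index exactly $n+1$. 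The shift from the index $3$ appearing in Lemma \ref{lem:convexebarcode} to $n+1$ arises from the additional $n-2$ transverse complex directions: the elliptic Morse--Bott perturbation in these directions is arranged to have small positive rotation, so that each contributes $1$ to the Conley--Zehnder index, giving $3+(n-2)=n+1$. One further checks that no good orbit of index $n$ appears at all in the action window, and that any orbit of index $n+2$ (coming from the interior corner of $\Omega_1$) has action strictly greater than $\|(1,1)\|_{\Omega_1}^{*}-\delta$.

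Once this orbit enumeration is in place, the conclusion follows exactly as in Lemma \ref{lem:convexebarcode}: via Lemma \ref{complex-exists}, for $L$ in the asserted interval the filtered subcomplex satisfies $CC_{n+1}^{L}(Z^{\delta,\eps},\lambda_0)\cong\Q^{2}$ and $CC_{n}^{L}(Z^{\delta,\eps},\lambda_0)=CC_{n+2}^{L}(Z^{\delta,\eps},\lambda_0)=0$, and for $L_1<L_2$ in the interval the inclusion $CC_{n+1}^{L_1}\hookrightarrow CC_{n+1}^{L_2}$ is the identity, so passing to homology gives the claimed isomorphism $\imath_{L_1,L_2}$.

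The main obstacle is the Conley--Zehnder index bookkeeping for the product Morse--Bott model, particularly ensuring that the two axis orbits have index exactly $n+1$ and that no stray orbits of index $n$ or $n+2$ intrude into the relevant action window. This requires combining the planar CZ contribution from the smoothing/perturbation of $\partial X_{\Omega_1}$ with the elliptic contribution from the Morse--Bott perturbation in the $n-2$ normal complex directions, together with the action estimate provided by the standing hypothesis on $\|e_i\|_{\Omega_2}^{*}$.
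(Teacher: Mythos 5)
Your proposal is correct and follows essentially the same route as the paper: both apply Steps 1--3 of the proof of \cite[Lemma 2.5]{GuH} to the product convex toric domain $X_{\Omega_1\times\Omega_2}\subset\C^n$, use the hypothesis $\min_i\|e_i\|_{\Omega_2}^{*}>\|(1,1)\|_{\Omega_1}^{*}$ to exclude all orbits with nontrivial $\Omega_2$-component from the action window, identify the two index-$(n+1)$ generators and the possible index-$(n+2)$ generator, and conclude via Lemma \ref{complex-exists} exactly as in Lemma \ref{lem:convexebarcode}. The only difference is cosmetic: you spell out the Conley--Zehnder bookkeeping ($3+(n-2)=n+1$ from the transverse elliptic directions) where the paper simply quotes the grading furnished by the construction in \cite{GuH}.
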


\begin{proof}
As in the proof of Lemma \ref{lem:convexebarcode}, Steps 1, 2, and 3 of the proof of \cite[Lemma 2.5]{GuH} provide a tame star-shaped domain $Z^{\delta,\eps}$ such that $(1-\eps)X_{\Omega_1\times \Omega_2}\subset Z^{\delta,\eps}\subset X_{\Omega_1\times \Omega_2}^{\circ}$ and such that the Reeb orbits of $\lambda_0|_{\partial Z^{\delta,\eps}}$ having action at most  $\|(1,1)\|_{\Omega_1}^{*}$ and Conley-Zehnder index at most $n+2$ consist of:
	\begin{itemize}
	\item no orbits of index $n$;
	\item two orbits $(1,0),(0,1)$ in degree $n+1$, with actions in the intervals $\left(\|(1,0)\|_{\Omega_1}^{*}-\delta, \|(1,0)\|_{\Omega_1}^{*}+\delta\right)$ and$\left(\|(0,1)\|_{\Omega_1}^{*}-\delta, \|(0,1)\|_{\Omega_1}^{*}+\delta\right)$, respectively; and 
	\item at most one orbit $(1,1)$ of index $n+2$, with filtration level greater than $\|(1,1)\|_{\Omega_1}^{*}-\delta$.
	\end{itemize} 
	(In general we would potentially obtain orbits with actions approximately $\|(\alpha,\beta)\|_{\Omega_1\times \Omega_2}^{*}= \|\alpha\|_{\Omega_1}^{*}+\|\beta\|_{\Omega_2}^{*}$ for arbitrary $\alpha\in \N^2$ and $\beta\in \N^{n-2}$, but our restriction to filtration levels less than or equal to $\|(1,1)\|_{\Omega_1}^{*}$, which is assumed to be less than each $\|e_i\|_{\Omega_2}^{*}$ forces $\beta$ to be zero.)
	
	So as in the proof of Lemma \ref{lem:convexebarcode}, for   $L$ in the interval  $\left[\max\{\|(1,0)\|_{\Omega}^{*},\|(0,1)\|_{\Omega}^{*}\}+\delta\,,\,\|(1,1)\|_{\Omega}^{*}-\delta\right]$ we have $CC_{n}^{L}(Z^{\delta,\eps},\lambda_0)=CC_{n+2}^{L}(Z^{\delta,\eps},\lambda_0)=\{0\}$ and $CC_{n+1}^{L}(Z^{\delta,\eps},\lambda_0)\cong \mathbb{Q}^2$, and moreover if $L_1,L_2$ both lie in this interval with $L_1<L_2$ then the inclusion of complexes $CC_{n+1}^{L_1}(Z^{\delta,\eps},\lambda_0)\to CC_{n+1}^{L_2}(Z^{\delta,\eps},\lambda_0)$ is an isomorphism.  So passing to homology shows that, for $\max\{\|(1,0)\|_{\Omega}^{*},\|(0,1)\|_{\Omega}^{*}\}+\delta\leq L_1<L_2\leq \|(1,1)\|_{\Omega}^{*}-\delta$, the inclusion-induced map $\imath_{L_1,L_2}\co CH^{L_1}_{n+1}(Z^{\delta,\eps},\lambda_0)\to CH^{L_2}_{n+1}(Z^{\delta,\eps},\lambda_0)$ is an isomorphism of two-dimensional vector spaces.
\end{proof}

\begin{lemma}\label{lem:keyconvexprod}
	Let $X_{\Omega_1}$ be a convex toric domain in $\C^2$ with the property that $\max\left\{\norm{(1,0)}^*_{\Omega_1}, \norm{(0,1)}^*_{\Omega_1} \right\}<\norm{(1,1)}_{\Omega_1}^{*}$, and let $X_{\Omega_2}$ be a convex toric domain in $\C^{n-2}$ such that $\min_{1\leq i\leq n-2}\|e_i\|_{\Omega_2}^{*}>\|(1,1)\|_{\Omega_1}^{*}$. Then for all small $\eta>0$,
	\[
		\op{Rank}\left(CH_{n+1}^{\max\left\{\norm{(1,0)}^*_{\Omega_1}, \norm{(0,1)}^*_{\Omega_1} \right\}+\eta}\left((X_{\Omega_1}\times X_{\Omega_2})^{\circ},\lambda_0\right) \too CH_{n+1}^{\norm{(1,1)}^*_\Omega-\eta}\left((X_{\Omega_1}\times X_{\Omega_2})^{\circ},\lambda_0\right)\right)=2.
	\]
\end{lemma}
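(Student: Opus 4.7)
The approach is to mimic the proof of Lemma \ref{lem:keyconvex} essentially verbatim, with Proposition \ref{prodapprox} playing the role of Lemma \ref{lem:convexebarcode} and with degree $n+1$ replacing degree $3$. Proposition \ref{prodapprox} has been set up so that its Reeb-orbit conclusions, and in particular its isomorphism statement for the inclusion-induced maps on the approximating complexes, have exactly the same formal shape as those of Lemma \ref{lem:convexebarcode}, so no new geometric input is required.

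Concretely, I would first fix $\eta>0$ so small that one can choose $\delta>0$ with both $L_1:=\max\{\|(1,0)\|^{*}_{\Omega_1},\|(0,1)\|^{*}_{\Omega_1}\}+\eta$ and $L_2:=\|(1,1)\|^{*}_{\Omega_1}-\eta$ lying inside the interval $\bigl[\max\{\|(1,0)\|^{*}_{\Omega_1},\|(0,1)\|^{*}_{\Omega_1}\}+2\delta,\,\|(1,1)\|^{*}_{\Omega_1}-2\delta\bigr]$; this is possible for $\eta$ small because of the strict inequality $\max\{\|(1,0)\|^{*}_{\Omega_1},\|(0,1)\|^{*}_{\Omega_1}\}<\|(1,1)\|^{*}_{\Omega_1}$ assumed in the lemma. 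Then I would pick a sequence $\eps_m\searrow 0$ with $Z^{\delta,\eps_m}\subset (Z^{\delta,\eps_{m+1}})^{\circ}$ for every $m$; by Proposition \ref{prodapprox} these are tame star-shaped domains forming a cofinal subsystem within the inverse system defining $CH^{L}\bigl((X_{\Omega_1}\times X_{\Omega_2})^{\circ},\lambda_0\bigr)$ for each $L\in\{L_1,L_2\}$.

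Next, the key step is to show that for each $L\in\{L_1,L_2\}$ and all sufficiently large $m$, the transfer map
\[
(\Phi^{L})_m \co CH_{n+1}^{L}(Z^{\delta,\eps_{m+1}},\lambda_0) \longrightarrow CH_{n+1}^{L}(Z^{\delta,\eps_m},\lambda_0)
\]
is an isomorphism of two-dimensional vector spaces. Both source and target have dimension two by Proposition \ref{prodapprox}, so it suffices to prove injectivity. For this I would reuse the chain of inclusions
\[
(1-\eps_m)Z^{\delta,\eps_{m+1}}\subset (1-\eps_m)X_{\Omega_1\times\Omega_2}^{\circ}\subset (Z^{\delta,\eps_m})^{\circ}\subset Z^{\delta,\eps_m}\subset (Z^{\delta,\eps_{m+1}})^{\circ}
\]
exactly as in the proof of Lemma \ref{lem:keyconvex}, and apply Lemma \ref{lem:commutativetriangle} to identify the composite transfer map along this chain with the inclusion-induced map $\imath_{L,(1-\eps_m)^{-1}L}$ on $CH^{*}_{n+1}(Z^{\delta,\eps_{m+1}},\lambda_0)$, up to rescaling isomorphisms. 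As soon as $m$ is large enough that $(1-\eps_m)^{-1}L_2<\|(1,1)\|^{*}_{\Omega_1}-\delta$, Proposition \ref{prodapprox} makes this composite an isomorphism of two-dimensional vector spaces, forcing $(\Phi^{L})_m$ to be injective and hence an isomorphism.

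Passing to the inverse limit then yields isomorphisms $CH^{L_i}_{n+1}\bigl((X_{\Omega_1}\times X_{\Omega_2})^{\circ},\lambda_0\bigr)\cong CH^{L_i}_{n+1}(Z^{\delta,\eps_m},\lambda_0)$ for $i=1,2$ and $m$ large, under which the map $\imath_{L_1,L_2}$ of the lemma corresponds to its counterpart on the approximating domain $Z^{\delta,\eps_m}$. That counterpart is an isomorphism of two-dimensional vector spaces by Proposition \ref{prodapprox}, so the map on $(X_{\Omega_1}\times X_{\Omega_2})^{\circ}$ has rank $2$, as desired. Because the argument is a direct transcription of the proof of Lemma \ref{lem:keyconvex}, no substantive new obstacle appears; the only potentially delicate point, namely ensuring that the numerical thresholds leave room for the choices of $\delta$ and $m$ just described, is automatic from the strict inequalities built into the hypotheses.
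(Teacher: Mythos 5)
Your proposal is correct and is exactly the paper's argument: the paper proves this lemma by stating that, given Proposition \ref{prodapprox}, the proof of Lemma \ref{lem:keyconvex} carries over verbatim with degree $n+1$ in place of degree $3$, which is precisely the transcription you carry out (cofinal system $Z^{\delta,\eps_m}$, the chain of inclusions, Lemma \ref{lem:commutativetriangle}, and the dimension count).
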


\begin{proof}
Given Proposition \ref{prodapprox}, this is proven in exactly the same way as Lemma \ref{lem:keyconvex}.
\end{proof}

\begin{lemma}\label{ellipsoidproduct} If $E(a_1,a_2)\subset \R^4$ is an ellipsoid with $1<\frac{a_2}{a_1}\notin \Q$, and if $E(b_1,\ldots,b_{n-2})\subset \R^{2n-4}$ is any ellipsoid, then
$\dim CH^{L}_{n+1}\Bigl(\bigl(E(a_1,a_2)\times E(b_1,\ldots,b_{n-2})\bigr)^{\circ},\lambda_0\Bigr)\leq 1$ for all $L<\min\{b_i\}$.
\end{lemma}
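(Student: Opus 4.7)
The plan is to construct a cofinal family of tame star-shaped approximations to $P^\circ$, where $P=E(a_1,a_2)\times E(b_1,\ldots,b_{n-2})$, enumerate the relevant low-action generators, and then establish a nonvanishing differential that gives the required dimension bound.

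We distinguish two cases depending on whether $\min\{b_i\}\leq a_2$ or $\min\{b_i\}>a_2$.  In the first case any $L<\min\{b_i\}$ automatically satisfies $L<a_2$, and then an orbit $(\vec{\alpha},\vec{\beta})\in\N^2\times\N^{n-2}$ with action at most $L$ must have $\vec{\beta}=\vec 0$ (else $\|\vec{\beta}\|^*_{\Omega_2}\geq\min\{b_i\}>L$) and also $\vec{\alpha}=(k,0)$ for some $k\geq 0$ (else $\|\vec{\alpha}\|^*_{\Omega_1}\geq a_2>L$); of these, only $\vec{\alpha}=(1,0)$ has Conley--Zehnder index $n+1$, so $\dim CC^L_{n+1}\leq 1$ and the result follows immediately.

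So assume $\min\{b_i\}>a_2$, in which case the hypothesis of Proposition \ref{prodapprox} is satisfied with $\Omega_1$ the $2$-simplex defining $E(a_1,a_2)$ and $\Omega_2$ the $(n-2)$-simplex defining $E(b_1,\ldots,b_{n-2})$.  This yields a cofinal family of tame star-shaped domains $Z^{\delta,\eps}\subset P^\circ$ whose generators in filtration range $(-\infty,a_2]$ and Conley--Zehnder index at most $n+2$ consist of: two index-$(n+1)$ orbits $((1,0),\vec 0)$ and $((0,1),\vec 0)$ (with actions $\sim a_1$ and $\sim a_2$), and at most one index-$(n+2)$ orbit $((1,1),\vec 0)$ (with action $\sim a_2$).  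For $L<a_2$ only the first contributes, so $\dim CH^L_{n+1}(Z^{\delta,\eps})\leq 1$ immediately.  For $a_2\leq L<\min\{b_i\}$ we have $\dim CC^L_{n+1}(Z^{\delta,\eps})=2$ and $\dim CC^L_{n+2}(Z^{\delta,\eps})=1$, so the required bound will follow from $\partial((1,1),\vec 0)\neq 0$.

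To show that the differential is nonzero, choose the smoothing data and a compatible almost complex structure $J$ on the symplectization of $\partial Z^{\delta,\eps}$ so that, in a neighborhood of the $\vec{\beta}=0$ locus, both the contact form and $J$ respect the product decomposition $\mathbb{C}^2\times\mathbb{C}^{n-2}$.  A standard maximum principle argument then forces any $J$-holomorphic cylinder with asymptotic orbits in the $\vec{\beta}=0$ locus to have constant projection to $\mathbb{C}^{n-2}$, so that the moduli space defining $\partial((1,1),\vec 0)$ is in bijection with the analogous moduli space in the symplectization of a smoothing of $\partial E(a_1,a_2)$ alone.  In that $4$-dimensional setting the same enumeration produces two degree-$3$ generators and one degree-$4$ generator in the same action range, and the known calculation $\dim CH^L_3(E(a_1,a_2)^\circ)\leq 1$ (see e.g.\ \cite[Section 3]{BCE}) forces the $4$D analog of $\partial(1,1)$ to be nonzero; this transfers via the bijection to give $\partial((1,1),\vec 0)\neq 0$ in the product.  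Passing to the inverse limit over $\{Z^{\delta,\eps}\}$ then yields $\dim CH^L_{n+1}(P^\circ,\lambda_0)\leq 1$.

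The main technical obstacle is the rigorous implementation of the localization step: the smoothing $Z^{\delta,\eps}$ must simultaneously preserve the product splitting near the $\vec{\beta}=0$ locus and yield a non-degenerate Reeb flow, and the maximum principle argument requires careful control on a plurisubharmonic function along this locus compatible with the product structure.
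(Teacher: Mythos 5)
Your reduction to the toric model creates a problem that the paper deliberately avoids, and the step you use to resolve it is not actually carried out. In the GuH-style smoothing of Proposition \ref{prodapprox} (which, incidentally, only enumerates orbits of action at most $\|(1,1)\|^{*}_{\Omega_1}=a_2$ and index at most $n+2$, so your generator counts for $a_2\leq L<\min\{b_i\}$, and your entire first case $\min\{b_i\}\leq a_2$, already go beyond what the cited statement provides), there are \emph{two} index-$(n+1)$ generators below action $\min\{b_i\}$, so your bound hinges entirely on showing that $\partial\bigl((1,1),\vec{0}\bigr)$ hits $\bigl((0,1),\vec{0}\bigr)$. This cannot be extracted algebraically: the unfiltered homology $CH_{n+1}\cong\Q$ only forces the class of $\bigl((0,1),\vec{0}\bigr)$ to die eventually, and a priori it could be killed by a degree-$(n+2)$ generator with $\vec{\beta}\neq 0$ of action $\geq\min\{b_i\}$, in which case $CH^{L}_{n+1}$ would be two-dimensional in the window $(a_2,\min\{b_i\})$ -- ruling out exactly this persistence is the content of the lemma, so it cannot be assumed. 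Your geometric route to the nonvanishing differential is also flawed as stated: if the contact form literally respects the product decomposition near the $\vec{\beta}=0$ locus, the closed orbits there come in noncompact Morse--Bott families parametrized by the $w$-coordinates and the domain is not tame, so the construction must perturb precisely the region where you want to localize; making the ``bijection of moduli spaces'' rigorous then requires cascade/Morse--Bott machinery that you acknowledge but do not supply. This is the essential gap.

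For comparison, the paper's proof sidesteps any differential computation by choosing a smarter approximation: the $\ell^p$-sum domains $Z_p=\{H\leq 1\}$ with $H=\bigl((\tfrac{\pi|z_1|^2}{a_1}+\tfrac{\pi|z_2|^2}{a_2})^{p}+(\sum_i\tfrac{\pi|w_i|^2}{b_i})^{p}\bigr)^{1/p}$. On $\partial Z_p$ the Reeb flow rotates each $w_i$ with period at least $b_i$, so every orbit of action below $\min\{b_i\}$ has $w\equiv 0$ and is governed by the \emph{genuine} (not toric-model) Reeb flow of the irrational ellipsoid $E(a_1,a_2)$; since $1<\tfrac{a_2}{a_1}\notin\Q$, the Conley--Zehnder indices $2k+2\lfloor ka_1/a_2\rfloor+n-1$ and $2k+2\lfloor ka_2/a_1\rfloor+n-1$ show there is exactly one such orbit of index $n+1$. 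Thus the complex itself has at most one generator in the relevant degree and action range, and the bound follows from nondegenerate approximations $Z_p^{\eps}$ and the inverse-limit argument, with no holomorphic-curve input. If you want to salvage your approach, you would need either to prove the localization rigorously (Morse--Bott setup plus a genuine maximum-principle argument for a non-product-preserving perturbation) or, better, to replace the toric-model smoothing by one, like $Z_p$, whose low-action dynamics already reflect the true ellipsoid indices.
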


\begin{proof}
    Given $p>1$, consider the Hamiltonian
    \[
        H:\C^2\times\C^{n-2}\to\R \co H(z,w):=\left(\left(\frac{\pi|z_1|^{2}}{a_1}+\frac{\pi|z_2|^{2}}{a_2}\right)^{p} + \left(\sum_{i=1}^{n-2}\frac{\pi |w_i|^{2}}{b_i} \right)^{p}\right)^{\frac{1}{p}}
    \]
    
    A computation shows the Hamiltonian vector field $X_H$ of $H$ obeys $\lambda_0(X_H)=-H$, from which one deduces that the Reeb vector field of $\lambda_0$ along the boundary of $Z_p:=\{H\leq 1\}$ is equal to $-X_H$.  (Here we use the sign convention that defines $X_H$ by $d\lambda_0(X_H,\cdot)=dH$.)  
    
    Note that $Z_p\subset E(a_1,a_2)\times E(b_1,\ldots,b_{n-2})$, and that (because the $\ell^p$ norm on $\R^2$ converges uniformly on compact subsets to the $\ell^{\infty}$ norm as $p\to\infty$), for any $\eps>0$ we have $(1-\eps)\bigl(E(a_1,a_2)\times E(b_1,\ldots,b_{n-2})\bigr)\subset Z_p$ for all sufficiently large $p$.

    The Reeb flow on $\partial Z_p$ rotates the $w_i$ coordinates with period $b_i\left(\sum_{i=1}^{n-2}\frac{\pi|w_i|^2}{b_i}\right)^{-(p-1)}$, which is greater than or equal to $b_i$ since, on $\partial Z_p$, we have $\sum_{i=1}^{n-2}\frac{\pi|w_i|^2}{b_i}\leq 1$.  Hence any closed Reeb orbit on $\partial Z_p$ having action less than $\min\{b_i\}$ must have all $b_i$ identically zero.  

    Because $\frac{a_2}{a_1}\notin \Q$, it is easy to check that any closed Reeb orbit on $\partial Z_p$ must have one or both of $z_1,z_2$ identically equal to zero.  Such an orbit which also has all $b_i$ equal to zero has action $ka_1$ or $ka_2$ where $k\in \N$.  Moreover the Conley-Zehnder index of such an orbit is given by $2k + 2\left\lfloor\frac{ka_1}{a_2}\right\rfloor+n-1$ or by $2k + 2\left\lfloor\frac{ka_2}{a_1}\right\rfloor+n-1$. Indeed the linearized flow splits into the symplectic sum of the linearized flows on $E(a_1,a_2)$ and on $E(b_1,\ldots,b_{n-2})$. Thus the Conley-Zehnder index is the sum of the Conley-Zehnder indices of each individual linearized flow.
    
    In particular, there is only one such orbit of Conley-Zehnder index $n+1$, namely the one which rotates once in the $z_1$ plane and has all other coordinates equal to zero.  It follows that $Z_p$ is arbitrarily well-approximated by non-degenerate star-shaped domains $Z_{p}^{\eps}\subset Z_{p}^{\circ}$ such that, for  $L<\min\{b_i\}$, we have  $\dim CH_{n+1}^{L}(Z_{p}^{\eps},\lambda_0)\leq 1$.  By using these $Z_{p}^{\eps}$ for $p\gg 1$ to approximate $\bigl(E(a_1,a_2)\times E(b_1,\ldots,b_{n-2})\bigr)^{\circ}$ it is not hard to see (using arguments like the one in the proof of Lemma \ref{lem:keyconvex}) that $\dim CH^{L}_{n+1}\Bigl(\bigl(E(a_1,a_2)\times E(b_1,\ldots,b_{n-2})\bigr)^{\circ},\lambda_0\Bigr)\leq 1$.
\end{proof}

\begin{proof}[Proof of Theorem \ref{anyd}]
    In Case (a), the proof of Theorem \ref{4d} shows that $\dell(X_{\Omega})<\frac{1}{c}\|(1,1)\|_{\Omega}^{*}$.  Hence there is a sequence of symplectic embeddings $X_{\Omega}\hookrightarrow E(a_1,a_2)\hookrightarrow \alpha X_{\Omega}^{\circ}$ where (without loss of generality) $1<\frac{a_2}{a_1}\notin \Q$ and $1<\alpha< \frac{1}{c}\|(1,1)\|_{\Omega}^{*}=\frac{\|(1,1)\|_{\Omega}^{*}}{\max\{\|(1,0)\|_{\Omega}^{*},\|(0,1)\|_{\Omega}^{*}\}}$.  By taking a product with the identity, this yields symplectic embeddings   \[ X_{\Omega}\times E(b_1,\ldots,b_{n-2})\hookrightarrow E(a_1,a_2)\times E(b_1,\ldots,b_{n-2})\hookrightarrow \alpha \bigl(X_{\Omega}\times E(b_1,\ldots,b_{n-2})\bigr)^{\circ}. \]
If the composition of these embeddings were unknotted, then Corollary \ref{rankcor} (applied with $L$ slightly smaller than $\|(1,1)\|_{\Omega}^{*}$) and Lemma \ref{lem:keyconvexprod} would show that $\dim CH_{n+1}^{L}\left((E(a_1,a_2)\times E(b_1,\ldots,b_{n-2})^{\circ},\lambda_0\right)\geq 2$, a contradiction with Lemma \ref{ellipsoidproduct} provided that we choose $R\geq \|(1,1)\|_{\Omega}^{*}$.

In Case (b), the proof of Theorem \ref{4d} likewise shows that there is a sequence of symplectic embeddings $P(a,b)\hookrightarrow  E(a_1,a_2)\hookrightarrow \alpha P(a,b)^{\circ}$ where $1<\frac{a_2}{a_1}\notin \Q$ and $1<\alpha<\frac{a+b}{b}=\frac{\|(1,1)\|_{\Omega}^{*}}{\max\{\|(1,0)\|_{\Omega}^{*},\|(0,1)\|_{\Omega}^{*}\}}$.  (Here we write $\Omega=[0,a]\times [0,b]$).  Then the same argument as in Case (a) applies to show that the product of the composition of these embeddings with the identity on $E(b_1,\ldots,b_{n-2})$ will be knotted provided that $b_i\geq R:=a+b$ for all $i$.
\end{proof}

\section{Some embeddings of four-dimensional ellipsoids} \label{construct}
The main goal of this section is to prove Theorem \ref{dellbounds}, which asserts the existence of certain symplectic embeddings to and from four-dimensional ellipsoids.  The machinery for constructing (or, perhaps more accurately, ascertaining the existence of) such embeddings has its roots in Taubes-Seiberg-Witten theory and in papers such as \cite{MP}, \cite{M}, \cite{CG} which relate the question of whether certain four-dimensional domains symplectically embed into certain other domains to questions about symplectic ball-packing problems and then to questions about the symplectic cones of blowups of $\mathbb{C}P^2$, which are then converted to elementary problems by results from \cite{LL}.  We will presently recall some of these results, rephrasing them in a way suitable for our applications.

 In this section we will consider a limited class of toric domains in $\mathbb{C}^2$, given as the preimage under the standard moment map $\mu\co (w,z)\mapsto (\pi|w|^2,\pi|z|^2)$ of a quadrilateral having a right-angled vertex at the origin and satisfying a couple of other conditions, see Figure \ref{quads}.  More specifically:

\begin{dfn}\label{quaddfn}  Let $a,b,x,y\in [0,\infty)$ satisfy the following properties: \begin{itemize} \item[(i)] $x\leq a$ and $y\leq b$.
\item[(ii)] If $\frac{x}{a}+\frac{y}{b}<1$, then $x+y\leq \min\{a,b\}$.
\item[(iii)] If $\frac{x}{a}+\frac{y}{b}>1$, then $x+y\geq \max\{a,b\}$.  \end{itemize}
We denote by $T(a,b,x,y)$ the preimage under $\mu$ of the quadrilateral in $\mathbb{R}^2$ having vertices $(0,0),(a,0),(x,y),(0,b)$.  

Any such set $T(a,b,x,y)$ is said to be a \textbf{toric quadrilateral}; it is said to be concave if $\frac{x}{a}+\frac{y}{b}\leq 1$ and convex if $\frac{x}{a}+\frac{y}{b}\geq 1$. 

If $a,b,x,y\in\mathbb{Q}$, then $T(a,b,x,y)$ is said to be a \textbf{rational toric quadrilateral}.

\end{dfn}

\begin{figure}\label{quads} 
\begin{center}
\includegraphics[width=2 in]{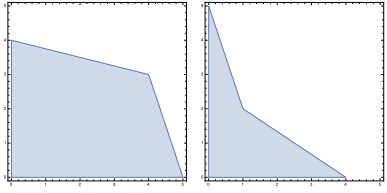}
\caption{The images under $\mu$ of the toric quadrilaterals $T(5,4,4,3)$ and $T(4,5,1,2)$.}
\end{center}
\end{figure}

We allow the possibility that $(x,y)$ lies on the line segment from $(a,0)$ to $(0,b)$ so that the relevant quadrilateral degenerates to a triangle; indeed in this case $T(a,b,x,y)$ is the ellipsoid $E(a,b)$ (and is both concave and convex).

For any rational concave toric quadrilateral $T(a,b,x,y)$ (and indeed for somewhat more general toric domains), \cite{CG} (generalizing \cite{M}) explains how to construct the so-called ``weight sequence'' $w\bigl(T(a,b,x,y)\bigr)$ of $T(a,b,x,y)$, which is a finite unordered sequence of positive numbers.  We will rephrase this as follows.  Given two unordered sequences of positive numbers $a=[a_1,\ldots,a_k],b=[b_1,\ldots,b_l]$ we write $a\sqcup b$ for the union with repetitions: $a\sqcup b=[a_1,\ldots,a_k,b_1,\ldots,b_l]$.  We will abbreviate the weight sequence $w\bigl(E(a,b)\bigr)=w\bigl(T(a,b,a,0)\bigr)$ as $\mathcal{W}(a,b)$.  Then for a general rational concave toric quadrilateral the weight sequence is determined recursively by the following prescriptions: 
\begin{itemize} \item For any $a\geq 0$, $\mathcal{W}(a,0)=\mathcal{W}(0,a)=[]$ (the empty sequence).
\item For $0<a\leq b$, $\mathcal{W}(a,b)=\mathcal{W}(b,a)=[a]\sqcup\mathcal{W}(a,b-a)$.
\item If $\frac{x}{a}+\frac{y}{b}<1$ (which by our assumptions in Definition \ref{quaddfn} imply that $x+y\leq\min\{a,b\}$) then \[ w\bigl(T(a,b,x,y)\bigr)=[x+y]\sqcup\mathcal{W}(a-x-y,y)\sqcup\mathcal{W}(b-x-y,x).\]
\end{itemize} 

For instance, \begin{align*} 
w(T(4,5,1,2))&=[3]\sqcup\mathcal{W}(1,2)\sqcup \mathcal{W}(2,1)=[3]\sqcup[1]\sqcup\mathcal{W}(1,1)\sqcup[1]\sqcup \mathcal{W}(1,1)
\\ &=[3,1,1,1,1]\sqcup \mathcal{W}(1,0)\sqcup \mathcal{W}(1,0)=[3,1,1,1,1].
\end{align*}

Dually, to a convex toric quadrilateral $T$ is associated a ``weight expansion,''  which takes the form of a pair $\bigl(h(T);\hat{w}(T)\bigr)$ where $h(T)\in[0,\infty)$ is called the ``head'' and $\hat{w}(T)$ is a possibly-empty unordered sequence of positive numbers and is called the ``negative weight sequence.''  For a general rational convex toric quadrilateral  the weight expansion is determined 
as follows: 
\begin{itemize}\item If $a\leq b$ then $h\bigl(E(a,b)\bigr)=h\bigl(E(b,a)\bigr)=b$ and $\hat{w}\bigl(E(a,b)\bigr)=\hat{w}\bigl(E(b,a)\bigr)=\mathcal{W}(b,b-a)$.
\item If $\frac{x}{a}+\frac{y}{b}>1$ (which by our assumptions in Definition \ref{quaddfn} imply that $x+y\geq\max\{a,b\}$), then $h\bigl(T(a,b,x,y)\bigr)=x+y$ and $\hat{w}\bigl(T(a,b,x,y)\bigr)=\mathcal{W}(x+y-a,y)\sqcup \mathcal{W}(x+y-b,x)$.
\end{itemize} (This is a complete prescription, since by definition any convex toric quadrilateral $T(a,b,x,y)$ has $\frac{x}{a}+\frac{y}{b}\geq 1$, with equality implying that $T(a,b,x,y)=E(a,b)$.  A more obviously-consistent phrasing is that the head $h(T)$ is equal to the capacity of the smallest ball containing $T$, and that the negative weight sequence is the union of the weight sequences of ellipsoids whose interiors are equivalent under the action of translations and $SL_2(\Z)$ to  the components of $B^4\bigl(h(T)\bigr)^{\circ}\setminus T$.)

The deep result that we need is:
\begin{theorem}\cite[Theorem 1.4]{CG}\label{cgthm}  Let $S_1,\ldots,S_k$ be a rational concave toric quadrilaterals and $T$ be a rational convex toric quadrilateral.  Then the following are equivalent:
\begin{itemize} \item[(i)] For all $\alpha >1$ there is a symplectic embedding $\coprod_{i=1}^{k}S_i\hookrightarrow \alpha T$.
\item[(ii)] For all $\alpha>1$ there is a symplectic embedding \[ \left(\coprod_{i=1}^{k}\coprod_{c\in w(S_i)}B^4(c)\right)\sqcup \coprod_{c\in\hat{w}(T)}B^4(c)\hookrightarrow B^4\bigl(\alpha h(T)\bigr).\]
\end{itemize}
\end{theorem}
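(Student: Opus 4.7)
The strategy is to establish two reduction lemmas---one for concave rational toric quadrilaterals, one for convex ones---each of which converts a symplectic embedding question involving a toric quadrilateral into one involving only disjoint unions of four-dimensional balls. Granted these two reductions, the equivalence (i)$\Leftrightarrow$(ii) will follow by applying them in turn to the two sides of the embedding in (i).

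First I would prove: for any open subset $V\subset\R^4$, a concave rational toric quadrilateral $S$ admits symplectic embeddings into $\alpha V$ for every $\alpha>1$ if and only if $\coprod_{c\in w(S)} B^4(c)$ does. The ``only if'' direction is elementary, because the recursion defining $w(S)$ reflects an explicit construction of a symplectic embedding of the disjoint union of \emph{open} balls $\coprod_{c\in w(S)} B^4(c)^{\circ}$ into $S^{\circ}$ whose complement has measure zero; this is obtained by iteratively applying the standard ellipsoid-to-ball-plus-ellipsoid moves that underlie the McDuff decomposition of ellipsoids. For the ``if'' direction, one uses the ball-packing to toric-domain extension machinery pioneered in \cite{M} and extended in \cite{CG}: because the total volume of the $B^4(c)$ equals that of $S$, symplectic embeddings of the disjoint union of balls into $\alpha V$ can, after a small loss captured by the $\alpha>1$ slack, be replaced by embeddings of all of $S$.

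Next I would prove the dual statement for a convex rational toric quadrilateral $T$: for any compact $U\subset \R^4$, the existence of symplectic embeddings $U\hookrightarrow \alpha T$ for all $\alpha>1$ is equivalent to the existence of symplectic embeddings $U\sqcup \coprod_{c\in \hat{w}(T)} B^4(c)\hookrightarrow B^4(\alpha h(T))$ for all $\alpha>1$. The geometric picture is that $B^4(h(T))^{\circ}$ decomposes up to a measure-zero set as $T^{\circ}$ together with a finite collection of open subsets, each of which is carried by an $SL_2(\Z)\ltimes \R^2$ transformation of the moment polytope to the interior of a standard ellipsoid, and the collection of weight sequences of these ellipsoids assembles to $\hat{w}(T)$. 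The ``only if'' direction is then immediate by placing the $\hat{w}(T)$ balls into the complement of $\alpha T$ inside $B^4(\alpha h(T))$, while the ``if'' direction again requires the ball-packing dictionary.

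Combining the two reductions gives the theorem: by the first reduction, applied to each $S_i$ with $V=T$, (i) is equivalent to the existence for all $\alpha>1$ of a symplectic embedding $\coprod_{i}\coprod_{c\in w(S_i)} B^4(c)\hookrightarrow \alpha T$; applying the second reduction with $U=\coprod_{i}\coprod_{c\in w(S_i)} B^4(c)$ converts this into exactly (ii). The main obstacle is the hard direction of the two reduction lemmas, which rests on deep input from Taubes--Seiberg--Witten theory, the symplectic cone results of \cite{LL}, and the inflation/packing techniques of \cite{MP,M} and their extension in \cite{CG}; assuming these tools, the remaining work is a careful bookkeeping of the weight-sequence recursion together with the standard ``ball inside ellipsoid'' symplectomorphisms.
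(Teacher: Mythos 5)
First, a point of comparison: the paper does not prove Theorem \ref{cgthm} at all. It is imported verbatim from \cite[Theorem 1.4]{CG}, and the only argument supplied is the parenthetical remark that the proof there (which follows McDuff's ellipsoid argument in \cite{M}) goes through without change when the single concave domain is replaced by a disjoint union of several, as was already observed for ellipsoids in \cite[Proposition 3.5]{M10}. Your proposal instead tries to re-derive the statement from two ``reduction lemmas,'' and while the overall architecture (decompose each concave source into its weight balls; trade the convex target for the ball $B^4(h(T))$ minus its negative-weight balls; reduce everything to a ball packing) is indeed the architecture of the proof in \cite{CG}, the lemmas as you state them are not consequences of the cited machinery, and the way you combine them has a gap.

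Concretely: (1) Your first lemma is asserted for an arbitrary open target $V\subset\R^4$ and your second for an arbitrary compact companion $U$. The hard directions of both are exactly where the Taubes--Seiberg--Witten/inflation input and the symplectic-cone results of \cite{LL}, \cite{MP} enter, and those arguments require every object being embedded to be a ball (or an ellipsoid/concave domain already decomposed into balls), so that the question becomes one about the symplectic cone of a blowup of $\mathbb{C}P^2$; they do not apply to a general $V$ or a general $U$. Indeed your second lemma with, say, $U$ a polydisk and $T$ an ellipsoid would reduce polydisk-into-ellipsoid embeddings to a ball-packing criterion, which is not known and not expected --- so ``assuming these tools'' does not cover the lemmas in the stated generality, and the parenthetical volume-count justification (``the total volume of the balls equals that of $S$, so the slack $\alpha>1$ lets one replace the balls by $S$'') is not an argument at all: matching volume is exactly what does not suffice in these rigidity questions. (2) Even granting the single-domain concave reduction, applying it ``to each $S_i$ with $V=T$'' only yields, for each $i$ separately, that $S_i\hookrightarrow\alpha T$ iff its weight balls do; it does not yield the equivalence for the disjoint union $\coprod_i S_i\hookrightarrow\alpha T$, since embedding each piece individually is weaker than embedding them disjointly. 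To swap one $S_i$ for its balls while the other pieces stay put you would need the reduction with target $\alpha T$ minus the images of the other $S_j$ --- an arbitrary open set, i.e.\ precisely the unsupported generality --- or a genuinely multi-component version of the reduction. That multi-component statement is exactly what the proof in \cite{CG} provides by handling all the balls simultaneously in a multiple blowup of $\mathbb{C}P^2$, and it is the content of the paper's remark citing \cite[Proposition 3.5]{M10}; the honest short proof here is the paper's, namely a citation of \cite[Theorem 1.4]{CG} together with that observation.
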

(While \cite[Theorem 1.4]{CG} is stated for a single concave toric domain $S_1$, the proof---which closely follows the proof for ellipsoids in \cite{M}---extends without change to a collection of several disjoint such domains, as was already noted when all of the domains are ellipsoids in \cite[Proposition 3.5]{M10}.)

Let us introduce the following notation. If $[a_1,\ldots,a_m]$ is an unordered sequence of nonnegative real numbers, and if $t$ is another nonnegative real number, we will write \[ [a_1,\ldots,a_m]\preceq [t] \] if and only if \[ t\geq \inf\left\{u: \coprod_{i=1}^{m}B^4(a_i)\mbox{ symplectically embeds into }B^4(u)\right\} \]  Then Theorem \ref{cgthm} can be rephrased as stating that, for concave toric quadrilaterals $S_1,\ldots, S_k$ and a convex toric quadrilateral $T$, the statement that for all $\alpha>1$ there is a symplectic embedding $\coprod_{i=1}^{k}S_i\hookrightarrow \alpha T$ is equivalent to the statement that \[ w(S_1)\sqcup\cdots\sqcup w(S_k)\sqcup\hat{w}(T)\preceq [h(T)]. \]

\begin{remark}\label{cp2}
As follows from \cite{MP} and \cite{Liu}, if we denote by $H$ the hyperplane class and $E_1,\ldots,E_m$ the exceptional divisors of the manifold $X_m$ obtained by blowing up $\mathbb{C}P^2$ $m$ times, the statement that $[a_1,\ldots,a_m]\preceq [t]$ is equivalent to the statement that the Poincar\'e dual of the class $tH-\sum a_iE_i$ lies in the set $\bar{\mathcal{C}}_K$ given as the closure of the subset of $H^2(X_m;\mathbb{R})$ consisting of the cohomology classes of symplectic forms having associated canonical class Poincar\'e dual to $-3H+\sum E_i$.
\end{remark}

We will often find it useful to combine Theorem \ref{cgthm} with the following elementary but somewhat subtle fact.  In the special case that $b$ and $c$ are integer multiples of $a$  this has a well-known proof as in  \cite[Lemma 2.6]{M}; see also \cite[Lemma 2.6]{M10} for a corresponding statement about ECH capacities in a different special case.
\begin{prop}\label{tile}
Let $a,b,c\in (0,\infty)$.  Then for all $\alpha >1$ there is a symplectic embedding $E(a,b)\coprod E(a,c)\hookrightarrow \alpha E(a,b+c)$.
\end{prop}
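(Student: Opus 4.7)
My plan is to translate the embedding question into a ball-packing problem via Theorem \ref{cgthm} and then verify that packing. First, by a standard continuity argument, it suffices to prove the claim when $a, b, c \in \mathbb{Q}_{>0}$: given $\alpha > 1$, choose $\alpha' \in (1, \alpha)$ and rationals $a' \geq a$, $b' \geq b$, $c' \geq c$ close enough to $a, b, c$ that $\alpha' E(a', b' + c') \subset \alpha E(a, b + c)$, then observe that any embedding $E(a', b') \sqcup E(a', c') \hookrightarrow \alpha' E(a', b' + c')$ restricts to the desired embedding for the original parameters.

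With $a, b, c \in \mathbb{Q}_{>0}$, the ellipsoids $E(a, b), E(a, c), E(a, b+c)$ are all rational concave (respectively convex) toric quadrilaterals in the sense of Definition \ref{quaddfn}, and applying Theorem \ref{cgthm} with $S_1 = E(a, b)$, $S_2 = E(a, c)$, and $T = E(a, b + c)$ reduces the claim to the ball-packing inequality
\[
\mathcal{W}(a, b) \sqcup \mathcal{W}(a, c) \sqcup \hat{w}\bigl(E(a, b+c)\bigr) \preceq \bigl[h\bigl(E(a, b+c)\bigr)\bigr].
\]
A direct computation using the additivity identity $\sum_{w \in \mathcal{W}(x, y)} w^2 = xy$ (total volume) shows that the sum of squares on the left equals $h(E(a, b + c))^2$, so the packing in question would be volume-tight.

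The main obstacle is verifying this combinatorial ball-packing inequality, since the natural hope $\mathcal{W}(a, b) \sqcup \mathcal{W}(a, c) = \mathcal{W}(a, b + c)$ fails in general --- for example, $\mathcal{W}(3, 2) \sqcup \mathcal{W}(3, 2) = [2, 2, 1, 1, 1, 1]$ while $\mathcal{W}(3, 4) = [3, 1, 1, 1]$ --- so the inequality cannot simply be deduced from the tautological inclusion $E(a, b+c) \hookrightarrow \alpha E(a, b+c)$. I would proceed by induction on the complexity measure $\lceil b/a \rceil + \lceil c/a \rceil$, with the base case (where $b$ and $c$ are integer multiples of $a$) handled by McDuff's Lemma 2.6 as noted in the preamble; there the weight sequences collapse to copies of $[a]$ and the packing is a tiling of equal balls. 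For the inductive step, after relabeling using the symmetry $E(x, y) = E(y, x)$ to arrange that $a \leq b \leq c$, I would use the recursion $\mathcal{W}(a, x) = [a] \sqcup \mathcal{W}(a, x - a)$ (valid for $x \geq a$) to peel off corresponding terms from $\mathcal{W}(a, b)$, $\mathcal{W}(a, c)$, and the parts of $\hat{w}(E(a, b+c))$, reducing the inequality to one with strictly smaller complexity. Should the direct induction prove too delicate, a fallback is to invoke Remark \ref{cp2}: the inequality is equivalent to the cohomology class $h \cdot H - \sum a_i E_i$ lying in the closure of the symplectic cone of a blowup of $\mathbb{C}P^2$, which by Li-Li reduces to checking non-negative pairing with every exceptional class --- a finite combinatorial problem amenable to Cremona reductions.
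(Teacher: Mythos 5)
Your reduction steps are fine: the rational approximation argument works, and applying Theorem \ref{cgthm} with $S_1=E(a,b)$, $S_2=E(a,c)$, $T=E(a,b+c)$ correctly converts the statement (say for $a\leq b+c$) into the ball-packing inequality $\mathcal{W}(a,b)\sqcup\mathcal{W}(a,c)\sqcup\mathcal{W}(b+c,b+c-a)\preceq[b+c]$, and your volume computation showing this packing is tight is also correct. The genuine gap is that this inequality -- which is the entire mathematical content of the proposition in your formulation -- is never actually proved. The proposed induction on $\lceil b/a\rceil+\lceil c/a\rceil$ is only a plan: peeling $[a]$ off $\mathcal{W}(a,b)$ and $\mathcal{W}(a,c)$ does not produce a smaller instance of the same inequality, because the head of the target and the negative weight sequence $\mathcal{W}(b+c,b+c-a)$ change in a way not controlled by the recursion you cite, and the relation $\preceq$ does not obviously allow you to ``remove matching balls from both sides.'' Moreover, since the packing is volume-full, there is no slack to absorb such manipulations, so this is exactly the hard regime for packing arguments. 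The fallback via Remark \ref{cp2} and \cite{LL} is likewise not a finite check: for each fixed rational triple one can run Cremona reductions, but the proposition quantifies over all $a,b,c$, so one needs a uniform structural argument (as in \cite{M}), which is not supplied. As written, the proof establishes the reduction but not the theorem.

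For comparison, the paper proves Proposition \ref{tile} by a completely elementary and explicit construction with no input from \cite{CG}: using the Traynor trick \cite[Corollary 5.3]{T}, $\gamma E(v,w)$ embeds into the Lagrangian product $\triangle(1,1)\times_L\square(v,w)$ and this product embeds back into $E(v,w)$; one then places $\gamma E(a,b)$ over $\square(a,b)$ and $\gamma E(a,c)$ over the translated rectangle $\{y_1<a,\ b<y_2<b+c\}$, so the two images are disjoint inside $\triangle(1,1)\times_L\square(a,b+c)\subset E(a,b+c)$, and a rescaling finishes the proof. Besides being self-contained, this avoids the logical heaviness of invoking Taubes--Seiberg--Witten based results for a statement that the paper then feeds into those very results (Propositions \ref{cvxaxy}, \ref{ccvaxy}, \ref{step2}); your route is not circular, since Theorem \ref{cgthm} is external, but it replaces an easy construction with an unproven combinatorial claim.
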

\begin{proof}
For any $v,w>0$ let us write $\square(v,w)=(0,v)\times (0,w)$ and $\triangle(v,w)=\{(x_1,x_2)\in (0,\infty)^2|\frac{x_1}{v}+\frac{x_2}{w}<1\}$. Also for $A,B\subset \R^2$ write $A\times_LB$ for the ``Lagrangian product'' $\{(x_1+iy_1,x_2+iy_2)|(x_1,x_2)\in A,(y_1,y_2)\in B\}\subset \mathbb{C}^2$. 
Now the Traynor trick \cite[Corollary 5.3]{T} shows that for all $\gamma < 1$ there is a symplectic embedding $\gamma E(v, w)\hookrightarrow \triangle(v,w)\times_L \square(1,1)$.  Conversely $(x_1+iy_1,x_2+iy_2)\mapsto (\sqrt{\frac{x_1}{\pi}}e^{2\pi iy_1},\sqrt{\frac{x_2}{\pi}}e^{2\pi iy_2})$ defines a symplectic embedding $\triangle(v,w)\times_L \square(1,1)\to E(v,w)$.  Meanwhile the symplectomorphism of $\mathbb{C}^2$ given by $(x_1+iy_1,x_2+iy_2)\mapsto (v^{-1}x_1+ivy_1,w^{-1}x_2+iwy_2)$ maps $\triangle(v,w)\times_L \square(1,1)$ to $\triangle(1,1)\times_L \square(v,w)$.  Hence: \begin{align} \label{ellipsesquare} \mbox{For any $v,w>0$ and any $\gamma<1$, there are symplectic embeddings } \\ \nonumber \gamma E(v,w)\hookrightarrow \triangle(1,1)\times_L \square(v,w)\hookrightarrow E(v,w). \end{align}

The proof readily follows from this: if $\gamma<1$, we may symplectically embed 
\[
    E(\gamma a,\gamma b)\hookrightarrow\triangle(1,1)\times_L\square(a,b)=\{(x_1+iy_1,x_2+iy_2)|x_1,x_2,y_1,y_2>0,\,x_1+x_2<1,\,y_1<a,y_2<b\},
\]
and likewise, by composing an embedding as in (\ref{ellipsesquare}) with a translation in the $y_2$ direction, we may symplectically embed \[ E(\gamma a,\gamma c)\hookrightarrow \{(x_1+iy_1,x_2+iy_2)|x_1,x_2,y_1>0,\,x_1+x_2<1,\,y_1<a,\,b<y_2<b+c\}.\]  The images of these two embeddings are evidently disjoint, and their union is contained in $\triangle(1,1)\times_L\square(a,b+c)$, which symplectically embeds into $E(a,b+c)$.  We thus obtain, for any $\gamma<1$, a symplectic embedding $E(\gamma a,\gamma b)\coprod E(\gamma a,\gamma c)\hookrightarrow E(a,b+c)$; conjugation by a rescaling then gives the embeddings required in the proposition.
\end{proof}

The following family of embeddings is used in Case (i) of Theorem \ref{4d}; see Figure \ref{L5} for more context in a particular instance.

\begin{prop} \label{cvxaxy}
Let $a,b,x,y\in(0,\infty)$ with $x\leq a, y\leq b,$ and $a\leq b\leq x+y$.  Then for all $\alpha>1$ there is a symplectic embedding $E(a,x+y)\hookrightarrow \alpha T(a,b,x,y)$.
\end{prop}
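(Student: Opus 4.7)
First, the hypotheses $x \le a$, $y \le b$, $a \le b \le x+y$ imply that $\frac{x}{a}+\frac{y}{b}\ge\frac{x+y}{b}\ge 1$, so that $T(a,b,x,y)$ is a convex toric quadrilateral in the sense of Definition \ref{quaddfn}, with head $h(T)=x+y$ and negative weight sequence $\hat{w}(T)=\mathcal{W}(x+y-a,y)\sqcup \mathcal{W}(x+y-b,x)$. A standard rational perturbation reduces the problem to the case where $a,b,x,y\in\mathbb{Q}$, allowing us to apply Theorem \ref{cgthm}. Applying it with the single concave source $E(a,x+y)$ and target $T$ shows that the desired embedding $E(a,x+y)\hookrightarrow\alpha T(a,b,x,y)$ exists for all $\alpha>1$ if and only if
\[
  \mathcal{W}(a,x+y)\sqcup\mathcal{W}(x+y-a,y)\sqcup\mathcal{W}(x+y-b,x)\preceq[x+y].
\]
A second application of Theorem \ref{cgthm}, this time with three concave sources $E(a,x+y)$, $E(x+y-a,y)$, $E(x+y-b,x)$ and target $B^4(x+y)$ (whose negative weight sequence is empty and whose head is $x+y$), shows that the above ball-packing inequality is equivalent to the existence, for every $\alpha>1$, of a symplectic embedding
\[
  E(a,x+y)\coprod E(x+y-a,y)\coprod E(x+y-b,x)\hookrightarrow \alpha B^4(x+y).
\]

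I would construct such an embedding by two successive applications of Proposition \ref{tile}. Since $a\le b$ gives $x+y-b\le x+y-a$, the inclusion yields $E(x+y-b,x)\hookrightarrow E(x+y-a,x)$. Then Proposition \ref{tile} with common leg $x+y-a$ produces, for any $\alpha_1>1$,
\[
  E(x+y-a,y)\coprod E(x+y-b,x)\hookrightarrow E(x+y-a,y)\coprod E(x+y-a,x)\hookrightarrow \alpha_1 E(x+y-a,x+y).
\]
Proposition \ref{tile} again, now with common leg $x+y$, yields
\[
  E(a,x+y)\coprod E(x+y-a,x+y)\hookrightarrow \alpha_2 E(x+y,x+y)=\alpha_2 B^4(x+y).
\]
Composing these two embeddings and choosing $\alpha_1$, $\alpha_2$ sufficiently close to $1$ then provides the sought-after ellipsoid packing into $\alpha B^4(x+y)$ for any preassigned $\alpha>1$.

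The conceptual heart of the argument is thus a two-step stacking scheme: first one fills the ``corner'' ellipsoid $E(x+y-a,x+y)$ by stacking $E(x+y-a,y)$ and a slight enlargement of $E(x+y-b,x)$ along their shared $x+y-a$ leg, and then one stacks this corner ellipsoid together with $E(a,x+y)$ along their shared $x+y$ leg to fill the bounding ball $B^4(x+y)$. The hypothesis $a\le b$ enters precisely to justify the enlargement $E(x+y-b,x)\hookrightarrow E(x+y-a,x)$ used in the first stage; once this is in place, both stackings are direct applications of Proposition \ref{tile}. No serious obstacle appears beyond the routine rational approximation needed to invoke Theorem \ref{cgthm}, and a check that the few degenerate cases (e.g.\ $a=b=x+y$, in which $T(a,b,x,y)$ collapses to $B^4(x+y)$) are handled correctly by the same scheme.
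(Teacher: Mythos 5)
Your proposal is correct and follows essentially the same route as the paper's proof: reduce to rational data, apply Theorem \ref{cgthm} twice to convert the problem into packing $E(a,x+y)\sqcup E(x+y-a,y)\sqcup E(x+y-b,x)$ into $\alpha B^4(x+y)$, and realize that packing by the inclusion $E(x+y-b,x)\subset E(x+y-a,x)$ (using $a\leq b$) followed by two applications of Proposition \ref{tile}. The only differences are expository (e.g.\ your explicit verification of the head and negative weight sequence of $T(a,b,x,y)$), not mathematical.
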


\begin{proof}  It evidently suffices to prove the statement when $a,b,x,y\in\Q$. Then
by Theorem \ref{cgthm} the statement is equivalent to the statement that $\mathcal{W}(a,x+y)\sqcup\mathcal{W}(x+y-a,y)\sqcup\mathcal{W}(x+y-b,x)\preceq [x+y]$.  But another application of Theorem \ref{cgthm} shows that this, in turn, is equivalent to the statement that for all $\alpha>1$ there exists a symplectic embedding of a disjoint union of three ellipsoids: \[ E(a,x+y)\sqcup E(x+y-a,y)\sqcup E(x+y-b,x)\hookrightarrow \alpha B^4(x+y). \] Since we assume that $a\leq b$ (so $x+y-b\leq x+y-a$), we have symplectic embeddings \[ E(x+y-a,y)\sqcup E(x+y-b,x)\hookrightarrow E(x+y-a,y)\sqcup E(x+y-a,x)\hookrightarrow \sqrt{\alpha}E(x+y-a,x+y)\] where the first map is the inclusion and the second is given by Proposition \ref{tile}.  Combining this with another application of Proposition \ref{tile} yields: \begin{align*}
E(a,x+y)\sqcup & E(x+y-a,y)\sqcup  E(x+y-b,x)\hookrightarrow E(a,x+y)\sqcup \sqrt{\alpha}E(x+y-a,x+y)\\&\subset \sqrt\alpha\left(E(a,x+y)\sqcup E(x+y-a,x+y)\right) \hookrightarrow \alpha E(x+y,x+y)=\alpha B^4(x+y).
\end{align*}
\end{proof}

Similarly in the concave case, we obtain:

\begin{prop} \label{ccvaxy}
Let $a,b,x,y\in (0,\infty)$ with $x+y\leq a\leq b$.  Then for all $\alpha >1$ there is a symplectic embedding $T(a,b,x,y)\hookrightarrow E(b,x+y)$.
\end{prop}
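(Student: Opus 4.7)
The plan is to mirror the proof of Proposition \ref{cvxaxy}, interchanging the roles of source and target. As a first step, I would reduce to the case that $a,b,x,y\in\mathbb{Q}$ by approximating $T(a,b,x,y)$ from inside by slightly smaller rational concave toric quadrilaterals; since the target ellipsoid $E(b,x+y)$ can absorb a small rescaling, this loses nothing. In the rational case Theorem \ref{cgthm} converts the existence of symplectic embeddings $T(a,b,x,y)\hookrightarrow \alpha E(b,x+y)$ for every $\alpha>1$ into the weight inequality
\[
    w\bigl(T(a,b,x,y)\bigr)\sqcup \hat{w}\bigl(E(b,x+y)\bigr)\preceq \bigl[h\bigl(E(b,x+y)\bigr)\bigr].
\]

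Next I would unpack both sides. The hypothesis $x+y\le a\le b$ forces $\tfrac{x}{a}+\tfrac{y}{b}\le 1$; when equality holds $T(a,b,x,y)$ already equals the ellipsoid $E(a,b)\subset E(b,x+y)$ and the claim is trivial. Otherwise the recursive prescription gives
\[
    w\bigl(T(a,b,x,y)\bigr)=[x+y]\sqcup \mathcal{W}(a-x-y,y)\sqcup \mathcal{W}(b-x-y,x),
\]
while $x+y\le b$ implies $h\bigl(E(b,x+y)\bigr)=b$ and $\hat{w}\bigl(E(b,x+y)\bigr)=\mathcal{W}\bigl(b,b-(x+y)\bigr)$. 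A second application of Theorem \ref{cgthm} then re-expresses the weight inequality as the statement that for every $\alpha>1$ there is a symplectic packing
\[
    B^4(x+y)\sqcup E(a-x-y,y)\sqcup E(b-x-y,x)\sqcup E(b-x-y,b)\hookrightarrow \alpha B^4(b).
\]

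To exhibit such a packing, I would chain together three applications of Proposition \ref{tile} with the trivial inclusion $E(a-x-y,y)\hookrightarrow E(b-x-y,y)$, which is valid because $a\le b$: first merge $E(b-x-y,y)\sqcup E(b-x-y,x)$ into a small dilate of $E(b-x-y,x+y)$; next merge $B^4(x+y)=E(x+y,x+y)$ with this to land in a small dilate of $E(x+y,b)$; and finally merge the resulting ellipsoid with $E(b-x-y,b)$ to land in a small dilate of $E(b,b)=B^4(b)$. Choosing the three dilation factors so that their product is at most $\alpha$ delivers the embedding. The only real obstacle is combinatorial rather than analytic: one has to verify that, at each stage, the parameters of the ellipsoids to be merged line up so that Proposition \ref{tile} applies with matching first coordinate. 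Degenerate cases in which $a-x-y$ or $b-x-y$ vanishes simply make the corresponding ellipsoid empty and cause no trouble.
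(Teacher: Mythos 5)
Your proposal is correct and follows essentially the same route as the paper: reduce to rational data, apply Theorem \ref{cgthm} to convert the problem into the packing $B^4(x+y)\sqcup E(a-x-y,y)\sqcup E(b-x-y,x)\sqcup E(b,b-x-y)\hookrightarrow \alpha B^4(b)$, and realize that packing by the same three-stage chain of Proposition \ref{tile} applications (together with the inclusion $E(a-x-y,y)\subset E(b-x-y,y)$). The only cosmetic differences are that you spell out the rational approximation and the degenerate ellipsoid case, which the paper leaves implicit.
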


\begin{proof} It again suffices to assume that $a,b,x,y\in\Q$.
Theorem \ref{cgthm} shows that the proposition is equivalent to the statement that $[x+y]\sqcup\mathcal{W}(a-x-y,y)\sqcup \mathcal{W}(b-x-y,x)\sqcup \mathcal{W}(b-x-y,b)\preceq [b]$, which in turn is equivalent to the existence of a symplectic embedding, for all $\alpha >1$,\[  B^4(x+y)\sqcup E(a-x-y,y)\sqcup E(b-x-y,x)\sqcup E(b,b-x-y)\hookrightarrow \alpha B^4(b).\]  Proposition \ref{tile} (together with the inclusion $E(a-x-y,y)\subset E(b-x-y,y)$) gives, for all $\nu>1$, embeddings $E(a-x-y,y)\sqcup E(b-x-y,x)\hookrightarrow\nu E(b-x-y,x+y)$ and then $B^4(x+y)\sqcup E(b-x-y,x+y)\hookrightarrow \nu E(b,x+y)$, and finally $E(b,x+y)\sqcup E(b,b-x-y)\hookrightarrow\nu E(b,b)=\nu B^4(b)$.  Combining these three embeddings (with $\nu=\alpha^{1/3}$) then implies the result.
\end{proof}

\begin{remark}
Note that the volume of $T(a,b,x,y)$ is $\frac{1}{2}(ay+bx)$, while that of $E(a,x+y)$ is $\frac{1}{2}a(x+y)$.  So in the case that $a=b$, the embeddings $E(a,x+y)\to \alpha T(a,a,x,y)$ (in the convex case) or $T(a,a,x,y)\to \alpha E(a,x+y)$ (in the concave case) fill all but an arbitrarily small proportion of the volumes of their targets as $\alpha\to 1$.
\end{remark}

Since $P(1,b)=T(1,b,1,b)$, a special case of Proposition \ref{cvxaxy} is that, for any $\alpha>1$ and $b\geq 1$, there is a symplectic embedding $E(1,1+b)\hookrightarrow \alpha P(1,b)$.  The following reproduces this embedding when $1\leq b<2$, and improves on it for $b=m+\eps\geq 2$.  The case that $\eps=0$ is well-known; see \cite[Remark 1.2(1)]{CFS}.

\begin{prop}\label{longembed} Let $m\in\mathbb{Z}_+$ and $0\leq \ep<1$.  Then for all $\alpha >1$ there is a symplectic embedding $E(1,2m+\ep)\hookrightarrow \alpha P(1,m+\ep)$.
\end{prop}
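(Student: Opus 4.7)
The plan is to apply Theorem~\ref{cgthm} to convert the desired ellipsoid-in-polydisk embedding into a weighted ball-packing statement, whose $\ep=0$ instance is the classical embedding cited from \cite[Remark 1.2(1)]{CFS} and whose $\ep>0$ instance is then verified by a direct K\"ahler cone analysis.

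I would begin by writing $P(1,m+\ep)=T(1,m+\ep,1,m+\ep)$ as a convex toric quadrilateral. A direct calculation gives head $h=m+1+\ep$ and negative weight expansion $\hat w=\mathcal{W}(m+\ep,m+\ep)\sqcup\mathcal{W}(1,1)=[m+\ep,1]$, while the source has weight sequence $\mathcal{W}(1,2m+\ep)=[1]^{\sqcup 2m}\sqcup\mathcal{W}(\ep,1)$. Theorem~\ref{cgthm}, together with a second application that reinterprets the weights $\mathcal{W}(\ep,1)$ as those of the concave ellipsoid $E(\ep,1)$, translates the proposition into the equivalent assertion that for every $\alpha>1$ one has a symplectic packing
\[
E(1,2m+\ep)\sqcup B^4(m+\ep)\sqcup B^4(1)\hookrightarrow\alpha B^4(m+1+\ep).
\]
A further simplification uses Proposition~\ref{tile} to absorb the unit ball into the long ellipsoid via $E(1,2m+\ep)\sqcup E(1,1)\hookrightarrow\alpha E(1,2m+1+\ep)$, so that the task reduces to constructing $E(1,2m+1+\ep)\sqcup B^4(m+\ep)\hookrightarrow\alpha B^4(m+1+\ep)$ for all $\alpha>1$.

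When $\ep=0$, a third application of Theorem~\ref{cgthm} (this time with the convex toric quadrilateral $T(1,m+1,1,m)$, which has head $m+1$ and negative weight sequence $[m]$) shows that this reduced packing is in fact equivalent to the classical embedding $E(1,2m)\hookrightarrow\alpha P(1,m)$ recorded in \cite[Remark 1.2(1)]{CFS}. For $\ep\in(0,1)$ the packing has volume slack precisely $\ep$, and I would verify it by appeal to the Li--Li criterion recalled in Remark~\ref{cp2}, checking that the associated cohomology class
\[
(m+1+\ep)H-\sum_{i=1}^{2m+1}E_i-(m+\ep)E_{2m+2}-\sum_{j} w_j E_{2m+2+j}
\]
(where the $w_j$ enumerate the weights in $\mathcal{W}(\ep,1)$) pairs non-negatively with every relevant exceptional $(-1)$-class in the corresponding iterated blowup of $\mathbb{C}P^2$. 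The main obstacle is this final step: Proposition~\ref{tile} alone preserves volume equality and so cannot produce the required $\ep$-slack from the volume-critical $\ep=0$ packing, so some genuine algebro-geometric input combining \cite{CFS} with an exceptional-class analysis is unavoidable.
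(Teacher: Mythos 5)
Your reductions via Theorem \ref{cgthm} are correct as far as they go: the weight data $h\bigl(P(1,m+\ep)\bigr)=m+1+\ep$, $\hat{w}=[m+\ep,1]$ and $\mathcal{W}(1,2m+\ep)=[1]^{\sqcup 2m}\sqcup\mathcal{W}(\ep,1)$ agree with the paper's, and your ``further simplification'' absorbing $B^4(1)$ does not actually change the weight sequence (the two multisets coincide), so it is an honest repackaging. The problem is that your argument stops exactly where the real work begins. For $\ep>0$ the packing $E(1,2m+1+\ep)\sqcup B^4(m+\ep)\hookrightarrow\alpha B^4(m+1+\ep)$ is only asserted, with a promise to check that the class $(m+1+\ep)H-\sum_{i=1}^{2m+1}E_i-(m+\ep)E_{2m+2}-\sum_j w_jE_{2m+2+j}$ pairs nonnegatively with ``every relevant exceptional class.'' That is not a routine finite verification: the weights $w_j$ of $\mathcal{W}(\ep,1)$ come from the continued fraction expansion of $\ep$ and can be arbitrarily numerous, and the set of exceptional classes in a many-point blowup of $\C P^2$ is infinite; you give no argument for the nonnegativity in general, and you yourself flag this as the main obstacle. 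As written, the key step is a gap, not a proof.

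Moreover, your closing claim that Proposition \ref{tile} ``cannot produce the required $\ep$-slack'' and that algebro-geometric input is unavoidable is mistaken, and it marks where your route diverges from the paper's. The paper does not decompose $\mathcal{W}(1,2m+\ep)$ all the way into balls; it writes $\mathcal{W}(1,2m+\ep)=\mathcal{W}(1,m)\sqcup\mathcal{W}(1,m+\ep)$ and keeps the pieces as ellipsoids, so that Theorem \ref{cgthm} converts the proposition into the packing $E(1,m)\sqcup E(1,m+\ep)\sqcup E(m+\ep,m+\ep)\sqcup E(1,1)\hookrightarrow\alpha B^4(m+1+\ep)$, which is then assembled by three applications of Proposition \ref{tile}: $E(1,m)\sqcup E(1,1)\hookrightarrow\sqrt{\alpha}\,E(1,m+1)\subset\sqrt{\alpha}\,E(1,m+1+\ep)$, next $E(1,m+\ep)\sqcup E(m+\ep,m+\ep)\hookrightarrow\sqrt{\alpha}\,E(m+\ep,m+1+\ep)$, and finally these two pieces, which share the factor $m+1+\ep$ and satisfy $(m+\ep)+1=m+1+\ep$, combine into $\alpha B^4(m+1+\ep)$. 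The $\ep$-slack is consumed entirely by the inclusion $E(1,m+1)\subset E(1,m+1+\ep)$, so no Cremona moves or exceptional-class analysis are needed. To complete your approach you would have to either carry out the Li--Li positivity check uniformly in $\ep$ or, more simply, redo the decomposition at the ellipsoid level as above.
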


\begin{proof}
By Theorem \ref{cgthm} the statement is equivalent to the statement that $\mathcal{W}(1,2m+\ep)\sqcup \mathcal{W}(m+\ep,m+\ep)\sqcup\mathcal{W}(1,1)\preceq [m+1+\ep]$.  From the recursive description  of $\mathcal{W}(a,b)$ given earlier we see that $\mathcal{W}(1,2m+\ep)=\mathcal{W}(1,m)\sqcup \mathcal{W}(1,m+\ep)$, so this is equivalent to the existence, for all $\alpha >1$, of a symplectic embedding \[ E(1,m)\sqcup E(1,m+\ep)\sqcup E(m+\ep,m+\ep) \sqcup E(1,1)\hookrightarrow \alpha B^4(m+1+\ep).\]  But by Proposition \ref{tile} there are symplectic embeddings \[ E(1,m)\sqcup E(1,1)\hookrightarrow \sqrt{\alpha}E(1,m+1)\subset \sqrt{\alpha} E(1,m+1+\ep) \] and \[ E(1,m+\ep)\sqcup E(m+\ep,m+\ep)\hookrightarrow \sqrt{\alpha}E(m+\ep,m+1+\ep),\] and then another application of Proposition \ref{tile} gives a symplectic embedding \[ \sqrt{\alpha}E(m+\ep,m+1+\ep)\sqcup \sqrt{\alpha}E(1,m+1+\ep)\hookrightarrow \alpha E(m+1+\ep,m+1+\ep)=\alpha B^4(m+1+\ep),\] from which the result is immediate.
\end{proof}

The embeddings in Propositions \ref{cvxaxy}, \ref{ccvaxy}, and \ref{longembed} will give rise to many of the knotted embeddings described in the introduction.  Some of our other knotted embeddings require a somewhat less straightforward application of Theorem \ref{cgthm} and Proposition \ref{tile}.  The key additional (and standard) ingredient is the use of \emph{Cremona moves}, based on \cite[Proof of Lemma 3.4]{LL}.  As in Remark \ref{cp2} we regard the question of whether $[a_1,\ldots,a_m]\preceq [t]$ as equivalent to the question of whether the Poincar\'e dual of $tH-\sum a_iE_i$ lies in the closure $\bar{\mathcal{C}}_K$ of the appropriate connected component of the symplectic cone of the $m$-fold blowup $X_m$ of $\mathbb{C}P^2$.  Since $[a_1,\ldots,a_m]\preceq [t]$ if and only if $[a_1,\ldots,a_m,0]\preceq[t]$ we may without loss of generality assume that $m\geq 3$.  Then $X_m$ contains a sphere in the class $H-E_1-E_2-E_3$ of self-intersection $-2$ and Chern number zero; the cohomological action of a Dehn-Seidel twist in this sphere preserves $\bar{\mathcal{C}}_K$ and maps the Poincar\'e dual of $tH-\sum a_iE_i$ to the Poincar\'e dual of $(2t-a_1-a_2-a_3)H-(t-a_2-a_3)E_1-(t-a_1-a_3)E_2-(t-a_1-a_2)E_3-\sum_{i=4}^{m}a_iE_i$.  So we have:

\begin{prop}\cite{LL} \label{cremona}
Assume that $t\geq \max\{a_1+a_2,a_1+a_3,a_2+a_3\}$.  Then \[ [a_1,a_2,a_3,a_4,\ldots,a_m]\preceq [t] \quad \mbox{ if and only if } \]
\[ [t-a_2-a_3,t-a_1-a_3,t-a_1-a_2,a_4,\ldots,a_m] \preceq [2t-a_1-a_2-a_3] \] 
\end{prop}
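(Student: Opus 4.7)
The plan is to execute precisely the Cremona-move strategy outlined in the paragraph preceding the proposition, reducing everything to a short linear-algebra computation in the cohomology of the $m$-fold blow-up $X_m$ of $\mathbb{C}P^2$. Since $[a_1,\ldots,a_m]\preceq[t]$ is unchanged by appending zero entries, we may assume $m\geq 3$. By Remark \ref{cp2}, each of the two $\preceq$ statements in the proposition is equivalent to the corresponding $\mathbb{R}$-linear combination of $H,E_1,\ldots,E_m$ being Poincar\'e dual to a class in $\bar{\mathcal{C}}_K\subset H^2(X_m;\mathbb{R})$. Hence it will suffice to exhibit a linear involution of $H^2(X_m;\mathbb{R})$ that preserves $\bar{\mathcal{C}}_K$ and sends $tH-\sum a_iE_i$ to the class appearing in the second $\preceq$ statement.

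The involution in question is the reflection $\rho_v(\alpha)=\alpha-2\tfrac{\alpha\cdot v}{v\cdot v}v$ in the class $v:=H-E_1-E_2-E_3$. Using $H\cdot H=1$, $E_i\cdot E_i=-1$, and the vanishing of the remaining pairings, one computes $v\cdot v=-2$ and $v\cdot(3H-\sum_j E_j)=0$, so $v$ is represented by a smoothly embedded $(-2)$-sphere with Chern number zero. The Dehn-Seidel twist along this sphere is a symplectomorphism of $X_m$ whose induced action on cohomology is exactly $\rho_v$; since the twist preserves the canonical class and sends symplectic forms to symplectic forms, $\rho_v$ carries $\bar{\mathcal{C}}_K$ to itself. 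This invariance is the single genuinely geometric ingredient, drawn from \cite[Proof of Lemma 3.4]{LL}, and is the only step in the argument that is not purely formal.

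It remains to carry out the explicit calculation. With $\alpha=tH-\sum a_iE_i$ one finds $\alpha\cdot v=t-a_1-a_2-a_3$, so the reflection formula collapses to $\rho_v(\alpha)=\alpha+(t-a_1-a_2-a_3)v$. Expanding the right-hand side gives precisely
\[
(2t-a_1-a_2-a_3)H-(t-a_2-a_3)E_1-(t-a_1-a_3)E_2-(t-a_1-a_2)E_3-\sum_{i\geq 4}a_iE_i,
\]
which is the class appearing on the right in the proposition. The hypothesis $t\geq\max\{a_1+a_2,\,a_1+a_3,\,a_2+a_3\}$ ensures nonnegativity of each of the three new coefficients $t-a_i-a_j$ of $E_1,E_2,E_3$; adding any two of those same inequalities (and using $a_i\geq 0$) also yields $2t-a_1-a_2-a_3\geq 0$, so the coefficient of $H$ is nonnegative as well. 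The transformed class therefore fits the format of Remark \ref{cp2}, and since $\rho_v$ is an involution that preserves $\bar{\mathcal{C}}_K$, the equivalence of the two $\preceq$ statements follows at once. The main obstacle, such as it is, is conceptual rather than computational: one must trust the invariance of $\bar{\mathcal{C}}_K$ under $\rho_v$; everything else is bookkeeping with the intersection form.
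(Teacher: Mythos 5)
Your proposal is correct and follows essentially the same route as the paper: the paper's argument (given in the paragraph preceding the proposition) likewise uses Remark \ref{cp2} to translate both $\preceq$ statements into membership of Poincar\'e duals in $\bar{\mathcal{C}}_K$, appends a zero to ensure $m\geq 3$, and applies the cohomological action of the Dehn--Seidel twist in the $(-2)$-sphere class $H-E_1-E_2-E_3$, which is exactly your reflection $\rho_v$, with the same resulting class. Your explicit verification of the intersection-form computation and of the nonnegativity of the new coefficients is a fine elaboration of what the paper leaves implicit.
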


The following will help us construct the  knotted polydisks from Case (iv) of Theorem \ref{4d}.

\begin{prop}\label{step2}
Let $a\leq y\leq b\leq 2a$.  Then for all $\alpha >1$ there is a symplectic embedding \[ E\left(\frac{a+b}{3},2a+y\right)\hookrightarrow \alpha T(a,b,a,y).\]
\end{prop}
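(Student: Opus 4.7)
My plan is to reduce the proposition to a ball-packing statement via Theorem \ref{cgthm} and then verify that statement using Proposition \ref{cremona} together with Proposition \ref{tile}, in the same spirit as the proofs of Propositions \ref{cvxaxy}, \ref{ccvaxy}, and \ref{longembed}. By continuity in $\alpha$ it suffices to assume $a,b,y\in\mathbb{Q}$. Since $x=a$ and $y\leq b$ imply $\frac{x}{a}+\frac{y}{b}\geq 1$, the domain $T(a,b,a,y)$ is a rational convex toric quadrilateral, and a direct unwinding of the recursive definitions gives $h(T(a,b,a,y))=a+y$ and $\hat w(T(a,b,a,y))=\mathcal{W}(y,y)\sqcup\mathcal{W}(a+y-b,a)=[y]\sqcup\mathcal{W}(a+y-b,a)$. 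By Theorem \ref{cgthm} the proposition is equivalent to the packing inequality
\[ \mathcal{W}\bigl(\tfrac{a+b}{3},\,2a+y\bigr)\sqcup[y]\sqcup\mathcal{W}(a+y-b,\,a)\preceq[a+y]. \]

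The key step of the plan is a Cremona move (Proposition \ref{cremona}) applied to the three largest entries. Unwinding the recursion using $b\leq 2a$ and $y\geq a$ shows that $\mathcal{W}(\tfrac{a+b}{3},2a+y)$ begins with three copies of $\tfrac{a+b}{3}$ and that $\mathcal{W}(a+y-b,a)$ begins with $a+y-b$; the three largest entries of the left-hand multiset are $y,\,a+y-b,\,\tfrac{a+b}{3}$ (using $y\geq a+y-b\Leftrightarrow b\geq a$ and $y\geq a\geq\tfrac{a+b}{3}$). Applying Proposition \ref{cremona} to this triple is permitted: the three required inequalities $a+y\geq y+(a+y-b)$, $a+y\geq y+\tfrac{a+b}{3}$, and $a+y\geq (a+y-b)+\tfrac{a+b}{3}$ each reduce to a consequence of $a\leq b\leq 2a$. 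The move replaces $[y,\,a+y-b,\,\tfrac{a+b}{3}]$ by $[\tfrac{2b-a}{3},\,\tfrac{2a-b}{3},\,b-y]$ and shrinks the target ball from $B^4(a+y)$ to $B^4\bigl(\tfrac{2(a+b)}{3}\bigr)$.

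The main obstacle is then to verify the reduced packing inequality, which remains nontrivial. After the move, the multiset to be packed into $B^4\bigl(\tfrac{2(a+b)}{3}\bigr)$ consists of $[\tfrac{a+b}{3},\,\tfrac{a+b}{3}]\sqcup\mathcal{W}\bigl(\tfrac{a+b}{3},\,a+y-b\bigr)\sqcup\mathcal{W}(a+y-b,\,b-y)$ together with the three new small entries $\tfrac{2b-a}{3},\tfrac{2a-b}{3},b-y$. I expect to handle this by iterating the same Cremona/tile reduction along the Euclidean-algorithm-like structure of the tails $\mathcal{W}(\tfrac{a+b}{3},\,a+y-b)$ and $\mathcal{W}(a+y-b,\,b-y)$, with case analysis according to the sign of $a+y-b-\tfrac{a+b}{3}$ and similar quantities that govern which entry is dominant at each stage. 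A volume check shows that the source ellipsoid and the target have the same volume precisely when $y=a$ or $b=2a$, so the chain of reductions must be sharp on these boundary loci; elsewhere there is strict volume room. The combinatorial bookkeeping required to cover all parameter regimes uniformly is the delicate part of the argument.
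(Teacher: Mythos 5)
Your setup is correct and matches the paper's: the reduction via Theorem \ref{cgthm} to $\mathcal{W}\bigl(\tfrac{a+b}{3},2a+y\bigr)\sqcup[y]\sqcup\mathcal{W}(a+y-b,a)\preceq[a+y]$, the unwinding of the leading weights, and the first Cremona move on the triple $\bigl(y,\,a+y-b,\,\tfrac{a+b}{3}\bigr)$ producing the new entries $\tfrac{2b-a}{3},\tfrac{2a-b}{3},b-y$ and target $\tfrac{2(a+b)}{3}$ are exactly the paper's first steps. (One small inaccuracy: these three numbers need not be the three largest entries of the multiset -- e.g.\ $a=y=1$, $b=2$ gives $a+y-b=0$ -- but that is irrelevant, since Proposition \ref{cremona} only needs the pairwise-sum inequalities you verify.)

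The genuine gap is that you stop precisely where the real work begins. Your plan for the reduced inequality -- ``iterating the same Cremona/tile reduction along the Euclidean-algorithm-like structure of the tails, with case analysis'' -- is not carried out, and you yourself flag the uniform bookkeeping as the delicate unresolved part; as stated it is an expectation, not a proof, and it is far from clear it closes uniformly over the parameter range. The paper finishes differently and concretely: a \emph{second} Cremona move applied to $\bigl[\tfrac{a+b}{3},\tfrac{a+b}{3},\tfrac{2b-a}{3}\bigr]$ (allowed since $\tfrac{2(a+b)}{3}-\tfrac{a+b}{3}-\tfrac{2b-a}{3}=\tfrac{2a-b}{3}\geq0$) drops the target to $[a]$ and leaves $\bigl[\tfrac{2a-b}{3},\tfrac{2a-b}{3},0,\tfrac{2a-b}{3},b-y\bigr]\sqcup\mathcal{W}\bigl(\tfrac{a+b}{3},a+y-b\bigr)\sqcup\mathcal{W}(b-y,a+y-b)$, which is then recognized as $[0]\sqcup\mathcal{W}\bigl(\tfrac{2a-b}{3},2a-b\bigr)\sqcup\mathcal{W}\bigl(\tfrac{a+b}{3},a+y-b\bigr)\sqcup\mathcal{W}(b-y,a+y-b)\sqcup\mathcal{W}(b-y,b-y)$. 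By Theorem \ref{cgthm} again it then suffices to embed the four corresponding ellipsoids into $\alpha B^4(a)$, which is done by three applications of Proposition \ref{tile}, using the identities $\tfrac{2a-b}{3}+\tfrac{a+b}{3}=a$ and $(a+y-b)+(b-y)=a$ together with $2a-b\leq a+y-b$; no iteration or case analysis is needed. Without this (or some completed substitute), your argument does not establish the proposition.
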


\begin{proof}
As usual assuming that $a,b,y\in\Q$, by Theorem \ref{cgthm} the proposition is equivalent to the statement that \begin{equation}\label{goal} \mathcal{W}\left(\frac{a+b}{3},2a+y\right)\sqcup\mathcal{W}(y,y)\sqcup\mathcal{W}(a+y-b,a)\preceq [a+y].\end{equation}  Since $a\leq y$ and $b\leq 2a$ we have $a+b\leq 2a+y$, in view of which \[ \mathcal{W}\left(\frac{a+b}{3},2a+y\right)=\left[\frac{a+b}{3},\frac{a+b}{3},\frac{a+b}{3}\right]\sqcup \mathcal{W}\left(\frac{a+b}{3},a+y-b\right).\]  Meanwhile of course $\mathcal{W}(y,y)=[y]$, and (since $y\leq b$) $\mathcal{W}(a+y-b,a)=[a+y-b]\sqcup\mathcal{W}(a+y-b,b-y)$.  So (\ref{goal}) amounts to the statement that \[ \left[y,a+y-b,\frac{a+b}{3},\frac{a+b}{3},\frac{a+b}{3}\right]\sqcup\mathcal{W}\left(\frac{a+b}{3},a+y-b\right)\sqcup\mathcal{W}(b-y,a+y-b)\preceq [a+y].\]  
Applying Proposition \ref{cremona} and reordering the sequence in brackets shows that this is equivalent to the statement that \[ \left[\frac{a+b}{3},\frac{a+b}{3},\frac{2b-a}{3},\frac{2a-b}{3},b-y\right]\sqcup \mathcal{W}\left(\frac{a+b}{3},a+y-b\right)\sqcup\mathcal{W}(b-y,a+y-b)\preceq\left[\frac{2a+2b}{3}\right].\]
Then another application of Proposition \ref{cremona} shows that this last statement (and hence also (\ref{goal})) is equivalent to the statement that \[
\left[\frac{2a-b}{3},\frac{2a-b}{3},0,\frac{2a-b}{3},b-y\right]\sqcup\mathcal{W}\left(\frac{a+b}{3},a+y-b\right)\sqcup\mathcal{W}(b-y,a+y-b)\preceq[a].
\]  The left hand side above can be rewritten as \[ [0]\sqcup \mathcal{W}\left(\frac{2a-b}{3},2a-b\right)\sqcup\mathcal{W}\left(\frac{a+b}{3},a+y-b\right)\sqcup \mathcal{W}(b-y,a+y-b)\sqcup \mathcal{W}(b-y,b-y).\]  So by Theorem \ref{cgthm} it suffices to show that for all $\alpha >1$ there is a symplectic embedding \begin{equation}\label{embedgoal} E\left(\frac{2a-b}{3},2a-b\right)\sqcup E\left(\frac{a+b}{3},a+y-b\right)\sqcup E(b-y,a+y-b)\sqcup E(b-y,b-y)\hookrightarrow \alpha E(a,a).\end{equation}  We now repeatedly use Proposition \ref{tile}, obtaining for any $\nu>1$ symplectic embeddings: \begin{itemize} \item $E(b-y,a+y-b)\sqcup E(b-y,b-y)\hookrightarrow \nu E(b-y,a)$;
\item $E\left(\frac{2a-b}{3},2a-b\right)\sqcup E\left(\frac{a+b}{3},a+y-b\right)\subset E\left(\frac{2a-b}{3},a+y-b\right)\sqcup E\left(\frac{a+b}{3},a+y-b\right)\hookrightarrow \nu E(a,a+y-b)$ (since $a\leq y$);
\item $E(a,a+y-b)\sqcup E(b-y,a)\cong E(a+y-b,a)\sqcup E(b-y,a)\hookrightarrow \nu E(a,a)$.\end{itemize}  Combining these embeddings (with $\nu=\sqrt{\alpha}$) yields the embedding (\ref{embedgoal}) and hence proves the proposition.
\end{proof}

\subsection{Proof of Theorem \ref{dellbounds}} \label{dellproof}
We begin with the following easy observation, using the terminology and notation from Section \ref{intro}.
\begin{prop} Let $\Omega\subset [0,\infty)^n$ be any star-shaped domain such that $X_{\Omega}$ contains the origin in its interior.  Then \begin{equation}\label{maxdell} \dell(X_{\Omega})\leq \inf\left\{\left.\left\|\left(\frac{1}{a_1},\ldots,\frac{1}{a_n}\right)\right\|^{*}_{\Omega}\right|\mbox{There is a symplectic embedding $E(a_1,\ldots,a_n)\hookrightarrow X_{\Omega}^{\circ}$}\right\} \end{equation} and \begin{equation}\label{mindell} 
\dell(X_{\Omega})\leq \frac{1}{\sup\left\{\left.[(\frac{1}{a_1},\ldots,\frac{1}{a_n})]_{\Omega}\right|\mbox{There is a symplectic embedding $X_{\Omega}\hookrightarrow E(a_1,\ldots,a_n)^{\circ}$}\right\}}.
\end{equation}
\end{prop}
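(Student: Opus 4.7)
The plan is to turn set-theoretic inclusions of the form $X_\Omega\subset E$ and $E\subset\alpha X_\Omega^\circ$ (where $E$ is an ellipsoid) into symplectic embeddings, and then compose these with the hypothesized symplectic embedding to or from an ellipsoid. Two inclusion criteria, both directly from the definitions of $\|\cdot\|^{*}_{\Omega}$ and $[\,\cdot\,]_{\Omega}$ together with the identity $\alpha E(b_1,\ldots,b_n)=E(\alpha b_1,\ldots,\alpha b_n)$, should be established first: (1) $X_\Omega\subset E(b_1,\ldots,b_n)$ if and only if $\|(1/b_1,\ldots,1/b_n)\|^{*}_{\Omega}\leq 1$, since both conditions amount to requiring $\sum_i x_i/b_i\leq 1$ for every $(x_1,\ldots,x_n)\in\Omega$; and (2) if $\alpha\,[(1/b_1,\ldots,1/b_n)]_{\Omega}>1$ then $E(b_1,\ldots,b_n)\subset\alpha X_\Omega^\circ$. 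For the latter, I would use that $X_\Omega^\circ=\mu^{-1}(\Omega^\circ)$ (interior taken inside $[0,\infty)^n$; this uses that $\mu$ is a continuous open map with compact fibers) to reduce to showing that the closed ellipsoidal simplex $\{v\in[0,\infty)^n\,|\,\sum_i v_i/b_i\leq 1\}$ lies in the $[0,\infty)^n$-interior of $\alpha\Omega$: any $v$ in the closure of $[0,\infty)^n\setminus\alpha\Omega$ satisfies $\sum_i v_i/b_i\geq\alpha[(1/b_1,\ldots,1/b_n)]_{\Omega}>1$ by the scaling identity $[\,\cdot\,]_{\alpha\Omega}=\alpha[\,\cdot\,]_{\Omega}$, and so cannot lie in the simplex.

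It is also useful (and routine) to note that the rescaling $w\mapsto\sqrt{\alpha}\,\phi(w/\sqrt{\alpha})$ converts a symplectic embedding $\phi\co E(a_1,\ldots,a_n)\hookrightarrow X_\Omega^\circ$ into a symplectic embedding $\alpha E(a_1,\ldots,a_n)\hookrightarrow \alpha X_\Omega^\circ$, since $\omega_0$ on $\C^n$ is translation-invariant. With these preliminaries, for (\ref{maxdell}) I would take $\phi\co E(a_1,\ldots,a_n)\hookrightarrow X_\Omega^\circ$ and an arbitrary $\alpha>\|(1/a_1,\ldots,1/a_n)\|^{*}_{\Omega}$; criterion (1) applied with $b_i=\alpha a_i$ yields the inclusion $X_\Omega\hookrightarrow\alpha E(a_1,\ldots,a_n)$, and the rescaled $\phi$ gives $\alpha E(a_1,\ldots,a_n)\hookrightarrow\alpha X_\Omega^\circ$, so $\dell(X_\Omega)\leq\alpha$; taking infima over such $\alpha$ and over admissible $a_i$ yields the first bound. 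For (\ref{mindell}) I would take $\phi\co X_\Omega\hookrightarrow E(a_1,\ldots,a_n)^\circ$ and $\alpha>1/[(1/a_1,\ldots,1/a_n)]_{\Omega}$; criterion (2) with $b_i=a_i$ provides the inclusion $E(a_1,\ldots,a_n)\hookrightarrow\alpha X_\Omega^\circ$, and composing with $\phi$ again yields $\dell(X_\Omega)\leq\alpha$, so taking infima gives the second bound.

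The only step requiring care is the interior-versus-closed distinction in criterion (2): one must verify not merely that the ellipsoidal simplex avoids $[0,\infty)^n\setminus\alpha\Omega$, but that it lies in the $[0,\infty)^n$-interior of $\alpha\Omega$, which is precisely what $\alpha X_\Omega^\circ$ corresponds to under $\mu$. The strict inequality $\alpha[(1/b_1,\ldots,1/b_n)]_{\Omega}>1$, together with the observation that boundary points of $\alpha\Omega$ in $[0,\infty)^n$ are limits of exterior points (since $\alpha\Omega$ is the closure of a bounded open set), handles this by a standard limit argument. Everything else is bookkeeping with the definitions and the rescaling convention $\alpha E(a_1,\ldots,a_n)=E(\alpha a_1,\ldots,\alpha a_n)$.
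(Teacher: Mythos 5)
Your proposal is correct and follows essentially the same route as the paper: translate the $\|\cdot\|^{*}_{\Omega}$ and $[\,\cdot\,]_{\Omega}$ conditions into inclusions $X_{\Omega}\subset \alpha E(a_1,\ldots,a_n)$ and $E(a_1,\ldots,a_n)\subset \alpha X_{\Omega}^{\circ}$ read off under the moment map, then compose with the given ellipsoid embedding (rescaled), so your extra room $\alpha>\|\cdot\|^{*}_{\Omega}$ (resp.\ $\alpha>1/[\,\cdot\,]_{\Omega}$) and the interior-versus-closure care in your criterion (2) just make the same argument slightly more careful on the open/closed bookkeeping. One minor slip worth noting: the rescaled map $w\mapsto \sqrt{\alpha}\,\phi(w/\sqrt{\alpha})$ is symplectic because $\omega_0$ is homogeneous under dilations (pulling back by $w\mapsto\sqrt{\alpha}w$ multiplies $\omega_0$ by $\alpha$), not because $\omega_0$ is translation-invariant, but this does not affect the argument.
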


\begin{proof}
We first prove (\ref{maxdell}).  Suppose that there is a symplectic embedding $E(a_1,\ldots,a_n)\hookrightarrow X_{\Omega}^{\circ}$ and let $\alpha=\|(\frac{1}{a_1},\ldots,\frac{1}{a_n})\|^{*}_{\Omega}$.  So by definition, each point $(x_1,\ldots,x_n)\in \Omega$ obeys $\sum_i\frac{x_i}{a_i}\leq \alpha$.  But $\alpha E(a_1,\ldots,a_n)$ is precisely the preimage under $\mu$ of $\big\{(x_1,\ldots,x_n)\in[0,\infty)^n|\sum_i\frac{x_i}{a_i}\leq\alpha\big\}$, while $X_{\Omega}=\mu^{-1}(\Omega)$.  So we have $E(a_1,\ldots,a_n)\hookrightarrow X_{\Omega}^{\circ}$ and $X_{\Omega}\subset \alpha E(a_1,\ldots,a_n)$, and hence for $E=\alpha E(a_1,\ldots,a_n)$ there are symplectic embeddings $X_{\Omega}\hookrightarrow E\hookrightarrow \alpha X_{\Omega}^{\circ}$. Thus $\dell(X_{\Omega})\leq\alpha$.  Since $(a_1,\ldots,a_n)$ was arbitrary subject to the assumption that there is a symplectic embedding $E(a_1,\ldots,a_n)\hookrightarrow X_{\Omega}^{\circ}$, this proves (\ref{maxdell}).

Similarly, suppose that there is a symplectic embedding $X_{\Omega}\hookrightarrow E(a_1,\ldots,a_n)^{\circ}$ and let $\nu=\left[(\frac{1}{a_1},\ldots,\frac{1}{a_n})\right]_{\Omega}$.  Then for each $(x_1,\ldots,x_n)\in\overline{[0,\infty)^2\setminus\Omega}$   we have $\sum_i\frac{x_i}{a_i}\geq \nu$. 

So since $\Omega$ is closed  it then follows that $\big\{(y_1,\ldots,y_n)\in[0,\infty)^n|\sum_i\frac{y_i}{a_i}\leq\nu\big\}\subset \Omega$.  Taking preimages under $\mu$ then shows that $\nu E(a_1,\ldots,a_n)\subset X_{\Omega}$,  and hence $E(a_1,\ldots,a_n)\hookrightarrow \nu^{-1}X_{\Omega}$.  Thus $\dell(X_{\Omega})\leq\frac{1}{\nu}$, which implies (\ref{mindell}) since $\nu$ was arbitrary subject to the assumption that there is a symplectic embedding $X_{\Omega}\hookrightarrow E(a_1,\ldots,a_n)^{\circ}$.
\end{proof}

The proof of Theorem \ref{dellbounds} now follows almost immediately based on Propositions \ref{cvxaxy}, \ref{ccvaxy}, and \ref{step2}.  For part (a),
the hypotheses that $\hat{\Omega}$ is convex and that $(a,0),(0,b),(x,y)\in \Omega$ imply that also $(0,0),(0,y)\in \Omega$.   Since $(0,y)\in\Omega$ and since the right-hand-side of the desired inequality is independent of $b$,  there is no loss of generality in assuming that $b\geq y$, while the hypothesis of the theorem gives inequalities $x\leq a\leq b\leq x+y$. The fact that  $\Omega$ is a convex region containing $(a,0),(0,0),(0,b),(x,y)$  implies that the quadrilateral with these points as its vertices is contained in $\Omega$, and hence that $T(a,b,x,y)\subset X_{\Omega}$.  So for any $\alpha >1$ Proposition \ref{cvxaxy} gives a symplectic embedding $E\big(\alpha^{-1}a,\alpha^{-1}(x+y)\big)\hookrightarrow X_{\Omega}^{\circ}$, whence (\ref{maxdell}) yields Theorem \ref{dellbounds} (a).

Similarly in part (b), by hypothesis we have $(a,0),(x,y),(0,b)\in\overline{[0,\infty)^2\setminus\Omega}$, and moreover $[0,\infty)^2\setminus \Omega$ (and hence also its closure) is convex.  Since $\Omega$ is bounded, it follows that  $\overline{[0,\infty)^2\setminus\Omega}$ contains all points of form $t\vec{v}$ where $t\geq 1$ and $\vec{v}$ lies on  the  line segment from $(a,0)$ to $(x,y)$ or the line segment  from $(x,y)$ to $(0,b)$.  The preimage under $\mu$ of the set of all  such points is $\R^4\setminus T(a,b,x,y)^{\circ}$,  while the preimage under $\mu$ of $\overline{[0,\infty)^2\setminus\Omega}$ is $\R^4\setminus X_{\Omega}^{\circ}$, so this shows that $X_{\Omega}^{\circ}\subset T(a,b,x,y)^{\circ}$ and hence (recalling our convention that ``domains'' are closures of open subsets) that  $X_{\Omega}\subset T(a,b,x,y)$.  Thus part (b) of Theorem \ref{dellbounds} follows from Proposition \ref{ccvaxy} and (\ref{mindell}).

Part (c) of Theorem \ref{dellbounds} is an immediate application of Proposition \ref{step2} (applied to $P(a,b)=T(a,b,a,b)$) together with (\ref{maxdell}).

\subsection{An explicit construction}\label{explicitsection}

The embeddings from Propositions \ref{cvxaxy}, \ref{ccvaxy}, and \ref{step2} that underlie Theorem \ref{dellbounds} are obtained by very indirect methods and are difficult to understand concretely.  We will now explain a more direct construction that, for instance, leads to an explicit formula for a knotted embedding $P(1,1)\to \alpha P(1,1)^{\circ}$ for any $\alpha\in \left(\frac{1}{2-\sqrt{2}},2\right)$.

The key ingredient is a toric structure on the complement of the antidiagonal in $S^2\times S^2$ that appears (at least implicitly) in \cite[Example 1.22]{EP}, \cite{FOOO}, \cite[Section 2]{OU}.   View $S^2$ as the unit sphere in $\R^3$ and let $A=\{(v,w)\in S^2\times S^2\,|\,w=-v\}$ be the antidiagonal.  Define functions $F_1,F_2\co S^2\times S^2\to\R$ by \[ F_1(v,w)=v_3+w_3\qquad F_2(v,w)=\|v+w\|.\]  Now $F_2$ fails to be smooth along $A=F_{2}^{-1}(\{0\})$, but on $S^2\times S^2\setminus A$ the Hamiltonian flows of the functions $F_1$ and $F_2$  induce $S^1$-actions that commute with each other and are rather simple to understand: $F_1$ induces simultaneous rotation of the factors about the $z$-axis, and $F_2$ induces the flow which rotates the pair $(v,w)\in S^2\times S^2\setminus A$ about an axis in the direction of $v+w$.  In formulas:\begin{align}\label{f1flow} \phi_{F_1}^{t}&\bigl((v_1,v_2,v_3),(w_1,w_2,w_3)\bigr)\\ \nonumber &=\Bigl(\bigl((\cos t)v_1-(\sin t)v_2,(\sin t)v_1+(\cos t)v_2,v_3\bigr),\bigl((\cos t)w_1-(\sin t)w_2,(\sin t)w_1+(\cos t)w_2,w_3\bigr)\Bigr) \end{align} and \begin{equation}\label{f2flow} \phi_{F_2}^{t}\left(v,w\right)=\left(\frac{v+w}{2}+(\cos t)\frac{v-w}{2}+(\sin t)\frac{w\times v}{\|v+w\|},\frac{v+w}{2}+(\cos t)\frac{w-v}{2}+(\sin t)\frac{v\times w}{\|v+w\|}\right).\end{equation}

Define \[ J\co S^2\times S^2\to \R^2\quad\mbox{by}\quad J(v,w)=\left(2-\|v+w\|,\|v+w\|-v_3-w_3\right), \] \emph{i.e.} $J=(2-F_2,-F_1+F_2)$.    Then $J$ is smooth away from $A$, and its restriction to $S^2\times S^2\setminus A$ is the moment map for a Hamiltonian $T^2$-action.\footnote{Here we view $T^2$ as $(\R/2\pi\Z)^2$.  On the other hand the map $\mu(w,z)=(\pi|w|^2,\pi|z|^2)$ that we have considered elsewhere is the moment map for a Hamiltonian $(\R/\Z)^2$-action; to get a $(\R/2\pi\Z)^2$-action one would take $\frac{\mu}{2\pi}$.} It is not hard to see that $J$ has image equal to $\Delta:=\{(x,y)\in [0,\infty)^2|x/2+y/4\leq 1\}$, and that the preimage of $\{x/2+y/4=1\}$ is equal to $Q:=\{(v,w)\in S^2\times S^2|v_3+w_3=-\|v+w\|\}$.  (In other words, $Q$ is the locus of pairs $(v,w)\in S^2\times S^2$ such that $v+w$ is on the nonpositive $z$ axis.)

\begin{prop}\label{PhiS}
Let $\Delta^{\circ}=\left\{(x,y)\in[0,\infty)^2\left|\frac{x}{2}+\frac{y}{4}<1\right.\right\}$ and define $s\co \Delta^{\circ}\to S^2\times S^2$ by \begin{align*} s(x,y)=&\left(\left(\sqrt{x\left(1-\frac{x}{4}\right)},\sqrt{y\left(1-\frac{x}{2}-\frac{y}{4}\right)},1-\frac{x+y}{2}\right)
,\right.\\ &\phantom{coucou}\left.\left(-\sqrt{x\left(1-\frac{x}{4}\right)},\sqrt{y\left(1-\frac{x}{2}-\frac{y}{4}\right)},1-\frac{x+y}{2}\right)\right).\end{align*} Then, writing $E(4\pi,8\pi)^{\circ}=\left\{(w,z)\in \C^2\,|\,\frac{|w|^2}{4}+\frac{|z|^2}{8}<1\right\}$, the map \[ \Phi\bigl(|z_1|e^{i\theta},|z_2|e^{i\varphi}\bigr)=\phi_{F_1}^{\varphi}\left(\phi_{F_2}^{\theta-\varphi}\left(s\left(\frac{|z_1|^2}{2},\frac{|z_2|^2}{2}\right)\right)\right) \] defines a symplectomorphism $\Phi\co E(4\pi,8\pi)^{\circ}\to S^2\times S^2\setminus Q$ which satisfies $J\circ\Phi(z_1,z_2)=\left(\frac{|z_1|^2}{2},\frac{|z_2|^2}{2}\right)$.
\end{prop}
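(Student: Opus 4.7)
The plan is to interpret $\Phi$ as the action-angle coordinate map associated to the Hamiltonian $T^2$-action on $S^2\times S^2\setminus A$ generated by the commuting Hamiltonians $F_1,F_2$, using $s$ as a Lagrangian section of $J$ over $\Delta^\circ$. I would establish the proposition by verifying four claims in turn: (a) $J\circ s=\textrm{id}_{\Delta^\circ}$; (b) $s(\Delta^\circ)$ is a Lagrangian submanifold of $S^2\times S^2$; (c) $\Phi^*\omega_{S^2\times S^2}=\omega_0$ on the open subset $\{z_1z_2\ne 0\}$; and (d) $\Phi$ extends smoothly across $\{z_1z_2=0\}$ to a bijection onto $S^2\times S^2\setminus Q$. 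For (a), direct substitution into the formula for $s(x,y)$ yields $v+w=(0,2\sqrt{y(1-x/2-y/4)},2-x-y)$, hence $\|v+w\|^2=(2-x)^2$, so $F_2\circ s=2-x$ and $F_1\circ s=2-x-y$; this gives $J\circ s=(x,y)$. Since the explicit formulas (\ref{f1flow}) and (\ref{f2flow}) show that $\phi_{F_1}^t$ and $\phi_{F_2}^s$ commute and each preserves the other's Hamiltonian, $J$ is constant along the $T^2$-orbits generated by these flows, so the identity $J\circ\Phi(z_1,z_2)=(|z_1|^2/2,|z_2|^2/2)$ reduces to (a).

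For (b), I would note that $s(x,y)=(v,\iota(v))$ where $v\in S^2$ has its first two coordinates nonnegative and $\iota\colon S^2\to S^2$ is the orientation-reversing isometry $(v_1,v_2,v_3)\mapsto(-v_1,v_2,v_3)$. Since $\iota$ is anti-symplectic, its graph is a Lagrangian submanifold of $(S^2\times S^2,\omega_{S^2}\oplus\omega_{S^2})$, and so is $s(\Delta^\circ)$. For (c), I would work in the coordinates $(a_1,a_2,\theta,\varphi)=(|z_1|^2/2,|z_2|^2/2,\arg z_1,\arg z_2)$ on $\{z_1z_2\ne 0\}\subset E(4\pi,8\pi)^\circ$, where $\omega_0=da_1\wedge d\theta+da_2\wedge d\varphi$. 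Setting $\psi=\phi_{F_1}^\varphi\circ\phi_{F_2}^{\theta-\varphi}$ and $p=\Phi(z_1,z_2)$, one has $\partial_\theta\Phi=X_{F_2}|_p$, $\partial_\varphi\Phi=X_{F_1}|_p-X_{F_2}|_p$, and $\partial_{a_j}\Phi=d\psi\cdot\partial_{a_j}s$. Because $\psi$ preserves both $\omega$ and the vector fields $X_{F_1},X_{F_2}$, each of the six wedge pairings collapses to an evaluation at $\sigma=s(a_1,a_2)$: the $(\partial_\theta,\partial_\varphi)$-term vanishes since $\{F_1,F_2\}=0$, the $(\partial_{a_1},\partial_{a_2})$-term vanishes by (b), and the four mixed terms reduce to $\pm\partial_{a_i}(F_j\circ s)$, matching the coefficients of $\omega_0$ exactly via the formulas in (a).

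For (d), the only delicate locus is $\{z_1z_2=0\}$, where one of the angles $\theta,\varphi$ is ill-defined. At $z_1=0$, $s(0,a_2)$ lies on the diagonal of $S^2\times S^2$, where $v+w=2v$ so that $\phi_{F_2}^t$ fixes $s(0,a_2)$ pointwise; hence $\Phi(0,z_2)=\phi_{F_1}^{\arg z_2}(s(0,|z_2|^2/2))$ depends only on $z_2$. At $z_2=0$, $s(a_1,0)$ has $v+w$ on the positive $z$-axis, so the rotation axes of $\phi_{F_2}^t$ and $\phi_{F_1}^t$ coincide there, and the composition $\phi_{F_1}^\varphi\circ\phi_{F_2}^{\theta-\varphi}$ collapses to a single rotation by $\theta$, depending only on $z_1$. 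Smoothness in the holomorphic coordinate $z_1$ (resp.\ $z_2$) then follows from a local computation. For bijectivity, I would construct $\Phi^{-1}$ explicitly: given $(v,w)\in S^2\times S^2\setminus Q$, first recover $(a_1,a_2)=J(v,w)\in\Delta\setminus\{x/2+y/4=1\}$, which determines $(|z_1|,|z_2|)$, and then read off the angles by identifying the position of $(v,w)$ on its $T^2$-orbit through $s(a_1,a_2)$. I expect the main obstacle to be step (d): carefully verifying that $\Phi$ extends smoothly across both axis loci and that its image fills all the edge strata and interior vertices of $\Delta\setminus\{x/2+y/4=1\}$ (namely the diagonal, the positive-$z$-axis locus, the antidiagonal $A$, and the north-pole point $((0,0,1),(0,0,1))$), without spilling over into $Q$.
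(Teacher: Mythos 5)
Your proposal is correct and follows essentially the same route as the paper: check $J\circ s=\mathrm{id}_{\Delta^{\circ}}$, note that $s$ is a Lagrangian section because its image is the graph of the anti-symplectic reflection $(v_1,v_2,v_3)\mapsto(-v_1,v_2,v_3)$, use equivariance of $\Phi$ with respect to the two torus actions, and conclude that $\Phi$ identifies action-angle coordinates --- the paper states your steps (c)--(d) as a one-line assertion, with smoothness across $\{z_1z_2=0\}$ addressed via the explicit formula of Remark \ref{altform}, so your more detailed verification is compatible with and slightly more careful than the paper's. One small correction to your step (d): the antidiagonal $A$ satisfies $v_3+w_3=0=-\|v+w\|$, so $A\subset Q$ (it sits over the vertex $(2,0)$ of the slant edge) and is therefore excluded from the target, not one of the strata your image must fill.
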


\begin{proof} First we observe that $s$ indeed takes values in $S^2\times S^2\subset \R^3\times \R^3$, which follows by computing \begin{align*} x &\left(1-\frac{x}{4}\right)+y\left(1-\frac{x}{2}-\frac{y}{4}\right)+\left(1-\frac{x+y}{2}\right)^2
\\&=x+y-\frac{x^2+y^2}{4}-\frac{xy}{2}+1-x-y+\frac{x^2+2xy+y^2}{4}=1.\end{align*}  Given $(x,y)\in\Delta^{\circ}$, if we write $(v,w)=s(x,y)$, then \[ \|v+w\|^2=4y\left(1-\frac{x}{2}-\frac{y}{4}\right)+(2-x-y)^2=x^2-4x+4=(2-x)^2,\] so (since $x<2$) \[ J\bigl(s(x,y)\bigr)=\bigl(2-\|v+w\|,-v_3-w_3+\|v+w\|\bigr)=(x,x+y-2+2-x)=(x,y).\]  In particular, the image of $s$ is contained in $S^2\times S^2\setminus Q=J^{-1}(\Delta^{\circ})$, and it intersects each fiber of $J|_{J^{-1}(\Delta^{\circ})}$ just once.

Moreover, since the image of $s$ is contained in $\big\{(v,Rv)\,|\,v\in S^2\big\}$ where $R$ is the reflection through the $v_2v_3$-plane and hence is antisymplectic, we see that $s^{\star}\Omega=0$ where $\Omega$ is the standard product symplectic form on $S^2\times S^2$.  Thus $s\co \Delta^{\circ}\to J^{-1}(\Delta^{\circ})$ is a Lagrangian right inverse to the moment map $J$.  

Write $\psi^{1}_{(\theta,\varphi)}(z_1,z_2)=(e^{-i\theta}z_1,e^{-i\varphi}z_2)$ for the standard $T^2$-action on $E(4\pi,8\pi)^{\circ}$ (with moment map $\frac{\mu}{2\pi}$ having image equal to $\Delta^{\circ}$; the negative signs in front of $\theta$ and $\varphi$ arise because our convention for Hamiltonian vector fields is $\omega_0(X_H,\cdot)=dH$).  Likewise write $\psi^{2}_{(\theta,\varphi)}=\phi_{F_1}^{-\varphi}\circ\phi_{F_2}^{\varphi-\theta}$ for the $T^2$-action on $S^2\times S^2\setminus Q$ induced by the moment map $J$.  Our map $\Phi$ maps the Lagrangian section of $\frac{\mu}{2\pi}$ given by the nonnegative real locus of $E(4\pi,8\pi)^{\circ}$ to the Lagrangian section of $J|_{S^2\times S^2\setminus Q}$ given by the image of $s$, and $\Phi$ obeys $J\circ\Phi=\frac{\mu}{2\pi}$ and, for all $(\theta,\varphi)\in T^2$, $\Phi\circ\psi^{1}_{(\theta,\varphi)}=\psi^{2}_{(\theta,\varphi)}\circ\Phi$.  These facts are easily seen to imply that $\Phi$ is a symplectomorphism, as it identifies action-angle coordinates on $E(4\pi,8\pi)^{\circ}$ with action-angle coordinates on $S^2\times S^2\setminus Q$.  The last statement is immediate from the formula for $\Phi$ and the facts that $J\circ s$ is the identity and that $J$ is preserved under the Hamiltonian flows of $F_1$ and $F_2$.
\end{proof}

\begin{remark}\label{altform} With sufficient effort, one can derive the following equivalent formula for the map $\Phi\co E(4\pi,8\pi)^{\circ}\to S^2\times S^2$ from Proposition \ref{PhiS}: regarding $S^2$ as the unit sphere in $\C\times \R$, we have \begin{align} \Phi & (w,z)=\bigl(\Gamma(w,z),\Gamma(-w,z)\bigr) \quad\mbox{where} \nonumber \\ \label{longeqn} \Gamma(w,z)&= \left(\frac{\sqrt{8-|w|^2}\left((8-2|w|^2-|z|^2)w+\bar{w}z^2\right)}{8(4-|w|^2)}+\frac{iz}{4}\sqrt{8-2|w|^2-|z|^2},\right.\\ & \qquad \qquad \left. 1-\frac{|w|^2+|z|^2}{4}-\frac{\sqrt{(8-|w|^2)(8-2|w|^2-|z|^2)}}{4(4-|w|^2)}Im(w\bar{z})\right).\nonumber \end{align}

Since $E(4\pi,8\pi)^{\circ}$ is precisely the locus where $2|w|^2+|z|^2<8$, this makes clear that $\Phi$ is a smooth (indeed even real-analytic) map despite the appearance of square roots in the formula for $s$ in Proposition \ref{PhiS}.  
\end{remark}

Now if $D(4\pi)$ denotes the open disk of area $4\pi$ (so radius $2$) in $\C$, there is a symplectomorphism $\sigma\co S^2\setminus\{(0,-1)\}\to D(4\pi)$ defined by \begin{equation}\label{sigma} \sigma(z,v_3)=\sqrt{\frac{2}{1+v_3}}z\end{equation} where as in Remark \ref{altform} we regard $S^2$ as the unit sphere in $\C\times\R$.

So if we let $\mathcal{I}=\left(\{(0,-1)\}\times S^2\right)\cup\left(S^2\times\{(0,-1)\}\right)$ then $\sigma\times \sigma$ defines a symplectomorphism $S^2\times S^2\setminus\mathcal{I}\cong P(4\pi,4\pi)^{\circ}=D(4\pi)\times D(4\pi)$.  

For $v=(z,v_3)\in S^2\subset \C\times\R$, we have \[ \|v+(0,-1)\|^2=|z|^2+v_{3}^{2}-2v_3+1=2-2v_3 \] and hence

\[ J\bigl(v,(0,-1)\bigr)=J\bigl((0,-1),v\bigr)=\left(2-\sqrt{2-2v_3},  \sqrt{2-2v_3}+(1-v_3)\right).\]  Thus \[ J(\mathcal{I})\subset \bigl\{(x,y)\in\R^2\,|\,(2-x)^2=2(x+y)-4\bigr\}=\bigl\{(x,y)\in\R^2\,|\,y=\frac{x^2}{2}-3x+4\bigr\}.\]   Since $\frac{\mu}{2\pi}=J\circ\Phi$, we have $\frac{\mu}{2\pi}(\Phi^{-1}(\mathcal{I}))=J(\mathcal{I})$. From this we obtain the following:

\begin{prop}\label{PhiEllipse}
Suppose that $X_{\Omega}$ is a convex toric domain where $\Omega\subset\big\{(2\pi x,2\pi y)\in [0,\infty)^2|y<\frac{x^2}{2}-3x+4\big\}$. Then there is an ellipsoid $E$ such that $X_{\Omega}\subset E^{\circ}$ and such that the map $\Phi$ from Proposition \ref{PhiS} maps $E$ to a subset of $S^2\times S^2\setminus \mathcal{I}$.  Hence $(\sigma\times\sigma)\circ\Phi|_E$ is a symplectic embedding from $E$ to $P(4\pi,4\pi)^{\circ}$.     
\end{prop}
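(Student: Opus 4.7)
The plan is to reduce the existence of $E$ to a two-dimensional convex separation argument via the moment map. By Proposition \ref{PhiS} we have $J \circ \Phi = \mu/(2\pi)$, so combined with the observation just above the statement that $J(\mathcal{I})$ is contained in the parabolic arc $\{(x,y) : y = f(x),\ x \in [0,2]\}$ with $f(x) := x^2/2 - 3x + 4$, we get $\Phi^{-1}(\mathcal{I}) \subset \mu^{-1}(2\pi J(\mathcal{I}))$. Hence any ellipsoid $E \subset E(4\pi,8\pi)^\circ$ whose moment image $\mu(E)$ avoids $2\pi J(\mathcal{I})$ will automatically satisfy $\Phi(E) \subset S^2 \times S^2 \setminus \mathcal{I}$. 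Working in the rescaled coordinates $(x,y) = \mu/(2\pi)$, the goal becomes to find a right triangle $T$ with legs on the axes that contains $\widetilde\Omega := \Omega/(2\pi)$ in its interior, is disjoint from the arc $\{(x, f(x)) : x \in [0, 2]\}$, and lies inside the open triangle $\{x/2 + y/4 < 1\}$.

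Such a triangle will come from a separating line. Both $\widetilde\Omega$ (compact and convex, since $X_\Omega$ is a convex toric domain) and $K := \{(x,y) \in [0,\infty)^2 : y \geq f(x)\}$ (closed and convex as the intersection of the first quadrant with the epigraph of the convex function $f$) are convex, and the hypothesis on $\Omega$ says precisely that $\widetilde\Omega \cap K = \emptyset$. The strict separating hyperplane theorem yields $(a,b) \in \R^2 \setminus \{0\}$ and $c_1 < c_2$ such that $ax + by \leq c_1$ on $\widetilde\Omega$ and $ax + by \geq c_2$ on $K$; pick any $c \in (c_1, c_2)$. Because $\widehat\Omega$ is a convex subset of $\R^2$ invariant under sign changes of either coordinate it contains the origin, so $(0,0) \in \widetilde\Omega$ and thus $c_1 \geq 0$. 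Using the corner points $(2,0), (0,4) \in K$ (which follow from $f(2) = 0$ and $f(0) = 4$), strict separation then forces $2a \geq c_2 > c$ and $4b \geq c_2 > c$, so $a, b, c > 0$ and the intercepts of the line satisfy $\alpha := c/a < 2$ and $\beta := c/b < 4$.

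Set $E := E(2\pi\alpha, 2\pi\beta)$; its moment image in the rescaled coordinates is the closed triangle $T$ with vertices $(0,0),(\alpha,0),(0,\beta)$. Three verifications complete the proof: (i) $\widetilde\Omega \subset \{ax + by < c\} \cap [0,\infty)^2 \subset T^\circ$, so $X_\Omega \subset E^\circ$; (ii) the linear function $x/2 + y/4$ attains its maximum on $T$ at a vertex, namely $\max(\alpha/2, \beta/4) < 1$, so $E \subset E(4\pi, 8\pi)^\circ$; (iii) $T \subset \{ax + by \leq c\}$ while $K \supset J(\mathcal{I})$ lies in $\{ax + by \geq c_2 > c\}$, so $\mu(E) \cap 2\pi J(\mathcal{I}) = \emptyset$, which by the first paragraph gives $\Phi(E) \subset S^2 \times S^2 \setminus \mathcal{I}$. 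Post-composing with the symplectomorphism $\sigma \times \sigma : S^2 \times S^2 \setminus \mathcal{I} \to P(4\pi, 4\pi)^\circ$ produces the stated symplectic embedding. The main technical point is verifying that the separating coefficients $a,b$ are both positive, so that the resulting line has positive intercepts defining an honest ellipsoid in the first quadrant; this requires exploiting both the boundary corner points $(2,0),(0,4)$ of $K$ and the observation that convex toric domains contain the origin.
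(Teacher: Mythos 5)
Your proposal is correct and follows essentially the same route as the paper: separate $\tfrac{1}{2\pi}\Omega$ from the closed convex region on or above the parabola $y=\tfrac{x^2}{2}-3x+4$ by a line via the hyperplane separation theorem, use the points $(2,0),(0,4)$ of that region to see the line has positive intercepts less than $2$ and $4$, and take $E$ to be the corresponding ellipsoid, which then lies in $E(4\pi,8\pi)^{\circ}$ and misses $\Phi^{-1}(\mathcal{I})$. Your verification that the separating functional has positive coefficients (using the origin in $\Omega$ and the two corner points) is just a slightly more algebraic version of the paper's observation that the separating line must have negative slope, so there is no substantive difference.
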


\begin{proof}
The sets $\frac{1}{2\pi}\Omega$ and $S:=\bigl\{(x,y)\in [0,\infty)^2\,|\,y\geq \frac{x^2}{2}-3x+4\bigr\}$ are disjoint, closed, convex subsets of $\R^2$, and the first of these sets is compact, so the hyperplane separation theorem shows that they must be separated by a line $\ell$, which passes through the first quadrant since both sets are contained in the first quadrant. This line $\ell$ must have negative slope, since $S$ intersects all lines with positive slope and also intersects all horizontal or vertical lines that pass through the first quadrant.  So we can write the separating line as $\ell=\bigl\{(x,y)\in \R^2\,|\,\frac{x}{a}+\frac{y}{b}=1\bigr\}$ with $a,b>0$, and then it will hold that $\frac{1}{2\pi}\Omega\subset \bigl\{\frac{x}{a}+\frac{y}{b}<1\bigr\}$ and $S\subset \{\frac{x}{a}+\frac{y}{b}>1\}$.  The first inclusion shows that $X_{\Omega}\subset E(2\pi a,2\pi b)^{\circ}$.  Meanwhile since $(2,0),(0,4)\in S\subset\bigl\{\frac{x}{a}+\frac{y}{b}>1\bigr\}$, we have $a<2$ and $b<4$.  So $E(2\pi a,2\pi b)$ is contained in the domain of the map $\Phi$ from Proposition \ref{PhiS}, and by the discussion before the proposition the fact that $\ell\cap S=\varnothing$ implies that $E(2\pi a,2\pi b)\cap \Phi^{-1}(\mathcal{I})=\varnothing$.  Hence the proposition holds with $E=E(2\pi a,2\pi b)$. 
\end{proof}

\begin{cor}\label{explicit}
Suppose that $X_{\Omega}$ is a convex toric domain with $\Omega\subset \{(2\pi x,2\pi y)\in [0,\infty)^2|y<\frac{x^2}{2}-3x+4\}$, and that we have $P(4\pi,4\pi)\subset \alpha X_{\Omega}$ for some $\alpha<\dellu(X_{\Omega})$. Then $(\sigma\times \sigma)\circ\Phi|_{X_{\Omega}}\co X_{\Omega}\hookrightarrow P(4\pi,4\pi)^{\circ}\subset \alpha X_{\Omega}^{\circ}$ defines a knotted embedding $X_{\Omega}\hookrightarrow\alpha X_{\Omega}^{\circ}$.
\end{cor}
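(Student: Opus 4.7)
The plan is to observe that the hypotheses package together exactly so that the embedding $(\sigma\times\sigma)\circ\Phi|_{X_\Omega}$ is one of the compositions whose existence enters into the definition of $\dellu(X_\Omega)$ in (\ref{delludef}), so the strict inequality $\alpha<\dellu(X_\Omega)$ will force it to be knotted. No further symplectic-homology computation should be needed; the work is entirely in assembling the factorization.

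First, I would invoke Proposition \ref{PhiEllipse} to produce a four-dimensional ellipsoid $E$ with $X_\Omega\subset E^\circ$ such that $\Phi$ carries $E$ into $S^2\times S^2\setminus\mathcal{I}$, so that $(\sigma\times\sigma)\circ\Phi|_E$ is a well-defined symplectic embedding of $E$ into $P(4\pi,4\pi)^\circ$. Next, the hypothesis $P(4\pi,4\pi)\subset\alpha X_\Omega$ gives $P(4\pi,4\pi)^\circ\subset\alpha X_\Omega^\circ$ by passing to interiors. Composing these inclusions with the symplectic embedding just constructed yields
\[
X_\Omega\;\hookrightarrow\; E\;\xrightarrow{(\sigma\times\sigma)\circ\Phi|_E}\;P(4\pi,4\pi)^\circ\;\subset\;\alpha X_\Omega^\circ,
\]
where the first arrow is the inclusion. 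The composition of the first two arrows is precisely $(\sigma\times\sigma)\circ\Phi|_{X_\Omega}$, viewed as a map into $\alpha X_\Omega^\circ$.

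Finally, I would argue by contradiction: if the composition $\phi:=(\sigma\times\sigma)\circ\Phi|_{X_\Omega}\co X_\Omega\to\alpha X_\Omega^\circ$ were unknotted, then taking $(a_1,a_2)$ so that $E=E(a_1,a_2)$, and letting $f\co X_\Omega\hookrightarrow E$ be the inclusion and $g\co E\hookrightarrow \alpha X_\Omega^\circ$ the composition of $(\sigma\times\sigma)\circ\Phi|_E$ with inclusion, we would have an unknotted $g\circ f=\phi$; the definition (\ref{delludef}) would then force $\dellu(X_\Omega)\leq\alpha$, contradicting the hypothesis $\alpha<\dellu(X_\Omega)$. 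This establishes that $\phi$ is knotted.

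There is no genuine obstacle here, since Proposition \ref{PhiEllipse} already provides the required ellipsoid factorization and the remaining steps are straightforward set-theoretic and definitional manipulations. The only mild subtlety is confirming that taking interiors preserves the inclusion $P(4\pi,4\pi)\subset\alpha X_\Omega$ and that the map $g$ obtained by restriction really has image contained in $\alpha X_\Omega^\circ$ (rather than only in $\alpha X_\Omega$), both of which are immediate from openness of $P(4\pi,4\pi)^\circ$.
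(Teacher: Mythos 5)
Your proposal is correct and is essentially the paper's own argument: invoke Proposition \ref{PhiEllipse} to factor $(\sigma\times\sigma)\circ\Phi|_{X_\Omega}$ as $X_\Omega\hookrightarrow E\hookrightarrow P(4\pi,4\pi)^{\circ}\subset\alpha X_\Omega^{\circ}$ and then conclude knottedness directly from the definition of $\dellu$ and the hypothesis $\alpha<\dellu(X_\Omega)$. The explicit contradiction step you spell out is exactly what the paper leaves implicit, so there is nothing to add.
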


\begin{proof}
By Proposition \ref{PhiEllipse} we have an ellipsoid $E$ and a sequence $X_{\Omega}\hookrightarrow E^{\circ}\hookrightarrow P(4\pi,4\pi)^{\circ}\subset \alpha X_{\Omega}^{\circ}$ where the first map is the inclusion and the second map is $(\sigma\times \sigma)\circ \Phi|_{E}$.  So the corollary follows directly from the assumption that $\alpha<\dellu(X_{\Omega})$ and the definition of $\dellu$.
\end{proof}

We emphasize that this embedding $(\sigma\times\sigma)\circ\Phi$ is completely explicit: $\sigma$ is defined in (\ref{sigma}) and $\Phi$ is defined in Proposition \ref{PhiS} based partly on the formulas (\ref{f1flow}) and (\ref{f2flow}), or even more explicitly is given by (\ref{longeqn}).

\begin{figure}\label{explicitfig}
\includegraphics[width=\textwidth]{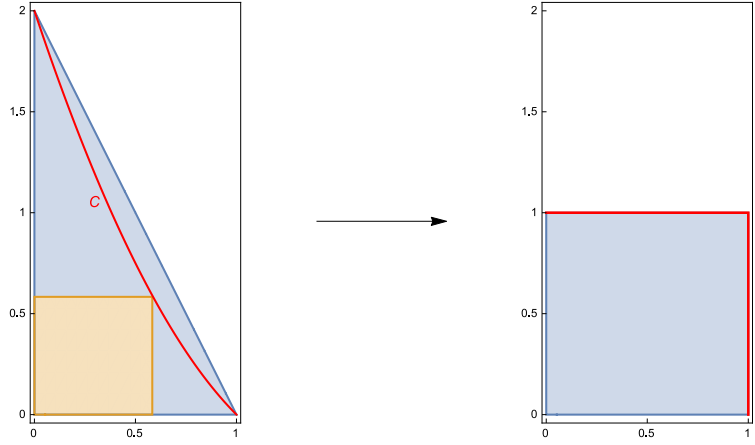}
\caption{After appropriate rescalings, the map $\Phi$ from Proposition \ref{PhiS} sends the interior of the ellipsoid $E(1,2)$ to a product of spheres of area $1$, with the preimage of $(S^2\times \{(0,0,-1)\})\cup(\{(0,0,-1)\}\times S^2)$ contained in $\mu^{-1}(C)$ where $C$ is the red curve at left.  Consequently the preimage under $\mu$ of any domain lying below $C$, such as the small square at left, is embedded into the polydisk $P(1,1)^{\circ}$ by a rescaling of $(\sigma\times\sigma)\circ\Phi$. This gives an explicit knotted embedding $P(c,c)\hookrightarrow P(1,1)^{\circ}$ for $1/2<c<2-\sqrt{2}$.}
\end{figure}  

\begin{ex} For instance, $\Omega$ could be taken to be a square $[0,2\pi c]^2$ with $c$ smaller than the smallest root of the polynomial $\frac{x^2}{2}-4x+4$, namely $
4-2\sqrt{2}$ (see Figure \ref{explicitfig}).  So we obtain an embedding $(\sigma\times\sigma)\circ\Phi\co P(2\pi c,2\pi c)\hookrightarrow P(4\pi,4\pi)^{\circ}=\frac{2}{c}P(2\pi c,2\pi c)^{\circ}$, which is knotted provided that $\frac{2}{c}<\dellu\bigl(P(2\pi c,2\pi c)\bigr)$. By Theorem \ref{dellubounds} we have $\dellu\bigl(P(a,a)\bigr)\geq 2$ for any $a$, so our embedding is knotted provided that $1<c<4-2\sqrt{2}$.  So after conjugating by appropriate rescalings our explicit embedding $(\sigma\times\sigma)\circ\Phi$ defines a knotted embedding $P(a,a)\hookrightarrow \alpha P(a,a)^{\circ}$ provided that $2>\alpha>\frac{1}{2-\sqrt{2}}\approx 1.71$.  For comparison, our less explicit construction based on Proposition \ref{cvxaxy} (leading to the bound $\dell(P(a,a))\leq 3/2$ from Theorem \ref{dellbounds}) gives knotted embeddings  $P(a,a)\hookrightarrow \alpha P(a,a)^{\circ}$ whenever $2>\alpha>1.5 $. 

Choosing the scaling so that the codomain is $P(4\pi,4\pi)^{\circ}$, the image of this embedding $\alpha^{-1}P(4\pi,4\pi)\hookrightarrow P(4\pi,4\pi)^{\circ}$ is not hard to describe explicitly as a subset of $P(4\pi,4\pi)^{\circ}$: it is given as the region \[ \bigl\{(z_1,z_2)\in P(4\pi,4\pi)^{\circ}|G_2(z_1,z_2)\geq 2-2/\alpha,\,-G_1(z_1,z_2)+G_2(z_1,z_2)\leq 2/\alpha\bigr\},\] where $G_i=F_i\circ(\sigma\times\sigma)^{-1}$, \emph{i.e.}, \[ G_1(z_1,z_2)=2-\frac{|z_1|^2+|z_2|^2}{2}\] and \begin{align*} G_2(z_1,z_2)^2=&\left(\sqrt{1-\frac{|z_1|^2}{4}}\operatorname{Re}(z_1)+\sqrt{1-\frac{|z_2|^2}{4}}\operatorname{Re}(z_2) \right)^2\\ &\, +\left(\sqrt{1-\frac{|z_1|^2}{4}}\operatorname{Im}(z_1)+\sqrt{1-\frac{|z_2|^2}{4}}\operatorname{Im}(z_2)  \right)^2+\left(2-\frac{|z_1|^2+|z_2|^2}{2}\right)^2.\end{align*}
\end{ex}

Corollary \ref{explicit} also applies to some other convex toric domains besides the cube $P(a,a)$, though it as not as broadly applicable as Theorem \ref{4d}.  For example the reader may check that, in Corollary \ref{explicit}, for appropriate $\alpha$ one can take $X_{\Omega}$ equal to a polydisk $P(1,a)$ with $1\leq a\leq 1.2$, or  to an appropriately rescaled $\ell^p$ ball as in Theorem \ref{4d} for $p\geq 10$.

\begin{remark}
By construction, the embedding $\Phi$ from Proposition \ref{PhiS} maps the torus $T_{\sqrt{2}}:=\left\{(w,z)\in \C^2\left||w|=|z|=\sqrt{2}\right.\right\}$ to the Lagrangian torus in $S^2\times S^2$ that is denoted $K$ in \cite[Example 1.22]{EP}, and which can be identified with the Chekanov-Schlenk twist torus $\Theta$, see \cite{CS},\cite{OU}. 
Since, as shown in \cite{EP}, there is no symplectomorphism mapping $K$ to the Clifford torus in $S^2\times S^2$ (\emph{i.e.}, to the image of $T_{\sqrt{2}}$ under the standard embedding $(\sigma\times \sigma)^{-1}$ of $P(4\pi,4\pi)^{\circ}$ into $S^2\times S^2$), one easily infers independently of our other results that $(\sigma\times \sigma)\circ\Phi\co P(2\pi c,2\pi c)\hookrightarrow P(4\pi,4\pi)^{\circ}$ must not be isotopic to the inclusion by a compactly supported Hamiltonian isotopy for $1<c<4-2\sqrt{2}$ (for such a Hamiltonian isotopy could be extended to $S^2\times S^2$, giving a symplectomorphism that would send $K$ to the Clifford torus).  However this argument based on Lagrangian tori does not seem to adapt to yield the full result that $(\sigma\times\sigma)\circ\Phi$ is knotted in the stronger sense of Definition \ref{knotdef}.

By the way, if $c<1$, our embedding $(\sigma\times\sigma)\circ \Phi \co P(2\pi c,2\pi c)\hookrightarrow P(4\pi,4\pi)^{\circ}$ is unknotted. Indeed in this case the ball $B^{4}(4\pi c)$ is contained  both in $P(4\pi,4\pi)^{\circ}$ and in $E(4\pi,8\pi)\setminus \Phi^{-1}(\mathcal{I})$, and so both $(\sigma\times\sigma)\circ \Phi|_{P(2\pi c,2\pi c)}$ and the inclusion $P(2\pi c,2\pi c)\hookrightarrow P(4\pi,4\pi)^{\circ}$ extend to embeddings  $B^{4}(4\pi c,4\pi c)\hookrightarrow P(4\pi,4\pi)^{\circ}$; these two embeddings of the ball are symplectically isotopic by \cite[Proposition 1.5]{CG}.  Thus a transition between knottedness and unknottedness occurs at the value $c=1$, which is precisely the first value for which $P(2\pi c,2\pi c)$ contains the torus $T_{\sqrt{2}}$ mentioned at the start of the remark. \end{remark} 

\begin{remark} A similar construction to that in Proposition \ref{PhiS}, using results from \cite[Section 3]{OU}, allows one to construct a symplectic embedding of $E(3\pi, 12\pi)^{\circ}$ into $\mathbb{C}P^2$ where the symplectic form on $\mathbb{C}P^2$ is normalized to give area $6\pi$ to a complex projective line, such that the torus $T_{\sqrt{2}}$ is sent to the $\mathbb{C}P^2$ version of the Chekanov-Schlenk twist torus $\Theta$.  Combining this with a symplectomorphism from the complement of a line in $\C P^2$ to a ball and restricting to $P(2\pi c,2\pi c)$ for $c$ slightly larger than $1$, we obtain a symplectic embedding $P(2\pi c,2\pi c)\hookrightarrow B^{4}(6\pi)^{\circ}$ which cannot be Hamiltonian isotopic to the inclusion because $\Theta$ is not Hamiltonian isotopic to the Clifford torus.  It is less clear whether this embedding $P(2\pi c,2\pi c)\hookrightarrow B^{4}(6\pi)^{\circ}$  is knotted in the sense of Definition \ref{knotdef}; the symplectic-homology-based methods in the present paper seem ill-equipped to address this because the filtered positive $S^1$-equivariant symplectic homology of $B^4(6\pi)$ does not have as rich a structure as that of the domains $X$ that appear in Theorem \ref{4d}.
\end{remark}

\section{More knotted polydisks}\label{polysect}

The lower bounds on $\dellu$ that are used to show that our embeddings $X\hookrightarrow \alpha X^{\circ}$ are knotted are generally based on showing that, for suitable $k,L$, the maps $\Phi^L\co CH_{k}^{L}(\alpha X^{\circ},\lambda_{0})\to CH^{L}_{k}(X^{\circ},\lambda_{0})$ have sufficiently large image and then appealing to Corollary \ref{rankcor}.  One can in principle use Corollary \ref{rankcor} to prove the knottedness of embeddings $X\hookrightarrow V$ for more general star-shaped open subsets $V$ which are not dilates of $X^{\circ}$; the main difficulty in this case is that one can no longer simply appeal to Lemma \ref{opentriangle} in order to estimate the rank of $\Phi^L\co CH_{k}^{L}(V,\lambda_{0})\to CH_{k}^{L}(X^{\circ},\lambda_{0})$.  

In this section we carry this procedure out when $X$ and $V$ are four-dimensional polydisks $P(a_0,b_0)$, $P(a_1,b_1)^{\circ}$, typically with
$\frac{b_0}{a_0}\neq \frac{b_1}{a_1}$.

A polydisk $P(a,b)$ is the toric domain associated to the rectangle $R_{a,b}=[0,a]\times[0,b]$, which has $\|(x,y)\|^{*}_{R_{a,b}}=ax+by$, so by Lemma \ref{lem:keyconvex} we see that $\dim CH^{L}_{3}\bigl(P(a,b)^{\circ},\lambda_{0}\bigr)=2$ whenever $\max\{a,b\}<L<a+b$, and that \begin{align}\label{polydiski} \imath_{L_1,L_2}\co CH^{L_1}_{3}\bigl(P(a,b)^{\circ},\lambda_{0}\bigr)\to CH^{L_2}_{3}\bigl(P(a,b)^{\circ},\lambda_{0}\bigr) &\mbox{ is an isomorphism for }\\ \nonumber & \max\{a,b\}<L_1<L_2<a+b.\end{align}
 
\begin{lemma} \label{shrink-long-side}
Assume that $a\leq b<b'$ and that $b'<L<a+b$.  Then the transfer map $\Phi^L\co CH^{L}_{3}\bigl(P(a,b')^{\circ},\lambda_{0}\bigr)\to CH^{L}_{3}\bigl(P(a,b)^{\circ},\lambda_{0}\bigr)$ is an isomorphism.
\end{lemma}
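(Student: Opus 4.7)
The plan is as follows. First, by (\ref{polydiski}) both $CH^L_3(P(a,b')^\circ,\lambda_0)$ and $CH^L_3(P(a,b)^\circ,\lambda_0)$ are two-dimensional---the former because $b'<L<a+b<a+b'$, the latter because $b\le b'<L<a+b$---so it is enough to show that $\Phi^L$ is injective.

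The main technical step is a rescaling sandwich. I claim that for any $c',c\in[b,b']$ with $c'<c$ satisfying $c(L-c')<ac'$, the transfer map $\Phi^L\co CH^L(P(a,c)^\circ,\lambda_0)\to CH^L(P(a,c')^\circ,\lambda_0)$ is an isomorphism. To prove this, pick $\zeta$ in the interval $\bigl(L/(a+c),\,c'/c\bigr)$, which is nonempty exactly under that inequality. Then $\zeta P(a,c)\subset P(a,c')^\circ\subset P(a,c)^\circ$, and by the functoriality of Lemma \ref{openfunctor}(ii) the composition of transfer maps
\[
CH^L(P(a,c)^\circ,\lambda_0)\longrightarrow CH^L(P(a,c')^\circ,\lambda_0)\longrightarrow CH^L(\zeta P(a,c)^\circ,\lambda_0)
\]
equals the transfer map induced by the inclusion $\zeta P(a,c)^\circ\subset P(a,c)^\circ$. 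By Lemma \ref{opentriangle} applied with the scale factor $\zeta^{-1}$, this composition is identified, via the rescaling isomorphism, with the persistence map
\[
\imath_{\zeta L,L}\co CH^{\zeta L}(P(\zeta a,\zeta c)^\circ,\lambda_0)\to CH^L(P(\zeta a,\zeta c)^\circ,\lambda_0).
\]
Since $\zeta\in\bigl(L/(a+c),\,c'/c\bigr)\subset(0,1)$ gives $\zeta c<\zeta L<L<\zeta(a+c)$, the observation (\ref{polydiski}) shows that this persistence map is an isomorphism in degree three. Because both ends of the sandwich are two-dimensional, the first factor must be injective, hence bijective.

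To finish, fix $L\in(b',a+b)$. For every $c\in[b,b']$ the gap $ac/(L-c)-c=c(a+c-L)/(L-c)$ is bounded below by $b(a+b-L)/(L-b)>0$, so one can construct a finite strictly increasing chain $b=c_0<c_1<\cdots<c_n=b'$ in which each $c_{i+1}$ lies in the nonempty interval $\bigl(c_i,\,\min\{b',\,ac_i/(L-c_i)\}\bigr)$. This choice guarantees $c_{i+1}(L-c_i)<ac_i$ at every step, so the claim above makes each $\Phi^L\co CH^L(P(a,c_{i+1})^\circ,\lambda_0)\to CH^L(P(a,c_i)^\circ,\lambda_0)$ an isomorphism; composing them via Lemma \ref{openfunctor}(ii) yields the desired transfer map. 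The main obstacle is that the direct single-step sandwich from $b'$ down to $b$ may fail when the inequality $b'(L-b)<ab$ does not hold, which is precisely why the intermediate chain is required.
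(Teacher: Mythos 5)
Your proof is correct and takes essentially the same route as the paper: you sandwich a fully rescaled copy so that Lemma \ref{opentriangle} identifies the composite transfer map with a persistence map $\imath_{\zeta L,L}$, which is an isomorphism by (\ref{polydiski}), deduce that the first factor is an isomorphism by a dimension count, and chain finitely many such one-sided shrinks by functoriality, exactly as the paper does with its uniform factor $(1+\delta)^{-1}$. One small touch-up: to get bijectivity of the first factor you need the \emph{middle} term $CH^{L}_{3}\bigl(P(a,c')^{\circ},\lambda_{0}\bigr)$, not just the two ends of the sandwich, to be two-dimensional; this follows from the same dimension count, since $c'\leq b'<L<a+b\leq a+c'$.
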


\begin{proof}
Choose $\delta>0$ such that $(1+\delta)L<a+b$ and such that, for some $N\in \mathbb{N}$, $(1+\delta)^Nb=b'$.  Consider any $c\in [b,b']$. Lemma \ref{opentriangle} gives a commutative diagram \[ \xymatrix{ & CH_{3}^{L}\bigl((1+\delta)^{-1}P(a,c)^{\circ},\lambda_{0}\bigr)\ar[dd]^{\cong} \\ CH_{3}^{L}\bigl(P(a,c)^{\circ},\lambda_{0}\bigr)\ar[ru]^{\Phi^L}\ar[rd]_{\imath_{L,(1+\delta)L}} & \\ & CH_{3}^{(1+\delta)L}\bigl(P(a,c)^{\circ},\lambda_{0}\bigr) } \] where $\imath_{L,(1+\delta)L}$ is an isomorphism by Lemma \ref{lem:keyconvex} since our assumptions give $a\leq b\leq c\leq b'<L<(1+\delta)L<a+b\leq a+c$.  Thus $\Phi^L\co CH_{3}^{L}\bigl(P(a,c)^{\circ},\lambda_{0}\bigr)\to CH_{3}^{L}\Bigl(P\bigl((1+\delta)^{-1}a,(1+\delta)^{-1}c\bigr),\lambda_{0}\Bigr)$ is an isomorphism.  But this latter map factors as a composition \[ \xymatrix{
CH_{3}^{L}\bigl(P(a,c)^{\circ},\lambda_{0}\bigr)\ar[r]^{\Phi^L} & CH_{3}^{L}\bigl(P(a,(1+\delta)^{-1}c)^{\circ},\lambda_{0}\bigr)\ar[r]^{\Phi^L} & CH_{3}^{L}\Bigl(P\bigl((1+\delta)^{-1}a,(1+\delta)^{-1}c\bigr)^{\circ},\lambda_{0}\Bigr),}\] so since all three vector spaces above have dimension two it follows that, again for any $c\in [b,b']$, $\Phi^L\co CH_{3}^{L}\bigl(P(a,c)^{\circ},\lambda_{0}\bigr)\to CH_{3}^{L}\bigl(P(a,(1+\delta)^{-1}c),\lambda_{0}\bigr)$ is an isomorphism.

Since $(1+\delta)^{-N}b'=b$, we may apply this successively with $c=b',(1+\delta)^{-1}b',\ldots,(1+\delta)^{-(N-1)}b'$ and appeal to the functoriality of $\Phi^L$ to see that $\Phi^L\co CH^{L}_{3}\bigl(P(a,b')^{\circ},\lambda_{0}\bigr)\to CH^{L}_{3}\bigl(P(a,b)^{\circ},\lambda_{0}\bigr)$ is an isomorphism.
\end{proof}

A similar argument gives:

\begin{lemma}\label{shrink-short-side}
Assume that $a<a'\leq b$ and that $b<L<a+b$.  Then the transfer map $\Phi^L\co CH^{L}_{3}\bigl(P(a',b)^{\circ},\lambda_{0}\bigr)\to CH^{L}_{3}\bigl(P(a,b)^{\circ},\lambda_{0}\bigr)$ is an isomorphism.
\end{lemma}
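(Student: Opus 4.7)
The plan is to follow the argument of Lemma \ref{shrink-long-side} with the roles of the two polydisk factors interchanged. First, I would choose $\delta > 0$ small enough that $(1+\delta) L < a + b$ and that, for some $N \in \mathbb{N}$, $(1+\delta)^N a = a'$. Such a $\delta$ exists because for any $N \in \mathbb{N}$ the choice $\delta = (a'/a)^{1/N} - 1$ gives $(1+\delta)^N a = a'$, and this value can be made arbitrarily small by taking $N$ large, while the hypothesis $L < a+b$ ensures that $(1+\delta)L < a+b$ holds for all sufficiently small $\delta$.

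For any $c \in [a, a']$, Lemma \ref{opentriangle} would yield the commutative diagram
\[
\xymatrix{ & CH_{3}^{L}\bigl((1+\delta)^{-1}P(c,b)^{\circ},\lambda_0\bigr) \ar[dd]^{\cong}_{\mathrm{rescaling}} \\ CH_{3}^{L}\bigl(P(c,b)^{\circ},\lambda_0\bigr) \ar[ru]^{\Phi^L} \ar[rd]_{\imath_{L,(1+\delta)L}} & \\ & CH_{3}^{(1+\delta)L}\bigl(P(c,b)^{\circ},\lambda_0\bigr) }
\]
in which $\imath_{L,(1+\delta)L}$ is an isomorphism by Lemma \ref{lem:keyconvex}(ii): indeed $\max\{c,b\} = b < L$ because $c \leq a' \leq b$, and $(1+\delta)L < a + b \leq c + b$ by the choice of $\delta$. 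Hence the transfer map $\Phi^L \co CH_{3}^{L}\bigl(P(c,b)^\circ, \lambda_0\bigr) \to CH_{3}^{L}\bigl(P((1+\delta)^{-1}c, (1+\delta)^{-1}b)^\circ, \lambda_0\bigr)$ is an isomorphism.

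To extract an isomorphism that shrinks only the first coordinate, I would factor this transfer as
\[
CH_{3}^{L}\bigl(P(c,b)^\circ, \lambda_0\bigr) \xrightarrow{\Phi^L} CH_{3}^{L}\bigl(P((1+\delta)^{-1}c, b)^\circ, \lambda_0\bigr) \xrightarrow{\Phi^L} CH_{3}^{L}\bigl(P((1+\delta)^{-1}c, (1+\delta)^{-1}b)^\circ, \lambda_0\bigr).
\]
The middle space has dimension two by Lemma \ref{lem:keyconvex}(i) since $\max\{(1+\delta)^{-1}c, b\} = b < L$ and $(1+\delta)(L-b) < (a+b) - (1+\delta) b = a - \delta b < a \leq c$, which gives $L < (1+\delta)^{-1}c + b$. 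Since all three spaces in the factorization have dimension two and the composition is an isomorphism, the first map $\Phi^L \co CH_{3}^{L}\bigl(P(c,b)^\circ, \lambda_0\bigr) \to CH_{3}^{L}\bigl(P((1+\delta)^{-1}c, b)^\circ, \lambda_0\bigr)$ is an isomorphism for every $c \in [a, a']$. Applying this successively with $c = a', (1+\delta)^{-1} a', \ldots, (1+\delta)^{-(N-1)} a' = (1+\delta)a$, and using functoriality of the transfer map, I would conclude that $\Phi^L \co CH_{3}^{L}\bigl(P(a', b)^\circ, \lambda_0\bigr) \to CH_{3}^{L}\bigl(P(a, b)^\circ, \lambda_0\bigr)$ is an isomorphism. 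The whole argument is a mirror image of the proof of Lemma \ref{shrink-long-side}; there is no real obstacle beyond bookkeeping the strict inequalities needed to keep each invocation of Lemma \ref{lem:keyconvex} inside its range of validity when $b$ is held fixed and $a'$ is shrunk to $a$.
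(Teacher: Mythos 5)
Your proposal is correct and follows essentially the same route as the paper: the same choice of $\delta$ with $(1+\delta)^N a = a'$, the same appeal to Lemma \ref{opentriangle} together with Lemma \ref{lem:keyconvex} (i.e.\ (\ref{polydiski})) to see that shrinking both factors by $(1+\delta)$ gives an isomorphism, the same dimension-count through the intermediate polydisk $P\bigl((1+\delta)^{-1}c,b\bigr)$, and the same iteration via functoriality. Your explicit verification that $L<(1+\delta)^{-1}c+b$ for the intermediate space is a detail the paper leaves implicit, but it matches the intended argument.
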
 

\begin{proof}
Analogously to the proof of Lemma \ref{shrink-long-side}, choose $\delta>0$ such that $(1+\delta)L<a+b$ and, for some $N\in \mathbb{N}$, $(1+\delta)^Na=a'$.  Using Lemma \ref{opentriangle} and (\ref{polydiski}), we find that for all $c\in [a,a']$ the transfer map $\Phi^L\co CH^{L}_{3}\bigl(P(c,b)^{\circ},\lambda_{0}\bigr)\to CH^{L}_{3}\Bigl(P\bigl((1+\delta)^{-1}c,(1+\delta)^{-1}b\bigr)^{\circ},\lambda_{0}\Bigr)$ is an isomorphism.  Since this map factors through $CH^{L}_{3}\bigl(P(1+\delta)^{-1}c,b),\lambda_{0}\bigr)$, we deduce by dimensional considerations that $\Phi^L\co CH^{L}_{3}\bigl(P(c,b),\lambda_{0}\bigr)\to CH^{L}_{3}\bigl(P((1+\delta)^{-1}c,b),\lambda_{0}\bigr)$ is an isomorphism for all $c\in [a,a']$.  Just as in the proof of Lemma \ref{shrink-long-side}, iterating this for $c=a',(1+\delta)^{-1}a',\ldots,(1+\delta)^{-(N-1)}a'$ yields the result.  
\end{proof}

\begin{prop}\label{polydisk-rank}
Assume that $a_1\leq b_1$, that $a_0\leq a_1$ and that $b_0\leq b_1$.  For any $L$ with $b_1<L<a_0+b_0$, the transfer map $\Phi^L\co CH^{L}_{3}\bigl(P(a_1,b_1)^{\circ},\lambda_{0}\bigr)\to CH^{L}_{3}\bigl(P(a_0,b_0)^{\circ},\lambda_{0}\bigr)$ is an isomorphism (and so in particular has rank two).
\end{prop}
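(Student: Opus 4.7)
The plan is to factor the inclusion $P(a_0,b_0)\hookrightarrow P(a_1,b_1)$ through the intermediate polydisk $P(a_0,b_1)$. By functoriality of the transfer map (diagram (\ref{opendiag}) in Lemma \ref{openfunctor}), the transfer map in question decomposes as the composition
\[
CH^L_3\bigl(P(a_1,b_1)^\circ,\lambda_0\bigr)\xrightarrow{\Phi^L}CH^L_3\bigl(P(a_0,b_1)^\circ,\lambda_0\bigr)\xrightarrow{\Phi^L}CH^L_3\bigl(P(a_0,b_0)^\circ,\lambda_0\bigr),
\]
so it suffices to show each of these two transfer maps is an isomorphism; then the two-dimensionality claim follows from (\ref{polydiski}) applied to $P(a_0,b_0)$, whose hypothesis $b_0<L<a_0+b_0$ (or $a_0<L<a_0+b_0$ when $a_0\ge b_0$) is implied by $\max(a_0,b_0)\le b_1<L<a_0+b_0$.

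For the first factor, which shrinks the first coordinate from $a_1$ down to $a_0$, I will invoke Lemma \ref{shrink-short-side} with $a=a_0$, $a'=a_1$, $b=b_1$. The hypothesis $a_0\le a_1\le b_1$ is part of the standing assumptions, and the hypothesis $b_1<L<a_0+b_1$ follows from $b_1<L$ together with $L<a_0+b_0\le a_0+b_1$. (If $a_0=a_1$ the map is simply the identity.)

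For the second factor, which shrinks the second coordinate from $b_1$ down to $b_0$, the natural attempt is Lemma \ref{shrink-long-side} with $a=a_0$, $b=b_0$, $b'=b_1$, and this goes through verbatim whenever $a_0\le b_0$. The main obstacle is the remaining case $a_0>b_0$, in which the hypothesis $a\le b$ of Lemma \ref{shrink-long-side} fails. Here my plan is to observe that, on inspection of the proof of Lemma \ref{shrink-long-side}, the inequality $a\le b$ is used only to deduce the chain needed for Lemma \ref{lem:keyconvex} to apply to $P(a,c)$ on the filtration interval $[L,(1+\delta)L]$ as $c$ ranges over $[b,b']$, namely $\max(a,c)<L<(1+\delta)L<a+c$. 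In the present situation, even when $a_0>b_0$ we still have $a_0\le a_1\le b_1<L$ and $c\le b_1<L$, so $\max(a_0,c)<L$; and $(1+\delta)L<a_0+b_0\le a_0+c$ for $\delta>0$ small. Hence the same argument yields the required isomorphism.

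Composing the two isomorphisms then gives the result. The only real subtlety is the verification just outlined that the hypothesis $a\le b$ in Lemma \ref{shrink-long-side} is slightly stronger than what the proof actually uses; alternatively, one could re-state and re-prove that lemma with the weaker hypothesis $\max(a,b')<L<a+b$ and cite it directly.
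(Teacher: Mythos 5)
Your argument is correct, but it follows a different route from the paper's. You factor the inclusion through the single intermediate polydisk $P(a_0,b_1)$ and apply Lemma \ref{shrink-short-side} followed by Lemma \ref{shrink-long-side}, observing that the hypothesis $a\leq b$ in the latter can be weakened to $\max\{a,b'\}<L$ (together with $L<a+b$), which is indeed all that its proof uses: the chain of inequalities there only needs $\max\{a,c\}<L<(1+\delta)L<a+c$ for $c\in[b,b']$, and this, as well as the two-dimensionality of the middle group $CH^{L}_{3}\bigl(P(a,(1+\delta)^{-1}c)^{\circ},\lambda_0\bigr)$ via Lemma \ref{lem:keyconvex}, follows from your weakened hypotheses since $a_0\leq b_1<L$ and $(1+\delta)L<a_0+b_0\leq a_0+c$. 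The paper instead keeps the lemmas exactly as stated and pays with a case analysis: when $a_0\leq b_0$ it factors through a rescaled copy of $P(a_0,b_0)$ (namely $P(a_1,\tfrac{a_1b_0}{a_0})$ or $P(\tfrac{a_0b_1}{b_0},b_1)$, depending on whether $\tfrac{b_1}{a_1}\gtrless\tfrac{b_0}{a_0}$), so that the second leg is a dilation handled by Lemma \ref{opentriangle} together with (\ref{polydiski}); and when $a_0>b_0$ it factors through the cube $P(a_0,a_0)$ and conjugates by the coordinate swap to restore the ordering hypothesis of Lemma \ref{shrink-short-side}. Your decomposition is more uniform (no case split on the aspect ratios and no swap trick), at the cost of either restating Lemma \ref{shrink-long-side} with the weaker hypothesis or carrying out the inspection you describe; the paper's version buys strict adherence to the stated lemmas. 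One cosmetic point: the two-dimensionality of $CH^{L}_{3}\bigl(P(a_0,b_0)^{\circ},\lambda_0\bigr)$ comes from Lemma \ref{lem:keyconvex}(i) (the sentence preceding (\ref{polydiski})) rather than from (\ref{polydiski}) itself, which is the statement about the maps $\imath_{L_1,L_2}$.
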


\begin{proof}
If $a_0\leq b_0$ and $\frac{b_1}{a_1}\geq \frac{b_0}{a_0}$, then we can factor $\Phi^L\co CH^{L}_{3}(P(a_1,b_1)^{\circ})\to CH^{L}_{3}(P(a_0,b_0)^{\circ})$ as a composition \[ \xymatrix{ CH^{L}_{3}(P(a_1,b_1)^{\circ})\ar[r]^{\Phi^L} & CH^{L}_{3}\left(P\left(a_1,\frac{a_1b_0}{a_0}\right)^{\circ}\right)\ar[r]^{\Phi^L} & CH^{L}_{3}(P(a_0,b_0)^{\circ})   } \] where the first map is an isomorphism by Lemma \ref{shrink-long-side} and the second is an isomorphism by (\ref{polydiski}) and Lemma \ref{opentriangle} (which identifies the map with the inclusion-induced map $\imath_{\frac{a_0}{a_1}L,L}\co CH^{\frac{a_0}{a_1}L}_{3}\bigl(P(a_0,b_0)^{\circ},\lambda_{0}\bigr)\to CH_{3}^{L}\bigl(P(a_0,b_0)^{\circ},\lambda_{0}\bigr)$). 

Similarly, if $a_0\leq b_0$ and $\frac{b_1}{a_1}\leq \frac{b_0}{a_0}$, then we can factor
$\Phi^L\co CH^{L}_{3}\bigl(P(a_1,b_1)^{\circ},\lambda_{0}\bigr)\to CH^{L}_{3}\bigl(P(a_0,b_0)^{\circ},\lambda_{0}\bigr)$ as a composition \[ \xymatrix{ CH^{L}_{3}\bigl(P(a_1,b_1)^{\circ},\lambda_{0}\bigr)\ar[r]^{\Phi^L} & CH^{L}_{3}\left(P\left(\frac{a_0b_1}{b_0},b_1\right)^{\circ},\lambda_{0}\right)\ar[r]^{\Phi^L} & CH^{L}_{3}\bigl(P(a_0,b_0)^{\circ},\lambda_{0}\bigr)   } \] where the first map is an isomorphism  by  Lemma \ref{shrink-short-side} and the second is an isomorphism   by (\ref{polydiski}) and Lemma \ref{opentriangle}.

We have now proven the result whenever $a_0\leq b_0$.  If instead $a_0>b_0$, then the hypotheses imply that $P(a_0,b_0)\subset P(a_0,a_0)\subset P(a_1,b_1)$, and that $a_0\leq b_1<L<a_0+b_0<2a_0$.  We can then factor the map in question as \[  \xymatrix{ CH^{L}_{3}\bigl(P(a_1,b_1)^{\circ},\lambda_{0}\bigr)\ar[r]^{\Phi^L} & CH^{L}_{3}\bigl(P(a_0,a_0)^{\circ},\lambda_{0}\bigr)\ar[r]^{\Phi^L} & CH^{L}_{3}\bigl(P(a_0,b_0)^{\circ},\lambda_{0}\bigr)   } \] where the first map is an isomorphism by a case of the present corollary that we have already proven, and the second is an isomorphism by Lemma \ref{shrink-short-side} (after conjugating by a symplectomorphism that switches the factors of $\mathbb{C}^2$).
\end{proof}

\begin{cor} \label{ellpolycor}
If $E\subset \mathbb{R}^4$ is an ellipsoid, if $g\co E\to P(c,d)^{\circ}$ is a symplectic embedding where $c\leq d$, and if $P(a,b)\subset E\cap P(c,d)^{\circ}$, then $g|_{P(a,b)}$ is knotted provided that $d<a+b$. 
\end{cor}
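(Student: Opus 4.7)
The plan is to derive a contradiction from the assumption that $g|_{P(a,b)}$ is unknotted by applying Corollary \ref{rankcor} with $X=P(a,b)$, the ellipsoid $E$, $Y=P(c,d)$, $f\co P(a,b)\hookrightarrow E$ the inclusion (valid since $P(a,b)\subset E$), and $g\co E\to P(c,d)^\circ$ the given embedding. This will give, for every $L\in\R$ and $k\in\Z$,
\[
    \op{Rank}\Bigl(\Phi^L\co CH^L_k\bigl(P(c,d)^\circ,\lambda_0\bigr)\to CH^L_k\bigl(P(a,b)^\circ,\lambda_0\bigr)\Bigr)\leq \dim CH^L_k\bigl(E^\circ,\lambda_0\bigr),
\]
and I would then exhibit $L$ and $k$ for which the left hand side is two while the right hand side is at most one.

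First I would reduce to the case in which $E$ is an irrational ellipsoid, so that the ellipsoid computation used in the proof of Theorem \ref{dellubounds}(a) applies. By the footnote to Definition \ref{knotdef}, $g$ extends to a symplectic embedding of some open neighborhood $U\supset E$. Using compactness of $E$ and openness of $P(c,d)^\circ$, I would choose an ellipsoid $E'$ with irrational axis ratio, containing $E$ and close enough to $E$ that both $E'\subset U$ and the extension of $g$ still maps $E'$ into $P(c,d)^\circ$. Since $P(a,b)\subset E\subset E'$ and the extension agrees with $g$ on $P(a,b)$, replacing $(E,g)$ by $(E',g|_{E'})$ preserves the hypothesis that $g|_{P(a,b)}$ is unknotted, so we may assume $E$ itself is irrational. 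Then Lemma \ref{closedopen} together with \cite[Section 3]{BCE} yields $\dim CH^L_3(E^\circ,\lambda_0)\leq 1$ for every $L$ outside the discrete set of actions of closed Reeb orbits on $\partial E$.

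Next I would take $k=3$ and choose $L$ with $d<L<a+b$ and $L$ avoiding the discrete set of Reeb actions on $\partial E$; such $L$ exists because $d<a+b$ by hypothesis. The inclusion $P(a,b)\subset P(c,d)^\circ$ forces $a\leq c$ and $b\leq d$, which together with the assumption $c\leq d$ verifies all hypotheses of Proposition \ref{polydisk-rank} applied with $(a_1,b_1,a_0,b_0)=(c,d,a,b)$. That proposition asserts that $\Phi^L\co CH^L_3(P(c,d)^\circ,\lambda_0)\to CH^L_3(P(a,b)^\circ,\lambda_0)$ is an isomorphism of two-dimensional vector spaces and thus has rank two, contradicting $\dim CH^L_3(E^\circ,\lambda_0)\leq 1$.

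The most delicate point is the perturbation to an irrational ellipsoid $E'$: one must simultaneously ensure that $E'$ contains $E$ (so that $P(a,b)\subset E'$ and the extension of $g$ agrees with $g$ on $P(a,b)$), that $E'$ lies in the domain $U$ of the extension of $g$, and that the image of $E'$ under the extension is contained in $P(c,d)^\circ$. These are routine compactness and continuity arguments, but the bookkeeping must be done carefully to guarantee that the chain $P(a,b)\hookrightarrow E'\hookrightarrow P(c,d)^\circ$ is a valid input to Corollary \ref{rankcor}.
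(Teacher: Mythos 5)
Your proposal is correct and follows essentially the same route as the paper: apply Corollary \ref{rankcor} with $f$ the inclusion, $k=3$, and any $L$ with $d<L<a+b$, and contradict Proposition \ref{polydisk-rank} using $\dim CH^{L}_{3}(E^{\circ},\lambda_0)\leq 1$. Your extra care about replacing $E$ by a slightly larger irrational ellipsoid is sound and simply makes explicit the perturbation step the paper performs in the proof of Theorem \ref{dellubounds}(a) and leaves implicit here.
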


\begin{proof}
If $g|_{P(a,b)}$ were unknotted, we could apply Corollary \ref{rankcor} with $f$ equal to the inclusion, with $k=3$, and with $L$ equal to any number with $d<L<a+b$.  This would yield $\op{Rank}\Bigl(\Phi^L\co CH^{L}_{3}\bigl(P(c,d)^{\circ},\lambda_{0}\bigr)\to CH^{L}_{3}\bigl(P(a,b)^{\circ},\lambda_{0}\bigr)\Bigr)\leq 1$, in contradiction with Proposition \ref{polydisk-rank}.
\end{proof}

\begin{remark}\label{flip}
In the case that $\max\{a,b\}<c$, so that $P(c,d)^{\circ}$ contains both $P(a,b)$ and $P(b,a)$, then one example of a symplectic embedding $P(a,b)\hookrightarrow P(c,d)^{\circ}$ is $\sigma\co (z_1,z_2)\mapsto (z_2,z_1)$, which has image equal to $P(b,a)$.  In \cite[Theorem 4]{FHW} it is shown that, when $c=d<a+b$, this embedding is not Hamiltonian isotopic to the inclusion within $P(c,c)^{\circ}$.  However our definition of knottedness is such that (when $c=d$) this embedding would be considered unknotted, because the symplectomorphism of $P(c,c)^{\circ}$ which swaps the factors maps $P(a,b)$ to $P(b,a)$ (and we do not require our ambient symplectomorphisms to be induced by Hamiltonian isotopies supported in the codomain). Likewise when $a=b$ but $c\neq d$, $\sigma$ is unknotted according to our definition because we take knottedness to depend only on the image of the embedding.  

In the situation that both $a\neq b$ and $c<d$ (and still $\max\{a,b\}\leq c$ and $d<a+b$) it can be shown that the above embedding $\sigma\co P(a,b)\to P(c,d)$ with image $P(b,a)$ is knotted. More specifically, by using arguments like those in \cite[Section 3.3]{FHW} one can show that for $a<L_1<b$ and $c<L_2<d$ the inclusion-induced map $SH_{3}^{[L_1,L_2)}(P(c,d)^{\circ})\to SH^{[L_1,L_2)}(P(a,b)^{\circ})$ on action-window symplectic homology 
vanishes, while the inclusion-induced map $SH_{3}^{[L_1,L_2)}(P(c,d)^{\circ})\to SH^{[L_1,L_2)}(P(b,a)^{\circ})$ is nontrivial, which is sufficient to show that $P(a,b)$ cannot be mapped to $P(b,a)$ by a symplectomorphism of $P(c,d)^{\circ}$; we omit the details.

However because Proposition \ref{polydisk-rank} shows that, for $d<L<a+b$, the map $\Phi^L\co CH_{3}^{L}\bigl(P(c,d)^{\circ},\lambda_{0}\bigr)\to CH_{3}^{L}\bigl(P(b,a)^{\circ},\lambda_{0}\bigr)$ has rank two, the embeddings described by Corollary \ref{ellpolycor} (for which $\Phi^L$ has rank one) have different knot types from $\sigma$.  (In other words, the image of such an embedding is not taken by a symplectomorphism of $P(c,d)^{\circ}$ to either one of $P(a,b)$ or $P(b,a)$.)  In particular this comment applies to the embeddings in Corollaries \ref{longknot} and \ref{allpoly} in each case that the target contains the image of the domain under $\sigma$.
\end{remark}

\begin{cor}\label{longknot}
Let $m\in \Z_+$ and $0\leq \ep <1$. If $a+\frac{b}{2m+\ep}<1$ and $0\leq a\leq b<m+\ep<a+b$ then there is a knotted embedding of $P(a,b)$ into $P(1,m+\ep)^{\circ}$.   
\end{cor}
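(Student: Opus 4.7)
The plan is to obtain the knotted embedding by restricting an ellipsoid embedding to $P(a,b)$ and then invoking Corollary \ref{ellpolycor} to conclude knottedness; the two main ingredients---the ellipsoid embedding $E(1,2m+\epsilon)\hookrightarrow\alpha P(1,m+\epsilon)$ of Proposition \ref{longembed} and the knottedness criterion of Corollary \ref{ellpolycor}---are already in hand, so the argument reduces to an exercise in choosing a scaling parameter and verifying inclusions.

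First I would choose $\alpha>1$ close enough to $1$ that $\alpha\bigl(a+\frac{b}{2m+\epsilon}\bigr)<1$, which is possible by the hypothesis $a+\frac{b}{2m+\epsilon}<1$. Proposition \ref{longembed} (applied with some $\alpha'\in(1,\alpha)$ and then enlarged slightly) produces for every such $\alpha$ a symplectic embedding $E(1,2m+\epsilon)\hookrightarrow\alpha P(1,m+\epsilon)^{\circ}$; conjugating by the rescaling $z\mapsto\alpha^{-1/2}z$ yields a symplectic embedding
\[
g\co E\bigl(\alpha^{-1},\alpha^{-1}(2m+\epsilon)\bigr)\hookrightarrow P(1,m+\epsilon)^{\circ}.
\]

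Next I would verify that $P(a,b)\subset E\bigl(\alpha^{-1},\alpha^{-1}(2m+\epsilon)\bigr)\cap P(1,m+\epsilon)^{\circ}$. For any $(z_1,z_2)\in P(a,b)$ one has $\pi|z_1|^2\leq a$ and $\pi|z_2|^2\leq b$, so
\[
\alpha\pi|z_1|^2+\frac{\alpha\pi|z_2|^2}{2m+\epsilon}\leq\alpha\left(a+\frac{b}{2m+\epsilon}\right)<1,
\]
showing containment in the ellipsoid. The containment $P(a,b)\subset P(1,m+\epsilon)^{\circ}$ follows from $b<m+\epsilon$ (given) and $a<1$ (an immediate consequence of $a+\frac{b}{2m+\epsilon}<1$ together with $b\geq 0$).

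Finally I would apply Corollary \ref{ellpolycor} with $E=E\bigl(\alpha^{-1},\alpha^{-1}(2m+\epsilon)\bigr)$, embedding $g$ as above, and target parameters $(c,d)=(1,m+\epsilon)$. The hypotheses $c\leq d$ (since $m\geq 1$ forces $m+\epsilon\geq 1$), $P(a,b)\subset E\cap P(c,d)^{\circ}$ (as just checked), and $d<a+b$ (namely $m+\epsilon<a+b$, given) are all satisfied, so Corollary \ref{ellpolycor} shows that $g|_{P(a,b)}\co P(a,b)\hookrightarrow P(1,m+\epsilon)^{\circ}$ is knotted. No serious obstacle is expected; the only care required is in the ordering of the two rescalings, and in confirming that the embedding may be arranged to land in the open polydisk.
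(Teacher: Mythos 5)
Your proposal is correct and follows essentially the same route as the paper: the paper picks $\mu$ with $a+\frac{b}{2m+\ep}<\mu<1$ and applies Corollary \ref{ellpolycor} to $\mu E(1,2m+\ep)$ together with the embedding from Proposition \ref{longembed}, which is exactly your construction with $\mu=\alpha^{-1}$. The scaling bookkeeping and the verification of the hypotheses of Corollary \ref{ellpolycor} are carried out correctly.
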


\begin{proof}
Choose $\mu$ such that $a+\frac{b}{2m+\ep}<\mu<1$; we then have $P(a,b)\subset (\mu E(1,2m+\ep))\cap P(1,m+\ep)^{\circ}$.  Proposition \ref{longembed} moreover gives a symplectic embedding $\mu E(1,2m+\ep)\hookrightarrow P(1,m+\ep)^{\circ}$.  The conclusion then follows from Corollary \ref{ellpolycor}.
\end{proof}

We conclude by restating and proving Theorem \ref{polythm}:

\begin{cor}\label{allpoly} Given any $y\geq 1$, there exist polydisks $P(a,b)$ and $P(c,d)$ and knotted embeddings of $P(a,b)$ into $P(1,y)^{\circ}$ and of $P(1,y)$ into $P(c,d)^{\circ}$.
\end{cor}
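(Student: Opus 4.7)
The plan is to obtain the two embeddings separately, using the machinery already assembled in the paper: Corollary \ref{longknot} for the first embedding, and the combination of Proposition \ref{cvxaxy} with Corollary \ref{ellpolycor} for the second. In both cases the work reduces to parameter bookkeeping to check that the inequalities required by those results can be simultaneously satisfied.

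For the embedding $P(a,b)\hookrightarrow P(1,y)^\circ$, I would apply Corollary \ref{longknot} directly, writing $y=m+\epsilon$ with $m=\lfloor y\rfloor\in \Z_+$ and $\epsilon=y-m\in[0,1)$. The job is to exhibit $a\leq b$ with $b<y<a+b$ and $a+\frac{b}{2y}<1$. Taking $b=y-\eta$ for small $\eta>0$, the two outer inequalities force $\eta<a<\tfrac{1}{2}+\tfrac{\eta}{2y}$, which has a nonempty solution set as soon as $\eta<\tfrac{y}{2y-1}$ (which holds for $\eta$ small enough since $y\geq 1$). The condition $a\leq b=y-\eta$ is automatic because such an $a$ lies in $(0,1)$ and $y-\eta\geq 1-\eta$. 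Concretely, for $y=1$ one can take $b=0.9$, $a=0.2$; this verifies even the tightest case.

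For the embedding $P(1,y)\hookrightarrow P(c,d)^\circ$, my plan is to aim for a square polydisk target $P(c,c)$ with $c$ just slightly larger than $y$. Proposition \ref{cvxaxy} applied to $T(c,c,c,c)=P(c,c)$ yields, for any $\alpha>1$, a symplectic embedding $E(c,2c)\hookrightarrow \alpha P(c,c)^\circ$; rescaling by $\alpha^{-1}$ produces $g\co E(c/\alpha,2c/\alpha)\hookrightarrow P(c,c)^\circ$. I then want to arrange $P(1,y)\subset E(c/\alpha,2c/\alpha)\cap P(c,c)^\circ$, which amounts to $\tfrac{\alpha}{c}+\tfrac{\alpha y}{2c}\leq 1$ (containment in the ellipsoid) and $c>y$ (containment in the interior of the polydisk); that is, $c>y$ and $c\geq \tfrac{\alpha(y+2)}{2}$. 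To invoke Corollary \ref{ellpolycor} with $P(a,b)=P(1,y)$ and $P(c,d)=P(c,c)$, I also need $d<a+b$, i.e.\ $c<1+y$. All three conditions are compatible: since $\tfrac{y+2}{2}<1+y$ for every $y>0$, one may choose $\alpha\in\bigl(1,\tfrac{2(y+1)}{y+2}\bigr)$ and then pick $c\in\bigl(\max\{y,\tfrac{\alpha(y+2)}{2}\},1+y\bigr)$. With these choices Corollary \ref{ellpolycor} gives that $g|_{P(1,y)}\co P(1,y)\hookrightarrow P(c,c)^\circ$ is knotted.

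There is no serious obstacle; the proof amounts to observing that the existence theorems for the two relevant ellipsoid embeddings feed precisely into the two applicable knottedness criteria from the previous section. The only point requiring mild care is the second part, where the bound $c<1+y$ coming from Corollary \ref{ellpolycor} must coexist with the lower bound $c\geq \tfrac{\alpha(y+2)}{2}$ forced by the ellipsoid containment; the inequality $\tfrac{y+2}{2}<y+1$ is exactly what allows this to work for every $y\geq 1$.
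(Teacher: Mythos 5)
Your proposal is correct in substance, and its second half takes a genuinely different route from the paper. For the embedding $P(a,b)\hookrightarrow P(1,y)^{\circ}$ you do what the paper does (apply Corollary \ref{longknot}), differing only in the choice of parameters: the paper fixes $a=\tfrac12$, $b=m+\tfrac{3\ep-1}{4}$, while you take $b=y-\eta$ and $a$ in an interval. One slip here: the hypothesis of Corollary \ref{longknot} is $a+\frac{b}{2m+\ep}<1$ with $2m+\ep=y+\lfloor y\rfloor$, not $a+\frac{b}{2y}<1$; for non-integer $y$ your stated upper bound $\tfrac12+\tfrac{\eta}{2y}$ for $a$ is too generous. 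This does not damage the argument, since with the correct denominator the admissible range becomes $\eta<a<\frac{\lfloor y\rfloor+\eta}{y+\lfloor y\rfloor}$, which is still nonempty for $\eta$ small (and your explicit check at $y=1$ is unaffected, as there $2m+\ep=2y$); but the constant should be fixed. For the embedding $P(1,y)\hookrightarrow P(c,d)^{\circ}$ the paper writes $y=2k+\delta$, runs a case analysis on the sign of $\delta$, and rescales the knotted embeddings of Corollary \ref{longknot} (hence ultimately the improved ellipsoid embedding of Proposition \ref{longembed}), producing a non-square target in general. You instead take a square target $P(c,c)$ with $y<c<1+y$, use only the elementary special case $E(c,2c)\hookrightarrow\alpha P(c,c)$ of Proposition \ref{cvxaxy}, and feed this directly into Corollary \ref{ellpolycor}; the compatibility of $c\geq\frac{\alpha(y+2)}{2}$ with $c<1+y$ is exactly the inequality $\frac{y+2}{2}<y+1$, as you say. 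This avoids the parity case analysis and the use of Proposition \ref{longembed} in that half of the proof, at the cost of no generality since the corollary only asks for the existence of some target polydisk; the paper's route, by contrast, records explicit non-square targets $P(c,d)$ tied to the rescaled Corollary \ref{longknot}. (Minor point, handled the same way the paper handles it elsewhere: Proposition \ref{cvxaxy} lands in $\alpha P(c,c)$ rather than its interior, so one should pass to a slightly smaller dilation factor, using strict star-shapedness from Proposition \ref{cvxstar}, before restricting to $P(1,y)$.)
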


\begin{proof}
For a knotted embedding $P(a,b)\hookrightarrow P(1,y)^{\circ}$, write $y=m+\ep$ where $m\in \Z_+$ and $0\leq \ep <1$.  We can then set $a=\frac{1}{2}$ and $b=m+\frac{3\ep-1}{4}$ and apply Corollary \ref{longknot}.

For a knotted embedding $P(1,y)\hookrightarrow P(c,d)^{\circ}$, write $y=2k+\delta$ where $k\in \Z_+$ and $-1\leq \delta <1$.  If $\delta\geq 0$, then Corollary \ref{longknot} gives a knotted embedding of $P(\mu,\mu y)$ into $P\left(1,k+\frac{\delta}{2}\right)^{\circ}$ for any $\mu$ with \[ \frac{1}{2}\frac{2k+\delta}{2k+\delta+1}<\mu<\frac{4k+\delta}{8k+3\delta},\] and so conjugating by a rescaling by $\mu$ gives the desired embedding (with $c=\frac{1}{\mu}$, $d=\frac{1}{\mu}\left(k+\frac{\delta}{2}\right)=\frac{y}{2\mu}$).  If instead $-1\leq \delta< 0$, then for $1+\frac{\delta}{4k}<\alpha < 1+\frac{1+\delta}{2k}$ Corollary \ref{longknot} (with $m=k,\ep=0$) gives a knotted embedding of $\frac{1}{2\alpha}P(1,y)$ into $P(1,k)^{\circ}$, and so again conjugating by a rescaling gives the desired embedding with $c=2\alpha,d=2\alpha k$.
\end{proof}

\appendix
\section{Proof of Lemma \ref{complex-exists}}

The purpose of this appendix is to prove Lemma \ref{complex-exists}. A related statement is proven in \cite{GG} for a slightly different version of $S^1$-equivariant symplectic homology; the main difference between our result and theirs is that they construct a filtered complex after choosing a certain action interval and prove that their complex computes the filtered $S^1$-equivariant symplectic homology associated to this action interval, whereas we construct a single complex that works simultaneously for all action intervals.  One can in fact show based on arguments similar to those below that the filtration on the complex constructed in \cite{GG} in the case of the action interval $(0,\infty)$ does have filtered homologies that recover their version of filtered $CH$ in arbitrary action intervals, but since this is not explicitly proven in \cite{GG} we give a detailed proof in our case. 

The main ingredient is an algebraic lemma concerning  filtered complexes which shows that, up to isomorphism, the images of inclusion-induced maps between the filtered parts of the complexes can be recovered from the filtered homology of a new chain complex whose underlying vector space is the $E^1$ term of the spectral sequence associated to the original filtered complex.  This lemma is proven in the following section, and in the subsequent section we apply this together with results from \cite{gutt},\cite{GuH} to complete the proof of Lemma \ref{complex-exists}.
We assume that the reader is familiar with positive $S^1$-equivariant symplectic homology and we use the notation from \cite{GuH}.

\subsection{A lemma on filtered complexes}

In this section we consider a $\Z$-graded chain complex $(C_*,\partial)$ of vector spaces over a field $K$ equipped with a filtration \[ \{0\}=F_0C_{*}\subset F_1C_{*}\subset\cdots\subset F_r C_{*}\subset \cdots \subset C_{*} \] (where each $F_rC_{*}$ is a subcomplex of $C_{*}$) that is bounded below by zero and exhausting (\emph{i.e.} $F_{\infty}C_*:=\cup_r F_rC_{*}$ is equal to $C_{*}$).  
We extend the above filtration by $\N$ to a filtration by $\Z$ by setting $F_iC_{*}=\{0\}$ for $i<0$.  

Recall that the associated graded complex of $(C_*,\partial)$, denoted $\mathcal{G}(C_*)$, is the direct sum of quotient complexes $\bigoplus_{p\geq 1}\frac{F_pC_*}{F_{p-1}C_*}$, equipped with obvious boundary operator induced from $\partial$.  The homology $H_*\big(\mathcal{G}(C_*)\big)$ evidently splits as a direct sum \[ H_k\big(\mathcal{G}(C_{*})\big)=\bigoplus_{p\geq 1}H_k\left(\frac{F_p C_*}{F_{p-1}C_*}\right).\]

The following is the main algebraic input needed for Lemma \ref{complex-exists}:

\begin{lemma}\label{maincx} 
With notation and assumptions as above, there is a chain complex $(D_*,\delta)$ equipped with a filtration \[ \{0\}=F_0D_*\subset F_1D_{*}\subset \cdots\subset F_rD_* \subset\cdots \subset D_* \] where for each $r,k$ \begin{equation}\label{dfilter} F_rD_k=\bigoplus_{1\leq p\leq r}H_k\left(\frac{F_p C_*}{F_{p-1}C_*}\right) \end{equation} and $F_{\infty}D_*:=\cup_rF_rD_*=D_*$, such that the boundary operator $\delta$ on $D_*$ strictly lowers filtration in the sense that $\delta(F_rD_*)\subset F_{r-1}D_*$, and such that for $1\leq s\leq t\leq\infty$ there exists an isomorphism of vector spaces 
\[
\Img\big(H_k(F_sC_{*},\partial)\to H_k(F_tC_*,\partial)\big)\cong \Img\big(H_k(F_sD_*,\delta)\to H_k(F_tD_*,\delta)\big)
\] where the maps on both sides are induced by inclusion of filtered subcomplexes.  \end{lemma}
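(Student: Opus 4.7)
The plan is to construct $(D_*, \delta)$ together with a filtered chain map $f: D_* \to C_*$ that restricts to a quasi-isomorphism on each filtration level $F_r D_* \to F_r C_*$. Once such a filtered quasi-isomorphism is in hand, the desired isomorphism of images of inclusion-induced maps is automatic: for $s \leq t$ in $\N$, the commutative square produced by $f$ identifies $\Img(H_k(F_s D) \to H_k(F_t D))$ with $\Img(H_k(F_s C) \to H_k(F_t C))$ via the isomorphisms $(f|_{F_s})_*$ and $(f|_{F_t})_*$. The case $t = \infty$ is recovered by a colimit argument, using the exhaustivity of the filtration and the fact that homology and image commute with directed colimits of subcomplexes.

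I would construct $F_r D_*$ and $f|_{F_r D_*}$ by induction on $r$, starting with $F_0 D_* = 0$. For the inductive step, let $G_r := F_r C_*/F_{r-1}C_*$ with its induced differential $\bar\partial_r$. Since $K$ is a field, I choose a graded subspace $\mathcal{H}_r \subset \ker(\bar\partial_r)$ that maps isomorphically to $H_*(G_r)$ under the quotient $\ker(\bar\partial_r) \to H_*(G_r)$, and I choose lifts $\{h_i\} \subset F_r C_*$ of a basis $\{\bar h_i\}$ of $\mathcal{H}_r$. Set $F_r D_* := F_{r-1}D_* \oplus \mathcal{H}_r$ as a graded vector space, so (\ref{dfilter}) holds by the inductive description of $F_{r-1}D_*$. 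Each $\partial h_i$ lies in $F_{r-1}C_*$ (because its image in $G_r$ is $\bar\partial_r \bar h_i = 0$) and is a cycle. Since $f_{r-1}$ is inductively a quasi-isomorphism, I can choose a $\delta$-cycle $z_i \in F_{r-1}D_*$ together with $c_i \in F_{r-1}C_*$ satisfying $f_{r-1}(z_i) = \partial h_i + \partial c_i$, and I then define $\delta \bar h_i := z_i$ and $f(\bar h_i) := h_i + c_i$.

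With these definitions the key properties are immediate: $\delta^2 \bar h_i = \delta z_i = 0$, $\delta(F_r D_*) \subset F_{r-1}D_*$, and $\partial f(\bar h_i) = \partial h_i + \partial c_i = f_{r-1}(z_i) = f(\delta \bar h_i)$ so $f$ is a strict chain map. To conclude that $f: F_r D_* \to F_r C_*$ is itself a quasi-isomorphism, I would compare the short exact sequences of complexes $0 \to F_{r-1}D_* \to F_r D_* \to \mathcal{H}_r \to 0$ and $0 \to F_{r-1}C_* \to F_r C_* \to G_r \to 0$. The map $\mathcal{H}_r \to G_r$ on quotients is a quasi-isomorphism by the construction of $\mathcal{H}_r$; the connecting map of the upper LES sends $\bar h_i$ to $[z_i]$, which $(f_{r-1})_*$ carries to $[\partial h_i]$ (the value of the lower connecting map on $\bar h_i$); and $f_{r-1}$ is a quasi-isomorphism by induction. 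The five-lemma applied to the long exact sequences of the two rows then finishes the inductive step.

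The main subtlety, such as it is, is in ensuring that $f$ can be chosen as a \emph{strict} chain map rather than merely a chain map up to homotopy: this is what forces the adjustment $h_i \mapsto h_i + c_i$, and it requires both the surjectivity of $(f_{r-1})_*$ on homology (so that $[\partial h_i]$ lies in its image) and the existence of an explicit chain-level primitive $c_i$ for the boundary $\partial h_i - f_{r-1}(z_i)$. Both are delivered by the inductive filtered quasi-isomorphism $f_{r-1}$, and the entire construction relies on the field hypothesis so that the needed splittings $\ker(\bar\partial_r) = \Img(\bar\partial_r) \oplus \mathcal{H}_r$ and vector-space lifts $\mathcal{H}_r \hookrightarrow F_r C_*$ exist.
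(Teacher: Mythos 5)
Your proposal is correct, and it proves the lemma by a genuinely different route than the paper. The paper works directly with the spectral sequence of the filtration: it takes $D_*$ to be the $E^1$-page, defines $\delta$ as the sum over all $r$ of the higher differentials $\hat{\partial}^r_{p,q}$ extended by zero via chosen complements ($H^r_{p,q}$ and $M^r_{p,q}$), computes $\ker\delta$ and $\Img\delta$ explicitly, and then matches the resulting filtered homologies against the associated graded of the image filtration on $H_*(F_tC_*)$ (Proposition \ref{computerank}); the isomorphisms of images are assembled at the end by choosing compatible complements. You instead build $D_*$ inductively up the filtration together with a genuine filtered chain map $f\co D_*\to C_*$ that is a quasi-isomorphism on every filtration level, the only delicate point being the correction $f(\bar h_i)=h_i+c_i$ needed to make $f$ a strict chain map; once such an $f$ exists, the isomorphism $\Img\big(H_k(F_sD_*)\to H_k(F_tD_*)\big)\cong\Img\big(H_k(F_sC_*)\to H_k(F_tC_*)\big)$ is formal from the commuting squares (using injectivity of $(f_t)_*$ and surjectivity of $(f_s)_*$), and $t=\infty$ follows since homology commutes with the exhausting direct limit. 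Your argument gives slightly more than the paper's statement — the isomorphisms are induced by a single filtered quasi-isomorphism, hence compatible in $s$ and $t$ for free — and it avoids the spectral-sequence bookkeeping, at the cost of not exhibiting $\delta$ explicitly in terms of the higher differentials (which the paper does, although nothing in the application to Lemma \ref{complex-exists} uses that description). Both arguments use the field hypothesis in the same way, namely to choose the splittings $\ker(\bar\partial_r)=\Img(\bar\partial_r)\oplus\mathcal{H}_r$ and lifts, and both use that the filtration is bounded below and exhausting.
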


The proof of Lemma \ref{maincx} will occupy the rest of this section.  To begin,
let us recall from \cite[Section 5.4]{weibel} some ingredients in the construction of the spectral sequence associated to the filtration on $(C_{*},\partial)$.

For $p\in \Z$ write $\eta_p\co F_pC_{*}\to \frac{F_pC_*}{F_{p-1}C_*}$ for the natural projection, and for $p,q,r\in \Z$ define: \[ A_{p,q}^{r}=\left\{x\in F_pC_{p+q}|\partial x\in F_{p-r}C_{p+q-1}\right\},\] \[ \hat{Z}_{p,q}^{r}=\eta_p(A_{p,q}^{r}),\qquad \hat{B}_{p,q}^{r}=\eta_p\left(\partial(A_{p+r-1,q-r+2}^{r-1})\right).\]  

For any $r\geq 1$ one then has inclusions \[ \{0\}= \hat{B}_{p,q}^{0}\subset \hat{B}_{p,q}^{1}\subset \cdots\subset \hat{B}_{p,q}^{r}\subset \hat{B}_{p,q}^{r+1}\subset \hat{Z}_{p,q}^{r+1}\subset \hat{Z}_{p,q}^{r}\subset\cdots\subset \hat{Z}_{p,q}^{0}=\frac{F_pC_{p+q}}{F_{p-1}C_{p+q}}.\]

We also write \[ \hat{B}_{p,q}^{\infty}=\cup_{r=1}^{\infty}\hat{B}_{p,q}^{r}=\cup_{r=1}^{\infty}\eta_p\left(\partial(A_{p+r-1,q-r+2}^{r-1})\right).\]
Note also that since we assume that $F_iC_*=\{0\}$ for $i\leq 0$, we have \[ \hat{Z}_{p,q}^{r}=\hat{Z}_{p,q}^{p}=\eta_p\left(\ker \partial|_{F_pC_{p+q}}\right) \mbox{ for }r\geq p.\]  Accordingly if we let $\hat{Z}_{p,q}^{\infty}=\hat{Z}_{p,q}^{p}$ then we will have \[ \hat{Z}_{p,q}^{\infty}=\cap_{r=1}^{\infty} \hat{Z}_{p,q}^{r}.\]

As is standard, we write \[ E_{p,q}^{r}=\frac{\hat{Z}_{p,q}^{r}}{\hat{B}_{p,q}^{r}} \] for $r\in \N\cup\{\infty\}$. 
For the case that $r=1$, notice that $\hat{Z}_{p,q}^{1}$ is equal to the set of degree-$(p+q)$ cycles in the quotient complex $\frac{F_pC_*}{F_{p-1}C_*}$ and that $\hat{B}_{p,q}^{1}$ is equal to the set of degree-$(p+q)$ boundaries in  $\frac{F_pC_*}{F_{p-1}C_*}$; thus \begin{equation}\label{E1id} E_{p,q}^{1}=H_{p+q}\left(\frac{F_pC_*}{F_{p-1}C_*}\right).\end{equation}

The following is standard and easily-checked:
\begin{prop}(cf. \cite[Construction 5.4.6]{weibel})\label{partialr} For each $p,q,r$, the boundary operator $\partial$ induces a map \[ \hat{\partial}^{r}_{p,q}\co E_{p,q}^{r}\to E_{p-r,q+r-1}^{r} \]
such that   \[ \ker(\hat{\partial}^{r}_{p,q})=\pi(\hat{Z}_{p,q}^{r+1})\quad\mbox{and} \quad \Img(\hat{\partial}^{r}_{p+r,q-r+1})=\pi(\hat{B}_{p,q}^{r+1}),\] where $\pi\co \hat{Z}_{p,q}^{r}\to \frac{\hat{Z}_{p,q}^{r}}{\hat{B}_{p,q}^{r}}$ is the quotient projection. \end{prop}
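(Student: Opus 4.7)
\medskip

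My plan is to verify the proposition by directly constructing the map $\hat{\partial}^{r}_{p,q}$ from the boundary operator $\partial$ on $C_*$, then unpacking the kernel and image in terms of the definitions of $\hat{Z}^{r+1}_{p,q}$ and $\hat{B}^{r+1}_{p,q}$. This is all essentially bookkeeping with the filtrations, so I do not expect any serious obstacle.

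First I would define the map.  Given $[\eta_p(x)]\in E^{r}_{p,q}$ with $x\in A^{r}_{p,q}$, note that $\partial x\in F_{p-r}C_{p+q-1}$ by definition of $A^{r}_{p,q}$, and $\partial(\partial x)=0\in F_{p-2r}C_{*}$ trivially, so $\partial x\in A^{r}_{p-r,q+r-1}$ and therefore $\eta_{p-r}(\partial x)\in \hat{Z}^{r}_{p-r,q+r-1}$.  I would set
\[
\hat{\partial}^{r}_{p,q}\big([\eta_p(x)]\big) = \big[\eta_{p-r}(\partial x)\big]\in E^{r}_{p-r,q+r-1}.
\]
Well-definedness requires checking two things: that the class in $E^{r}_{p-r,q+r-1}$ is unchanged if I modify $x$ by an element of $F_{p-1}C_{p+q}$ (which lies in $A^{r-1}_{p-1,q+1}$ once we know $\partial x\in F_{p-r}C_{*}$, so its image under $\eta_{p-r}\circ\partial$ lands in $\hat{B}^{r}_{p-r,q+r-1}$), and that if $\eta_p(x)\in \hat{B}^{r}_{p,q}$, that is, $x-\partial y\in F_{p-1}C_{*}$ for some $y\in A^{r-1}_{p+r-1,q-r+2}$, then $\eta_{p-r}(\partial x) = \eta_{p-r}(\partial(x-\partial y))$ lands in $\hat{B}^{r}_{p-r,q+r-1}$ by the same argument applied to $x - \partial y$.

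For the kernel, I would argue as follows.  If $x\in A^{r}_{p,q}$ represents a class in $\ker\hat{\partial}^{r}_{p,q}$, then $\eta_{p-r}(\partial x)\in\hat{B}^{r}_{p-r,q+r-1}=\eta_{p-r}\bigl(\partial A^{r-1}_{p-1,q+1}\bigr)$, so there is $z\in A^{r-1}_{p-1,q+1}$ with $\partial x - \partial z\in F_{p-r-1}C_{*}$.  Then $x-z\in F_pC_{p+q}$ and $\partial(x-z)\in F_{p-r-1}C_{*}$, so $x-z\in A^{r+1}_{p,q}$; since $z\in F_{p-1}C_{*}$ we have $\eta_p(x)=\eta_p(x-z)\in \hat{Z}^{r+1}_{p,q}$, giving $\ker(\hat{\partial}^{r}_{p,q})\subset \pi(\hat{Z}^{r+1}_{p,q})$.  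Conversely any element of $\pi(\hat{Z}^{r+1}_{p,q})$ is represented by some $x\in A^{r+1}_{p,q}$, so $\partial x\in F_{p-r-1}C_{*}$ and therefore $\eta_{p-r}(\partial x)=0$.

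Finally, for the image, unwinding the definition of $\hat{\partial}^{r}_{p+r,q-r+1}$ gives
\[
\Img(\hat{\partial}^{r}_{p+r,q-r+1}) = \pi\bigl(\eta_{p}(\partial A^{r}_{p+r,q-r+1})\bigr)=\pi(\hat{B}^{r+1}_{p,q})
\]
directly from the definition $\hat{B}^{r+1}_{p,q}=\eta_p\bigl(\partial A^{r}_{p+r,q-r+1}\bigr)$.  The only mild subtlety in the whole argument is making sure that the various $A$-sets to which I transfer representatives have the correct filtration indices; otherwise this is a formal verification.  The serious content of Lemma \ref{maincx} lies elsewhere, in leveraging the $E^1$-page together with these induced differentials to build $(D_*,\delta)$; Proposition \ref{partialr} itself is the standard structural input that makes that leveraging possible.
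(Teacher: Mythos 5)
Your verification is correct and is exactly the standard construction the paper invokes by citing Weibel (Construction 5.4.6) without writing out details: define $\hat{\partial}^{r}_{p,q}[\eta_p(x)]=[\eta_{p-r}(\partial x)]$, check well-definedness against both the choice of lift $x$ and the quotient by $\hat{B}^{r}_{p,q}$, and identify kernel and image by the filtration bookkeeping you carried out. No gaps; the index checks on the $A$-sets all come out right, so this matches the paper's (implicit) proof.
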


We also have the following fact concerning the maps $H_k(F_pC_*,\partial)\to H_k(F_tC_*,\partial)$ for $p\leq t$ induced by inclusion of filtered subcomplexes; this is a slight extension of the familiar fact that the spectral sequence of a suitable filtered complex converges to the associated graded of the homology.  
\begin{prop} \label{computerank} Let $1\leq p\leq t\leq \infty$ with $p<\infty$.  Then there is an isomorphism \[ 
 \frac{\Img\left( H_k(F_pC_*,\partial)\to H_k(F_tC_*,\partial)\right)}{\Img\left( H_k(F_{p-1}C_*,\partial)\to H_k(F_tC_*,\partial)\right)} \cong \frac{\hat{Z}_{p,k-p}^{\infty}}{\hat{B}_{p,k-p}^{t-p+1}}.
\]
\end{prop}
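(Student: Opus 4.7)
The plan is to identify both sides of the claimed isomorphism with the concrete subquotient
\[
Q_{p,t,k} := \frac{\ker\partial \cap F_pC_k}{(\ker\partial \cap F_{p-1}C_k) + (\partial(F_tC_{k+1}) \cap F_pC_k)}
\]
of $F_pC_k$, after which the proposition reduces to a comparison of two descriptions of the same object. The work is pure definition-chasing; the only slightly delicate point is the verification that a ``mod $\hat B$'' relation between cycles can always be realized by an honest decomposition whose correction term is itself a cycle.

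First I would unwind the right-hand side. Because $F_iC_* = \{0\}$ for $i \leq 0$, the condition $\partial x \in F_{p-r}C_{k-1}$ in the definition of $A^r_{p,k-p}$ forces $\partial x = 0$ once $r \geq p$, so $\hat Z_{p,k-p}^\infty = \eta_p(\ker\partial \cap F_pC_k)$, which is identified via $\eta_p$ with $(\ker\partial \cap F_pC_k)/(\ker\partial \cap F_{p-1}C_k)$. For the denominator, setting $q = k-p$ and $r = t-p+1$ one reads off $A_{p+r-1,q-r+2}^{r-1} = \{z \in F_tC_{k+1} \,|\, \partial z \in F_pC_k\}$, hence $\hat B_{p,k-p}^{t-p+1} = \eta_p\bigl(\partial(F_tC_{k+1}) \cap F_pC_k\bigr)$; the case $t = \infty$ follows by taking directed unions, since both $\eta_p$ and $\partial$ commute with them. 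The key small observation is that if $x \in \ker\partial \cap F_pC_k$ with $\eta_p(x) \in \hat B_{p,k-p}^{t-p+1}$, so that $x = y + z$ with $y \in F_{p-1}C_k$ and $z \in \partial(F_tC_{k+1}) \cap F_pC_k$, then $\partial x = \partial z = 0$ forces $\partial y = 0$, so $y \in \ker\partial \cap F_{p-1}C_k$. This gives $\hat Z_{p,k-p}^\infty/\hat B_{p,k-p}^{t-p+1} \cong Q_{p,t,k}$.

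For the left-hand side, a class in $H_k(F_tC_*)$ lies in $\Img(H_k(F_pC_*) \to H_k(F_tC_*))$ iff it is represented by some $x \in \ker\partial \cap F_pC_k$, and two such cycles $x,x'$ define the same class iff $x - x' \in \partial(F_tC_{k+1})$; hence
\[
\Img\bigl(H_k(F_pC_*) \to H_k(F_tC_*)\bigr) \cong \frac{\ker\partial \cap F_pC_k}{\partial(F_tC_{k+1}) \cap F_pC_k},
\]
and the corresponding formula for $p-1$ shows that the image of the inclusion-induced map $\Img(H_k(F_{p-1}C_*) \to H_k(F_tC_*)) \to \Img(H_k(F_pC_*) \to H_k(F_tC_*))$ is $\bigl((\ker\partial \cap F_{p-1}C_k) + (\partial(F_tC_{k+1}) \cap F_pC_k)\bigr)/(\partial(F_tC_{k+1}) \cap F_pC_k)$. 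Taking the quotient recovers exactly $Q_{p,t,k}$, and comparing with the previous paragraph completes the proof. The only genuine hard part is the cycle-lifting remark; everything else is routine subquotient gymnastics, and no spectral-sequence convergence input is needed thanks to the boundedness below of the filtration.
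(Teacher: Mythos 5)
Your proof is correct and is essentially the paper's own argument: the paper likewise identifies both sides with the common subquotient $\ker(\partial|_{F_pC_k})/\bigl(\ker(\partial|_{F_{p-1}C_k})+\partial(A_{t,k-t+1}^{t-p})\bigr)$, where $\partial(A_{t,k-t+1}^{t-p})=\partial(F_tC_{k+1})\cap F_pC_k$ is exactly your correction subspace, and uses the same $\eta_p$-induced isomorphism (your ``cycle-lifting'' remark is the check the paper calls easy). The only cosmetic difference is that the paper obtains the left-hand identification via a single surjection $\phi$ from $\ker(\partial|_{F_pC_k})$ and a kernel computation, rather than your image formula followed by the third isomorphism theorem.
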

(Here for the case $t=\infty$ we interpret $F_{\infty}C_{*}$ as $C_*$ and $\hat{B}_{p,k-p}^{\infty-p+1}$ as $\hat{B}_{p,k-p}^{\infty}$.)
\begin{proof}
There is an obvious surjective map \[\phi\co \ker(\partial|_{F_pC_k})\to 
 \frac{\Img\left( H_k(F_pC_*,\partial)\to H_k(F_tC_*,\partial)\right)}{\Img\left( H_k(F_{p-1}C_*,\partial)\to H_k(F_tC_{*},\partial)\right)} \] given by including $\ker(\partial|_{F_pC_k})$ into $\ker(\partial|_{F_tC_k})$, then taking homology classes, and then projecting.  We see that $x\in \ker(\phi)$ if and only if there is $y\in \ker(\partial|_{F_{p-1}C_k})$ such that $x$ and $y$ represent the same homology class in $H_k(F_t C_*,\partial)$; this holds if and only if we can write $x=y+\partial z$ with $z\in F_tC_{k+1}$, and in this case we would have $z\in A_{t,k-t+1}^{t-p}$ since $\partial z=x-y\in F_pC_{k}$.  Thus $\ker(\phi)=\ker(\partial|_{F_{p-1}C_k})+\partial(A_{t,k-t+1}^{t-p})$ and hence \begin{equation}\label{gpa}
\frac{\Img\left( H_k(F_pC_*,\partial)\to H_k(F_tC_*,\partial)\right)}{\Img\left( H_k(F_{p-1}C_*,\partial)\to H_k(F_tC_*,\partial)\right)} \cong \frac{ \ker(\partial|_{F_pC_k}) }{ \ker(\partial|_{F_{p-1}C_k})+\partial(A_{t,k-t+1}^{t-p})   }. \end{equation} (The above discussion implicitly assumes that $t<\infty$, but since $\cup_{s=1}^{\infty}F_sC_*=C_*$ the reasoning is equally valid for $t=\infty$ provided that we interpret the notation $A_{\infty,k-\infty+1}^{t-p}$ as $\cup_{p\leq s\in \N}A_{s,k-s+1}^{s-p}$, as we will continue to do below).  

On the other hand the projection $\eta_p\co F_pC_k\to \frac{F_pC_k}{F_{p-1}C_k}$ sends $\ker(\partial|_{F_pC_k})$ to $\hat{Z}_{p,k-p}^{\infty}$ and sends $ \ker(\partial|_{F_{p-1}C_k})+\partial(A_{t,k-t+1}^{t-p}) $ to $\hat{B}_{p,k-p}^{t-p+1}$, and it is easy to check that the resulting map \[ \eta\co \frac{ \ker(\partial|_{F_pC_k}) }{ \ker(\partial|_{F_{p-1}C_k})+\partial(A_{t,k-t+1}^{t-p})   }\to \frac{\hat{Z}_{p,k-p}^{\infty}}{\hat{B}_{p,k-p}^{t-p+1}} \] is an isomorphism.  Combining this isomorphism with (\ref{gpa}) proves the proposition.
\end{proof}

For $1\leq r\leq \infty$ let \[ B_{p,q}^{r}=\frac{\hat{B}_{p,q}^{r}}{\hat{B}_{p,q}^{1}}\qquad Z_{p,q}^{r}=\frac{\hat{Z}_{p,q}^{r}}{\hat{B}_{p,q}^{1}},\] so for $r<p$ we have a chain of inclusions  \[ \{0\}=B_{p,q}^{1}\subset\cdots\subset B_{p,q}^{r}\subset B_{p,q}^{r+1}\subset\cdots\subset B_{p,q}^{\infty}\subset Z_{p,q}^{\infty}=Z_{p,q}^{p}\subset Z_{p,q}^{r+1}\subset Z_{p,q}^{r}\subset\cdots\subset Z_{p,q}^{1}=E_{p,q}^{1}.\]  Projecting away $\hat{B}_{p,q}^{1}$ induces isomorphisms $E_{p,q}^{r}\cong \frac{Z_{p,q}^{r}}{B_{p,q}^{r}}$.  For each $p,q\in \Z$ and $r\geq 1$ let us choose: 
\begin{itemize} \item A complement $H_{p,q}^{r}$ to the subspace $B_{p,q}^{r}$ within the vector space $Z_{p,q}^{r}$, and 
\item A complement $M_{p,q}^{r}$ to the subspace $Z_{p,q}^{r+1}$ within the vector space $Z_{p,q}^{r}$.  \end{itemize}
Given these choices, the projection $Z_{p,q}^{r}\to E_{p,q}^{r}$ restricts to $H_{p,q}^{r}$ as an isomorphism, so the maps $\hat{\partial}_{p,q}^{r}$ from Proposition \ref{partialr} induce maps \[ \partial_{p,q}^{r}\co H_{p,q}^{r}\to H_{p-r,q+r-1}^{r} \] with \[ \ker\partial_{p,q}^{r}=Z_{p,q}^{r+1}\cap H_{p,q}^{r},\qquad \Img\partial_{p+r,q-r+1}^{r}=B_{p,q}^{r+1}\cap H_{p,q}^{r}.\]

(In particular, since $Z_{p,q}^{r+1}=Z_{p,q}^{r}$ for $r\geq p$, we have $\partial_{p,q}^{r}=0$ for $r\geq p$).

For any $r\geq 2$ the various direct sum decompositions $Z_{p,q}^{j-1}=Z_{p,q}^{j}\oplus M_{p,q}^{j-1}$ yield a direct sum decomposition \begin{align*} E_{p,q}^{1}&=Z_{p,q}^{1}=Z_{p,q}^{r}\oplus M_{p,q}^{r-1}\oplus\cdots\oplus M_{p,q}^{1} 
\\ &= H_{p,q}^{r}\oplus B_{p,q}^{r}\oplus M_{p,q}^{r-1}\oplus\cdots\oplus M_{p,q}^{1}.\end{align*}

(For $r=1$ we have $B_{p,q}^{1}=\{0\}$ and $H_{p,q}^{1}=E_{p,q}^{1}$ and the above direct sum decomposition degenerates to $E_{p,q}^{1}=H_{p,q}^{1}$).

We accordingly extend our map $\partial^{r}_{p,q}\co H_{p,q}^{r}\to H_{p,q}^{r}$ to a linear map (still denoted $\partial^{r}_{p,q}$) defined on all of $E_{p,q}^{1}$ by setting it equal to zero on the summands $B_{p,q}^{r},M_{p,q}^{r-1},\ldots,M_{p,q}^{1}$. We also regard the codomain of $\partial^{r}_{p,q}$ as $E_{p-r,q+r-1}^{1}$ rather than the subspace $H_{p-r,q+r-1}^{r}$.  
With this extended definition, we have \[ \ker\partial^{r}_{p,q}=(Z_{p,q}^{r+1}\cap H_{p,q}^{r})\oplus B_{p,q}^{r}\oplus M_{p,q}^{r-1}\oplus\cdots\oplus M_{p,q}^{1} = Z_{p,q}^{r+1}\oplus M_{p,q}^{r-1}\oplus \cdots\oplus M_{p,q}^{1}, \]
where we have used that $B_{p,q}^{r}\subset Z_{p,q}^{r+1}\subset Z^{r}_{p,q}=H_{p,q}^{r}\oplus B_{p,q}^{r}$, so that $(Z_{p,q}^{r+1}\cap H_{p,q}^{r})\oplus B_{p,q}^{r}=Z_{p,q}^{r+1}$.
Since we have a direct sum decomposition \[ E_{p,q}^{1}=Z_{p,q}^{r+1}\oplus M_{p,q}^{r}\oplus M_{p,q}^{r-1}\cdots\oplus M_{p,q}^{1} ,\] it follows that:

\begin{cor}\label{chop} The maps $\partial_{p,q}^{r}\co E^{1}_{p,q}\to E_{p-r,q+r-1}^{1}$ restrict as isomorphisms $M_{p,q}^{r}\to B_{p-r,q+r-1}^{r+1}\cap H_{p-r,q+r-1}^{r}$, and vanish identically on the complementary subspace $Z_{p,q}^{r+1}\oplus M_{p,q}^{r-1}\oplus\cdots\oplus M_{p,q}^{1}$ to $M_{p,q}^{r}$ in $E_{p,q}^{1}$.
\end{cor}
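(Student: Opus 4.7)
The plan is to read off both statements of the corollary directly from two ingredients that are already in place: the direct-sum decomposition
\[
E_{p,q}^{1} \;=\; H_{p,q}^{r} \oplus B_{p,q}^{r} \oplus M_{p,q}^{r-1} \oplus \cdots \oplus M_{p,q}^{1}
\]
together with the identification of $\ker\partial_{p,q}^{r}$ as $Z_{p,q}^{r+1} \oplus M_{p,q}^{r-1} \oplus \cdots \oplus M_{p,q}^{1}$ carried out in the paragraph preceding the corollary. As a preliminary, I will record the inclusion $B_{p,q}^{r} \subset Z_{p,q}^{r+1}$, which follows at once from $\partial^{2}=0$ applied to the generators of $\hat{B}_{p,q}^{r}$; together with $Z_{p,q}^{r}=H_{p,q}^{r}\oplus B_{p,q}^{r}$, this is what justifies the equality $(Z_{p,q}^{r+1}\cap H_{p,q}^{r})\oplus B_{p,q}^{r}=Z_{p,q}^{r+1}$ used in that kernel computation.

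I will first handle the vanishing claim. Because $M_{p,q}^{r}$ is by definition a complement to $Z_{p,q}^{r+1}$ inside $Z_{p,q}^{r}$, the subspace $Z_{p,q}^{r+1} \oplus M_{p,q}^{r-1} \oplus \cdots \oplus M_{p,q}^{1}$ appearing in the kernel description is indeed complementary to $M_{p,q}^{r}$ inside $E_{p,q}^{1}$, so the statement that $\partial_{p,q}^{r}$ vanishes on this complement is simply a restatement of the already-established identity $\ker\partial_{p,q}^{r}=Z_{p,q}^{r+1} \oplus M_{p,q}^{r-1} \oplus \cdots \oplus M_{p,q}^{1}$.

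For the isomorphism claim, injectivity of $\partial_{p,q}^{r}|_{M_{p,q}^{r}}$ is immediate from the fact that $M_{p,q}^{r}$ meets the kernel trivially. To identify the image, I will observe that since $\partial_{p,q}^{r}$ annihilates every summand in the direct-sum decomposition other than $H_{p,q}^{r}$, its image on all of $E_{p,q}^{1}$ coincides with its image on $H_{p,q}^{r}$, which by the text preceding the corollary is $B_{p-r,q+r-1}^{r+1}\cap H_{p-r,q+r-1}^{r}$. Because $M_{p,q}^{r}$ is complementary to $\ker\partial_{p,q}^{r}$, the restriction $\partial_{p,q}^{r}|_{M_{p,q}^{r}}$ realizes the same image, giving the desired isomorphism onto $B_{p-r,q+r-1}^{r+1}\cap H_{p-r,q+r-1}^{r}$.

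No substantial obstacle arises here: the corollary is purely a reorganization of the information already assembled in the preceding discussion, and I expect the proof to occupy only a few lines. The only point demanding a tiny bit of care is confirming the inclusion $B_{p,q}^{r}\subset Z_{p,q}^{r+1}$, which underlies both the simplification of the kernel and the compatibility of the two direct-sum decompositions $Z_{p,q}^{r}=H_{p,q}^{r}\oplus B_{p,q}^{r}$ and $Z_{p,q}^{r}=Z_{p,q}^{r+1}\oplus M_{p,q}^{r}$.
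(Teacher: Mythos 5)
Your proposal is correct and follows essentially the same route as the paper, which deduces the corollary directly from the preceding identities $\ker\partial_{p,q}^{r}=Z_{p,q}^{r+1}\oplus M_{p,q}^{r-1}\oplus\cdots\oplus M_{p,q}^{1}$, $\Img\partial^{r}=B^{r+1}\cap H^{r}$, and the decomposition $E_{p,q}^{1}=Z_{p,q}^{r+1}\oplus M_{p,q}^{r}\oplus\cdots\oplus M_{p,q}^{1}$ (using $B_{p,q}^{r}\subset Z_{p,q}^{r+1}$ exactly as you note). Nothing is missing.
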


In particular, since for $j>r$ we have $M_{p,q}^{j}\subset Z_{p,q}^{j}\subset Z_{p,q}^{r+1}\subset \ker(\partial_{p,q}^{r})$, this shows that $\partial_{p,q}^{r}$ vanishes on $M_{p,q}^{j}$ for $j\neq r$, while it maps $M_{p,q}^{r}$ isomorphically to $B_{p-r,q+r-1}^{r+1}\cap H_{p-r,q+r-1}^{r}$.

Now for any $p,q$ let us write \[ \partial_{p,q}=\sum_{r\geq 1} \partial_{p,q}^{r}\co E_{p,q}^{1}\to\oplus_{r\geq 1}H_{p-r,q+r-1}^{r}\subset E_{p-r,q+r-1}^{1}.\]  (This has just finitely many nonzero terms since $\partial_{p,q}^{r}=0$ for $r\geq p$.)  Also define, for $k\in \Z$, \[ D_k=\bigoplus_{p+q=k}E_{p,q}^{1},\] and define $\delta_k\co D_k\to D_{k-1}$ as the map which restricts to $\partial_{p,q}$ on the respective summands $E_{p,q}^{1}$.  Each $D_k$ has a filtration given by \[ F_sD_k=\bigoplus_{p+q=k,p\leq s}E_{p,q}^{1},\] which is consistent with (\ref{dfilter}) by (\ref{E1id}). By definition, the map $\delta_k$ respects this filtration, and indeed satisfies the stronger property $\delta_k(F_sD_k)\subset F_{s-1}D_{k-1}$.  

We will now compute the kernel and image of $\delta_k$.  For a general element $x=\sum_{p}x_p\in D_k$ where each $x_p\in E_{p,k-p}^{1}$, 
the component of $\delta_kx$ in the summand $E_{m,k-1-m}^{1}\subset D_{k-1}$ is equal to \[ \sum_r \partial_{m+r,k-m-r}^{r}x_{m+r}.\]  Now $\partial_{m+r,k-m-r}^{r}x_{m+r}$ lies in the subspace $B_{m,k-1-m}^{r+1}\cap H_{m,k-1-m}^{r}$ of $E_{m,k-1-m}^{1}$.  
But these latter subspaces are independent as $r$ varies: indeed given finitely many elements $y_r\in B_{m,k-1-m}^{r+1}\cap H_{m,k-1-m}^{r}$ that are not all zero, if $r_{\max}$ is chosen maximal subject to the property that $y_{r_{\max}}\neq 0$ then the fact that $0\neq y_{r_{\max}}\in H_{m,k-1-m}^{r_{\max}}$ while for all $s<r_{\max}$ we have $y_s\in B_{m,k-1-m}^{s+1}\subset B_{m,k-1-m}^{r_{\max}}$ would imply that $\sum y_r\neq 0$ since $H_{m,k-1-m}^{r_{\max}}$ is complementary to $B_{m,k-1-m}^{r_{\max}}$.

The independence of these subspaces implies that, for $x_p\in E_{p,k-p}^{1}$, the component of $\delta_k\left(\sum_p x_p\right)$ in $E_{m,k-m-1}^{1}$ is zero only if each $\partial_{m+r,k-m-r}^{r}x_{m+r}$ separately vanishes.  Thus: \begin{equation}\label{allpr} \sum_px_p\in\ker\delta_k \Leftrightarrow (\forall p,r)(\partial_{p,k-p}^{r}x_p=0).\end{equation}  Now fixing $p$ and recalling that $Z_{p,k-p}^{p}=Z_{p,k-p}^{\infty}$ and $\partial_{p,k-p}^{r}=0$ for $r\geq p$, note that we have \[ E_{p,k-p}^{1}=Z_{p,k-p}^{\infty}\oplus M_{p,k-p}^{p-1}\oplus\cdots\oplus M_{p,k-p}^{1}.\]  Moreover, for $r<p$, $\partial_{p,k-p}^{r}$ vanishes on $Z_{p,k-p}^{r+1}\supset Z_{p,k-p}^{\infty}$ and on each $M_{p,k-p}^{j}$ for $j\neq r$ while restricting injectively to $M_{p,k-p}^{r}$.  Hence $\partial_{p,k-p}^{r}x_p=0$ for \emph{all} $r$ if and only if $x_p\in Z_{p,k-p}^{\infty}$.  In combination with (\ref{allpr}) this shows:

\begin{prop}\label{prop:ker} 
\[ \ker(\delta_k\co D_k\to D_{k-1})=\bigoplus_p Z_{p,k-p}^{\infty} \] and, for each $s\in \N$, \[ \ker(\delta_k|_{F_sD_k})=\bigoplus_{p\leq s}Z_{p,k-p}^{\infty}.\]
\end{prop}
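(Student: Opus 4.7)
The plan is to essentially assemble the observations made in the paragraphs immediately preceding the statement into a clean deduction. The first statement is the main one; the filtered version follows immediately because the summand $E_{p,k-p}^{1}$ sits inside $F_sD_k$ precisely when $p\le s$, and $\delta_k$ preserves this decomposition summand by summand.

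First I would verify carefully the independence claim that was sketched just before the proposition: for each fixed $m$, the family of subspaces $\{B_{m,k-1-m}^{r+1}\cap H_{m,k-1-m}^{r}\}_{r\ge 1}$ of $E_{m,k-1-m}^{1}$ is linearly independent. The argument is the one already indicated: given a finite tuple $y_r\in B_{m,k-1-m}^{r+1}\cap H_{m,k-1-m}^{r}$ not all zero, pick the maximal $r_{\max}$ with $y_{r_{\max}}\neq 0$; then $y_{r_{\max}}\in H_{m,k-1-m}^{r_{\max}}$ and every $y_s$ with $s<r_{\max}$ lies in $B_{m,k-1-m}^{s+1}\subset B_{m,k-1-m}^{r_{\max}}$, so $\sum_r y_r$ has a nonzero component in the complement $H_{m,k-1-m}^{r_{\max}}$ of $B_{m,k-1-m}^{r_{\max}}$ in $Z_{m,k-1-m}^{r_{\max}}$ and is in particular nonzero.

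Next I would compute the $E^{1}_{m,k-1-m}$-component of $\delta_k\left(\sum_p x_p\right)$ for $x_p\in E_{p,k-p}^{1}$; by the definition of $\delta_k$ this component equals $\sum_r \partial^{r}_{m+r,k-m-r}x_{m+r}$. By Corollary \ref{chop} the $r$-th summand lies in $B_{m,k-1-m}^{r+1}\cap H_{m,k-1-m}^{r}$, so by the independence claim the sum vanishes iff each $\partial^{r}_{m+r,k-m-r}x_{m+r}=0$. Letting $m$ vary, this gives (\ref{allpr}): a chain $\sum_p x_p$ lies in $\ker\delta_k$ iff $\partial^{r}_{p,k-p}x_p=0$ for all $p,r$.

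The final step is to identify, for fixed $p$, the intersection $\bigcap_{r\ge 1}\ker(\partial^{r}_{p,k-p})$. Using the decomposition $E_{p,k-p}^{1}=Z_{p,k-p}^{\infty}\oplus M_{p,k-p}^{p-1}\oplus\cdots\oplus M_{p,k-p}^{1}$ (which uses $Z_{p,k-p}^{p}=Z_{p,k-p}^{\infty}$), Corollary \ref{chop} tells us that $\partial^{r}_{p,k-p}$ annihilates $Z_{p,k-p}^{r+1}\supset Z_{p,k-p}^{\infty}$ and each $M_{p,k-p}^{j}$ with $j\neq r$, and restricts injectively to $M_{p,k-p}^{r}$; since $\partial^{r}_{p,k-p}=0$ for $r\ge p$, the common kernel over all $r$ is exactly $Z_{p,k-p}^{\infty}$. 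Combining this with (\ref{allpr}) yields $\ker\delta_k=\bigoplus_p Z_{p,k-p}^{\infty}$, and restricting to $F_sD_k=\bigoplus_{p\le s}E_{p,k-p}^{1}$ gives the filtered statement. There is no real obstacle here: all the non-trivial bookkeeping (the direct sum decompositions of $E^{1}_{p,q}$ and the behavior of the $\partial^{r}_{p,q}$ on the various summands) has already been done in Corollary \ref{chop}, and the proof is just the assembly described above.
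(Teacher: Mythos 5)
Your proposal is correct and follows essentially the same route as the paper: the paper's argument is precisely the preceding discussion (independence of the subspaces $B_{m,k-1-m}^{r+1}\cap H_{m,k-1-m}^{r}$, the component formula for $\delta_k$, and the action of the $\partial^{r}_{p,k-p}$ on the decomposition $E_{p,k-p}^{1}=Z_{p,k-p}^{\infty}\oplus M_{p,k-p}^{p-1}\oplus\cdots\oplus M_{p,k-p}^{1}$ from Corollary \ref{chop}), assembled exactly as you describe, with the filtered statement following because $\delta_k$ acts summand by summand.
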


Next we will show:
\begin{prop}\label{prop:im}
\[ \Img(\delta_k\co D_k\to D_{k-1})=\bigoplus_p B_{p,k-1-p}^{\infty} \] and, 
for $s\in \N$, \[ \Img(\delta_k|_{F_sD_k})=\bigoplus_{p<s}B_{p,k-1-p}^{s-p+1}.
\]
\end{prop}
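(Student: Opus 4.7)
The approach is to track each component of $\delta_k x$ in the decomposition $D_{k-1} = \bigoplus_{p} E_{p,k-1-p}^{1}$, in parallel with the proof of Proposition \ref{prop:ker} but now reading off the image rather than the kernel. The key preliminary ingredient is a direct sum decomposition of $B_{p,k-1-p}^{r+1}$ (and hence of $B_{p,k-1-p}^{\infty}$) in terms of the complements $H_{p,k-1-p}^{j}$.

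First I would prove by induction on $r$ that, for each $p$ and $r \geq 1$,
$$B_{p,k-1-p}^{r+1} = \bigoplus_{j=1}^{r}\left(B_{p,k-1-p}^{j+1}\cap H_{p,k-1-p}^{j}\right),$$
starting from $B_{p,k-1-p}^{1}=\{0\}$: given $b \in B_{p,k-1-p}^{r+1}\subset Z_{p,k-1-p}^{r} = H_{p,k-1-p}^{r}\oplus B_{p,k-1-p}^{r}$, write $b = h + b'$ with $h \in H_{p,k-1-p}^{r}$ and $b'\in B_{p,k-1-p}^{r}$; then $h = b - b'\in B_{p,k-1-p}^{r+1}\cap H_{p,k-1-p}^{r}$, and $b'$ decomposes by the inductive hypothesis. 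Taking the union in $r$ gives the analogous decomposition of $B_{p,k-1-p}^{\infty}$.

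For the inclusion $\Img(\delta_k|_{F_sD_k}) \subset \bigoplus_{p<s}B_{p,k-1-p}^{s-p+1}$, let $x = \sum_{q} x_q \in F_s D_k$, so $x_q = 0$ for $q > s$. By definition of $\delta_k$, the component of $\delta_k x$ in the summand $E_{p,k-1-p}^{1}$ is $\sum_{r\geq 1}\partial_{p+r,k-p-r}^{r}x_{p+r}$, and the discussion preceding Corollary \ref{chop} shows that each summand lies in $B_{p,k-1-p}^{r+1}\cap H_{p,k-1-p}^{r}$. Only those $r$ with $p+r \leq s$ contribute, so the component lies in $\bigoplus_{r=1}^{s-p}(B_{p,k-1-p}^{r+1}\cap H_{p,k-1-p}^{r}) = B_{p,k-1-p}^{s-p+1}$ by the previous paragraph (and the sum is empty when $p \geq s$).

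For the reverse inclusion, given $b \in B_{p,k-1-p}^{s-p+1}$ with $1 \leq p < s$, decompose $b = \sum_{r=1}^{s-p}b_r$ with $b_r\in B_{p,k-1-p}^{r+1}\cap H_{p,k-1-p}^{r}$, and use Corollary \ref{chop} to choose preimages $y_{p+r}\in M_{p+r,k-p-r}^{r}$ with $\partial_{p+r,k-p-r}^{r}y_{p+r} = b_r$. Setting $y = \sum_r y_{p+r}$, we have $y \in F_s D_k$, and Corollary \ref{chop} (which says $\partial^{s'}$ vanishes on $M^{r}$ when $s' \neq r$) implies that $\delta_k y$ has $E_{p,k-1-p}^{1}$-component equal to $b$ and all other components zero. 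Summing over $p$ yields the filtered inclusion. The unfiltered statement is the $s = \infty$ case, interpreting $B^{\infty - p + 1}$ as $B^{\infty}$, or equivalently follows by taking the union of the filtered ones. The one nontrivial point — and really the main step — is the direct sum decomposition of $B^{r+1}$; once that is in hand the identification of image components is direct bookkeeping with Corollary \ref{chop}.
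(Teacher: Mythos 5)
Your proposal is correct and follows essentially the same route as the paper: the key step in both is the inductively proven decomposition $B_{p,k-1-p}^{t+1}=\bigoplus_{1\leq r\leq t}\bigl(B_{p,k-1-p}^{r+1}\cap H_{p,k-1-p}^{r}\bigr)$ (the paper's (\ref{bdecomp})), combined with Corollary \ref{chop} to produce preimages in the $M^{r}$ summands. The only difference is organizational—you build preimages componentwise by target degree $p$, while the paper fixes a source degree and produces a single element of $E^{1}_{p,k-p}$ hitting all prescribed $y_r$ at once—which does not change the substance of the argument.
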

\begin{proof} As noted earlier the summand of $\delta_k\left(\sum_px_p\right)$
 in $E_{m,k-1-m}^{1}$ is $\sum_r \partial_{m+r,k-m-r}^{r}x_{m+r}$, which is a 
sum of terms in the mutually independent subspaces $B_{m,k-1-m}^{r+1}\cap H_{
m,k-1-m}^{r}$.  Note that, for fixed $k,m$ and any $t\in \N$, \begin{equation}
\label{bdecomp} \bigoplus_{1\leq r\leq t}\left(B_{m,k-1-m}^{r+1}\cap H_{m,k-1-
m}^{r}\right)=B_{m,k-1-m}^{t+1}: \end{equation} indeed using the inclusions \[
 B_{m,k-1-m}^{r}\subset B_{m,k-1-m}^{r+1}\subset Z_{m,k-1-m}^{r}=H_{m,k-1-m}^{
r}\oplus B_{m,k-1-m}^{r}\]  we see that $B_{m,k-1-m}^{r+1}=(B_{m,k-1-m}^{r+1}
\cap H_{m,k-1-m}^{r})\oplus B_{m,k-1-m}^{r}$; applying this inductively 
starting from $B_{m,k-1-m}^{1}=\{0\}$ yields (\ref{bdecomp}).  The same 
reasoning shows that $\bigoplus_{r=1}^{\infty}\left(B_{m,k-1-m}^{r+1}\cap H_{
m,k-1-m}^{r}\right)=B_{m,k-1-m}^{\infty}$. Thus to prove the proposition it 
suffices to show that, given $p\in \N$ and elements $y_{r}\in B_{p-r,k+r-p-1}^
{r+1}\cap H_{p-r,k+r-p-1}^{r}$ for $1\leq r< p$, we can find a single $x\in
 E_{p,k-p}^{1}$ with $\partial_{p,k-p}^{r}x=y_r$ for each $r$.  But this is 
an easy consequence of Corollary \ref{chop}: using the decomposition $E_{p,k-p
}^{1}=Z_{p,k-p}^{\infty}\oplus M_{p,k-p}^{p-1}\oplus \cdots\oplus M_{p,k-p}^
{1}$ we can take $x$ to be an element with trivial component in $Z_{p,k-p}^{
\infty}$ and with component in each respective $M_{p,k-p}^{r}$ equal to a preimage of $y_r$ 
under $\partial_{p,k-p}^{r}$.\end{proof}

\begin{cor}\label{dhomology}
Let $D_*=\oplus_{k}D_k$ and $\delta=\oplus_k\delta_k$.  Then $(D_*,\delta)$ is a filtered chain complex whose total homology is given by \[ H_k(D_*,\delta)=\frac{\oplus_pZ_{p,k-p}^{\infty}}{\oplus_p B_{p,k-p}^{\infty}}.\]
Moreover, for $s\in \N,t\in\N\cup\{\infty\}$ with $s\leq t$ we have \[ \Img\big(H_k(F_sD_*,\delta)\to H_k(F_tD_*,\delta)\big)= \frac{\bigoplus_{p\leq s}Z_{p,k-p}^{\infty}}{\bigoplus_{p\leq s}B_{p,k-p}^{t-p+1}}.\] 
\end{cor}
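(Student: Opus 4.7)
The plan is to assemble Corollary \ref{dhomology} as an essentially immediate consequence of Propositions \ref{prop:ker} and \ref{prop:im}, with one short preliminary observation promoting the direct-sum constructions to a bona fide chain complex.

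First I would verify that $(D_*,\delta)$ is actually a chain complex, i.e.\ that $\delta^2=0$. By Proposition \ref{prop:im} the image of $\delta_{k+1}$ is contained in $\bigoplus_p B^\infty_{p,k-p}$, while by Proposition \ref{prop:ker} the kernel of $\delta_k$ equals $\bigoplus_p Z^\infty_{p,k-p}$. The needed containment $B^\infty_{p,q}\subset Z^\infty_{p,q}$ reduces to the inclusion $\hat{B}^r_{p,q}\subset\hat{Z}^\infty_{p,q}$, which comes for free from $\partial^2=0$ on $C_*$: any element of $\hat{B}^r_{p,q}$ has the form $\eta_p(\partial y)$ for some $y\in A^{r-1}_{p+r-1,q-r+2}\subset F_{p+r-1}C_{p+q+1}$, and since $\partial(\partial y)=0$ the element $\partial y$ lies in $\ker(\partial|_{F_pC_{p+q}})$, so $\eta_p(\partial y)\in\hat{Z}^\infty_{p,q}$. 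Thus $\delta^2=0$, and combining Propositions \ref{prop:ker} and \ref{prop:im} for the unrestricted case yields the first formula $H_k(D_*,\delta)=\bigoplus_p Z^\infty_{p,k-p}/\bigoplus_p B^\infty_{p,k-p}$.

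For the image formula, the inclusion-induced map $H_k(F_sD_*,\delta)\to H_k(F_tD_*,\delta)$ has image canonically isomorphic to $\ker(\delta_k|_{F_sD_k})/\bigl(\ker(\delta_k|_{F_sD_k})\cap\Img(\delta_{k+1}|_{F_tD_{k+1}})\bigr)$. Proposition \ref{prop:ker} identifies the numerator as $\bigoplus_{p\leq s}Z^\infty_{p,k-p}$, while Proposition \ref{prop:im} (applied to $\delta_{k+1}$ and filtration level $t$) identifies $\Img(\delta_{k+1}|_{F_tD_{k+1}})$ as $\bigoplus_{p<t}B^{t-p+1}_{p,k-p}$. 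Because $B^{t-p+1}_{p,k-p}$ is already a subspace of $Z^\infty_{p,k-p}$, the intersection with $\bigoplus_{p\leq s}Z^\infty_{p,k-p}$ is simply $\bigoplus_{p\leq\min\{s,t-1\}}B^{t-p+1}_{p,k-p}$.

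It remains to reconcile this with the denominator $\bigoplus_{p\leq s}B^{t-p+1}_{p,k-p}$ stated in the corollary. When $s<t$ the ranges $p\leq\min\{s,t-1\}$ and $p\leq s$ coincide; in the boundary case $s=t$ the apparent extra summand is $B^{1}_{s,k-s}=\hat{B}^1_{s,k-s}/\hat{B}^1_{s,k-s}=0$, so it contributes nothing to the quotient. In either case one obtains exactly the claimed formula. The only (very minor) obstacle in this argument is tracking the off-by-one bookkeeping at the endpoint $s=t$, and this is settled by the vanishing $B^1=0$ just noted; all of the substantive work has already been done in Propositions \ref{prop:ker} and \ref{prop:im}.
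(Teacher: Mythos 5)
Your proposal is correct and follows essentially the same route as the paper: both deduce the corollary directly from Propositions \ref{prop:ker} and \ref{prop:im}, identify the image on homology with $\ker(\delta|_{F_sD_k})$ modulo its intersection with $\Img(\delta|_{F_tD_{k+1}})$, and dispose of the endpoint case $s=t$ via $B^{1}_{s,k-s}=\{0\}$. Your explicit check that $B^{r}_{p,q}\subset Z^{\infty}_{p,q}$ (hence $\delta^2=0$) is just a spelled-out version of the inclusion the paper invokes without comment.
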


\begin{proof}
That $(D_*,\delta)$ is a chain complex simply results from  Propositions \ref{prop:ker} and \ref{prop:im} and the fact that $B_{p,q}^{\infty}\subset Z_{p,q}^{\infty}$; the computation of $H_k(D_*,\delta)$ likewise follows immediately. The computation of $\Img\big(H_k(F_sD_*,\delta)\to H_k(F_tD_*,\delta)\big)$  also follows because this image is essentially by definition equal to the quotient of $\ker(\delta|_{F_sD_k})$ by $\Img(\delta|_{F_tD_{k+1}})\cap F_sD_k$.  (For the case that $t=s$, it perhaps also bears noting that $B_{s,k-s}^{1}=\{0\}$, so that $\oplus_{p<s} B_{p,k-p}^{s-p+1}=\oplus_{p\leq s}B_{p,k-p}^{s-p+1}$). \end{proof}

Lemma \ref{maincx} now follows almost immediately from Corollary \ref{dhomology} and Proposition \ref{computerank}.  Indeed, projecting away $\hat{B}_{p,k-p}^{1}$ gives isomorphisms $\frac{\hat{Z}_{p,k-p}^{r}}{\hat{B}_{p,k-p}^{r}}\cong \frac{Z_{p,k-p}^{r}}{B_{p,k-p}^{r}}$ so Corollary \ref{dhomology} and Proposition \ref{computerank} show that we have, whenever $s\in \N$ and $1\leq s\leq t\leq \infty$, \[ 
\Img(H_k(F_sD_*,\delta)\to H_k(F_tD_*,\delta) )\cong  \bigoplus_{p=1}^{s}
 \frac{\Img\left( H_k(F_pC_*,\partial)\to H_k(F_tC_*,\partial)\right)}{\Img\left( H_k(F_{p-1}C_*,\partial)\to H_k(F_tC_*,\partial)\right)}.\]  Since $F_0C_*=\{0\}$, we can then iteratively choose complements to $\Img\left( H_k(F_{p-1}C_*,\partial)\to H_k(F_tC_*,\partial)\right)$ in $\Img\left( H_k(F_{p}C_*,\partial)\to H_k(F_tC_*,\partial)\right)$ to obtain an isomorphism $\Img\big(H_k(F_sD_*,\delta)\to H_k(F_tD_*,\delta)\big)\cong \Img\big(H_k(F_sC_*,\partial)\to H_k(F_tC_*,\partial)\big)$.  Moreover in the case that $t=\infty$, as $s$ varies this can be done in such a way that if $s<s'$ then the isomorphism $\Img\big(H_k(F_{s'}D_*,\delta)\to H_k(D_*,\delta)\big)\cong \Img\big(H_k(F_{s'}C_*,\partial)\to H_k(C_*,\partial)\big)$ restricts to  $\Img\big(H_k(F_{s}D_*,\delta)\to H_k(D_*,\delta)\big)$ as the already-chosen isomorphism  $\Img\big(H_k(F_sD_*,\delta)\to H_k(D_*,\delta)\big)\cong \Img\big(H_k(F_sC_*,\partial)\to H_k(C_*,\partial)\big)$; hence by taking the union over $s$ we obtain an isomorphism $H_k(D_*,\delta)\cong H_k(C_*,\delta)$ (corresponding to the case $s=t=\infty$ in Lemma \ref{maincx}).

Since we have already seen that our complex $(D_*,\delta)$ satisfies the other required properties, this completes the proof of Lemma \ref{maincx}.

\subsection{Construction of $CC_*(X,\lambda)$}

Since we assume that the Reeb flow on the boundary of $(X,\lambda)$ is nondegenerate, the set of actions (equivalently, periods) of the Reeb orbits on $\partial X$ is discrete; of course every element of this set is positive, so let us denote by $T_1<T_2<\cdots<T_r<\cdots$ the numbers which arise as actions of Reeb orbits on $\partial X$.  Also write  $T_0=0$.  By \cite[Proposition 3.1]{GuH}, the maps $\imath_{L_1,L_2}\co CH^{L_1}(X,\lambda)\to CH^{L_2}(X,\lambda)$ give a directed system (\emph{i.e.} $\imath_{L_2,L_3}\circ \imath_{L_1,L_2}=\imath_{L_1,L_3}$), and $\imath_{L_1,L_2}$ is an isomorphism if the interval $(L_1,L_2]$ does not contain any of the actions $T_i$.  So in particular if $L\leq L'$ with $L\in [T_i,T_{i+1}),\,L'\in [T_j,T_{j+1})$ then there is a commutative diagram \begin{equation}\label{intervaldiag} \xymatrix{ CH^L(X,\lambda)\ar[r]^{\imath_{L,L'}} & CH^{L'}(X,\lambda) \\ CH^{T_i}(X,\lambda) \ar[u]^{\imath_{T_i,L}}_{\cong} \ar[r]_{\imath_{T_i,T_j}} & CH^{T_j}(X,\lambda)\ar[u]_{\imath_{T_j,L'}}^{\cong}} \end{equation} where both vertical arrows are isomorphisms.  So to understand the maps $\imath_{L_1,L_2}$ it suffices to understand the maps $\imath_{T_i,T_j}$.

By definition (\cite[Definition 6.1]{GuH}), we have \[ CH^L(X,\lambda)=\varinjlim_{N,H}HF^{S^1,N,+,\leq L}(H,J) \] where the direct limit is taken over parametrized Hamiltonians $H\co S^1\times \hat{X}\times S^{2N+1}\to \R$ on the Liouville completion $\hat{X}$ of $X$ that satisfy a certain admissibility condition, with the structure maps being given by parametrized versions of continuation maps associated to pairs $(N,H),(N',H')$ with $N\leq N',H\leq H'|_{S^1\times \hat{X}\times S^{2N+1}}$.  Here $HF^{S^1,N,+,\leq L}(H,J)$ is the homology of the subcomplex (which for brevity we will denote by $C(N,H)^L$)  generated by orbits of symplectic action at most $L$ of the positive equivariant Floer complex $C(N,H)^{\infty}:=\frac{CF^{S^1,N,+}(N,H)}{CF^{S^1,N,+,\leq \epsilon}(N,H)}$ where $0<\epsilon\ll T_1$.  The maps $\imath_{L_1,L_2}\co CH^{L_1}(X,\lambda)\to CH^{L_2}(X,\lambda)$ are by definition the maps induced on the direct limit by the maps $HF^{S^1,N,+,\leq L_1}(N,H)\to HF^{S^1,N,+,\leq L_2}(N,H)$ given by the inclusion of subcomplexes $C(N,H)^{L_1}\hookrightarrow C(N,H)^{L_2}$.

Suppose that $\{(N_i,H_i)\}_{i=1}^{\infty}$ is any cofinal, linearly ordered subset of the partially ordered set of pairs $(N,H)$ used to define $CH^L(X,\lambda)$.  We can then form the direct limit of the chain complexes $C(N_i,H_i)^{\infty}$, using as structure maps the compositions of chain level continuation maps $C(N_i,H_i)^{\infty}\to C(N_{i+1},H_{i+1})^{\infty}$.  Denote this direct limit by $\underrightarrow{C}$.   Since the continuation maps preserve the filtration by symplectic action,  for any $L\in \R$ we likewise have a direct limit $\underrightarrow{C}^L=\varinjlim_i C(N_i, H_i)^{L}$, and the $\underrightarrow{C}^L$ form an $\R$-valued filtration of $\underrightarrow{C}$.  

Let us coarsen this $\R$-filtration to an $\N$ filtration by, for each $p\in \N$, choosing $T'_p$ with $T_p<T'_p<T_{p+1}$, and letting \[ F_p\underrightarrow{C}=\underrightarrow{C}^{T'_p}\] (Recall our notation that $T_0=0$ and the $T_p$ for $p>0$ are the distinct actions of Reeb orbits along $\partial X$, in increasing order.) As in \cite[Remark 5.6]{GuH}, for $i$ sufficiently large every generator of $C(N_i,H_i)$ will have filtration level greater than $T'_0$, so that $C(N_i,H_i)^{T'_0}=\{0\}$ for $i$ sufficiently large and so $F_0\underrightarrow{C}=\{0\}$. The fact that $\cup_{p}C(N_i,H_i)^{T'_p}=C(N_i,H_i)$ for each $i$ implies that likewise $\cup_pF_p \underrightarrow{C}=\underrightarrow{C}$.  All of our complexes are $\Z$-graded because of the assumption that $c_1(TX)|_{\pi_2(X)}=0$.  Thus Lemma \ref{maincx} applies to the filtered complex $\underrightarrow{C}$, producing a filtered complex $(D_*,\delta)$ with \[ F_rD_{*}=\bigoplus_{1\leq p\leq r}H_*\left(\frac{\varinjlim_iC(N_i,H_i)^{T'_{p}}}{\varinjlim_iC(N_i,H_i)^{T'_{p-1}}}\right) \] such that for each $k\in \Z,s\leq t$ we have \[ \Img\big(H_k(F_sD_{*},\delta)\to H_k(F_tD_*,\delta)\big)\cong \Img\left(H_k(\underrightarrow{C}^{s})\to H_k(\underrightarrow{C}^{t})\right);\] note that (for finite $t$) the right-hand side is precisely the image of $\imath_{T_s,T_t}$ in grading $k$.  Also, since $\varinjlim$ is an exact functor, we have \[ H_*\left(\frac{\varinjlim_iC(N_i,H_i)^{T'_{p}}}{\varinjlim_iC(N_i,H_i)^{T'_{p-1}}}\right)\cong \varinjlim_iH_*\left(\frac{C(N_i,H_i)^{T'_p}}{C(N_i,H_i)^{T'_{p-1}}}\right). \]  Thus we have a filtered complex $(D_*,\delta)$ whose $r$-filtered part is \[ F_rD_*=\varinjlim_i\bigoplus_{1\leq p\leq r}H_*\left(\frac{C(N_i,H_i)^{T'_p}}{C(N_i,H_i)^{T'_{p-1}}}\right) \] and such that, for $1\leq s\leq t<\infty$, \begin{equation}\label{chdiso} \Img\left(\imath_{T'_s,T'_t}\co CH^{T'_s}_{k}(X,\lambda)\to CH^{T'_t}_{k}(X,\lambda) \right)\cong \Img\big(H_k(F_sD_{*},\delta)\to H_k(F_tD_*,\delta)\big).\end{equation}

The foregoing discussion applies to an arbitrary cofinal linearly ordered subset $\{(N_i,H_i)\}_{i=1}^{\infty}$ of the set of admissible pairs $(N,H)$.  For a particular choice of such a cofinal subset consisting of Hamiltonians as described in \cite[Section 3.1]{gutt} and \cite[Remark 5.15]{GuH}, the homologies $H_*\left(\frac{C(N_i,H_i)^{T'_p}}{C(N_i,H_i)^{T'_{p-1}}}\right)$ are computed in \cite[Section 3.2]{gutt},\cite[Section 6.7]{GuH}.  Namely, the space $H_*\left(\frac{C(N_i,H_i)^{T'_p}}{C(N_i,H_i)^{T'_{p-1}}}\right)$ is generated by elements $\widecheck{\gamma}$ and $u^{N_i}\otimes \widehat{\gamma}$ as $\gamma$ ranges over good Reeb orbits having action equal to $T_p$; writing $CZ$ for the Conley--Zehnder index, the grading of $\widecheck{\gamma}$ is $CZ(\gamma)$ and that of $u^{N_i}\otimes \widehat{\gamma}$ is $CZ(\gamma)+2N_i+1$.  The continuation maps $H_*\left(\frac{C(N_i,H_i)^{T'_p}}{C(N_i,H_i)^{T'_{p-1}}}\right)\to H_*\left(\frac{C(N_{i+1},H_{i+1})^{T'_p}}{C(N_{i+1},H_{i+1})^{T'_{p-1}}}\right)$ moreover map $\widecheck{\gamma}$ to $\widecheck{\gamma}$ and $u^{N_i}\otimes\widehat{\gamma}$ to zero, as one can see based on \cite[Remark 3.7]{BOjta}.  Thus in any given degree $k$ the direct limit $\varinjlim_i H_k\left(\frac{C(N_i,H_i)^{T'_p}}{C(N_i,H_i)^{T'_{p-1}}}\right)$  has basis in bijection with the good Reeb orbits on $\partial X$ of action $T_p$ and Conley-Zehnder index $k$. 

So the $\N$-filtered complex $(D_*,\delta)$ produced by Lemma \ref{maincx} has the property that $F_rD_k$ is the span of a set of generators in bijection with the good Reeb orbits on $\partial X$ of Conley--Zehnder index $k$ and action at most $T_r$.  The complex $CC_{*}(X,\lambda)$ promised in Lemma \ref{complex-exists} is then given by converting $(D_*,\delta)$ into an $\R$-filtered complex by taking the $L$-filtered part $CC^{L}_{*}(X,\lambda)$ to be equal $F_rD_*$ where $r$ is maximal subject to the condition that $T_r\leq L$.  In particular we have equalities $CC^{L}_{*}(X,\lambda)=CC^{L'}_{*}(X,\lambda)$ whenever $L,L'\in [T_r,T_{r+1})$.  Since $\delta$ strictly decreases the $\N$-filtration on $D_*$, it likewise strictly decreases this $\R$-filtration.

 By (\ref{chdiso}), we have isomorphisms \[ \Img\left(\imath_{T'_s,T'_t}\co CH^{T'_s}_{k}(X,\lambda)\to CH^{T'_t}_{k}(X,\lambda)\right)\cong \Img\left(H_k\big(CC^{T'_s}_{*}(X,\lambda)\big)\to H_k\big(CC^{T'_t}_{*}(X,\lambda)\big)\right)
 \] for $s\leq t$, and then by applying (\ref{intervaldiag}) we obtain a similar isomorphism with $T'_s,T'_t$ replaced by arbitrary $L,L'$ with $L\leq L'$. The special case that $L=L'$ shows that $CH_{k}^{L}(X,\lambda)$ is isomorphic to $H_k\big(CC_{*}^{L}(X,\lambda)\big)$ since in this case the relevant inclusion-induced map is the identity. This completes the proof that the filtered complex $CC_{*}(X,\lambda)=\cup_L CC^{L}_{*}(X,\lambda)$ with boundary operator $\delta$ satisfies the properties required by Lemma \ref{complex-exists}.

\end{document}